\documentclass[a4paper]{article}

\usepackage[utf8]{inputenc}
\usepackage[width=15cm,height=22cm]{geometry}
\usepackage{amsfonts,amsmath,amssymb,amsthm}
\usepackage[dvipsnames]{xcolor}
\usepackage[hidelinks,pdfusetitle]{hyperref}
\usepackage[capitalise]{cleveref}
\crefname{assumption}{Assumption}{Assumptions}

\newtheorem{proposition}{Proposition}[section]
\newtheorem{lemma}[proposition]{Lemma}
\newtheorem{assumption}[proposition]{Assumption}
\newtheorem{corollary}[proposition]{Corollary}

\newtheorem{example}[proposition]{Example}
\newtheorem{theorem}[proposition]{Theorem}
\newtheorem{open}[proposition]{Open Problem}
\newtheorem{remark}[proposition]{Remark}
\newtheorem{definition}[proposition]{Definition}
\numberwithin{equation}{section}

\title{Time-iteration methods for controllability}
\author{Frédéric Marbach}
\date{\today}

\newcommand{\C}{\mathbb{C}}
\newcommand{\R}{\mathbb{R}}
\newcommand{\N}{\mathbb{N}}
\newcommand{\T}{\mathbb{T}}
\newcommand{\Z}{\mathbb{Z}}
\newcommand{\dd}{\mathrm{d}}

\newcommand{\norm}[1]{\lVert#1\rVert}
\newcommand{\abs}[1]{\lvert#1\rvert}

\newcommand{\cO}{\mathcal{O}}
\newcommand{\cR}{\mathcal{R}}
\newcommand{\cL}{\mathcal{L}}
\newcommand{\cS}{\mathcal{S}}
\newcommand{\cX}{\mathcal{X}}
\newcommand{\cY}{\mathcal{Y}}
\newcommand{\Id}{\operatorname{Id}}
\newcommand{\Ran}{\operatorname{Ran}}
\newcommand{\vect}{\operatorname{span}}
\newcommand{\supp}{\operatorname{supp}}

\usepackage{etoolbox}
\newcounter{proofcounter}
\newcounter{stepcounter}[proofcounter]
\newcommand{\step}[1]{%
\ifnum\value{stepcounter}>0 \medskip\fi
\refstepcounter{stepcounter}\noindent\emph{\textbf{Step \thestepcounter:} #1.}}
\newcommand{\eqstep}[1]{%
\ifnum\value{stepcounter}>0 \medskip\fi
\refstepcounter{stepcounter}\noindent\emph{\textbf{Step \thestepcounter:} #1}}
\AtBeginEnvironment{proof}{\stepcounter{proofcounter} \setcounter{stepcounter}{0}}

\begin{document}

\maketitle

\begin{abstract}
    These notes are based on a short course delivered at the Summer School EUR MINT 2025 ``\emph{Control, Inverse Problems and Spectral Theory}'', held in June 2025 in Toulouse, France.
    The course presents three important strategies in control theory, formulated as time-iteration methods, where each time step brings the state of the system closer to the desired target.

    For linear PDEs, we survey the classical Lebeau--Robbiano method and its more recent developments.
    This method combines spectral inequalities and dissipation estimates to prove null controllability of a dissipative linear system.

    For nonlinear PDEs, we reinterpret the Liu--Takahashi--Tucsnak method, which establishes local controllability of a nonlinear system by analyzing the control cost of its linearization.
    We provide an easily applicable black-box formulation of their method.

    Finally, for nonlinear ODEs, we present the tangent vectors method, which establishes local exact controllability starting from approximately reachable directions.
\end{abstract}

\newpage

\setcounter{tocdepth}{2}
\tableofcontents

\newpage
\section{A prequel on the controllability of ODEs}
\label{sec:intro}

Controllability refers to the possibility of choosing a time-varying input in an evolution equation to drive the state towards a desired target.
This self-contained prequel highlights a few basic results on the controllability of ODEs.
It is included as a reference for newcomers, and to fix notations.

For broader context, we refer the readers to classical textbooks on mathematical control theory, for example \cite{BulloLewis2005,Jurdjevic1997,Sontag1998} for ODEs, \cite{Coron2007,TucsnakWeiss2009} for PDEs, or \cite{Liberzon2012} for optimal control.

\medskip

In this section, we fix $n, m \in \N^*$ and $p \in [1,\infty]$.

\subsection{The rank condition for linear systems}

We consider the linear time-invariant control system on $\R^n$:
\begin{equation}
    \label{eq:lti_system}
    \dot{y}(t) = A y(t) + B u(t),
\end{equation}
where $y(t) \in \R^n$ is the state, $u(t) \in \R^m$ the control, and $A \in \R^{n \times n}$, $B \in \R^{n \times m}$ constant matrices.

\begin{lemma}
    For any $T > 0$, any initial data $y_0 \in \R^n$ and any control $u \in L^p((0,T); \R^m)$, there exists a unique mild solution $y \in C^0([0,T];\R^n)$ to \eqref{eq:lti_system} with $y(0) = y_0$ given by:
    \begin{equation}
        \label{eq:variation_constants}
        y(t) = e^{tA} y_0 + \int_0^t e^{(t-s)A} B u(s) \dd s.
    \end{equation}
\end{lemma}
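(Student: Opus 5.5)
We first fix what ``mild solution'' means here: a function $y \in C^0([0,T];\R^n)$ satisfying the integral identity
\begin{equation*}
    y(t) = y_0 + \int_0^t \bigl( A y(s) + B u(s) \bigr) \dd s \quad \text{for all } t \in [0,T].
\end{equation*}
Equivalently, $y$ is absolutely continuous with $\dot y = Ay + Bu$ almost everywhere. The plan is to prove existence by checking that formula \eqref{eq:variation_constants} defines such a function, and then to prove uniqueness by a Grönwall argument on the difference of two solutions.

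\textbf{Existence.} Since $[0,T]$ is bounded, we have $u \in L^p \subset L^1((0,T);\R^m)$, and $s \mapsto e^{(t-s)A}$ is continuous and bounded on $[0,T]$. Hence the right-hand side of \eqref{eq:variation_constants} is well defined. Writing $e^{(t-s)A} = e^{tA} e^{-sA}$ gives
\begin{equation*}
    y(t) = e^{tA}\Bigl( y_0 + \int_0^t e^{-sA} B u(s) \dd s \Bigr).
\end{equation*}
The inner function $g(t) := \int_0^t e^{-sA} B u(s) \dd s$ is absolutely continuous, with $\dot g(t) = e^{-tA} B u(t)$ for almost every $t$ (Lebesgue differentiation theorem), and in particular $y$ is continuous. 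Applying the product rule for absolutely continuous functions gives $\dot y = A y + Bu$ almost everywhere and $y(0) = y_0$. Integrating this yields the integral identity above. This product-rule step, justified for merely $L^1$ data, is the only genuinely technical point, and it reduces to the standard fact that a product of absolutely continuous functions is absolutely continuous. Alternatively, one can verify the integral identity directly with Fubini, using $\int_s^t A e^{(r-s)A} \dd r = e^{(t-s)A} - \Id$.

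\textbf{Uniqueness.} If $y_1, y_2$ are two mild solutions with the same $y_0$ and the same $u$, then $z := y_1 - y_2$ is continuous and satisfies $z(t) = \int_0^t A z(s) \dd s$. Therefore $\abs{z(t)} \le \norm{A} \int_0^t \abs{z(s)} \dd s$, and Grönwall's lemma gives $z \equiv 0$. Alternatively, one can iterate the bound to obtain $\abs{z(t)} \le \sup_{[0,T]} \abs{z}\,(\norm{A} t)^k / k!$, which tends to $0$ as $k \to \infty$. No step is expected to be a real obstacle.
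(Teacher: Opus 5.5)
The paper states this lemma without proof and treats it as standard background, so there is no direct argument to compare against. Your proof is correct and complete. You define mild solutions by the integral identity, which is exactly the notion the paper later adopts in \eqref{eq:mild} for \cref{lem:WP}. Either the Fubini computation or the product rule for absolutely continuous functions then shows that \eqref{eq:variation_constants} is such a solution, and Gr\"onwall gives uniqueness.

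The closest argument in the paper is the proof of \cref{lem:WP}, which covers your linear system by taking $f_0(y) = Ay$ and constant $f_j = b_j$. There, existence and uniqueness come from the Banach fixed-point theorem applied to an iterate of the Picard map $\Theta$, using the bound $(L t)^k/k!$ on $\Theta^k(y) - \Theta^k(\tilde y)$. That is precisely your alternative iterated uniqueness estimate.

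The two routes differ on two points:
\begin{itemize}
\item The fixed-point route produces a solution abstractly and does not by itself identify it with \eqref{eq:variation_constants}. Your direct verification supplies this identification, which is what the lemma actually asserts.
\item The paper's route extends verbatim to nonlinear Lipschitz vector fields, where no explicit solution formula is available.
\end{itemize}
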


\begin{definition}[Controllability]
    Let $T > 0$.
    System \eqref{eq:lti_system} is said to be \emph{controllable in time~$T$} when, for any initial state $y_0 \in \R^n$, and any final state $y_T \in \R^n$, there exists $u \in L^p((0,T);\R^m)$ such that the solution to \eqref{eq:lti_system} with $y(0)=y_0$ satisfies $y(T)=y_T$.
\end{definition}

The fundamental result of control theory for linear time-invariant systems, dating back to 1960 in \cite[Corollary 5.5]{Kalman1960} or \cite[Theorem 6 and p.\ 15]{LaSalle1960}, is that controllability is equivalent to a simple algebraic condition on the matrices $A$ and $B$, which happens to be independent of $T$ (and $p$).

\begin{theorem}[Rank condition]
    \label{thm:kalman}
    Let $T > 0$.
    System \eqref{eq:lti_system} is controllable in time $T$ if and only if
    \begin{equation}
        \label{eq:kalman}
        \operatorname{rank} [B, AB, A^2B, \dots, A^{n-1}B] = n.
    \end{equation}
\end{theorem}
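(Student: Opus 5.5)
The proof rests on the variation of constants formula \eqref{eq:variation_constants}. Controllability in time $T$ amounts to surjectivity of the end-point map restricted to zero initial data,
\begin{equation*}
    L_T : u \mapsto \int_0^T e^{(T-s)A} B u(s) \,\dd s .
\end{equation*}
Indeed, the target $y_T$ is reached from $y_0$ exactly when $L_T u = y_T - e^{TA} y_0$. Since $L_T$ is linear with values in $\R^n$, it is surjective if and only if $(\Ran L_T)^\perp = \{0\}$. The whole argument therefore reduces to identifying the orthogonal complement of $\Ran L_T$ with the orthogonal complement of the column space of the Kalman matrix $K := [B, AB, \dots, A^{n-1}B]$.

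\textbf{Characterizing $(\Ran L_T)^\perp$.} Take $\xi \in \R^n$. By Fubini,
\begin{equation*}
    \xi \cdot L_T u = \int_0^T \big(B^\top e^{(T-s)A^\top}\xi\big) \cdot u(s) \,\dd s .
\end{equation*}
The function $s \mapsto B^\top e^{(T-s)A^\top}\xi$ is continuous, hence lies in $L^{p'}$. So $\xi \perp \Ran L_T$ if and only if this function vanishes identically on $[0,T]$: for one direction, test against $u$ equal to this function, which is continuous and hence in $L^p$. Setting $t = T - s$, this is equivalent to $\xi^\top e^{tA} B = 0$ for all $t \in [0,T]$.

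\textbf{Equivalence with the rank condition.}
\begin{itemize}
    \item[($\Leftarrow$)] If $\xi^\top e^{tA}B \equiv 0$ on $[0,T]$, differentiate $k$ times at $t=0$ to get $\xi^\top A^k B = 0$ for every $k$, so $\xi^\top K = 0$. Under \eqref{eq:kalman} this forces $\xi = 0$, so $L_T$ is onto.
    \item[($\Rightarrow$)] If \eqref{eq:kalman} fails, pick $\xi \neq 0$ with $\xi^\top A^k B = 0$ for $k \le n-1$. By Cayley--Hamilton, every $A^k$ with $k \ge n$ is a linear combination of $I, A, \dots, A^{n-1}$, so $\xi^\top A^k B = 0$ for all $k$. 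Summing the exponential series then gives $\xi^\top e^{tA}B = 0$ for all $t$. Hence $\xi \in (\Ran L_T)^\perp$ is nonzero, $L_T$ is not onto, and the system is not controllable.
\end{itemize}

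\textbf{Where the care is needed.} The main step is the duality characterization above, in particular that surjectivity holds for every $p \in [1,\infty]$. It is cleanest to exhibit the controls explicitly. Under \eqref{eq:kalman}, the Gramian
\begin{equation*}
    G_T = \int_0^T e^{tA} B B^\top e^{tA^\top} \,\dd t
\end{equation*}
is positive definite, because $\xi^\top G_T \xi = \int_0^T \abs{B^\top e^{tA^\top}\xi}^2 \,\dd t$ vanishes only if $\xi = 0$. The control
\begin{equation*}
    u(s) = B^\top e^{(T-s)A^\top} G_T^{-1}\big(y_T - e^{TA}y_0\big)
\end{equation*}
is continuous, hence in every $L^p$, and it steers $y_0$ to $y_T$. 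This settles sufficiency independently of $p$ and $T$. Necessity also holds independently of $p$ and $T$, since the invariance argument in ($\Rightarrow$) uses neither.
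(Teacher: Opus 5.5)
Your proof is correct and follows the paper's argument. You reduce controllability to surjectivity of $u \mapsto \int_0^T e^{(T-s)A}Bu(s)\,\dd s$, show the orthogonal complement of its range is trivial by differentiating $\xi^\top e^{tA}B \equiv 0$ at the endpoint, and use Cayley--Hamilton with the exponential series for necessity. The Gramian construction is a correct extra that yields an explicit continuous control, but it is not needed, since a linear map into $\R^n$ whose range has trivial orthogonal complement is already onto for every $p$.
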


\begin{proof}
    Using \eqref{eq:variation_constants}, one checks that system \eqref{eq:lti_system} is controllable in time $T$ if and only if $\cR_T = \R^n$ where $\cR_T$ denotes the set of reachable states from the origin:
    \begin{equation}
        \cR_T := \{ y(T) \mid y \text{ is a solution to \eqref{eq:lti_system} with } y(0) = 0 \}.
    \end{equation}
    Thus $\cR_T$ is the image of the linear operator $\Phi_T : L^p((0,T);\R^m) \to \R^n$ defined by
    \begin{equation}
        \label{eq:lti-PhiT}
        \Phi_T(u) := \int_0^T e^{(T-t)A} B u(t) \dd t.
    \end{equation}
    \emph{First, assume \eqref{eq:kalman}.}
    Let us prove that $\cR_T = \R^n$.
    Let $z \in (\cR_T)^{\perp}$.
    Then, for any $u \in L^p((0,T);\R^m)$,
    \begin{equation}
        0
        = \langle z, \Phi_T(u) \rangle
        = z^\top \Phi_T(u)
        = \int_0^T (B^\top e^{(T-t) A^\top} z)^\top u(t) \dd t.
    \end{equation}
    Hence the analytic function $t \mapsto B^\top e^{(T-t) A^\top} z$ from $[0,T] \to \R^m$ is identically $0$.
    Differentiating $n-1$ times at $t = T$, we obtain that $(-1)^k B^\top (A^\top)^k z = 0$ for $k = 0, \dotsc, n-1$.
    Thus $z$ is orthogonal to the columns of $[B, A B, \dotsc, A^{n-1} B]$, which span $\R^n$.
    Hence $z = 0$, so $(\cR_T)^\perp = \{ 0 \}$.

    \medskip

    \noindent
    \emph{Second, assume that $\operatorname{rank} [B, A B, A^2 B, \dots, A^{n-1} B] < n$.}
    Then there exists a nonzero $z \in \R^n$, such that $z^\top A^k B = 0$ for all $k = 0, \dotsc, n-1$.
    By Cayley--Hamilton, this holds for all $k \geq 0$.
    Then, for any $u \in L^p((0,T);\R^m)$,
    \begin{equation}
        \langle z, \Phi_T(u) \rangle = \sum_{k=0}^\infty z^\top A^k B \int_0^T \frac{(T-t)^k}{k!} u(t) \dd t = 0.
    \end{equation}
    Thus $z \in (\cR_T)^\perp \neq \{ 0 \}$.
\end{proof}

Let us give two archetypal examples of controllable linear systems with $m = 1$.

\begin{example}[Integrator chain]
    \label{ex:chain}
    Consider the system $\dot{y}_1 = u$, $\dot{y}_2 = y_1$, $\dot{y}_3 = y_2$, ..., $\dot{y}_n = y_{n-1}$.
    This system is of the form $\dot{y} = Ay + Bu$ with
    \begin{equation}
        A =
        \begin{pmatrix}
            0 & 0 & \dotsb & 0 & 0 \\
            1 & 0 & \dotsb & 0 & 0 \\
            0 & 1 & \dotsb & 0 & 0 \\
            \vdots & \vdots & \ddots & \vdots & \vdots \\
            0 & 0 & \dotsb & 1 & 0
        \end{pmatrix}
        \quad \text{and} \quad
        B =
        \begin{pmatrix}
            1 \\ 0 \\ \vdots \\ 0
        \end{pmatrix}
        .
    \end{equation}
    Hence $B = e_1$ the first unit vector and $A^k B = e_{k+1}$ for $0 \leq k \leq n-1$.
    Thus \eqref{eq:kalman} is satisfied and the system is controllable in any time $T$.
\end{example}

\begin{remark}
    Though it may seem like a particular case, it can be proved that any system satisfying the rank condition \eqref{eq:kalman} can be reduced to the form of \cref{ex:chain} by \emph{static state feedback} and \emph{linear change of coordinates}.
    This reduction is called the \emph{Brunovsk\'y normal form} of the system (see \cite[Theorem 2]{Brunovsky1970} or \cite[Chapter 5, Theorem 2]{Jurdjevic1997}).
\end{remark}

\begin{example}[Diagonal system]
    \label{ex:diag}
    Consider the system $\dot{y}_j = - \lambda_j y_j + j u(t)$ where $\lambda_j = j^2$.
    This system is of the form $\dot{y} = Ay + Bu$ with
    \begin{equation}
        A = \operatorname{diag} (-\lambda_1, \dotsc, -\lambda_n)
        \quad \text{and} \quad
        B =
        \begin{pmatrix}
            1 \\ \vdots \\ n
        \end{pmatrix}
        .
    \end{equation}
    Thus one checks that $[B, AB, \dotsc, A^{n-1}B] = B^\top V$ where $V$ is the Vandermonde matrix associated with $-\lambda_1, \dotsc, -\lambda_n$, which is nonsingular since these values are distinct.
    Thus \eqref{eq:kalman} is satisfied and the system is controllable in any time $T > 0$.

    We will investigate the null-control cost of this system in \cref{s:large-systems} and use it as a finite approximation of a 1D heat equation controlled from the boundary in \cref{s:heat-boundary}.
\end{example}

\subsection{About the cost of fast controls}
\label{sec:seidman}

Assume that the linear time-invariant system \eqref{eq:lti_system} is controllable.
Hence, for all $T > 0$ and~$y_0 \in \R^n$, there exists at least one control $u$ such that the corresponding solution satisfies $y(T) = 0$.

In this paragraph, we quantify the cost of such null controllability.
We are interested in a worst-case measure: the largest, over all $y_0$ in the unit ball, of the minimal size of a control needed to drive the state to zero at time $T$.
It is natural to expect that this cost increases as $T \to 0$.

Understanding the asymptotic behavior of this cost as $T \to 0$ is crucial for transferring controllability properties from finite-dimensional ODEs to dissipative PDEs (see \cref{sec:LR}), and subsequently from linear PDEs to nonlinear ones (see \cref{sec:lin-2-nonlin}).

\bigskip

Let $\abs{\cdot}$ be the Euclidean norm on $\R^n$.
For $T > 0$, define the \emph{null-control cost} as
\begin{equation}
    \mathcal{K}^p(T) := \sup_{y_0 \in \R^n, \, \abs{y_0} = 1} \inf \Big\{ \| u \|_{L^p} \mid u \in L^p((0,T);\R^m), \enskip y(0) = y_0 \text{ and } y(T) = 0 \Big\}.
\end{equation}
The following result is due to Seidman and Yong \cite{SeidmanYong1996} (see \cite{Seidman1988} for $p=2$).

\begin{proposition}
    \label{p:seidman}
    Let $0 \leq K \leq n-1$ be the minimal integer such that
    \begin{equation}
        \label{eq:seidman-rank}
        \operatorname{rank} [B, A B, A^2 B, \dotsc, A^K B] = n.
    \end{equation}
    There exists $c > 0$ such that, as $T \to 0$,
    \begin{equation}
        \label{eq:seidman}
        \mathcal{K}^p(T) \sim c T^{-K-1+\frac 1 p}.
    \end{equation}
\end{proposition}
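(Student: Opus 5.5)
The plan is to reduce to a rescaled reachability problem on the fixed interval $(0,1)$, normalize the state space with an anisotropic dilation adapted to the Kalman filtration, and pass to the limit $T\to 0$ in a limit operator that is independent of $T$.

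\textbf{Step 1: reformulation.} By \eqref{eq:variation_constants}, $y(T)=0$ is equivalent to $\Phi_T(u)=-e^{TA}y_0$, with $\Phi_T$ defined in \eqref{eq:lti-PhiT}. Substitute $u(t)=v(t/T)$. Then $\Phi_T(u)=T\int_0^1 e^{T(1-s)A}Bv(s)\,\dd s$ and $\norm{u}_{L^p(0,T)}=T^{1/p}\norm{v}_{L^p(0,1)}$.

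\textbf{Step 2: filtration and dilation.} Let $V_k:=\vect\{B,AB,\dots,A^kB\}$ for $0\le k\le K$; by \eqref{eq:seidman-rank}, $V_K=\R^n$. Choose orthogonal complements $W_0:=V_0$ and $W_k:=V_k\cap V_{k-1}^\perp$, and let $\pi_k$ be the orthogonal projection onto $W_k$. Minimality of $K$ gives $W_K\neq\{0\}$. Define $D_T:=\sum_k T^k\pi_k$ and the rescaled operator
\[
L_T v := T^{-1}D_T^{-1}\Phi_T(u) = \sum_{j\ge0} \frac{1}{j!}\,D_T^{-1}T^jA^jB\int_0^1(1-s)^jv(s)\,\dd s .
\]

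\textbf{Step 3: the limit operator.} For $j\le K$, the columns of $A^jB$ lie in $V_j$, so $D_T^{-1}T^jA^jB=\sum_{i\le j}T^{j-i}\pi_iA^jB\to\pi_jA^jB$. For $j>K$, the term is $O(T^{j-K})$. Hence $L_T\to L_0$ in operator norm, where
\[
L_0 v:=\sum_{k=0}^K \frac{1}{k!}\,\pi_kA^kB\int_0^1(1-s)^kv(s)\,\dd s .
\]
$L_0$ is surjective onto $\R^n=\bigoplus_k W_k$. Indeed, the columns of $\pi_kA^kB$ span $W_k$, and the moments $\int_0^1(1-s)^kv$, $k\le K$, can be prescribed independently in $\R^m$ because the polynomials $(1-s)^k$ are linearly independent.

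\textbf{Step 4: the cost in rescaled form.} Let $N_T(z):=\inf\{\norm{v}_{L^p(0,1)} : L_Tv=z\}$. Then
\[
\mathcal{K}^p(T)=T^{1/p}\sup_{\abs{y_0}=1}N_T\bigl(-T^{-1}D_T^{-1}e^{TA}y_0\bigr)=T^{-K-1+1/p}\sup_{\abs{y_0}=1}N_T\bigl(T^KD_T^{-1}e^{TA}y_0\bigr),
\]
where the second equality uses that $N_T$ is positively homogeneous and even. Moreover $T^KD_T^{-1}e^{TA}y_0=\sum_k T^{K-k}\pi_ke^{TA}y_0\to\pi_Ky_0$ uniformly on the unit sphere.

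\textbf{Step 5 (main obstacle): uniform convergence $N_T\to N_0$.} The core of the argument is showing that $N_T\to N_0$ uniformly on bounded sets. The conclusion then follows with
\[
c=\sup_{\abs{y_0}=1}N_0(\pi_Ky_0),
\]
which is positive since $\pi_K\neq0$ and $N_0$ is a norm (from surjectivity of $L_0$).

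\emph{Upper bound.} Take a bounded right inverse $R_0$ of $L_0$, built from finitely many polynomial controls. Then $L_TR_0=\Id+E_T$ with $\norm{E_T}\to0$. For any $z$, pick $v_0$ with $L_0v_0=z$ and $\norm{v_0}\le N_0(z)+\varepsilon$, and set $v:=v_0-R_0(\Id+E_T)^{-1}(L_T-L_0)v_0$. This $v$ satisfies $L_Tv=z$ and $\norm{v}\le\norm{v_0}(1+C\norm{L_T-L_0})$.

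\emph{Lower bound.} If $L_Tv=z$, then $v-R_0(L_0-L_T)v$ is admissible for $L_0$. Hence $N_0(z)\le\norm{v}(1+C\norm{L_T-L_0})$.

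Together these give $N_T=N_0(1+o(1))$ uniformly. Combined with the continuity of the norm $N_0$ and the convergence $T^KD_T^{-1}e^{TA}y_0\to\pi_Ky_0$ from Step 4, this yields $\mathcal{K}^p(T)\sim cT^{-K-1+1/p}$. The argument works equally for $p\in\{1,\infty\}$, since only infima are used, never minimizers.
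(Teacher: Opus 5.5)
Your proof is correct and follows essentially the same route as the paper. Both arguments rescale time to $(0,1)$, use the orthogonal Kalman filtration with the anisotropic dilation, show that the rescaled operator converges to a surjective limit operator with an $O(T)$ remainder, and apply a correction through a bounded right inverse of the limit operator to get both the upper and the lower bound.

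The differences are cosmetic. The paper multiplies by $e^{-TA}$ so that the target is simply $-y_0/T$, and it treats the two bounds separately via a near-extremal $z^*\in E_K$. You keep $e^{TA}y_0$, which forces you to track $T^K D_T^{-1}e^{TA}y_0\to\pi_K y_0$. You then package both bounds as a single uniform comparison $N_T=N_0(1+o(1))$ of quotient norms, which is slightly tidier.
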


\begin{proof}
    \step{Time scaling}
    Given $u \in L^p((0,T);\R^m)$, we introduce $v(s) := u(s T)$.
    From the Duhamel formula \eqref{eq:variation_constants} at $t = T$, the condition $y(T) = 0$ is equivalent to $F_T(v) = - y_0 / T$ where
    \begin{equation}
        \label{eq:FTv}
        F_T(v) := \int_0^1 e^{-s (T A)} B v(s) \dd s.
    \end{equation}
    Since $\norm{u}_{L^p(0,T)} = T^{\frac 1 p} \norm{v}_{L^p(0,1)}$, we obtain by homogeneity that
    \begin{equation}
        \label{eq:TildeK-2-K}
        \mathcal{K}^p(T) = T^{\frac 1 p - 1} \widetilde{\mathcal{K}}(T)
        \quad \text{where} \quad
        \widetilde{\mathcal{K}}(T) := \sup_{z \in \R^n, \, \abs{z} = 1} \inf \big\{ \norm{v}_{L^p} \mid F_T(v) = z \big\}.
    \end{equation}

    \step{Decomposition of the state space}
    Write $B = (b_1, \dotsc, b_m)$.
    For $0 \leq k \leq K$, define $S_k := \operatorname{span} \{ A^j b_l \mid 0 \leq j \leq k, 1 \leq l \leq m \} \subset \R^n$ and $E_k$ the orthogonal complement of $S_{k-1}$ in~$S_k$, with the convention that $S_{-1} = \{0\}$.
    By assumption $E_0 \oplus \dotsb \oplus E_K = \R^n$ and $E_K \neq \{ 0 \}$.
    Let $\mathbb{P}_k$ denote the orthogonal projection on $E_k$.
    Thus $\mathbb{P}_k A^j b_l = 0$ for any $j < k$.

    Finally, for $z \in \R^n$, define $\Delta_T(z) := \sum_{k=0}^K T^{-k} \mathbb{P}_k z$ with inverse $\Delta_T^{-1} (z) := \sum_{k=0}^K T^k \mathbb{P}_k z$.

    \step{Asymptotic expansion of $F_T$}
    Expanding the exponential in \eqref{eq:FTv}, we see that $F_T$ scales like $T^k$ on each $E_k$.
    More precisely, $\Delta_T F_T = \Lambda + R_T$ where, for $v \in L^p((0,1);\R^m)$,
    \begin{align}
        \label{eq:seidman-Lambda}
        \Lambda(v) & := \sum_{k=0}^K \sum_{l = 1}^m \mathbb{P}_k (A^k b_l) \int_0^1 \frac{(-s)^k}{k!} v_l(s) \dd s \\
        R_T(v) & := \sum_{k=0}^K \sum_{l = 1}^m \sum_{j=k+1}^\infty \frac{\mathbb{P}_k (A^j b_l)}{j!} T^{j-k} \int_0^1 (-s)^j v_l(s) \dd s.
    \end{align}
    In particular, for $T \leq 1$,
    \begin{equation}
        \abs{R_T(v)} \leq (K+1) \sum_{l=1}^m \sum_{j=0}^{+\infty} \frac{\norm{A}^j \abs{b_l}}{j!} T \norm{v_l}_{L^1} \leq T C_R \norm{v}_{L^p}.
    \end{equation}

    \step{The map $\Lambda$ is onto from $L^p((0,1);\R^m)$ to $\R^n$}
    Each $E_k$ is spanned by the $\mathbb{P}_k (A^k b_l)$.
    Thus, for each $z \in \R^n$, there exist $\alpha_0, \dotsc, \alpha_K \in \R^m$ such that $z = \sum \mathbb{P}_k (A^k B \alpha_k)$.
    Moreover, for each $0 \leq k \leq K$, there exists $\varphi_k \in C^\infty([0,1];\R)$ such that $\int_0^1 (-s)^j \varphi_k(s) \dd s = j! \mathbf{1}_{j=k}$ for all $0 \leq j \leq K$.
    Then $\Lambda(v) = z$ where $v(s) := \sum \varphi_k (s) \alpha_k$.
    So $\Lambda$ is onto.

    In particular, there exists $U \in \mathcal{L}(\R^n;L^p((0,1);\R^m))$ such that $\Lambda \circ U = \Id$ and $C_\Lambda := \norm{U} < \infty$.

    \step{Definition of the constant}
    Since $\Lambda$ is bounded and $E_K \neq \{ 0 \}$,
    \begin{equation}
        \label{eq:def-c-seidman}
        c := \sup_{|z|=1, \, z \in E_K} \inf \big\{ \norm{v}_{L^p} \mid \Lambda (v) = z \big\} \in (0,C_\Lambda].
    \end{equation}
    For all $\varepsilon > 0$ and $z \in E_K$, there exists $v := V_\varepsilon(z)$ such that $\norm{V_\varepsilon(z)} \leq (c+\varepsilon) \abs{z}$ and $\Lambda(V_\varepsilon(z)) = z$.

    \step{Upper bound on the cost}
    For all $\varepsilon > 0$, there exists $T_\varepsilon > 0$ such that, for all $T \in (0,T_\varepsilon]$ and all $z \in \R^n$, there exists $v \in L^p((0,1);\R^m)$ such that $F_T(v) = z$ and $\norm{v}_{L^p} \leq (c + 2 \varepsilon) T^{-K} \abs{z}$.

    Let $T, \varepsilon > 0$ and $z \in \R^n$.
    Decompose $z = z_K + z'$ where $z_K \in E_K$ and $z' \in E_K^\perp$.
    To solve $F_T (v) = z$, we set $v := v_0 + v_1 + \dotsb$ where we define $v_0 := V_\varepsilon(\Delta_T(z_K)) + U(\Delta_T(z'))$ and $v_{i+1} := - U(R_T (v_i))$.
    Thus $\norm{v_0} \leq (c + \varepsilon) T^{-K} \abs{z_K} + C_\Lambda T^{-K+1} \abs{z'} \leq (c + \varepsilon + C_\Lambda T) T^{-K} \abs{z}$.
    Moreover $\norm{v_{i+1}} \leq C_\Lambda T C_R \norm{v_i}$.
    Thus the series converges normally and $\norm{v} \leq (c + \varepsilon + T C') T^{-K} \abs{z}$.
    By a telescoping sum argument, $F_T(v) = z$.
    Choosing $T_\varepsilon$ small enough, we obtain $\norm{v} \leq (c + 2\varepsilon) T^{-K} \abs{z}$.

    \step{Lower bound on the cost}
    For all $\varepsilon > 0$, there exists $T_\varepsilon > 0$ such that, for all $T \in (0,T_\varepsilon]$, there exists $z \in \R^n$ with $z \neq 0$ such that for all $v \in F_T^{-1}(z)$, one has $\norm{v}_{L^p} \geq (c - 2 \varepsilon) T^{-K} \abs{z}$.

    Let $\varepsilon > 0$.
    By definition \eqref{eq:def-c-seidman} of $c$, there exists $z^* \in E_K$ with $z^* \neq 0$ such that $\Lambda(v) = z^*$ implies $\norm{v}_{L^p} \geq (c - \varepsilon) \abs{z^*}$.
    For $T > 0$, set $z_T := \Delta_T^{-1}(z^*) = T^K z^*$.
    Let $v_T$ such that $F_T(v_T) = z_T$.
    Thus $(\Lambda + R_T)(v_T) = z^*$.
    Let $w_T := U (R_T(v_T))$ so that $\Lambda (w_T) = R_T (v_T)$.
    Then $\Lambda (v_T+w_T) = z^* \in E_K$.
    Hence $\norm{v_T+w_T} \geq (c-\varepsilon) \abs{z^*}$.
    But $\norm{w_T} \leq C_\Lambda T C_R \norm{v_T}$.
    Hence $(1 + T C_\Lambda C_R) \norm{v_T} \geq (c - \varepsilon) \abs{z^*}
    = (c - \varepsilon) T^{-K} \abs{z_T}$.
    For $T \in (0,T_\varepsilon]$ where $T_\varepsilon := \varepsilon / (C_\Lambda C_R (c - 2 \varepsilon))$, we have found $z_T \in \R^n$ with $z_T \neq 0$ such that any $v_T$ satisfying $F_T(v_T) = z_T$ satisfies $\norm{v_T} \geq
    (c - 2\varepsilon) T^{-K} \abs{z_T}$.

    \step{Conclusion}
    We proved that $\widetilde{\mathcal{K}}(T) \sim c T^{-K}$.
    Recalling \eqref{eq:TildeK-2-K} proves \eqref{eq:seidman}.
\end{proof}

\begin{example}[Integrator chain]
    Recall \cref{ex:chain}.
    Since $B = e_1$ and $A^k B = e_{k+1}$, we have $K = n-1$ in \eqref{eq:seidman-rank}.
    With the notations of the proof of \cref{p:seidman}, recalling \eqref{eq:seidman-Lambda},
    \begin{equation}
        \Lambda(v) = \sum_{k=0}^{n-1} e_{k+1} \int_0^1 \frac{(-s)^k}{k!} v(s) \dd s
    \end{equation}
    and, as in \eqref{eq:def-c-seidman}, since $E_K = \R e_n$, by homogeneity,
    \begin{equation}
        c = \inf \big\{ \norm{v}_{L^p} \mid \Lambda(v) = e_n \big\}.
    \end{equation}
    Take $p = 2$ to fix ideas.
    Then $c$ is the minimal $L^2$ norm of a function on $[0,1]$ orthogonal to $1, s, s^2, \dotsc, s^{n-2}$ and such that $\int_0^1 s^{n-1} v(s) \dd s = (-1)^{n-1} (n-1)!$.
    Using the properties of Legendre polynomials, one can prove that $c = \sqrt{2n-1} \frac{(2n-2)!}{(n-1)!}$.
\end{example}

\subsection{About the cost of large systems}
\label{s:large-systems}

In the previous paragraph, we computed the asymptotic behavior as $T \to 0$ of the null-control cost for a fixed system of a given size $n \in \N^*$.
When moving on to PDEs in \cref{sec:LR}, we will approximate a PDE by its (increasingly larger) finite dimensional projections.

As an example, consider again the diagonal system \cref{ex:diag}.
Equip $\R^n$ with the norm $\norm{y}_h^2 := \sum y_j^2 / j^2$ (reminiscent of the $H^{-1}$ norm for Fourier series).
Define
\begin{equation}
    \mathcal{K}_n(T) := \sup_{y_0 \in \R^n, \, \norm{y}_h=1} \inf_{u \in L^2((0,T);\R)} \Big\{ \norm{u}_{L^2} \mid y(0) = y_0 \text{ and } y(T) = 0 \Big\}.
\end{equation}
For a fixed $n$, we know by \cref{p:seidman} that $\mathcal{K}_n(T) \sim c_n T^{-n+\frac 12}$ for some constant $c_n$.
This estimate is only valid for $T \leq T_n$.
In \cref{s:heat-boundary}, we will use the following estimate.

\begin{proposition}
    \label{thm:fin-cost}
    For all $T > 0$ and $n \geq 1$,
    \begin{equation}
        \mathcal{K}_n(T) \leq \sqrt{\frac{n^2}{T}} \exp \left( 6 \sqrt{\frac n T} \right).
    \end{equation}
\end{proposition}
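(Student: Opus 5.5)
I would use the moment method. Write the target condition $y(T)=0$ as a finite moment problem and build an explicit biorthogonal family for the $n$ exponentials $e^{-\lambda_j t}$ on $(0,T)$. The constant in the statement should come out of explicit estimates for these biorthogonal functions; I have not checked that my construction reproduces the constant $6$ exactly.

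\emph{Reduction to moments.} By \eqref{eq:variation_constants}, $y_j(T)=0$ is equivalent to $j\int_0^T e^{-\lambda_j(T-s)}u(s)\,\dd s=-e^{-\lambda_j T}y_{0,j}$. Set $v(t):=u(T-t)$, which has the same $L^2$ norm. The condition becomes
\begin{equation*}
\int_0^T e^{-\lambda_j t}v(t)\,\dd t = -\frac{e^{-\lambda_j T}y_{0,j}}{j}=:m_j,\qquad 1\le j\le n .
\end{equation*}
Suppose I have $\theta_1,\dots,\theta_n\in L^2(0,T)$ with $\int_0^T e^{-\lambda_j t}\theta_k(t)\,\dd t=\delta_{jk}$. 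Then $v=\sum_k m_k\theta_k$ is a null control. By Cauchy--Schwarz,
\begin{equation*}
\norm{v}_{L^2}\le \Bigl(\sum_k y_{0,k}^2/k^2\Bigr)^{1/2}\Bigl(\sum_k e^{-2\lambda_kT}\norm{\theta_k}^2\Bigr)^{1/2}\le \norm{y_0}_h\,\sqrt n\,\max_k e^{-\lambda_kT}\norm{\theta_k}.
\end{equation*}
The problem therefore reduces to proving $e^{-\lambda_kT}\norm{\theta_k}_{L^2(0,T)}\le T^{-1/2}\sqrt n\,e^{6\sqrt{n/T}}$ for every $k\le n$. This gives the factor $\sqrt{n^2/T}$ in the statement.

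\emph{Construction of $\theta_k$.} I would work on the Fourier side via Paley--Wiener. Start from the polynomial $P_k(z)=\prod_{j\ne k,\,j\le n}\frac{\lambda_j+iz}{\lambda_j-\lambda_k}$, which vanishes at $z=i\lambda_j$ for $j\ne k$ and equals $1$ at $i\lambda_k$. Multiply it by a compactly supported multiplier such as $\bigl(\frac{\sin(\varepsilon z)}{\varepsilon z}\bigr)^M$, renormalised so the value at $i\lambda_k$ stays $1$. I would take $M=2n$ to beat the degree $n-1$ growth of $P_k$, and $M\varepsilon\le T/2$ so that the inverse Fourier transform, shifted to start at $t=0$, is supported in $[0,T]$. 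Biorthogonality then follows from the zeros. This construction has a drawback: the time scale $\varepsilon\sim T/n$ gives polynomial-in-$n$ losses, which is not what I want. The $e^{C\sqrt{n/T}}$ form suggests instead cutting off only the high frequencies $\lambda_j\gtrsim 1/T$. I would split the product into $j\le N:=\lceil\sqrt{n/T}\rceil$ and $j>N$, or equivalently use a Gram-matrix bound on the low modes combined with explicit sizes of Cauchy determinants.

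\emph{Key estimate (main obstacle).} The main difficulty is bounding $\norm{\theta_k}$ by Plancherel, which requires controlling $\prod_{j\ne k}\abs{\lambda_j+i\xi}/\abs{\lambda_j-\lambda_k}$ on the real line. The denominator is explicit for $\lambda_j=j^2$:
\begin{equation*}
\prod_{j\ne k}\abs{j^2-k^2}=\frac{(n+k)!\,(n-k)!}{2k^2}.
\end{equation*}
The numerator is controlled by $\log\abs{1+i\xi/j^2}\le$ the counting function of $\sqrt{\abs\xi}$, roughly $\sqrt{\abs\xi}$. The factor $e^{-\lambda_kT}$ is what kills the large ratio of factorials for large $k$. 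I expect to combine Stirling's bounds with the elementary inequality $\sum_j\log(1+a/j^2)\le\pi\sqrt a$, and then optimise $\varepsilon$ and $M$ against $T$. Tracking the constants so that the final exponent is at most $6\sqrt{n/T}$ is the step I expect to be delicate. If this direct route is too lossy, my fallback is the Gram-matrix formula $\norm{v}_{\min}^2=m^\top G^{-1}m$ with $G_{jk}=\frac{1-e^{-(\lambda_j+\lambda_k)T}}{\lambda_j+\lambda_k}$. I would compare it with the Cauchy matrix on $(0,\infty)$, whose inverse is explicit, and handle the truncation to $(0,T)$ as a perturbation.
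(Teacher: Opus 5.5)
Your reduction is correct: by Cauchy--Schwarz, the proposition follows once, for each $k\le n$, you have $\theta_k\in L^2(0,T)$ biorthogonal to $(e^{-\lambda_j t})_{j\le n}$ with $e^{-\lambda_k T}\norm{\theta_k}_{L^2}\le\sqrt{n/T}\,e^{6\sqrt{n/T}}$. But this estimate is the entire content of the statement, and the proposal does not prove it: it sketches three routes and completes none. This is a genuine gap.

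The reasons you give for abandoning your first construction are also mistaken:
\begin{itemize}
\item The scale $\varepsilon\sim T/n$ is exactly right. The exponent it produces is of order $\varepsilon^{-1/2}\sim\sqrt{n/T}$, and its polynomial prefactor $\varepsilon^{-1/2}\sim\sqrt{n/T}$ is precisely what the statement allows.
\item There is no large ratio of factorials to kill, since $\abs{P_k(0)}=2(n!)^2/\bigl((n+k)!\,(n-k)!\bigr)\le 2$. The weight $e^{-\lambda_k T}$ is needed instead to absorb the renormalisation forced by the shift. With $x:=\varepsilon\lambda_k$ and $2M\varepsilon=T$, one has $e^{-\lambda_k T}e^{\lambda_k M\varepsilon}\bigl(\varepsilon\lambda_k/\sinh(\varepsilon\lambda_k)\bigr)^M=\bigl(2x/(e^{2x}-1)\bigr)^M\le 1$.
\item The bound $\sum_j\log(1+\abs{\xi}/j^2)\le\pi\sqrt{\abs{\xi}}$ cannot by itself control the integral over all of $\R$. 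The function $e^{c\sqrt{\abs{\xi}}}$ is not integrable against the merely polynomial decay $(\varepsilon\abs{\xi})^{-2M}$ of the multiplier, so beyond $\abs{\xi}=1/\varepsilon$ you need the degree $n-1$ growth of $P_k$.
\item The Gram/Cauchy fallback fails where it matters. As $T\to0$ with $n$ fixed, $G=\cO(T)$ entrywise, while the Cauchy matrix $\bigl(1/(\lambda_j+\lambda_k)\bigr)$ does not depend on $T$. So the truncation to $(0,T)$ is not a small perturbation.
\end{itemize}

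Your first construction, with exactly your parameters $M=2n$ and $\varepsilon=T/(4n)$, does close the gap. Let $g_k$ be the function supported in $[-M\varepsilon,M\varepsilon]$ whose Fourier transform is $P_k(\xi)\bigl(\sin(\varepsilon\xi)/(\varepsilon\xi)\bigr)^M$. Write $\abs{P_k(\xi)}^2=\abs{P_k(0)}^2\prod_{j\ne k}(1+\xi^2/j^4)$. Comparing with $\int_0^\infty\log(1+\xi^2/s^4)\,\dd s=\pi\sqrt{2\abs{\xi}}$ gives $\abs{P_k(\xi)}^2\le 4e^{\pi\sqrt{2\abs{\xi}}}$ for $\abs{\xi}\le 1/\varepsilon$. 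For $\abs{\xi}\ge1/\varepsilon$, the inequality $1+\xi^2/j^4\le(\varepsilon\xi)^2(1+\varepsilon^{-2}j^{-4})$ gives $\abs{P_k(\xi)}^2\le 4(\varepsilon\abs{\xi})^{2n-2}e^{\pi\sqrt{2/\varepsilon}}$. Plancherel then yields $\norm{g_k}_{L^2}\le\sqrt{64/(3\pi)}\,\sqrt{n/T}\,e^{\pi\sqrt{2}\sqrt{n/T}}$. Since $\pi\sqrt2<6$, the constant is absorbed; when $n/T$ is small, use the factor $\bigl(2x/(e^{2x}-1)\bigr)^{2n}$ with $x\ge T/(4n)$.

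The paper argues by linear algebra instead of complex analysis. It uses the exact identity $\mathcal{K}_n(T)=\norm{M^{-1}}^{1/2}$ for the Gram matrix $M_{ij}=\int_0^T e^{(i^2+j^2)t}\,\dd t$. The exact slicing $M=\int_0^{T/n}D(t)UU^\top D(t)\,\dd t$ then reduces everything to the Vandermonde matrix $U$ with nodes $e^{j^2T/n}$. Gautschi's bound plus a Riemann-sum comparison finishes, and this is where the constant $6$ comes from.

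In your language, set $\phi_k\bigl(t+(m-1)T/n\bigr):=\frac{n}{T}e^{-k^2t}(U^{-1})_{mk}$ for $0\le t<T/n$ and $1\le m\le n$. This $\phi_k$ is biorthogonal to $(e^{j^2t})_{j\le n}$ on $(0,T)$, with $\norm{\phi_k}_{L^2}\le\sqrt{n/T}\,\abs{U^{-1}e_k}_1\le\sqrt{n/T}\,e^{6\sqrt{n/T}}$. So $\theta_k(t):=e^{\lambda_kT}\phi_k(T-t)$ meets your target with exactly the stated constant. What the Paley--Wiener route buys instead is a smaller exponent, $\pi\sqrt2\approx 4.44$.
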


\begin{proof}
    Let $T > 0$, $n \geq 1$ and $y_0 \in \R^n$ with $\norm{y_0}_h = 1$.
    By the Duhamel formula \eqref{eq:variation_constants} for the system of \cref{ex:diag}, we have $y(T) = 0$ if and only if, for all $1 \leq j \leq n$,
    \begin{equation}
        \label{eq:moments}
        \int_0^T u(t) e^{j^2 t} \dd t = - \frac{y_0^j}{j} =: b_j \in \R.
    \end{equation}
    Hence $b \in \R^n$ with $\abs{b}_2 = \norm{y_0}_h = 1$ is given, and we are looking for a $u \in L^2((0,T);\R)$ of minimal norm satisfying these conditions.
    Such a problem is called a \emph{moment problem}, and, starting with \cite{FattoriniRussell1971,FattoriniRussell1974}, the mathematical literature contains many works on such problems where one tries to solve \eqref{eq:moments} simultaneously for all $j \in \N^*$.
    Here, we focus on the question (and the cost) of the finite moment problem for $1 \leq j \leq n$.

    By projection on the subspace of $L^2((0,T);\R)$ spanned by $e^{t}, \dotsc, e^{n^2 t}$, the control of minimal norm can be written $u(t) := \sum_{j=1}^n a_j e^{j^2 t}$ for some $a \in \R^n$.
    Let us introduce the Gram matrix $M \in \R^{n \times n}$ defined as
    \begin{equation}
        \label{eq:def-Mij}
        M_{ij} := \int_0^T e^{(i^2+j^2) t} \dd t.
    \end{equation}
    With this notation, solving \eqref{eq:moments} for $1 \leq j \leq n$ is equivalent to the equation $M a = b$.
    To obtain a cost estimate, write
    \begin{equation}
        \norm{u}_{L^2}^2 = a^\top M a = (M^{-1} b)^\top M (M^{-1} b)
        = b^\top M^{-1} b.
    \end{equation}
    Since $\abs{b}_2 = \norm{y_0}_h$, this proves that $\mathcal{K}_n(T) = \norm{M^{-1}}^{\frac 12}$ where, for a matrix $A$, we use the spectral norm $\norm{A} := \max_{\abs{x}_2=1} \abs{Ax}_2$.

    Starting from \eqref{eq:def-Mij}, the idea of the proof is to approximate $M$ by a Riemann sum:
    \begin{equation}
        \int_0^T e^{(i^2+j^2)t} \dd t \approx \frac{T}{n} \sum_{k=1}^n e^{(i^2+j^2) \frac{(k-1) T}{n}}.
    \end{equation}
    Heuristically, we thus have $M \approx \frac{T}{n} U U^\top$ where we define
    \begin{equation}
        \label{eq:Uij}
        U_{ij} = e^{i^2 (j-1) \frac{T}{n}}.
    \end{equation}
    By \cref{lem:M-1-U-1}, $\norm{M^{-1}} \leq \frac{n}{T} \norm{U^{-1}}^2$.
    By \cref{lem:U-1-exp}, $\norm{U^{-1}} \leq \sqrt{n} \exp ( 6 \sqrt{n/T} )$.
\end{proof}

\begin{lemma}
    \label{lem:M-1-U-1}
    For any $n \geq 1$ and $T > 0$, for the matrices of \eqref{eq:def-Mij} and \eqref{eq:Uij},
    \begin{equation}
        \norm{M^{-1}} \leq \frac{n}{T} \norm{U^{-1}}^2.
    \end{equation}
\end{lemma}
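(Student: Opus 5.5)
The idea is to compare the quadratic forms of $M$ and $\frac{T}{n} U U^\top$, showing $M \succeq \frac{T}{n} U U^\top$ in the sense of symmetric matrices. Since both are symmetric positive definite, this implies $M^{-1} \preceq \frac{n}{T} (UU^\top)^{-1} = \frac nT U^{-\top}U^{-1}$, and hence
\begin{equation*}
\norm{M^{-1}} \leq \frac{n}{T} \norm{U^{-\top}U^{-1}} = \frac{n}{T}\norm{U^{-1}}^2 .
\end{equation*}
Invertibility of $U$ is automatic: it is a Vandermonde matrix in the distinct nodes $e^{i^2 T/n}$. If $U$ were singular the right-hand side would be infinite and the claim trivial anyway.

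Fix $x \in \R^n$ and set $f(t) := \sum_{i=1}^n x_i e^{i^2 t}$. Then
\begin{equation*}
x^\top M x = \int_0^T f(t)^2 \dd t,
\qquad
x^\top U U^\top x = \abs{U^\top x}^2 = \sum_{k=1}^n f\big(\tfrac{(k-1)T}{n}\big)^2 ,
\end{equation*}
because $(U^\top x)_k = \sum_i x_i e^{i^2 (k-1) T/n} = f((k-1)T/n)$. So the whole lemma reduces to the left Riemann sum inequality
\begin{equation*}
\frac{T}{n}\sum_{k=1}^n f(t_k)^2 \leq \int_0^T f^2, \qquad t_k := \frac{(k-1)T}{n}.
\end{equation*}

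This inequality is the main obstacle, since $f$ has arbitrary signed coefficients and need not be monotone. For monotone functions it would be immediate: if $f^2$ is nondecreasing on $[0,T]$, then on each interval $[t_k, t_k + T/n]$ we have $f(t_k)^2 \leq f(t)^2$, and integrating gives the inequality. My first attempt is to use a Descartes-type rule of signs for the exponential sums $\sum x_i e^{\lambda_i t}$, which says $f$ has at most $n-1$ real zeros. If that fails, I would instead choose the sampling nodes differently: in each subinterval $[t_k, t_k + T/n]$, pick $s_k$ minimizing $f^2$. Then $\frac{T}{n}\sum_k f(s_k)^2 \leq \int_0^T f^2$ trivially, but this gives a matrix depending on $x$, so the bound on $\norm{U^{-1}}$ would have to hold uniformly in the nodes. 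That is unsatisfactory.

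A better fallback is a quantitative comparison. Write $f(t_k)^2 \leq f(t)^2 + 2\int_{t_k}^{t} \abs{f f'}$, which controls the Riemann sum by $\int f^2$ plus a correction involving $f'$; if needed, absorb the resulting constant into the exponential factor $\exp(6\sqrt{n/T})$ of \cref{lem:U-1-exp}. For the stated constant, though, I would first check carefully whether the exact lemma intends the left endpoints: with $U_{ij} = e^{i^2(j-1)T/n}$ the nodes are left endpoints. Each exponential $e^{i^2 t}$ is increasing, which is what makes the heuristic $M \approx \frac Tn UU^\top$ favorable, and I would try to exploit this positivity, e.g.\ via a Cauchy--Schwarz argument on the Gram structure, to close the monotonicity gap.
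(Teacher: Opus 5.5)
Your reduction targets a false statement, so the ``main obstacle'' you flag cannot be overcome. The Loewner inequality $M \succeq \frac{T}{n} U U^\top$, i.e.\ $\int_0^T f^2 \geq \frac{T}{n} \sum_k f(t_k)^2$ for every $x$, already fails for $n = 2$. Take $x = (\frac53, -\frac23)$, so $f(t) = \frac53 e^t - \frac23 e^{4t}$. For every $T < \frac13 \ln \frac52$, $f$ is positive and strictly decreasing on $[0,T]$, so the left Riemann sum of $f^2$ strictly exceeds its integral. For $T = 0.1$, for instance, $\frac{T}{2}(f(0)^2 + f(T/2)^2) \approx 0.0940$ while $\int_0^T f^2 \approx 0.0873$. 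Each $e^{i^2 t}$ is increasing, but $f$ need not be once the coefficients have mixed signs. Structurally, $D A D \succeq A$ is false in general for $A \succeq 0$ and diagonal $D \succeq \Id$. A rule-of-signs argument therefore cannot help. Your quantitative fallback, absorbing an $f'$-correction into $\exp(6\sqrt{n/T})$, would prove a different and weaker bound, not the lemma with the constant $\frac nT$.

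The missing idea is to compare smallest eigenvalues, not the two quadratic forms at the same $x$. Shift the whole grid by a common $t \in [0, \frac Tn]$. This gives the exact identity $\int_0^T f^2 = \int_0^{T/n} \sum_k f(t + t_k)^2 \, \dd t$. The shifted samples are $f(t+t_k) = (U^\top D(t) x)_k$ with $D(t) := \operatorname{diag}(e^{i^2 t})$; equivalently, $M = \int_0^{T/n} D(t) U U^\top D(t) \, \dd t$. Every diagonal entry of $D(t)$ is at least $1$, so
\[ \abs{U^\top D(t) x}^2 \geq \lambda_{\min}(U U^\top) \abs{D(t) x}^2 \geq \lambda_{\min}(UU^\top) \abs{x}^2 . \]
Integrating over $t$ yields $\lambda_{\min}(M) \geq \frac Tn \lambda_{\min}(UU^\top)$, which is all that $\norm{M^{-1}} \leq \frac nT \norm{U^{-1}}^2$ requires. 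The growth of $e^{i^2 t}$ is used on the coefficient vector ($\abs{D(t)x} \geq \abs{x}$), not on $f$. That is why this argument is insensitive to the sign pattern that breaks yours. Your idea of resampling at $x$-dependent points $s_k$ is close in spirit, but only a common shift preserves the Vandermonde structure $U^\top D(t)$.
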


\begin{proof}
    Let $D(t) := \operatorname{diag} (e^{i^2 t})$ and $U(t) := D(t) U$.
    Then $M = \int_0^{\frac T n} U(t) U(t)^\top \dd t$.
    Indeed, for $1 \leq i, j \leq n$, letting $\sigma := i^2 + j^2 > 0$, we have
    \begin{equation}
        \int_0^{\frac T n} (U(t) U(t)^\top)_{ij} \dd t
        = \sum_{k=1}^n e^{\sigma (k-1)\frac{T}{n}} \int_0^{\frac T n} e^{\sigma t} \dd t
        = \frac{e^{\sigma T}-1}{e^{\sigma T / n}-1} \frac{e^{\sigma T/n}-1}{\sigma}
        = M_{ij}.
    \end{equation}
    For a real symmetric matrix $A$ one has $\norm{A^{-1}}^{-1} = \lambda_{\min}(A) = \inf_{\abs{x}_2 = 1} x^\top A x$.

    For any $x \in \R^n$, we have
    \begin{equation}
        x^\top M x
        = \int_0^{\frac T n} x^\top D(t) U U^\top D(t) x \dd t
        \geq \int_0^{\frac T n} \lambda_{\min}(U U^\top) \abs{D(t) x}_2^2 \dd t
        \geq \frac T n \lambda_{\min}(U U^\top) \abs{x}_2^2
    \end{equation}
    since $\abs{D(t) x}_2 \geq \abs{x}_2$.
    Thus $\norm{M^{-1}} \leq \frac n T \norm{(U U^\top)^{-1}} \leq \frac{n}{T} \norm{U^{-1}}^2$.
\end{proof}

\begin{lemma}
    \label{lem:U-1-exp}
    For any $n \geq 1$ and $T > 0$, for the matrix $U$ of \eqref{eq:Uij},
    \begin{equation}
        \norm{U^{-1}} \leq \sqrt{n} \exp \left( 6 \sqrt{n/T} \right).
    \end{equation}
\end{lemma}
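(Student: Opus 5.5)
The plan is to treat $U$ as a Vandermonde matrix and bound its inverse through Lagrange interpolation. Set $\tau := T/n$ and $x_i := e^{i^2 \tau}$ for $1 \leq i \leq n$, so that $U_{ij} = x_i^{j-1}$ with distinct nodes $1 < x_1 < \dotsb < x_n$. Row $i$ of $U^{-1}$ then consists of the coefficients of the Lagrange polynomial
\begin{equation*}
L_i(x) = \prod_{k \neq i} \frac{x - x_k}{x_i - x_k}.
\end{equation*}
Since the claimed bound $\sqrt n \exp(6\sqrt{n/T}) = \sqrt n \exp(6/\sqrt\tau)$ depends on $n$ only through the prefactor $\sqrt n$, the approach is:
\begin{itemize}
\item bound each row of $U^{-1}$ in $\ell^1$ by a quantity depending only on $\tau$;
\item convert that row bound into the spectral norm, which should produce the $\sqrt n$.
\end{itemize}
For the conversion I would use $\norm{U^{-1}} \leq \sqrt n \, \norm{U^{-1}}_{\infty}$, where $\norm{\cdot}_\infty$ is the maximal $\ell^1$ norm of a row.

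For the row bound, the $\ell^1$ norm of the coefficients of $\prod_{k\neq i}(x - x_k)$ is at most $\prod_{k \neq i}(1 + x_k)$, which is Gautschi's classical estimate. This gives
\begin{equation*}
\norm{U^{-1}}_\infty \leq \max_i \prod_{k\neq i} \frac{1 + x_k}{\abs{x_i - x_k}}.
\end{equation*}
I would split the product according to the sign of $k - i$, writing $s := \abs{k^2 - i^2}\tau$.
\begin{itemize}
\item For $k > i$, the factor is $(1+x_k)/(x_k - x_i) \leq 2/(1 - e^{-s})$, hmm, or more sharply $x_k/(x_k - x_i) \cdot (1+e^{-k^2\tau})$; I would aim for a bound of the form $1 + c/s$.
\item For $k < i$, use $1 + x_k \leq 2 x_k$, so the factor is at most $2/(e^{s} - 1) \leq 2/s$. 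In the worst case this is a factor $\lesssim 1 + 2/s$ in the regime where it matters.
\end{itemize}
In both cases the logarithm of a factor is at most $\log(1 + a/s)$ for an absolute constant $a$.

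Next I would use $\abs{k^2 - i^2} = \abs{k-i}(k+i) \geq \abs{k-i}^2$. In fact it is even at least $j(j+2)$ with $j = \abs{k - i}$ when $k > i$. Summing over $j \geq 1$ and comparing with an integral should then give
\begin{equation*}
\sum_{j \geq 1} \log\Bigl(1 + \frac{a}{j^2 \tau}\Bigr) \leq \int_0^\infty \log\Bigl(1 + \frac{a}{x^2\tau}\Bigr) \dd x = \pi \sqrt{a/\tau}.
\end{equation*}
Adding the two sides $k > i$ and $k < i$ produces $\exp(C/\sqrt\tau)$, uniformly in $i$ and $n$. Combined with the $\sqrt n$ from the norm conversion, this gives the claimed form.

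The main obstacle is getting the numerical constant down to $6$. Naive choices give $C = \pi(1 + \sqrt 2) \approx 7.6$, or about $8.9$ with cruder bounds. To reach $6$, I would first:
\begin{itemize}
\item exploit $k + i \geq j + 2$, so that the effective gaps are $\abs{k^2 - i^2} \geq j(j+2)$ and the sums start further out;
\item use that the $k < i$ side has only $i - 1$ terms;
\item use the sharper inequality $1/(1 - e^{-s}) \leq 1 + 1/s$, and handle the factor $2$ on the $k<i$ side by the finer bound $(1 + x_k)/(x_i - x_k) = (e^{-k^2\tau}+1)/(e^{s}-1)$.
\end{itemize}
If these refinements still fall short, I would replace Gautschi's bound by a direct estimate of the coefficients of $L_i$ that avoids the crude $1 + x_k$ factors.
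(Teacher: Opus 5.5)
\textbf{There is a genuine gap: the constant $6$ is never established, and the route you propose for it cannot reach $6$.} Your architecture is the right one, and in substance it is the paper's: Gautschi's bound, splitting by the sign of $k-i$, the gap $\abs{k^2-i^2}\geq (k-i)^2$, and comparing sums with integrals. The failure is in the factorwise bounds of the form $\ln(1+a/s)$. Since $1/(1-e^{-s}) = 1/s+1/2+O(s)$ and $1/(e^{s}-1)=1/s-1/2+O(s)$ as $s\to0$, such a bound forces $a\geq 1$. After you replace $s$ by $j^2\tau$ or $j(j+2)\tau$, each side with many terms then contributes about $\pi\sqrt{a/\tau}$, because $\int_0^\infty \ln(1+a/x^2)\,\dd x=\pi\sqrt a$.

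For instance, take $T=1$, $n\to\infty$ and $i=\lfloor n/2\rfloor$. Both sides have about $n/2$ terms, and $j\sqrt\tau$ reaches $\sqrt n/2\to\infty$. Your bound on the exponent is therefore at least $(2\pi-o(1))\sqrt{n/T}$, which exceeds $6\sqrt{n/T}$.

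None of your refinements changes this leading constant:
\begin{itemize}
\item Using $j(j+2)$ instead of $j^2$ is only an index shift, since $j(j+2)<(j+1)^2$.
\item Counting the $i-1$ terms on the $k<i$ side does not help uniformly in $i$.
\item The inequality $1/(1-e^{-s})\leq 1+1/s$, which you call sharper, is precisely the lossy step. For large $s$ it replaces $e^{-s}$ by $1/s$. Compare $\int_0^\infty\ln(1+x^{-2})\,\dd x=\pi$ with $\int_0^\infty -\ln(1-e^{-x^2})\,\dd x=\frac{\sqrt\pi}{2}\zeta(3/2)\approx 2.32$.
\end{itemize}

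\textbf{The missing idea is to keep the exponential form instead of passing to $1+a/s$.}
\begin{itemize}
\item For $k>i$, the factor is exactly $\frac{1+e^{-k^2\tau}}{1-e^{-s}}$.
\item For $k<i$, it equals $\frac{1+e^{-k^2\tau}}{e^{s}-1}\leq \frac{1+e^{-k^2\tau}}{1-e^{-s}}$, because $e^s-1\geq 1-e^{-s}$.
\end{itemize}
Using $s\geq (k-i)^2\tau$ and the monotonicity of the summands in $k$ and $j$, you get
\[
\ln\prod_{k\neq i}\frac{1+x_k}{\abs{x_i-x_k}}\leq\sum_{k\geq1}\ln\big(1+e^{-k^2\tau}\big)+2\sum_{j\geq1}-\ln\big(1-e^{-j^2\tau}\big)\leq\frac{1}{\sqrt\tau}\Big(\int_0^\infty\ln\big(1+e^{-y^2}\big)\,\dd y+\int_\R-\ln\big(1-e^{-z^2}\big)\,\dd z\Big).
\]
The bracket equals $\frac{\sqrt\pi}{2}(1-2^{-1/2})\zeta(3/2)+\sqrt\pi\,\zeta(3/2)\approx 0.68+4.63<6$.

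This is what the paper does. It phrases the sum--integral step through the monotonicity of $H_x(y)=\ln\frac{1+e^{y^2}}{\abs{e^{x^2}-e^{y^2}}}$ on each side of $y=x$, then bounds $H_x$ pointwise by the same two terms. So there is no need to abandon Gautschi's bound.

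\textbf{A smaller slip: rows versus columns.} Since $U_{ij}=x_i^{j-1}$, the coefficient vector of $L_i$ is the $i$-th \emph{column} of $U^{-1}$, not the $i$-th row. Gautschi's product therefore bounds the maximal column sum $\norm{U^{-1}}_1$. The conversion you need is $\norm{U^{-1}}\leq\sqrt n\,\norm{U^{-1}}_1$; your row-sum version does not match the quantity you actually estimate.
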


\begin{proof}
    Let $h := \sqrt{T/n}$.
    We see $U$ as the Vandermonde matrix for the nodes $z_j := e^{j^2 T / n} = e^{y_j^2}$ where $y_j := j h$ for $1 \leq j \leq n$.
    By a classical estimate by Gautschi (see \cref{lem:gautschi}),
    \begin{equation}
        \label{eq:Qj-1}
        \norm{U^{-1}} \leq \sqrt{n} \max_{1 \leq j \leq n} e^{Q_j}
        \quad \text{where} \quad
        Q_j := \ln \prod_{k\neq j}\frac{1+|z_k|}{|z_j-z_k|}
        = \sum_{k\neq j} \ln \frac{1+e^{y_k^2}}{|e^{y_j^2}-e^{y_k^2}|}.
    \end{equation}
    We claim that
    \begin{equation}
        \label{eq:Qj-2}
        Q_j = \sum_{k \neq j} H_{y_j}(y_k) \leq \frac 1 h \int_h^{n h} H_{y_j}(y) \dd y
        \quad \text{where} \quad
        H_x(y) := \ln \frac{1+e^{y^2}}{|e^{x^2}-e^{y^2}|}.
    \end{equation}
    Indeed, for a fixed $x \geq 0$, for $y \in [0,x)$ and $y \in (x,\infty)$, explicitly differentiating $H_x$ yields
    \begin{equation}
        H_x'(y) = \frac{2 y e^{y^2} (1+e^{x^2})}{(1+e^{y^2})(e^{x^2}-e^{y^2})}.
    \end{equation}
    Thus $H_x$ is increasing on $[0,x)$ and decreasing on $(x,\infty)$.
    Hence, for each $k<j$ we have $h H_{y_j}(y_k) \leq \int_{y_k}^{y_{k+1}} H_{y_j}$, and for each $k>j$, $h H_{y_j}(y_k) \leq \int_{y_{k-1}}^{y_k} H_{y_j}$.
    Summing over $k \neq j$ yields \eqref{eq:Qj-2}.

    For $x, y \geq 0$, using $|x^2-y^2| \geq (x-y)^2$, we have
    \begin{equation}
        H_x(y) = \ln \frac{1+e^{-y^2}}{|1-e^{x^2-y^2}|}
        \leq \ln \frac{1+e^{-y^2}}{1-e^{-|x^2-y^2|}}
        \leq \ln \frac{1+e^{-y^2}}{1-e^{-(x-y)^2}}.
    \end{equation}
    Thus
    \begin{equation}
        \int_h^{n h} H_x(y) \leq \int_h^{n h} \ln \frac{1+e^{-y^2}}{1-e^{-(x-y)^2}} \dd y
        \leq \int_0^{\infty} \ln (1+e^{-y^2}) \dd y + \int_{\R} -\ln (1-e^{-z^2}) \dd z < 6.
    \end{equation}
    Gathering with \eqref{eq:Qj-1} and \eqref{eq:Qj-2} concludes the proof.
\end{proof}

\subsection{Small-time local controllability of control-affine systems}
\label{sec:affine}

We consider the time-invariant control-affine system on $\R^n$:
\begin{equation}
    \label{eq:affine}
    \dot{y}(t) = f_0(y(t)) + u_1(t) f_1(y(t)) + \dotsb + u_m(t) f_m(y(t)),
\end{equation}
where $y(t) \in \R^n$ is the state, $u(t) \in \R^m$ the control, and $f_0, f_1, \dotsc, f_m \in C^1(\R^n;\R^n)$ vector fields.

We assume that $f_0(0) = 0$, so that $0$ is an equilibrium of the free system (with null control).
We are interested in the local controllability at this equilibrium.
Since this is a local property, it only depends on the values of $f_0, f_1, \dotsc, f_m$ in a neighborhood of the origin.
Hence, we can assume without loss of generality that they are globally Lipschitz.
This will lighten the proofs.
In particular, this prevents finite-time blow-up in the nonlinear ODE \eqref{eq:affine}.

\begin{lemma}
    \label{lem:WP}
    Let $f_0, f_1, \dotsc, f_m \in C^1(\R^n;\R^n)$, globally Lipschitz, with $f_0(0) = 0$.
    For any $T > 0$, $u \in L^p((0,T);\R^m)$ and $y_0 \in \R^n$, there exists a unique mild solution $y \in C^0([0,T];\R^n)$ to \eqref{eq:affine} with initial data $y(0) = y_0$.
    When needed, we will refer to this solution as $y(t;u,y_0)$.
\end{lemma}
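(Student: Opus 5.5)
The plan is to reduce the well-posedness to a Carathéodory-type fixed-point argument: a mild solution is a $y \in C^0([0,T];\R^n)$ satisfying the integral equation
\begin{equation}
    y(t) = y_0 + \int_0^t \Big( f_0(y(s)) + \sum_{i=1}^m u_i(s) f_i(y(s)) \Big) \dd s.
\end{equation}
Define $\Gamma : C^0([0,T];\R^n) \to C^0([0,T];\R^n)$ by the right-hand side. First I check that $\Gamma$ is well defined. Since the $f_i$ are globally Lipschitz with constant $L$, they have linear growth $\abs{f_i(y)} \leq \abs{f_i(0)} + L\abs{y}$. For continuous $y$ the integrand is therefore bounded by $C(1 + \abs{u(s)})(1+\norm{y}_\infty)$, which is in $L^1(0,T)$ because $L^p(0,T) \subset L^1(0,T)$ on a bounded interval. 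The integrand is measurable as a composition of continuous maps with measurable ones. Hence $\Gamma(y)$ is absolutely continuous, in particular continuous.

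Next I prove the contraction estimate. For $y, z$ continuous,
\begin{equation}
    \abs{\Gamma(y)(t) - \Gamma(z)(t)} \leq \int_0^t L\big(1 + \abs{u(s)}_1\big) \abs{y(s) - z(s)} \dd s .
\end{equation}
Set $\beta(t) := \int_0^t L(1+\abs{u(s)}_1)\dd s$. This is a nondecreasing continuous function with $\beta(T) < \infty$. Rather than splitting $[0,T]$ into small subintervals, I would use the weighted norm $\norm{y}_\beta := \sup_t e^{-2\beta(t)}\abs{y(t)}$, which is equivalent to the sup norm. Then
\begin{equation}
    \abs{\Gamma(y)(t)-\Gamma(z)(t)} \leq \norm{y-z}_\beta \int_0^t \beta'(s) e^{2\beta(s)} \dd s \leq \tfrac12 e^{2\beta(t)} \norm{y-z}_\beta ,
\end{equation}
so $\Gamma$ is a $\tfrac12$-contraction on the Banach space $(C^0([0,T];\R^n), \norm{\cdot}_\beta)$. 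Banach's fixed-point theorem then gives existence and uniqueness of the mild solution. The same argument works on $[0,T']$ for any $T' \leq T$, which gives uniqueness among solutions defined on subintervals. Alternatively, uniqueness also follows from Grönwall's lemma applied to the integral inequality above with the integrable kernel $\beta'$.

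The only delicate point is that $u$ is merely $L^p$ (possibly $p=1$) and not bounded. The Lipschitz constant of the vector field is therefore time dependent and only integrable. This is exactly what the weighted norm, or equivalently a partition of $[0,T]$ into finitely many intervals on which $\int L(1+\abs{u})<1/2$, handles; such a partition exists by absolute continuity of $\beta$. The hypothesis $f_0(0)=0$ is not needed for well-posedness; global Lipschitzness alone prevents blow-up and makes the solution global on $[0,T]$.
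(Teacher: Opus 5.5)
Your proof is correct and follows the same route as the paper: Banach's fixed-point theorem for the Picard operator on $C^0([0,T];\R^n)$, with the time-dependent integrable Lipschitz kernel $L(1+\abs{u(s)})$. The only difference is technical. The paper shows by induction that a high iterate $\Theta^k$ is a contraction, via the bound $L^k(t+\int_0^t\abs{u})^k/k!$, and then checks that the fixed point of $\Theta^k$ is also fixed by $\Theta$. Your Bielecki-type weighted norm $\sup_t e^{-2\beta(t)}\abs{y(t)}$ instead makes $\Gamma$ itself a $\frac12$-contraction, which is slightly shorter.
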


\begin{proof}
    Let $T > 0$, $u \in L^p((0,T);\R^m)$ and $y_0 \in \R^n$.
    We say that $y \in C^0([0,T];\R^n)$ is a \emph{mild solution} to \eqref{eq:affine} with initial data $y_0$ when it satisfies the integral formulation:
    \begin{equation}
        \label{eq:mild}
        \forall t \in [0,T], \quad y(t) = y_0 + \int_0^t (f_0 + u_1(s) f_1 + \dotsb + u_m(s) f_m)(y(s)) \dd s,
    \end{equation}
    which makes sense for $u \in L^p((0,T);\R^m)$.
    Let us prove that such a solution exists.

    Define a map $\Theta$ from the Banach space $C^0([0,T];\R^n)$ to itself by
    \begin{equation}
        (\Theta(y))(t) := y_0 + \int_0^t (f_0 + u_1(s) f_1 + \dotsb + u_m(s) f_m)(y(s)) \dd s.
    \end{equation}
    Let $L_0, L_1, \dotsc, L_m$ be the global Lipschitz constants of $f_0, f_1, \dotsc, f_m$ and $L := \max L_i$.
    One proves by induction on $k \in \N$ that, for all $y, \tilde{y} \in C^0([0,T];\R^n)$ and $t \in [0,T]$,
    \begin{equation}
        | (\Theta^k(y) - \Theta^k(\tilde{y}))(t) | \leq \frac{L^k (t + \int_0^t |u|)^k}{k!} \norm{y - \tilde{y}}_{C^0}.
    \end{equation}
    Since $T$ and $u$ are fixed, for $k$ large enough, $\Theta^k$ is a contraction and admits a unique fixed-point $y^\star$ by the Banach fixed-point theorem.
    Then $\Theta^k(\Theta(y^\star)) = \Theta(\Theta^k(y^\star)) = \Theta(y^\star)$, so $\Theta(y^\star)$ is a fixed point of $\Theta^k$, so is equal to $y^\star$.
    So $y^\star$ satisfies \eqref{eq:mild}.

    Uniqueness follows immediately.
    If $y$ and $\tilde{y}$ are two solutions, then $y = \Theta^k(y)$ and $\tilde{y} = \Theta^k(\tilde{y})$ which implies that $y = \tilde{y}$ since $\Theta^k$ is a contraction.
\end{proof}

\begin{remark}
    The proof given above relying on a Banach fixed-point argument in the Picard iterates is the same as the one of the Cauchy--Lipschitz / Picard--Lindelöf theorem.
    Given $T > 0$ and $u \in L^p((0,T);\R^m)$, one could have wanted to apply this abstract theorem to the time-dependent vector field $f(t,y) := f_0(y) + u_1(t) f_1(y) + \dotsb + u_m(t) f_m(y)$.
    However, since $u$ is not assumed to be continuous, $f$ is not continuous in time, which is usually assumed in these theorems.
    The role of this assumption is to obtain classical solutions in $C^1([0,T];\R^n)$, not only mild solutions as here.
\end{remark}

There are many (non-equivalent) local controllability notions in the literature, with subtle differences\footnote{See \cite[Table 2]{BoscainCannarsaFranceschiSigalotti2021}, \cite[Section 1.2, Appendix A.1]{BeauchardMarbach2022} or \cite[Section 8.2]{BeauchardMarbach2018} for some examples of definitions with variations on what exactly needs to be small or local and how.}.
We use the following definition.

\begin{definition}[$L^p$-STLC]
    \label{def:STLC}
    We say that \eqref{eq:affine} is \emph{small-time locally controllable} when, for every $T, \eta > 0$, there exists $\delta > 0$ such that, for every initial and final data $y_0, y_T \in B(0,\delta)$, there exists $u \in L^p((0,T);\R^m)$ with $\norm{u}_{L^p} \leq \eta$ such that $y(T;u,y_0) = y_T$.
\end{definition}

\subsection{The linear test for control-affine systems}
\label{sec:linear-test-ODEs}

Since small-time local controllability is a local notion, the first natural approach is to attempt to linearize the problem, at the equilibrium state $0$.
Given $\varphi \in C^1(\R;\R)$ with $\varphi(0) = 0$ and $\varphi'(0) \neq 0$, it is well-known that $\varphi$ is invertible near $0$, thanks to the inverse function theorem.
The \emph{linear test} is an instance of this result in control theory.
We here state it for systems governed by ODEs, but we will state a version valid for PDEs in \cref{sec:lin-2-nonlin-exact}.

Given $f_0, f_1, \dotsc, f_m \in C^1(\R^n;\R^n)$ globally Lipschitz with $f_0(0) = 0$, define
\begin{equation}
    \label{eq:AB-linearized}
    A := D f_0(0) \in \R^{n \times n}
    \quad \text{and} \quad
    B := [f_1(0), \dotsc, f_m(0)] \in \R^{n \times m}.
\end{equation}
We call $\dot{z} = A z + B v$ the \emph{linearized system at $0$} of the control-affine system \eqref{eq:affine}.

\begin{lemma}
    \label{lem:ODE-C1}
    Let $T > 0$.
    The solution map $\cS : \R^n \times L^p((0,T);\R^m) \to C^0([0,T];\R^n)$ defined by $\cS(y_0,u) := y(\cdot;u,y_0)$ is $C^1$ in a neighborhood of $(0,0)$ and, for all $(z_0,v) \in \R^n \times L^p((0,T);\R^m)$, $D \cS (0,0) \cdot (z_0,v) = z$ where $z \in C^0([0,T];\R^n)$ is the solution to $\dot{z} = Az + Bv$ and $z(0) = z_0$.
\end{lemma}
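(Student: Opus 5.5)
I would prove this with the implicit function theorem applied to the fixed-point formulation \eqref{eq:mild} from the proof of \cref{lem:WP}. Introduce the Banach spaces $\cX := \R^n \times L^p((0,T);\R^m)$ and $\cY := C^0([0,T];\R^n)$, and define $\mathcal{F} : \cX \times \cY \to \cY$ by
\begin{equation}
    \mathcal{F}(y_0,u,y)(t) := y(t) - y_0 - \int_0^t \Big( f_0(y(s)) + \sum_{i=1}^m u_i(s) f_i(y(s)) \Big) \dd s .
\end{equation}
By \cref{lem:WP}, $\cS(y_0,u)$ is the unique $y$ with $\mathcal{F}(y_0,u,y) = 0$. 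Since $f_0(0)=0$, we also have $\cS(0,0)=0$.

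The first step is to show that $\mathcal{F}$ is $C^1$. The terms $y$ and $y_0$ are linear and bounded. The Nemytskii map $y \mapsto f_i \circ y$ is $C^1$ from $C^0([0,T];\R^n)$ to itself, with derivative $h \mapsto Df_i(y(\cdot)) h$. To see this, note that on the compact set $y([0,T])$ enlarged by a unit ball, $Df_i$ is uniformly continuous, so the Taylor remainder $f_i(y+h)-f_i(y)-Df_i(y)h$ is $o(\norm{h}_{C^0})$ uniformly in $t$; continuity of $y \mapsto Df_i(y(\cdot))$ in operator norm follows from the same uniform continuity. Next, the bilinear map $(u_i, g) \mapsto \int_0^t u_i(s) g(s) \dd s$ from $L^p \times C^0$ to $C^0$ is bounded, with norm at most $T^{1-1/p}$ by Hölder, and composing it with the $C^1$ Nemytskii map gives a $C^1$ map. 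This is the step needing the most care. The main obstacle is that $u$ enters only through $L^p$, so the argument must avoid any pointwise control of $u$. Working on $C^0$ in $y$, where $f_i$ is genuinely differentiable, and putting $u$ only in the bounded bilinear integral handles this.

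The second step is to compute $D_y \mathcal{F}(0,0,0) \cdot h = h - \int_0^t A h(s) \dd s$, using $u=0$ and $A = Df_0(0)$. I would show that this operator is an isomorphism of $\cY$. Given $g \in \cY$, the equation $h - \int_0^\cdot A h = g$ is a linear Volterra equation. Its unique solution is obtained exactly as in \cref{lem:WP}, since a power of the integral operator is a contraction. Alternatively, one can write it explicitly: $h = g + \int_0^t A e^{(t-s)A} g(s) \dd s$. Bounded invertibility then holds directly, or by the open mapping theorem.

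The implicit function theorem now yields a neighborhood $\mathcal{U}$ of $(0,0)$ and a $C^1$ map $\Psi$ on $\mathcal{U}$ with $\mathcal{F}(y_0,u,\Psi(y_0,u))=0$. The uniqueness in \cref{lem:WP} gives $\Psi = \cS$ on $\mathcal{U}$, so $\cS$ is $C^1$ near $(0,0)$. Differentiating the identity $\mathcal{F}(y_0,u,\cS(y_0,u))=0$ at $(0,0)$ gives, for $z := D\cS(0,0)\cdot(z_0,v)$,
\begin{equation}
    z(t) - z_0 - \int_0^t \Big( A z(s) + \sum_i v_i(s) f_i(0) \Big) \dd s = 0,
\end{equation}
where $D_u\mathcal{F}(0,0,0)\cdot v = -\int_0^\cdot \sum_i v_i f_i(0)$. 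This is precisely the mild formulation of $\dot z = Az + Bv$ with $z(0)=z_0$, with $B$ as in \eqref{eq:AB-linearized}. By uniqueness it coincides with the Duhamel solution \eqref{eq:variation_constants}, which concludes the proof.
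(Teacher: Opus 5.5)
Your proposal is correct and follows essentially the same route as the paper: the implicit function theorem applied to the same map $(y_0,u,y) \mapsto y - y_0 - \int_0^\cdot (f_0 + \sum_i u_i f_i)(y)$ on $\R^n \times L^p((0,T);\R^m) \times C^0([0,T];\R^n)$, with the same partial derivatives at the origin and the same explicit inverse $h = g + \int_0^t A e^{(t-s)A} g(s)\,\dd s$. The only difference is that you spell out two things the paper leaves implicit: why the map is $C^1$ (Nemytskii operator on $C^0$ plus a bounded bilinear integral), and that the implicit function coincides with $\cS$ by the global uniqueness of \cref{lem:WP}.
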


\begin{proof}
    Fix $T > 0$.
    Define $\cX := \R^n \times L^p((0,T);\R^m)$, $\cY := C^0([0,T];\R^n)$ and $G : \cX \times \cY \to \cY$ as
    \begin{equation}
        G((y_0,u),y) := t \mapsto y(t) - y_0 - \int_0^t (f_0 +u_1(s) f_1 + \dotsb + u_m(s) f_m)(y(s)) \dd s.
    \end{equation}
    Thus, $G((y_0,u),y) = 0$ iff $y = \cS(y_0,u)$.
    Let $\mathbf{0} := ((0,0),0)$.
    Since $f_0(0) = 0$, $G(\mathbf{0}) = 0$.
    Since the vector fields are $C^1$, $G$ is $C^1$ too.
    We compute its partial derivatives at $\mathbf{0}$.
    One has
    \begin{equation}
        \label{eq:D12G}
        D_1 G(\mathbf{0}) \cdot (z_0,v) = t \mapsto - z_0 - B \int_0^t v
        \quad \text{and} \quad
        D_2 G(\mathbf{0}) \cdot z = t \mapsto z(t) - A \int_0^t z.
    \end{equation}
    One checks that $D_2 G(\mathbf{0}) \cdot z = h$ iff $z(t) = h(t) + A \int_0^t e^{(t-s)A} h(s) \dd s$, so $D_2 G(\mathbf{0})$ is a bounded linear isomorphism of $\cY$.
    By the implicit function theorem, there exists a neighborhood $\Omega$ of $(0,0)$ in $\cX$ and a $C^1$ map $\cS : \Omega \to \cY$ such that $G((y_0,u),\cS(y_0,u)) = 0$ for all $(y_0,u) \in \Omega$.
    By the chain rule, one has $D \cS(0,0) = - (D_2 G(\mathbf{0}))^{-1} (D_1 G(\mathbf{0}))$.
    Using \eqref{eq:D12G}, one finds that $D \cS(0) \cdot (z_0,v) = z$ where $z(t) = e^{tA} z_0 + \int_0^t e^{(t-s)A} B v(s) \dd s$ (so the unique solution to $\dot{z} = Az + Bv$ with $z(0) = z_0$).
\end{proof}

As noted in the rank condition of \cref{thm:kalman}, the controllability of the linearized system is independent of $T$.
Thus, one obtains the classical result, dating back at least to \cite[Theorem~3]{Markus1965}.

\begin{theorem}[Linear test]
    \label{thm:ODE-linear}
    Assume that linearized system $\dot{z} = A z + B v$ with $A, B$ as in \eqref{eq:AB-linearized} is controllable.
    Then the control-affine system \eqref{eq:affine} is $L^p$-STLC in the sense of \cref{def:STLC}.
\end{theorem}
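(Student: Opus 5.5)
Fix $T, \eta > 0$. The plan is to apply the inverse (or implicit) function theorem to the end-point map restricted to a finite-dimensional space of controls. Composing the solution map $\cS$ of \cref{lem:ODE-C1} with evaluation at time $T$ gives the map
\begin{equation}
    \mathcal{E} : (y_0, u) \mapsto y(T;u,y_0),
\end{equation}
which is $C^1$ on a neighborhood of $(0,0)$ in $\R^n \times L^p((0,T);\R^m)$. By \cref{lem:ODE-C1}, its differential at the origin is
\begin{equation}
    D\mathcal{E}(0,0)\cdot(z_0,v) = e^{TA} z_0 + \Phi_T(v),
\end{equation}
with $\Phi_T$ as in \eqref{eq:lti-PhiT}. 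Since the linearized system is controllable, the proof of \cref{thm:kalman} shows that $\Phi_T$ is onto $\R^n$.

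To avoid working with infinite-dimensional inverse function theorems, I will select $n$ controls $v_1, \dotsc, v_n \in L^p((0,T);\R^m)$ whose images $\Phi_T(v_i)$ form a basis of $\R^n$. I then define the map $F : \R^n \times \R^n \to \R^n$ by
\begin{equation}
    F(y_0, \alpha) := \mathcal{E}\Big(y_0, \sum_{i} \alpha_i v_i\Big).
\end{equation}
This map is $C^1$ near $(0,0)$. Because $f_0(0)=0$, we have $F(0,0)=0$. Its partial differential $D_\alpha F(0,0)$ is the matrix $[\Phi_T(v_1), \dotsc, \Phi_T(v_n)]$, which is invertible.

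By the implicit function theorem applied to the equation $F(y_0,\alpha) = y_T$, viewed in the unknown $\alpha$ with parameters $(y_0, y_T)$, there exist $\delta_1 > 0$ and a continuous map $(y_0,y_T) \mapsto \alpha(y_0,y_T)$ defined on $B(0,\delta_1)^2$ with the following properties: $\alpha(0,0)=0$, and $F(y_0,\alpha(y_0,y_T)) = y_T$. Equivalently, one can apply the inverse function theorem to the map $(y_0,\alpha) \mapsto (y_0, F(y_0,\alpha))$ on $\R^{2n}$. Its differential at the origin is block triangular with invertible diagonal blocks, so it is a local $C^1$ diffeomorphism.

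Set $u := \sum_i \alpha_i(y_0,y_T) v_i$. Then
\begin{equation}
    \norm{u}_{L^p} \leq |\alpha|_1 \max_i \norm{v_i}_{L^p},
\end{equation}
and the right-hand side tends to $0$ as $(y_0,y_T) \to 0$ by continuity of $\alpha$. Choosing $\delta \le \delta_1$ small enough ensures $\norm{u}_{L^p} \leq \eta$ and $y(T;u,y_0) = y_T$. This is exactly \cref{def:STLC}.

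The only delicate point is bookkeeping of neighborhoods. \cref{lem:ODE-C1} gives $C^1$ regularity only near $(0,0)$, so I must make sure the finite-dimensional parametrization stays inside that neighborhood. This holds for $|\alpha|$ and $|y_0|$ small, since the map $\alpha \mapsto \sum_i \alpha_i v_i$ is linear and continuous. The global Lipschitz assumption guarantees that $y(T;u,y_0)$ is well defined everywhere anyway, by \cref{lem:WP}. No step seems to be a genuine obstacle. The reduction to finitely many control directions is what avoids needing a right inverse of $\Phi_T$ on $L^p$ spaces, in particular for $p=\infty$ or $p=1$.
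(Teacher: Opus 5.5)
Your proposal is correct and follows essentially the same route as the paper. Both arguments restrict to the span of $n$ controls whose images under $\Phi_T$ form a basis and apply the finite-dimensional implicit function theorem to $y(T;u_\lambda,y_0)-y_T$ with parameters $(y_0,y_T)$. The paper normalizes $\Phi_T(v^j)=e_j$ so that $D_\lambda\Psi=\Id$, and it uses the $C^1$ regularity of the implicit map to get $\norm{u}_{L^p}\leq C_\Lambda(|y_0|+|y_T|)$, whereas you only need its continuity for smallness.
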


\begin{proof}
    Let $T, \eta > 0$.
    Since the linearized system $\dot{z} = Az + Bv$ is controllable, there exist $n$ controls $v^1, \dotsc, v^n \in L^p((0,T);\R^m)$ such that $\Phi_T(v^j) = \int_0^T e^{(T-t)A} B v^j(t) \dd t = e_j$ for $1 \leq j \leq n$.

    For $\lambda \in \R^n$, let $u_\lambda := \lambda_1 v^1 + \dotsb + \lambda_n v^n$.
    Define $\Psi : \R^{n+n} \times \R^n \to \R^n$ by
    \begin{equation}
        \Psi( (y_0,y_T), \lambda) := y(T;u_\lambda, y_0) - y_T.
    \end{equation}
    By \cref{lem:ODE-C1}, $\Psi$ is $C^1$ and $D_\lambda \Psi((0,0),0) = \Id_{\R^n}$ thanks to our choice of the $v^j$.
    By the $C^1$ implicit function theorem, there exists $\delta > 0$ and a $C^1$ map $\Lambda : B(0,\delta) \times B(0,\delta) \to \R^n$ with $\Lambda(0,0) = 0$ such that, for all $y_0, y_T \in B(0,\delta)$, $\Psi((y_0,y_T),\Lambda(y_0,y_T)) = 0$.
    Since $\Lambda$ is $C^1$, there exists $C_\Lambda > 0$ such that $\norm{u_{\Lambda(y_0,y_T)}}_{L^p} \leq C_\Lambda (|y_0| + |y_T|)$.
    Thus, up to reducing $\delta$, one has $\norm{u_{\Lambda(y_0,y_T)}} \leq \eta$ and $L^p$-STLC in the sense of \cref{def:STLC}.
\end{proof}

\begin{example}
    Let $n = 3$ and $m = 1$.
    Consider the system
    \begin{equation}
        \begin{cases}
            \dot{y}_1 = u \\
            \dot{y}_2 = y_1 + y_3^2 \\
            \dot{y}_3 = y_2 + y_1^3.
        \end{cases}
    \end{equation}
    It is of the form \eqref{eq:affine} with $f_0(y) = (0, y_1+y_3^2, y_2+y_1^3)$ and $f_1(y) = (1,0,0)$.
    Thus $A = Df_0(0)$ and $B = f_1(0)$ are of the integrator chain form given in \eqref{ex:chain}.
    So the linearized system is controllable by the rank condition, and the nonlinear one is $L^p$-STLC by the linear test.
\end{example}

\subsection{Nonlinear motions and obstructions}
\label{sec:ode-nonlinear}

When the linearized system at an equilibrium is not controllable, one must study higher order-terms to determine the controllability of the nonlinear system.

The following example is one where controllability can be obtained using a nonlinear term, using an entirely elementary argument.
The quest for increasingly general sufficient conditions for STLC of ODEs has a long history; see Hermes \cite[Theorem~3.2]{Hermes1982}, Sussmann \cite[Theorem~7.3]{Sussmann1987}, Agrachev and Gamkrelidze \cite[Theorem~4]{AgrachevGamkrelidze1993_Semigroups} or Krastanov \cite[Theorem 2.7]{Krastanov2009}.

\begin{example}
    \label{ex:cubic}
    Let $n = 2$ and $m = 1$.
    Consider the system:
    \begin{equation}
        \label{eq:ex-cubic}
        \begin{cases}
            \dot{y}_1 = u \\
            \dot{y}_2 = y_1^3.
        \end{cases}
    \end{equation}
    The linearized system is $\dot{y}_1 = u$ and $\dot{y}_2 = 0$, so is not controllable.
    Nevertheless, one can prove that~\eqref{eq:ex-cubic} is $L^p$-STLC.
    Fix $T > 0$ and $\varphi \in C^1_c((0,T);\R)$ such that $\int_0^T \varphi^3 = 1$.
    For $y_0, y_T \in \R^2$, consider $u(t) := (y_{T,1}-y_{0,1})/T + \mu \varphi'(t)$ for $\mu \in \R$ to be chosen later.
    One has $y_1(T) = y_{T,1}$ and $y_2(T)$ is a cubic polynomial of $\mu$ with leading coefficient $1$.
    Hence, by the intermediate value theorem, there exists $\mu$ such that $y_2(T) = y_{T,2}$.
\end{example}

As shown by the next example, the opposite situation can also occur, where higher-order terms prevent the controllability of the nonlinear system.
Although the following example is caricatural, many more general obstruction results are known; see Sussmann \cite[Proposition 6.3]{Sussmann1983}, Stefani \cite[Theorem 1]{Stefani1986}, Kawski \cite[Theorem~1]{Kawski1987_Necessary} or Beauchard and Marbach \cite{BeauchardMarbach2018,BeauchardMarbach2022}.

\begin{example}
    \label{ex:obs-W1}
    Let $n = 2$ and $m = 1$.
    Consider the system:
    \begin{equation}
        \label{eq:ex-quad}
        \begin{cases}
            \dot{y}_1 = u \\
            \dot{y}_2 = y_1^2.
        \end{cases}
    \end{equation}
    The linearized system is the same as the one of \eqref{eq:ex-cubic}.
    It is clear that the nonlinear system \eqref{eq:ex-quad} is not $L^p$-STLC since $\dot{y}_2 \geq 0$ (hence, starting from the initial state $y_0 = 0$, no final state of the form $- \delta e_2$ is reachable).
\end{example}

As can be expected, most difficulties arise when a system involves a competition between a nonlinear term allowing controllability such as $y_1^3$ in \eqref{eq:ex-cubic} and one yielding an obstruction such as $y_1^2$ in \eqref{eq:ex-quad}.
Such competitions are at the heart of the literature on sufficient or necessary STLC conditions.
We study one such example in \cref{sec:jakub} using the theory of tangent vectors.

\newpage
\section{From linear ODEs to linear dissipative PDEs}
\label{sec:LR}

In this section, we now consider control systems whose state is infinite dimensional.
Assuming that some projections of the state are controllable, and that the system is dissipative, we use a time-iteration argument to deduce that the infinite-dimensional state is small-time null controllable.

In contrast with the ODE theory of \cref{sec:intro} or with non-dissipative PDEs such as transport, wave or Schrödinger equations, in this section, we only consider the \emph{null controllability} problem, that is where the target state is $0$.
See \cref{sec:why-null} for more comments on this topic.

Heuristically, one can think that we consider systems of the form:
\begin{equation}
    \label{eq:yt=Ay+Bu}
    \dot{y} = A y + B u
\end{equation}
where $A$ and $B$ are operators generalizing the matrices of \eqref{eq:lti_system} and we aim for $y(T) = 0$.

\subsection{Input-output formalism}
\label{sec:linear-io}

To avoid technicalities linked with the theory of operator semigroups and admissibility of control operators, we will present the time-iteration argument using an input-output formalism, without referring to the underlying evolution equation \eqref{eq:yt=Ay+Bu}.

Let $Y$ be a Banach space (the \emph{state space}) describing the possible values for $y(t) \in Y$ and $U$ be a Banach space (the \emph{control space}) describing the possible values for $u(t) \in U$.
Fix $p \in [1,\infty]$.
The following definitions are adapted\footnote{
\cref{def:linear-io} slightly differs from \cite{Weiss1989}.
See \cref{sec:weiss} for a short discussion.} from \cite{Weiss1989}.

\begin{definition}
    For $u,v \in L^p(\R_+;U)$, let $u \underset{\tau}{\diamond} v \in L^p(\R_+;U)$ denote their $\tau$-concatenation:
    \begin{equation}
        (u \underset{\tau}{\diamond} v)(s) :=
        \begin{cases}
            u(s) & \text{for } s \in [0,\tau), \\
            v(s-\tau) & \text{for } s \geq \tau.
        \end{cases}
    \end{equation}
\end{definition}

\begin{definition}
    \label{def:linear-io}
    We say that $\Sigma$ is an \emph{abstract linear control system} when it is a family $(\Sigma_t)_{t \geq 0}$ of continuous linear maps from $Y \times L^p(\R_+;U)$ to $Y$ such that:
    \begin{itemize}
        \item for all $y_0 \in Y$ and $u \in L^p(\R_+;U)$, $t \mapsto \Sigma_t(y_0,u)$ is continuous on $\R_+$ with $\Sigma_0(y_0,u) = y_0$,
        \item for all $t, \tau \geq 0$, $y_0 \in Y$ and $u,v \in L^p(\R_+;U)$,
        \begin{equation}
            \label{eq:Sigma-compo}
            \Sigma_{t+\tau}(y_0,u \underset{\tau}{\diamond} v) = \Sigma_t(\Sigma_\tau(y_0,u), v).
        \end{equation}
    \end{itemize}
\end{definition}

Therefore, one should interpret $\Sigma_t(y_0,u)$ as $y(t) \in Y$, the state at time $t \geq 0$ of the solution to the considered controlled PDE with initial state $y_0 \in Y$ and control $u \in L^p(\R_+;U)$.
The requirements of \cref{def:linear-io} correspond to asking that $y(t)$ depends continuously on $t$, $y_0$ and $u$, along with the \emph{composition property} \eqref{eq:Sigma-compo}, which reflects that one obtains the same final state by using some intermediate state as initial data and propagating for the remaining time.

One has the following natural estimate.

\begin{lemma}
    \label{lem:ubp}
    Let $T > 0$.
    There exists $M_T > 0$ such that, for all $t \in [0,T]$, $\norm{\Sigma_t} \leq M_T$.
\end{lemma}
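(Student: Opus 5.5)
The plan is to apply the uniform boundedness principle (Banach--Steinhaus) to the family $(\Sigma_t)_{t \in [0,T]}$ of continuous linear maps from the Banach space $Y \times L^p(\R_+;U)$, equipped for instance with the norm $\norm{y_0}_Y + \norm{u}_{L^p}$, into $Y$. Banach--Steinhaus says that a family of bounded operators which is bounded pointwise on a Banach space is bounded in operator norm. So it suffices to show that for each fixed $(y_0,u)$ one has $\sup_{t \in [0,T]} \norm{\Sigma_t(y_0,u)}_Y < \infty$.

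Pointwise boundedness comes directly from the first requirement of \cref{def:linear-io}. For fixed $(y_0,u)$, the map $t \mapsto \Sigma_t(y_0,u)$ is continuous on $\R_+$. Hence it is continuous on the compact interval $[0,T]$, so its image is compact in $Y$ and in particular bounded. Banach--Steinhaus then gives $M_T := \sup_{t \in [0,T]} \norm{\Sigma_t} < \infty$. If one wants $M_T > 0$ strictly, note that $\Sigma_0(y_0,u) = y_0$ forces $\norm{\Sigma_0} \geq 1$ when $Y \neq \{0\}$; otherwise simply replace $M_T$ by $\max(M_T,1)$.

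I do not expect a real obstacle here. The only points to check are that the domain is complete, which holds because $Y$ and $L^p(\R_+;U)$ are Banach spaces for every $p \in [1,\infty]$, and that the continuity assumption is applied pointwise in $(y_0,u)$ and not uniformly. The composition property \eqref{eq:Sigma-compo} is not needed.
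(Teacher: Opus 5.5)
Your proof is correct and follows the paper's argument exactly: pointwise boundedness from the continuity of $t \mapsto \Sigma_t(y_0,u)$ on the compact interval $[0,T]$, then the uniform boundedness principle on the Banach space $Y \times L^p(\R_+;U)$. Your side remarks on completeness of the domain and on ensuring $M_T > 0$ are accurate but not needed.
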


\begin{proof}
    The family $(\Sigma_t)_{t \in [0,T]}$ is a family of bounded linear maps from $Y \times L^p(\R_+;U)$ to $Y$.
    For each fixed $(y_0,u) \in Y \times L^p(\R_+;U)$, $t \mapsto \Sigma_t(y_0,u)$ is continuous from $[0,T]$ to $Y$ by the first item of \cref{def:linear-io}, so $\sup_{t \in [0,T]} \norm{\Sigma_t(y_0,u)}_Y$ is bounded.
    Hence the conclusion follows from the uniform boundedness principle (Banach--Steinhaus theorem).
\end{proof}

\begin{remark}
    For pedagogical purposes, we consider here time-invariant systems.
    However, it is easy to adapt \cref{def:linear-io} to more general families of propagators, say $\Sigma_{t,s}$ for $t \geq s$.
\end{remark}

\subsection{A general time iteration}
\label{sec:LR-iteration}

In 1995, Lebeau and Robbiano introduced in \cite{LebeauRobbiano1995} a time-iteration argument which establishes the small-time null controllability of a dissipative PDE from the balance between the cost of the small-time null controllability of its projections (see \eqref{eq:ass.approx}) and its dissipation rate (see \eqref{eq:ass.dissip}).

We present here an abstract version of their argument, directly applicable as a black box, which encompasses recent progress made by other authors (see \cref{sec:LR-biblio}).
A pedagogical way to state the assumptions of this time iteration is as follows (see below for enhancements).

\begin{assumption}
    \label{ass:proj+diss}
    There exists a family $(P_k)_{k \in \N}$ of projections on $Y$ and there exist constants $C_1, C_2, c_1, c_2, a, b, m > 0$ with $a < b$ such that, for all $k \in \N$,
    \begin{itemize}
        \item \textbf{(control of low modes)} for all $T \in (0,1]$ and $y_0 \in Y$, there exists $u \in L^p(\R_+;U)$ such that
        \begin{equation}
            \label{eq:ass.approx}
            P_k \Sigma_T(y_0,u) = 0
            \quad \text{and} \quad
            \norm{u}_{L^p} \leq C_1 e^{c_1 k^a} \norm{y_0}_Y,
        \end{equation}

        \item \textbf{(dissipation of high modes)} for all $T \in (0,1]$ and $y_0 \in Y$,
        \begin{equation}
            \label{eq:ass.dissip}
            P_k y_0 = 0
            \quad \Longrightarrow \quad
            \norm{\Sigma_T(y_0,0)}_Y \leq C_2 e^{- c_2 T^m k^b} \norm{y_0}_Y.
        \end{equation}
    \end{itemize}
\end{assumption}

During the main time iteration, within each time step of duration $\tau$, we choose a threshold $k \in \N$, control the low modes to zero on $[0,\tau/2]$, then let the energy dissipate on $[\tau/2,\tau]$.
More precisely, the main time iteration relies on the following one-step contraction result.

\begin{lemma}
    \label{lem:LR-contract}
    If \cref{ass:proj+diss} holds, then there exist $C_3,c_3 > 0$ such that, for all $k \in \N$, for all $\tau \in (0,1]$ and $y_0 \in Y$, there exists $u \in L^p(\R_+;U)$ such that
    \begin{align}
        \label{eq:ass.contract-u}
        \norm{u}_{L^p} & \leq C_1 e^{c_1 k^a} \norm{y_0}_Y, \\
        \label{eq:ass.contract-y}
        \norm{\Sigma_\tau(y_0,u)}_Y & \leq C_3 e^{c_1 k^a} e^{- c_3 \tau^m k^b} \norm{y_0}_Y.
    \end{align}
\end{lemma}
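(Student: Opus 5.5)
The plan is to split the time step into two halves, exactly as announced before the statement: control the low modes on $[0,\tau/2]$, then apply no control and let the high modes dissipate on $[\tau/2,\tau]$. Fix $k \in \N$, $\tau \in (0,1]$ and $y_0 \in Y$. Apply the control-of-low-modes item \eqref{eq:ass.approx} of \cref{ass:proj+diss} with $T := \tau/2 \in (0,1]$. This yields $w \in L^p(\R_+;U)$ with $P_k \Sigma_{\tau/2}(y_0,w) = 0$ and $\norm{w}_{L^p} \leq C_1 e^{c_1 k^a} \norm{y_0}_Y$. We then define $u := w \underset{\tau/2}{\diamond} 0$, which is $w$ truncated to $[0,\tau/2)$ and zero afterwards. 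Its norm is at most $\norm{w}_{L^p}$, so \eqref{eq:ass.contract-u} holds immediately.

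For the state estimate, the composition property \eqref{eq:Sigma-compo} gives
\begin{equation}
\Sigma_\tau(y_0,u) = \Sigma_{\tau/2}(y_1, 0), \quad y_1 := \Sigma_{\tau/2}(y_0,u).
\end{equation}
We need $P_k y_1 = 0$, and the point to check is that truncating $w$ after time $\tau/2$ does not change the state at time $\tau/2$. Applying \eqref{eq:Sigma-compo} with $t=0$ gives $\Sigma_{\tau/2}(y_0, w \underset{\tau/2}{\diamond} v) = \Sigma_0(\Sigma_{\tau/2}(y_0,w),v) = \Sigma_{\tau/2}(y_0,w)$, using $\Sigma_0(y,v)=y$. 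Since $w = w \underset{\tau/2}{\diamond} \tilde w$ with $\tilde w := w(\cdot+\tau/2)$, we get $\Sigma_{\tau/2}(y_0,u) = \Sigma_{\tau/2}(y_0,w)$. Hence $y_1 = \Sigma_{\tau/2}(y_0,w)$ and $P_k y_1 = 0$.

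The dissipation item \eqref{eq:ass.dissip} with $T = \tau/2$ then gives
\begin{equation}
\norm{\Sigma_\tau(y_0,u)}_Y \leq C_2 e^{-c_2 2^{-m} \tau^m k^b} \norm{y_1}_Y.
\end{equation}
To bound $\norm{y_1}_Y$ we use \cref{lem:ubp} with $T=1$:
\begin{equation}
\norm{y_1}_Y \leq M_1 (\norm{y_0}_Y + \norm{u}_{L^p}) \leq M_1 (1 + C_1 e^{c_1 k^a}) \norm{y_0}_Y \leq M_1(1+C_1) e^{c_1 k^a}\norm{y_0}_Y,
\end{equation}
where the norm on $Y\times L^p$ is taken as the sum norm, any equivalent choice only changing constants. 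This yields \eqref{eq:ass.contract-y} with $c_3 := c_2 2^{-m}$ and $C_3 := C_2 M_1 (1+C_1)$.

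The only delicate point is the causality argument in the second paragraph, which ensures that the truncated control reaches the same intermediate state. Everything else is direct bookkeeping of constants.
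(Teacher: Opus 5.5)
Your proof is correct and follows the paper's argument: you control the low modes on $[0,\tau/2]$ via \eqref{eq:ass.approx}, extend the control by zero, bound $\norm{y_1}_Y$ with \cref{lem:ubp}, and let the state dissipate on $[\tau/2,\tau]$ via \eqref{eq:ass.dissip}, arriving at the same constants $c_3 = c_2 2^{-m}$ and $C_3 = M_1 C_2 (1+C_1)$. Your explicit check, via \eqref{eq:Sigma-compo} with $t=0$, that truncating the control after $\tau/2$ leaves the intermediate state unchanged is a step the paper leaves implicit, and you handle it correctly.
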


\begin{proof}
    Let $C_1, C_2, c_1, c_2, a, b, m > 0$ with $a < b$ be given by \cref{ass:proj+diss}.
    Let $\tau > 0$, $k \in \N$ and $y_0 \in Y$.
    Define a control $u$ on $[0,\tau]$ as being given by the first item of \cref{ass:proj+diss} for a time~$\frac \tau 2$, extended by $0$ on $[\frac \tau 2, \tau]$.
    Let $y_1 := \Sigma_{\frac \tau 2}(y_0,u)$.
    By \cref{lem:ubp} and \eqref{eq:ass.approx},
    \begin{equation}
        \label{eq:2ass-1}
        \norm{y_1}_Y
        \leq M_1 \left(\norm{y_0}_Y + \norm{u}_{L^p}\right)
        \leq M_1 (1 + C_1 e^{c_1 k^a}) \norm{y_0}_Y.
    \end{equation}
    Since $P_k y_1 = 0$, \eqref{eq:ass.dissip} implies that
    \begin{equation}
        \label{eq:2ass-2}
        \norm{\Sigma_\tau(y_0,u)}_Y
        = \norm{\Sigma_{\frac \tau 2}(y_1,0)}
        \leq C_2 e^{-c_2 (\tau -\frac \tau 2)^m k^b} \norm{y_1}_Y
    \end{equation}
    We deduce the claimed estimates, with $c_3 = c_2 2^{-m}$ and $C_3 = M_1 C_2 (1 + C_1)$.
\end{proof}

\begin{theorem}
    \label{thm:LR}
    Under \cref{ass:proj+diss}, the abstract linear system $\Sigma$ is small-time null-controllable.
    More precisely, there exist constants $C_0, c_0 > 0$ such that, for every $T \in (0,1]$ and $y_0 \in Y$, there exists $u \in L^p((0,T);U)$ such that $\Sigma_T(y_0,u) = 0$ and
    \begin{equation}
        \label{eq:LR-cost}
        \norm{u}_{L^p} \leq C_0 \exp \left( c_0 T^{-\sigma} \right) \norm{y_0}_Y
        \quad \text{where} \quad
        \sigma := \frac{am}{b-a}.
    \end{equation}
\end{theorem}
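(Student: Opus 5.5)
The plan is the classical Lebeau--Robbiano time iteration. We split $[0,T]$ into a geometric sequence of steps and apply \cref{lem:LR-contract} on each step, with a threshold that grows along the iteration. Fix $T \in (0,1]$ and $\rho \in (0,1)$ to be chosen, and set $\tau_j := (1-\rho)\rho^j T$ for $j \in \N$, so that $\sum_j \tau_j = T$ and $t_j := \sum_{i<j} \tau_i \to T$. On step $j$ we use a threshold $k_j$ of the form $k_j := \lfloor (\beta/\tau_j^{m})^{1/(b-a)} \rfloor$, possibly up to a constant. This is chosen so that $c_3 \tau_j^m k_j^b \geq 2 c_1 k_j^a$ dominates the cost exponent. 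More precisely, we want $c_1 k_j^a - c_3 \tau_j^m k_j^b \le -\tfrac{c_3}{2}\tau_j^m k_j^b$, which makes $\tau_j^m k_j^b \approx \beta^{b/(b-a)} \tau_j^{-am/(b-a)} = \beta' \tau_j^{-\sigma}$.

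\textbf{Step 1: recursion.} Let $y_{j+1} := \Sigma_{\tau_j}(y_j,u_j)$ with $u_j$ given by \cref{lem:LR-contract} for $(k_j,\tau_j,y_j)$. The composition property \eqref{eq:Sigma-compo} ensures that the concatenated control $u := u_0 \underset{\tau_0}{\diamond} (u_1 \underset{\tau_1}{\diamond} \cdots)$ produces $\Sigma_{t_j}(y_0,u) = y_j$. We get $\norm{y_{j+1}} \le C_3 e^{c_1 k_j^a - c_3\tau_j^m k_j^b}\norm{y_j}$ and $\norm{u_j}_{L^p} \le C_1 e^{c_1 k_j^a}\norm{y_j}$. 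With the choice of $k_j$, the exponent $c_1k_j^a$ is of size $C\tau_j^{-\sigma}$, and so is the gain $c_3\tau_j^mk_j^b$. The key point is to tune constants so that the gain exceeds the cost by a positive multiple $\gamma \tau_j^{-\sigma}$. Then
\[
\norm{y_j} \le C_3^j \exp\Big(-\gamma \sum_{i<j}\tau_i^{-\sigma}\Big)\norm{y_0},
\]
and since $\tau_i^{-\sigma}$ grows geometrically, the sum is comparable to $\tau_{j-1}^{-\sigma}$, which swamps $C_3^j$.

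\textbf{Step 2: control bound.} We have $\norm{u_j}_{L^p} \le C_1 C_3^{j} \exp(A\tau_j^{-\sigma} - \gamma\sum_{i<j}\tau_i^{-\sigma})\norm{y_0}$, where $A\tau_j^{-\sigma}$ bounds $c_1k_j^a$. We need $A\tau_j^{-\sigma}$ to be beaten by $\gamma\tau_{j-1}^{-\sigma} = \gamma\rho^{\sigma}\tau_j^{-\sigma}$ for $j\ge1$. This is the main obstacle, since the cost at step $j$ involves the current, smaller, time step while the gain involves the previous, larger one. I would handle it by exploiting the freedom in $\beta$ and $\rho$. Taking $k_j^{b-a} = \beta\tau_j^{-m}$ gives a cost of $c_1\beta^{a/(b-a)}\tau_j^{-\sigma}$ and a gain of $c_3\beta^{b/(b-a)}\tau_j^{-\sigma}$, whose ratio is $\beta$ times a constant. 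Hence for $\beta$ large the net gain $\gamma \approx c_3\beta^{b/(b-a)}$ exceeds $A\rho^{-\sigma}$ with $A = c_1\beta^{a/(b-a)}$, for fixed $\rho = 1/2$. Flooring $k_j$ only changes constants, and $k_j \ge 1$ holds because $\tau_j \le 1$ and $\beta$ is large. Then $\sum_j\norm{u_j}_{L^p}$ is dominated by the $j=0$ term, $C\exp(A((1-\rho)T)^{-\sigma})\norm{y_0}$, plus a summable geometric tail. For $p=\infty$ we use the maximum instead, and for $p<\infty$ the $p$-th powers, with the same bound. This gives \eqref{eq:LR-cost}.

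\textbf{Step 3: reaching zero.} Since $\norm{y_j}\to0$ and $t_j\to T$, we write $\Sigma_T(y_0,u) = \Sigma_{T-t_j}(y_j, u(\cdot+t_j))$ via \eqref{eq:Sigma-compo}. By \cref{lem:ubp} with $M_1$, its norm is at most $M_1(\norm{y_j} + \norm{u\mathbf 1_{[t_j,T]}}_{L^p})$. This tends to $0$ because the tail of the control series tends to zero, so $\Sigma_T(y_0,u)=0$. The control is extended by $0$ after $T$ and is viewed in $L^p((0,T);U)$.
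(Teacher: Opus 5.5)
Your proposal is correct and is essentially the paper's proof: geometric time steps, cutoffs chosen so that $\tau_j^m k_j^{b-a}$ has fixed size (so that both the cost $c_1 k_j^a$ and the gain $c_3 \tau_j^m k_j^b$ scale like $\tau_j^{-\sigma}$), \cref{lem:LR-contract} applied on each slice, and a triangle inequality over the concatenated controls. The differences are minor. You fix $\rho = 1/2$ and a $T$-independent large $\beta$, and use only the previous step's gain to absorb both the cost and the factor $C_3^j$ uniformly in $T \in (0,1]$; the paper instead links the ratios via $q^a = 2$, uses the whole accumulated gain, and first restricts to $T \le T_0$. You also conclude $\Sigma_T(y_0,u) = 0$ via \cref{lem:ubp} and the vanishing tail of the control, rather than via time continuity.
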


\begin{proof}
    \step{Construction of the time and frequency sequences}
    We define a sequence of time intervals and frequency cutoffs.
    Let $r>1$ and $q>1$ be two real numbers to be chosen later.
    We decompose the interval $[0,T]$ into a series of subintervals.
    Let
    \begin{equation}
        \label{eq:LR-tau_j}
        \alpha := T \left(1- \frac 1 r\right) \quad \text{and} \quad \tau_j := \frac{\alpha}{r^j} \quad \text{for } j \in \N.
    \end{equation}
    Hence $T = \sum_{j=0}^\infty \tau_j$.
    We define the sequence of time instants $T_0=0$ and $T_{j+1} = T_j + \tau_j$.
    The $j$-th step of our procedure will happen on the interval $[T_j, T_{j+1}]$.

    Let $\beta > 0$ be another parameter to be chosen.
    We define the sequence of frequency cutoffs as
    \begin{equation}
        \label{eq:LR-k_j}
        k_j := \lfloor \beta q^j \rfloor \quad \text{for } j \in \N.
    \end{equation}

    \step{The iterative scheme}
    We define a sequence of states $(y_j)_{j \in \N}$ by $y_0$ the given initial state and $y_{j+1} := \Sigma_{\tau_j}(y_j, u_j)$, where $u_j$ is the control used on $[T_j, T_{j+1}]$, given by \cref{lem:LR-contract}.
    Hence, by \eqref{eq:ass.contract-u} and \eqref{eq:ass.contract-y},
    \begin{align}
        \label{eq:vj_bound}
        \norm{u_j}_{L^p} & \leq C_1 e^{c_1 k_j^a} \norm{y_j}_Y, \\
        \label{eq:recurrence}
        \norm{y_{j+1}}_Y & \leq C_3 e^{- \gamma_j} \norm{y_j}_Y,
    \end{align}
    where we introduce the notation
    \begin{equation}
        \gamma_j := c_3 \tau_j^m k_j^b - c_1 k_j^a.
    \end{equation}

    \step{Choice of the parameters $r, q, \beta$}
    Our goal is to ensure that $\norm{y_j}_Y \to 0$ sufficiently fast.
    For $\beta \geq 1$, by \eqref{eq:LR-k_j}, $\frac 12 \beta q^j \leq k_j \leq \beta q^j$.
    Recalling \eqref{eq:LR-tau_j} for $\tau_j$, we obtain
    \begin{equation}
        \label{eq:gamma_j-low}
        \gamma_j \geq (q^a)^j \left( 2^{-b} c_3 \alpha^m \beta^b \left( q^{b-a} r^{-m} \right)^j - c_1 \beta^a \right).
    \end{equation}
    We let $q := 2^{\frac 1 a}$ so that $q^a = 2$ and $r := q^{\frac{b-a}{m}}$ so that $q^{b-a} r^{-m} = 1$.
    Also, we tune $\beta$ such that $2^{-b} c_3 \alpha^m \beta^b = 3 c_1 \beta^a$.
    This corresponds to setting
    \begin{equation}
        \label{eq:LR-beta}
        \beta := \left(\frac{2^b 3 c_1}{c_3 \alpha^m} \right)^{\frac 1 {b-a}}.
    \end{equation}
    With these choices $\beta \to +\infty$ as $T \to 0$ and one has:
    \begin{equation}
        \gamma_j \geq 2 c_1 \beta^a 2^j.
    \end{equation}

    \step{Stepwise convergence of the state}
    Iterating \eqref{eq:recurrence}, we obtain, for any $j \geq 1$,
    \begin{equation}
        \label{eq:yj-geom}
        \norm{y_j}_Y \leq \exp \left( j \ln C_3 - 2 c_1 \beta^a (2^j-1) \right) \norm{y_0}_Y.
    \end{equation}
    Therefore, $\norm{y_j}_Y \to 0$ as $j \to \infty$.

    \step{Bounding the total control cost}
    The overall control $u$ is the concatenation of the controls $u_j$ on $[T_j, T_{j+1}]$.
    Using the bounds \eqref{eq:vj_bound} and \eqref{eq:yj-geom}, we have:
    \begin{equation}
        \begin{split}
            \norm{u_j}_{L^p}
            & \leq C_1 e^{c_1 k_j^a} \norm{y_j}_Y \\
            & \leq C_1 \exp (c_1 \beta^a 2^j) \exp (j \ln C_3 - 2 c_1 \beta^a (2^j-1)) \norm{y_0}_Y \\
            & \leq C_1 e^{c_1 \beta^a} \exp \left(j (\ln C_3 - c_1 \beta^a) \right) \norm{y_0}_Y \\
            & \leq C_1 e^{c_1 \beta^a} 2^{-j} \norm{y_0}_Y
        \end{split}
    \end{equation}
    using $2^j-1 \geq j$ and $T$ small enough so that $\ln C_3 - c_1 \beta^a \leq - \ln 2$.

    Thus, there exists $T_0 > 0$ such that, for all $0 < T \leq T_0$,
    \begin{equation}
        \norm{u}_{L^p((0,T);U)}
        \leq \sum_{j=0}^{\infty} \norm{u_j}_{L^p((0,\tau_j);U)}
        \leq 2 C_1 e^{c_1 \beta^a} \norm{y_0}_Y.
    \end{equation}
    Recalling \eqref{eq:LR-tau_j} and \eqref{eq:LR-beta}, this entails the cost estimate \eqref{eq:LR-cost} since $\beta^a \approx (\alpha^{-\frac{m}{b-a}})^a \approx T^{-\sigma}$.

    \step{Conclusion}
    Since $\Sigma$ is an abstract linear control system, the associated solution $y(t) := \Sigma_t(y_0,u)$ belongs to $C^0([0,T];Y)$.
    By the composition property \eqref{eq:Sigma-compo}, $y(T_j) = y_j$.
    We proved that $y_j \to 0$.
    Since $T_j \to T$, by continuity, we conclude that $\Sigma_T(y_0,u) = y(T) = 0$.
\end{proof}

\begin{remark}[Cost optimization]
    In some contexts, one might be interested in optimizing the constant $c_0$ appearing inside the exponential of the cost estimate \eqref{eq:LR-cost}.
    Many parameters of the time iteration can be further tuned for this purpose (see \cite[Section 2.3]{Miller2010} or \cite[Section 2.2.3]{Gallaun2022}).
\end{remark}

\begin{remark}[Time-dependent systems]
    \label{rk:LR-time-dep}
    The proof given above also applies to time-dependent systems, say $\Sigma_{t,s}$ for $t \geq s$, provided that the assumptions hold uniformly with respect to $t$ and $s$ (see e.g.~\cite[Theorem 3.2]{BeauchardEgidiPravdaStarov2020}).
    We present the time-invariant version to lighten the notations.
\end{remark}

\begin{remark}[Relaxed assumptions on constants]
    \label{rk:relax}
    The form of the exponential constants in the assumed estimates \eqref{eq:ass.approx} and \eqref{eq:ass.dissip} is given to fix ideas, and because they match some frequent use cases.
    However, the time iteration is robust to many variations of these estimates.
    For example, it is common to have polynomial prefactors of the form $T^{-h}$ in \eqref{eq:ass.approx} (see e.g.\ \cite[Prop.~IV.2.24]{Boyer2022} and \cref{sec:spectral}) or in \eqref{eq:ass.dissip} (see e.g.\ \cite[eq.~(3.2)]{BeauchardEgidiPravdaStarov2020}).

    In fact, the proof can accommodate for much larger modifications.
    Since the time schedule works with $T^m k^b \approx k^a$, the result still holds if the estimates \eqref{eq:ass.approx} and \eqref{eq:ass.dissip} are respectively replaced by
    \begin{align}
        \label{eq:ass-approx-u-relax}
        \norm{u}_{L^p} & \leq C_1 \exp\left(c_1 k^a + c_1' T^{-\theta} \right) \norm{y_0}_Y, \\
        \label{eq:ass-dissip-relax}
        \norm{\Sigma_T(y_0,0)}_{Y} & \leq C_2 \exp\left( -c_2 T^m k^b + c_2' k^a + c_2'' T^{-\theta}\right) \norm{y_0}_Y.
    \end{align}
    Then the cost estimate \eqref{eq:LR-cost} holds with $\sigma = \max \{ \frac{am}{b-a}, \theta \}$.
    In fact, up to worsening $b$ or $\theta$ in the assumptions \eqref{eq:ass-approx-u-relax} and \eqref{eq:ass-dissip-relax}, one can always assume that $\sigma = \frac{am}{b-a} = \theta$.

    One can carry out the same proof, starting with adapting \cref{lem:LR-contract}.
    In the time schedule, one can keep $q = 2^{\frac 1a}$ and $r = q^{\frac{b-a}{m}}$ and look for $\beta = \rho \alpha^{-\frac{m}{b-a}}$.
    The lower-bound \eqref{eq:gamma_j-low} is replaced with
    \begin{equation}
        \gamma_j \geq 2^j \beta^a \left( \frac{c_2}{2^{b+m}} \rho^{b-a} - (c_1+c_2') - 2 \theta (c_1' + c_2'') \rho^{-a} \right).
    \end{equation}
    Choosing $\rho$ large enough, $\gamma_j \geq 2 (c_1 + c_1' 2^{-\sigma} \rho^{-a}) 2^j \beta^a$, and the end of the proof is unchanged.
\end{remark}

\begin{remark}
    Estimate \eqref{eq:ass.approx} states that the cost of the control of the first $k$ modes does not blow up as $T \to 0$.
    This can seem to be in sharp contrast with our finite-dimensional estimates of \cref{sec:seidman}.
    To understand this apparent paradox, one must recall that we will obtain such estimates only in situations where the integer $K$ of \cref{p:seidman} is equal to $0$ (i.e.\ the control operator $B$ directly spans the full space); see e.g.\ \cref{sec:spectral-commutative}.
\end{remark}

\subsection{The role of spectral inequalities}
\label{sec:spectral}

The time-iteration argument presented in \cref{sec:LR-iteration} is independent of the way that one proves that its assumptions are satisfied.
However, starting with \cite{LebeauRobbiano1995}, most works using this method also rely on a so-called \emph{spectral inequality} to check that the assumptions are satisfied.
Some authors also say \emph{uncertainty principle}, since it is not required that the projections $P_k$ for \cref{ass:proj+diss} are orthogonal projections on spectral subspaces of the considered self-adjoint operator.

\medskip

In this section, we assume that $Y$ and $U$ are Hilbert spaces, and we consider a family $(P_k)_{k \in \N}$ of orthogonal projections on $Y$.
For $t \geq 0$, we also define $\T_t \in \cL(Y)$ by $\T_t y_0 := \Sigma_t(y_0,0)$ ($\T$ is the strongly continuous semigroup associated with our system; see \cref{sec:weiss}).
Finally, we assume that there exists an operator $B \in \cL(U, Y)$ such that, for all $u \in L^p(\R_+;U)$ and $T \geq 0$,
\begin{equation}
    \label{eq:Sigma_T=B}
    \Sigma_T(0,u) = \int_0^T \T_{T-t} B u(t) \dd t.
\end{equation}
The existence of an integral representation of $u \mapsto \Sigma_T(0,u)$ is always true (at least when $p < + \infty$), but $B$ is not always bounded from $U$ to $Y$ (typically not for boundary control); see \cite[Theorem 3.9, Problem 3.10]{Weiss1989}.
In such a case, there is a notion of admissibility of $B$ with respect to $A$ (see \cite[Section 4]{Weiss1989}) and some of the techniques presented here still hold (see \cite[Theorem 1.2]{TenenbaumTucsnak2011}).

\medskip

The constructions of this section are based on the following \emph{static} assumption (see below for examples), in which neither the time nor the operator $A$ (or the semigroup $\T_t$) play a role.

\begin{assumption}[Spectral inequality]
    \label{ass:spectral}
    There exist $C_1, c_1, a > 0$ such that, for all $k \in \N$,
    \begin{equation}
        \label{eq:BPk}
        \forall y \in Y, \quad
        \norm{P_k y}_Y \leq C_1 e^{c_1 k^a} \norm{B^\star P_k y}_U
    \end{equation}
    or, equivalently (by a consequence of the closed-graph theorem, see \cite[Proposition 12.1.2]{TucsnakWeiss2009}),
    \begin{equation}
        \exists D_k \in \cL(Y, U), \text{ such that } P_k = P_k B D_k \text{ and } \norm{D_k} \leq C_1 e^{c_1 k^a}.
    \end{equation}
\end{assumption}

\subsubsection{The commutative case}
\label{sec:spectral-commutative}

In this paragraph, assuming that the projections $P_k$ commute with the dynamics (i.e.\ all $t \geq 0$, $P_k \T_t = \T_t P_k$), we show that the spectral inequality entails the controllability of the low modes.

\begin{proposition}
    \label{p:dual-easy}
    Under \cref{ass:spectral}, if the projections $P_k$ commute with the dynamics, then, for all $k \in \N$, $T \in (0,1]$ and $y_0 \in Y$, there exists $u \in L^p((0,T);U)$ such that
    \begin{equation}
        P_k \Sigma_T(y_0,u) = 0
        \quad \text{and} \quad
        \norm{u}_{L^p} \leq T^{\frac 1 p - 1} M_1 C_1 e^{c_1 k^a} \norm{y_0}_Y
    \end{equation}
    where $M_1$ is the constant from \cref{lem:ubp}.
\end{proposition}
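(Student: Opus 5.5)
We will use the second, equivalent form of \cref{ass:spectral}: there is $D_k \in \cL(Y,U)$ with $P_k = P_k B D_k$ and $\norm{D_k} \leq C_1 e^{c_1 k^a}$. The idea is to apply a \emph{constant in time} control that exactly cancels, in the range of $P_k$, the free evolution of $y_0$. We split $\Sigma_T(y_0,u) = \T_T y_0 + \Sigma_T(0,u)$ by linearity, use the representation \eqref{eq:Sigma_T=B} for the second term, and let the commutation $P_k \T_t = \T_t P_k$ move $P_k$ through the semigroup.

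Concretely, we set, for $t \in (0,T)$,
\begin{equation}
    u(t) := -\frac 1 T D_k \T_T y_0 ,
\end{equation}
a constant vector in $U$. Then
\begin{equation}
    P_k \Sigma_T(0,u) = -\frac 1 T \int_0^T P_k \T_{T-t} B D_k \T_T y_0 \dd t = -\frac 1 T \int_0^T \T_{T-t} P_k B D_k \T_T y_0 \dd t = -\frac 1 T \int_0^T \T_{T-t} P_k \T_T y_0 \dd t .
\end{equation}
This is not yet $-P_k \T_T y_0$, because of the extra factor $\T_{T-t}$, so the naive choice fails. To fix this we insert the semigroup into the control. Let
\begin{equation}
    u(t) := -\frac 1 T D_k \T_t y_0 .
\end{equation}
Then
\begin{equation}
    P_k \T_{T-t} B u(t) = -\frac 1 T \T_{T-t} P_k B D_k \T_t y_0 = -\frac 1 T \T_{T-t} P_k \T_t y_0 = -\frac 1 T P_k \T_T y_0 ,
\end{equation}
using the semigroup property $\T_{T-t}\T_t = \T_T$, which follows from \eqref{eq:Sigma-compo} with zero controls. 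Integrating over $(0,T)$ gives $P_k \Sigma_T(0,u) = -P_k \T_T y_0$. Hence $P_k \Sigma_T(y_0,u) = 0$.

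For the cost, note that $t \mapsto \T_t y_0$ is continuous, so $u$ is measurable and lies in $L^p$. For $t \in [0,T] \subset [0,1]$, \cref{lem:ubp} gives $\norm{\T_t y_0}_Y \leq M_1 \norm{y_0}_Y$. Therefore $\norm{u(t)}_U \leq T^{-1} C_1 e^{c_1 k^a} M_1 \norm{y_0}_Y$ pointwise, and taking the $L^p(0,T)$ norm contributes an extra factor $T^{1/p}$, which yields the claimed bound. We extend $u$ by zero beyond $T$ so that it fits the $L^p(\R_+;U)$ framework; this does not affect $\Sigma_T$.

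The main obstacle is the step just discussed: finding a control for which commutation collapses the Duhamel integrand to a constant multiple of $P_k \T_T y_0$, which requires the time-dependent choice $D_k \T_t y_0$ rather than a constant one. Beyond that, we only need to check the minor technical points: the semigroup identity $\T_{T-t}\T_t = \T_T$, the measurability of $u$, and the use of the equivalent closed-graph form of the spectral inequality.
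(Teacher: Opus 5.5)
Your proof is correct and follows the paper's argument: the same control $u(t) = -\frac{1}{T} D_k \T_t y_0$, the same cost bound via \cref{lem:ubp}, and the same collapse of the Duhamel integrand to $-\frac{1}{T} P_k \T_T y_0$. The only difference is cosmetic. The paper writes $P_k \T_{T-t} B D_k = P_k \T_{T-t} P_k B D_k$, which uses only the weaker identity $P_k \T_\tau P_k = P_k \T_\tau$ instead of full commutation. This is what lets \cref{rk:PKP} relax the hypothesis to invariance of $\Ran(\Id - P_k)$ under the semigroup.
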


\begin{proof}
    Let $k \in \N$ and $D_k \in \cL(Y,U)$ be given by \cref{ass:spectral}.
    For $T \in (0,1]$ and $y_0 \in Y$, set
    \begin{equation}
        u(t) := - \frac{1}{T} D_k \T_t y_0.
    \end{equation}
    By \cref{lem:ubp}, one has
    \begin{equation}
        \norm{u}_{L^p} \leq T^{\frac 1 p - 1} \norm{D_k} \cdot \sup_{t \in [0,1]} \norm{\T_t} \cdot \norm{y_0}_Y
        \leq T^{\frac 1 p - 1} M_1 C_1 e^{c_1 k^a} \norm{y_0}_Y.
    \end{equation}
    By linearity and \eqref{eq:Sigma_T=B},
    \begin{equation}
        \label{eq:duhamel-BDK}
        \Sigma_T(y_0, u) = \T_T y_0 - \frac{1}{T} \int_0^T \T_{T-t} B D_k \T_t y_0 \dd t.
    \end{equation}
    By assumption, the projection $P_k$ commutes with the semigroup, so, in particular
    \begin{equation}
        \label{eq:PTP}
        \forall \tau \geq 0, \quad
        P_k \T_\tau P_k = P_k \T_\tau
    \end{equation}
    Applying $P_k$ to \eqref{eq:duhamel-BDK}, using $P_k B D_k = P_k$, the semigroup property and \eqref{eq:PTP},
    \begin{equation}
        \begin{split}
            P_k \Sigma_T(y_0, u)
            & = P_k \T_T y_0 - \frac{1}{T} P_k \int_0^T \T_{T-t} P_k B D_k \T_t y_0 \dd t \\
            & = P_k \T_T y_0 - \frac{1}{T} P_k \int_0^T \T_{T-t} P_k \T_t y_0 \dd t \\
            & = P_k \T_T y_0 - \frac{1}{T} P_k \int_0^T \T_{T-t} \T_t y_0 \dd t = 0.
        \end{split}
    \end{equation}
    So we have found a control of the expected size, bringing the projection $P_k$ of the state to $0$.
\end{proof}

\begin{remark}
    \label{rk:PKP}
    The above proof only uses the relation \eqref{eq:PTP}, which is equivalent to the condition that the range of $P_k^\bot = \Id - P_k$ is stable by the semigroup (and so slightly weaker than requiring that $P_k$ commutes with the semigroup; one must only prevent ``high-to-low'' energy transfers).
\end{remark}

\subsubsection{The non-commutative case}
\label{sec:spectral-no-commute}

When the projections $P_k$ don't commute with the dynamics, the construction of \cref{p:dual-easy} no longer yields a null-control for the low modes.
However, the lack of commutation can be compensated by the dissipation to derive the one-step contraction result of \cref{lem:LR-contract}, thus allowing to execute the time iteration of \cref{thm:LR}.

\begin{proposition}
    \label{p:dual-hard}
    Assume that there exist $C_1, C_2, c_1, c_2, a, b, m > 0$ with $a < b$ such that, for all $k \in \N$, the spectral \cref{ass:spectral} and the dissipation condition \eqref{eq:ass.dissip} hold.
    Then there exist $C_3,c_3 > 0$ such that, for all $k \in \N$, for all $\tau \in (0,1]$ and $y_0 \in Y$, there exists $u \in L^p(\R_+;U)$ with
    \begin{align}
        \label{eq:ass.contract-u-bis}
        \norm{u}_{L^p} & \leq 2 \tau^{\frac 1 p - 1} M_1 C_1 e^{c_1 k^a} \norm{y_0}_Y, \\
        \label{eq:ass.contract-y-bis}
        \norm{\Sigma_\tau(y_0,u)}_Y & \leq C_3 e^{c_1 k^a} e^{- c_3 \tau^m k^b} \norm{y_0}_Y,
    \end{align}
    where $M_1$ is the constant from \cref{lem:ubp}.
\end{proposition}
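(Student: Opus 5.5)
We split the time step $[0,\tau]$ into a control phase on $[0,\tau/2]$ and a free phase on $[\tau/2,\tau]$, as in \cref{lem:LR-contract}. The control is still built from $D_k$, as in \cref{p:dual-easy}, but now it cannot kill $P_k$ of the state exactly. So we only ask it to make the low-mode part of the state small, and we absorb the commutation error using the dissipation \eqref{eq:ass.dissip}.

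\emph{Control phase.} On $[0,\tau/2]$ we set
\[
u(t) := -\tfrac{2}{\tau} D_k \T_t y_0, \qquad u := 0 \text{ on } (\tau/2,\infty).
\]
By \cref{lem:ubp} and \cref{ass:spectral}, this gives $\norm{u}_{L^p} \leq (\tau/2)^{\frac1p-1} M_1 C_1 e^{c_1 k^a}\norm{y_0}_Y$, which is at most the bound \eqref{eq:ass.contract-u-bis} since $2^{1-\frac1p}\leq 2$. Put $y_1 := \Sigma_{\tau/2}(y_0,u)$. Then \eqref{eq:Sigma_T=B} gives
\[
y_1 = \T_{\tau/2}y_0 - \tfrac{2}{\tau}\int_0^{\tau/2}\T_{\tau/2-t}BD_k\T_t y_0\,\dd t .
\]

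\emph{Splitting the integrand.} Write $BD_k = P_kBD_k + P_k^\perp BD_k = P_k + P_k^\perp BD_k$, using $P_kBD_k=P_k$. The $P_k$ part becomes $\T_{\tau/2-t}P_k\T_t y_0$. Splitting $\T_t y_0 = P_k \T_t y_0 + P_k^\perp\T_t y_0$ and using the semigroup property,
\[
\T_{\tau/2-t}P_k\T_ty_0 = \T_{\tau/2}y_0 - \T_{\tau/2-t}P_k^\perp\T_ty_0 .
\]
Averaging in $t$ over $[0,\tau/2]$ cancels the term $\T_{\tau/2}y_0$, so
\[
y_1 = \tfrac{2}{\tau}\int_0^{\tau/2}\T_{\tau/2-t}P_k^\perp\big(\T_t y_0 - BD_k\T_t y_0\big)\,\dd t .
\]
Every term of $y_1$ is now a semigroup applied to an element of $\Ran P_k^\perp$, i.e.\ to some $z$ with $P_kz=0$. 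The dissipation hypothesis \eqref{eq:ass.dissip} applies to such $z$.

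\emph{Free phase.} The state at time $\tau$ is $\Sigma_\tau(y_0,u)=\T_{\tau/2}y_1$. Each integrand contributes $\T_{\tau-t}z_t$ with $P_kz_t=0$ and
\[
\norm{z_t}_Y\leq (1+\norm{B}C_1e^{c_1k^a})M_1\norm{y_0}_Y .
\]
Since $\tau-t\geq\tau/2$, \eqref{eq:ass.dissip} gives $\norm{\T_{\tau-t}z_t}_Y\leq C_2e^{-c_2(\tau/2)^mk^b}\norm{z_t}_Y$. Averaging over $t$ then yields \eqref{eq:ass.contract-y-bis} with $c_3=c_22^{-m}$ and $C_3 = C_2M_1(1+\norm{B}C_1)$. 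The step I would double-check is the time range. We need $\tau-t\leq 1$ so that \eqref{eq:ass.dissip} applies; this holds because $\tau\le1$. We also need $\tau-t\geq\tau/2$ so that the dissipation rate is uniform; this holds because the control acts only on $[0,\tau/2]$.

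The main obstacle is the bookkeeping that turns the lack of commutation into terms supported on $\Ran P_k^\perp$, namely the identity for $y_1$ displayed above. If that identity fails in some detail, the fallback is a cruder splitting $y_1=P_ky_1+P_k^\perp y_1$ with an explicit estimate of the commutator $P_k\T_s-P_k\T_sP_k$. Note that the identity uses only $P_kBD_k=P_k$, the semigroup property and linearity. The resulting constant then involves $\norm{B}$, which is legitimate since $B\in\cL(U,Y)$.
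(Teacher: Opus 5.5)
Your proof is correct and follows the paper's argument: you use the same control $-\frac{2}{\tau}\mathbf{1}_{(0,\tau/2)}(t) D_k \T_t y_0$, and your identity for $y_1$, once propagated by $\T_{\tau/2}$, is exactly the paper's representation $\Sigma_\tau(y_0,u) = -\frac{2}{\tau}\int_0^{\tau/2}\T_{\tau-t}(\Id-P_k)(BD_k-\Id)\T_t y_0 \,\dd t$ coming from $P_k(BD_k-\Id)=0$, followed by the same dissipation bound with $c_3 = c_2 2^{-m}$ and $C_3 = C_2 M_1(1+\norm{B}C_1)$. The only cosmetic difference is that you evaluate the state at $\tau/2$ before propagating, while the paper writes the Duhamel formula directly at time $\tau$; the fallback you mention is not needed.
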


\begin{proof}
    Let $k \in \N$ and $D_k \in \cL(Y,U)$ given by \cref{ass:spectral}.
    For $\tau \in (0,1]$ and $y_0 \in Y$, we use the same control as in the proof of \cref{p:dual-easy} on $(0,\frac \tau 2)$, then a null control on $(\frac \tau 2, \tau)$:
    \begin{equation}
        u(t) := - \frac{2}{\tau} \mathbf{1}_{(0,\frac \tau 2)}(t) D_k \T_t y_0 .
    \end{equation}
    By \cref{lem:ubp}, we obtain \eqref{eq:ass.contract-u-bis}.
    By linearity and \eqref{eq:Sigma_T=B},
    \begin{equation}
        \Sigma_\tau(y_0, u) = \T_\tau y_0 - \frac{2}{\tau} \int_0^{\frac \tau 2} \T_{\tau-t} B D_k \T_t y_0 \dd t.
    \end{equation}
    We now write the following decomposition:
    \begin{equation}
        B D_k
        = \Id + (B D_k - \Id)
        = \Id + P_k (B D_k - \Id) + (\Id - P_k) (B D_k - \Id).
    \end{equation}
    By construction $P_k (B D_k - \Id) = 0$.
    By linearity and the semigroup property,
    \begin{equation}
        \Sigma_\tau(y_0, u) = - \frac 2 \tau \int_0^{\frac \tau 2} \T_{\tau - t} (\Id - P_k) (B D_k - \Id) \T_t y_0 \dd t.
    \end{equation}
    Hence, by the dissipation condition \eqref{eq:ass.dissip},
    \begin{equation}
        \begin{split}
            \norm{\Sigma_\tau(y_0,u)}_Y & \leq \frac 2 \tau \int_0^{\frac \tau 2} \norm{\T_{\tau - t} (\Id - P_k)} (\norm{B} \norm{D_k} + 1) \norm{\T_t} \norm{y_0}_Y \dd t \\
            & \leq C_2 e^{-c_2 (\tau/2)^m k^b} (\norm{B} C_1 e^{c_1 k^a} + 1) M_1 \norm{y_0}_Y.
        \end{split}
    \end{equation}
    Thus we obtain \eqref{eq:ass.contract-y-bis} with $C_3 = C_2 (C_1 \norm{B} + 1) M_1$ and $c_3 = c_2 2^{-m}$.
\end{proof}

\subsection{Historical perspective}
\label{sec:LR-biblio}

We briefly recall some contributions regarding the method exposed above, introduced in \cite{LebeauRobbiano1995}, and thus commonly referred to as the ``Lebeau--Robbiano method''.

\paragraph{The 1995 paper of Lebeau and Robbiano.}

In \cite{LebeauRobbiano1995}, to prove the small-time null controllability of the heat equation posed on a smooth compact Riemannian manifold $\mathcal{M}$, the strategy is as follows.
\begin{itemize}
    \item Consider $e_k$ the eigenfunctions of the operator $A$ of \eqref{eq:yt=Ay+Bu} (here $A = - \Delta$ with Dirichlet boundary conditions on $\mathcal{M}$) and $\lambda_k$ the associated eigenvalues.
    Consider $P_k$ to be the projection on the finite-dimensional space spanned by $e_1, \dotsc, e_k$.

    \item
    This automatically ensures that $\norm{\Sigma_T(y_0,0)}_Y \leq e^{-\lambda_k T} \norm{y_0}_Y$ when $P_k y_0 = 0$.
    By Weyl's law, this entails the dissipation property \eqref{eq:ass.dissip} with $b = \frac 2 d$ and $m = 1$.

    \item Using Carleman estimates inspired by \cite{Robbiano1995,Russell1973} for the elliptic operator $\partial_t^2 +\Delta$, prove that, for any $\theta > 1$, for any $T \in (0,1]$ and $y_0 \in Y$, there exists $u \in L^\infty(0,T)$ such that $P_k \Sigma_T(y_0,u) = 0$ with a cost estimate of the form
    \begin{equation}
        \label{eq:LR95-lossy}
        \norm{u}_{L^\infty} \leq C_1 \exp \left( c_1 \lambda_k^{\frac 12} + c_1 T^{-\theta} \right) \norm{y_0}_Y.
    \end{equation}
    This corresponds to our generalized cost assumption \eqref{eq:ass-approx-u-relax} with $a = \frac{1}{d}$ by Weyl's law.

    \item Use a geometric time and frequency schedule as exposed above to deduce the small-time null controllability of the full PDE.
    By \cref{rk:relax}, since $\theta > 1$ and $\frac{ma}{b-a} = 1$, their proof concludes to a cost estimate of the form $C_0 \exp(c_0 T^{-\theta})$ with $\theta > 1$, falling just short of the now-known optimal exponent $1$ (see below).
\end{itemize}

\paragraph{More recent variations and enhancements.}
The Lebeau--Robbiano method has been applied in countless settings, and many authors have proposed improvements.
We unfortunately only refer to a few of them, relevant to our exposition, and of which we are aware.

\begin{itemize}

    \item In \cite{JerisonLebeau1999} and \cite{LebeauZuazua1998}, Jerison, Lebeau and Zuazua started to isolate and study precisely static spectral inequalities of the form of \cref{ass:spectral}, avoiding the lossy estimate \eqref{eq:LR95-lossy}.

    \item In \cite{Miller2006}, Miller started a first abstraction of the previous results which concerned the heat equation to self-adjoint operators $A$ with admissible control operators $B$, obtaining the qualitatively optimal cost as $T \to 0$ for the control of a family of diffusion equations.

    \item Multiple subsequent works \cite{Miller2010,TenenbaumTucsnak2007,TenenbaumTucsnak2011} improved the estimates of the cost of controllability, or reduced the assumptions on the control operator $B$.
    Miller also remarked in \cite{Miller2010} that commutation is not necessary, only stability of the high modes as in \cref{rk:PKP}.

    \item In \cite{BeauchardPravdaStarov2018}, Beauchard and Pravda-Starov, in collaboration with Miller, introduced the key novelty that the projections $P_k$ don't need to be spectral projections onto the eigenspaces of the operator $A$ of \eqref{eq:yt=Ay+Bu} (which is thus not required to be self-adjoint).
    This important improvement allows to reuse well-known spectral inequalities for a reference operator and apply it to prove the controllability of another PDE (see \cref{sec:kolmogorov} where we apply a spectral inequality for the Laplacian to a Kolmogorov equation).

    As we prove in \cref{p:dual-hard}, thanks to the dissipation estimate, the lack of commutation between $P_k$ and the semigroup $\T_t$ does not prevent to obtain the one-step contraction of \cref{lem:LR-contract} which is used in the main time iteration.

    \item Later, it was identified that the method can be extended to time-dependent systems (with time-dependent operators \cite{BeauchardPravdaStarov2017}, or time-dependent control domains \cite{BeauchardEgidiPravdaStarov2020}).
    As mentioned in \cref{rk:LR-time-dep}, the same proofs apply, provided that the dissipation and spectral assumptions hold uniformly in time.

    \item The recent PhD thesis by Gallaun \cite{Gallaun2022} generalizes these results in multiple directions.
    In particular, in the spirit of \cref{rk:relax}, he proves that the structure of the time iteration is robust with respect to very general forms of the cost function and dissipation estimate.
    See also \cite{GallaunSeifertTautenhahn2020} for a generalization of \cref{sec:spectral} to the context of Banach spaces.

    \item All the references mentioned above work on the dual problem of observability (see \cite{TucsnakWeiss2009}).
    In particular, the proofs of \cite{Miller2010} or \cite{BeauchardPravdaStarov2018} rely on a telescoping series argument.
    In \cref{sec:LR-iteration,sec:spectral}, we obtain the same results while remaining in the control viewpoint.
\end{itemize}

Eventually, we refer to \cite[Section IV.2]{Boyer2022} for a nice textbook on the Lebeau--Robbiano method containing the proofs of the Carleman inequalities required for the control of the heat equation in bounded domains of $\R^d$.

\subsection{Example: interior control of the heat equation on \texorpdfstring{$[0,1]$}{[0,1]}}
\label{sec:heat-1D-interior}

In this paragraph, we investigate the small-time null controllability of the heat equation on the segment $[0,1]$, with an interior control.
More precisely, for $T > 0$, we consider the system:
\begin{equation}
    \label{eq:heat-1D-internal}
    \begin{cases}
        y_t - y_{xx} = u \mathbf{1}_\omega & \text{in } (0,T) \times (0,1), \\
        y(t,0) = y(t,1) = 0 & \text{in } (0,T), \\
        y(0,x) = y_0(x) & \text{in } (0,1),
    \end{cases}
\end{equation}
where $y_0 \in L^2(0,1)$ is the initial data and $u \in L^2((0,T) \times \omega)$ is a control, supported in a fixed measurable subset $\omega \subset (0,1)$.
We will consider solutions to this system in the space\footnote{By \cite[Chapter 1, Theorem 3.1]{LionsMagenes1972_Vol1}, $L^2((0,T);H^1_0(0,1)) \cap H^1((0,T);H^{-1}(0,1)) \hookrightarrow C^0([0,T];L^2(0,1))$ so one could omit it from the definition of $\mathcal{Y}$.}
\begin{equation}
    \mathcal{Y} := C^0([0,T];L^2(0,1)) \cap L^2((0,T);H^1_0(0,1)) \cap H^1((0,T);H^{-1}(0,1))
\end{equation}
equipped with the canonical norm.
We start with the following general well-posedness result.

\begin{proposition}
    \label{prop:heat-f-WP}
    For any $T > 0$, $y_0 \in L^2(0,1)$ and $f \in L^2((0,T);H^{-1}(0,1))$, there exists a unique solution $y \in \cY$ to
    \begin{equation}
        \label{eq:heat-f}
        \begin{cases}
            y_t - y_{xx} = f & \text{in } (0,T) \times (0,1), \\
            y(t,0) = y(t,1) = 0 & \text{in } (0,T), \\
            y(0,x) = y_0(x) & \text{in } (0,1).
        \end{cases}
    \end{equation}
    Moreover, there exists a constant $C > 0$ such that, for every $T$, $f$, $y_0$,
    \begin{equation}
        \label{eq:energy-heat-f-WP}
        \norm{y}_{C^0 L^2} + \norm{y}_{L^2 H^1} + \norm{y}_{H^1 H^{-1}} \leq C \left( \norm{y_0}_{L^2} + \norm{f}_{L^2 H^{-1}} \right).
    \end{equation}
\end{proposition}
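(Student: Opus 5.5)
I would prove \cref{prop:heat-f-WP} by a spectral (Fourier sine series) argument, which gives explicit formulas and makes the energy estimate transparent.

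\textbf{Setup.} Let $e_j(x) := \sqrt 2 \sin(j\pi x)$ and $\lambda_j := j^2\pi^2$ for $j \in \N^*$. These form an orthonormal basis of $L^2(0,1)$, and they are eigenfunctions of the Dirichlet Laplacian. The Sobolev norms are characterized by
\begin{equation}
    \norm{y}_{H^1_0}^2 \simeq \sum_j \lambda_j |y^j|^2
    \quad \text{and} \quad
    \norm{f}_{H^{-1}}^2 \simeq \sum_j \lambda_j^{-1} |f^j|^2,
\end{equation}
where $y^j := \langle y, e_j\rangle$ and $f^j := \langle f, e_j\rangle_{H^{-1},H^1_0}$. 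I would take this as the definition of the solution concept. A function $y \in \cY$ solves \eqref{eq:heat-f} when $\frac{\dd}{\dd t}\langle y, e_j\rangle + \lambda_j \langle y,e_j\rangle = f^j$ in $\mathcal{D}'(0,T)$ with $y^j(0) = y_0^j$. Equivalently, the equation holds in $L^2((0,T);H^{-1})$ and the initial condition holds in $C^0L^2$.

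\textbf{Existence and estimate.} Define the coefficients by Duhamel's formula:
\begin{equation}
    y^j(t) := e^{-\lambda_j t} y_0^j + \int_0^t e^{-\lambda_j(t-s)} f^j(s) \dd s.
\end{equation}
I would then bound each part of the norm mode by mode.
\begin{itemize}
    \item For $\norm{y}_{C^0L^2}$, Cauchy--Schwarz gives $\left|\int_0^t e^{-\lambda_j(t-s)} f^j\right|^2 \leq \frac{1}{2\lambda_j} \int_0^T |f^j|^2$. Summing over $j$ yields $\sup_t \norm{y(t)}_{L^2}^2 \leq 2\norm{y_0}^2 + C\norm{f}_{L^2H^{-1}}^2$. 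Continuity in time follows by uniform convergence of the series, since each tail is controlled by the tail of the same bound.
    \item For $\norm{y}_{L^2H^1}$, note first that $\lambda_j \int_0^T e^{-2\lambda_j t}|y_0^j|^2 \leq \frac12 |y_0^j|^2$. For the forced part, Young's convolution inequality with the kernel $e^{-\lambda_j t}$, whose $L^1$ norm is at most $\lambda_j^{-1}$, gives $\lambda_j \norm{\int_0^\cdot e^{-\lambda_j(\cdot-s)} f^j}_{L^2}^2 \leq \lambda_j^{-1}\norm{f^j}_{L^2}^2$.
    \item For $\norm{y}_{H^1H^{-1}}$, use $\dot y^j = -\lambda_j y^j + f^j$. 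Then $\sum_j \lambda_j^{-1} |\dot y^j|^2 \lesssim \sum_j \lambda_j |y^j|^2 + \lambda_j^{-1}|f^j|^2$, which is controlled by the previous bound. The $L^2H^{-1}$ part of this norm is trivially bounded by the $C^0L^2$ bound times $\sqrt T$.
\end{itemize}
I need to be careful here: that last piece would make the constant depend on $T$. To avoid this, I would use instead that the $L^2H^{-1}$ norm is dominated by the $L^2H^1_0$ norm, which is already bounded uniformly in $T$. All constants above are then independent of $T$, because every bound only used $\int_0^\infty e^{-\lambda_j t}\dd t = \lambda_j^{-1}$ with $\lambda_j \geq \pi^2$.

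\textbf{Uniqueness.} By linearity it suffices to treat $y_0 = 0$ and $f = 0$. Testing against $e_j$ gives $\dot y^j = -\lambda_j y^j$ with $y^j(0) = 0$, so $y^j \equiv 0$ for every $j$. This is the ODE uniqueness of \cref{lem:WP} applied to scalar linear equations in the distributional sense: $y^j \in H^1(0,T)$, so it is absolutely continuous. Hence $y = 0$.

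\textbf{Main obstacle.} The delicate point is making the solution notion and the identification $\frac{\dd}{\dd t}\langle y,e_j\rangle = \langle y_t, e_j\rangle$ rigorous in $H^1((0,T);H^{-1})$. This holds because pairing with a fixed $e_j \in H^1_0$ is a continuous linear functional, so it commutes with the distributional time derivative. The other point needing care is verifying that the constant $C$ does not depend on $T$, which the Young and Cauchy--Schwarz bounds above achieve.
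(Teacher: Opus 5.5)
Your proof is correct, but it takes a different route from the paper. The paper does not construct the solution. It quotes existence and uniqueness as textbook knowledge (Evans) and derives only the estimate, by the energy method: multiply by $y$, integrate by parts, use $\langle f, y\rangle_{H^{-1},H^1_0} \leq \frac12 \norm{f}_{H^{-1}}^2 + \frac12 \norm{y_x}_{L^2}^2$, and integrate in time, with the $H^1H^{-1}$ bound then read off from the equation. You instead diagonalize in the Dirichlet sine basis, so one explicit computation gives everything: existence from Duhamel's formula mode by mode, the estimate from Cauchy--Schwarz and Young's convolution inequality weighted by $\lambda_j^{\pm 1}$, and uniqueness from a scalar ODE after pairing with $e_j \in H^1_0$. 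Your route buys a self-contained and fully rigorous argument. In particular, your uniqueness step never needs the Lions--Magenes identity $\frac{\dd}{\dd t}\norm{y}_{L^2}^2 = 2\langle y_t, y\rangle_{H^{-1},H^1_0}$ for $y \in L^2H^1_0 \cap H^1H^{-1}$, on which the paper's multiplier computation implicitly relies. The $T$-independence of the constant is also transparent, including your correct choice to bound $\norm{y}_{L^2H^{-1}}$ by $\norm{y}_{L^2H^1_0}$ rather than by $\sqrt{T}\norm{y}_{C^0L^2}$. The energy method buys robustness: it uses neither the explicit eigenbasis nor the self-adjointness and time-independence of the operator. It therefore extends unchanged to variable coefficients, lower-order or time-dependent perturbations, and nonlinear terms. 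This is why the same multiplier computation is reused for the Burgers equation in \cref{lem:Burgers-a-priori} and in the uniqueness part of \cref{sec:WP-Burgers}, where the nonlinearity would couple all your modes.
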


\begin{proof}
    The existence and uniqueness of such a solution to \eqref{eq:heat-f} is classical textbook knowledge for linear parabolic PDEs (see for example \cite[Section 7.1.3, Theorems 3 and 4]{Evans2010}).
    The claimed inequality is a classical energy estimate, obtained by multiplying the PDE and integrating by parts.

    Multiplying the PDE by $y$ and integrating for $x \in (0,1)$, for almost every $t \in (0,T)$,
    \begin{equation}
        \frac{1}{2} \frac{\dd}{\dd t} \int_0^1 y^2(t,x) \dd x - \int_0^1 (y y_{xx})(t,x) \dd x = \langle f(t,\cdot), y(t,\cdot) \rangle_{H^{-1},H^1_0}.
    \end{equation}
    Integrating by parts in the second term,
    \begin{equation}
        \frac12 \frac{\dd}{\dd t} \int_0^1 y^2(t,x) \dd x + \int_0^1 y_x^2(t,x) \dd x
        \leq \norm{f(t,\cdot)}_{H^{-1}} \norm{y_x(t,\cdot)}_{L^2}
        \leq \frac 12 \norm{f(t,\cdot)}_{H^{-1}}^2 + \frac 12 \norm{y_x(t,\cdot)}_{L^2}^2.
    \end{equation}
    Hence, integrating in time leads to \eqref{eq:energy-heat-f-WP}.
\end{proof}

\subsubsection{Energy dissipation}
\label{sec:heat-1D-dissip}

Let $(\varphi_j)_{j \in \N^*}$ be the orthonormal basis of $L^2(0,1)$ of eigenfunctions of $- \partial_{xx}$ with Dirichlet boundary conditions, associated with the eigenvalues $\lambda_j = (j \pi)^2$.
More explicitly, $\varphi_j(x) := \sqrt{2} \sin (j \pi x)$.
Decomposing $y(t)$ on this basis, the explicit resolution of \eqref{eq:heat-1D-internal} with $u = 0$ leads to
\begin{equation}
    \langle y(t), \varphi_j \rangle = e^{- (j \pi)^2 t} \langle y_0, \varphi_j \rangle.
\end{equation}
In particular, if $\langle y_0, \varphi_j \rangle = 0$ for all $1 \leq j \leq k$, one has
\begin{equation}
    \label{eq:heat-1D-dissip}
    \norm{y(t)}_{L^2} \leq e^{- (k \pi)^2 t} \norm{y_0}_{L^2}.
\end{equation}

\subsubsection{Spectral inequality}

To control the low frequencies, we will rely on the following spectral inequality.
Tenenbaum and Tucsnak identified in \cite{TenenbaumTucsnak2011} that, for rectangular domains (including our segment $[0,1]$), one could replace the usual sophisticated Carleman inequalities by the following simple result by Tur\'an.
We give in \cref{s:turan} a self-contained one-page proof (see \cite{Turan1947} for Tur\'an's work with arbitrary frequencies, but where $\omega$ is an interval, and \cite{Nazarov1993} for measurable $\omega$).

\begin{proposition}
    \label{p:turan}
    Let $\omega \subset [0,1]$ be a measurable set with $|\omega| > 0$.
    There exists $C_\omega > 0$ such that, for any $n \geq 1$ and $a_1, \dotsc, a_n \in \R$,
    \begin{equation}
        \norm{f}_{L^2(0,1)} \leq C_\omega^n \norm{f}_{L^2(\omega)}
        \quad \text{where} \quad
        f(x) := \sum_{j=1}^n a_j \sin (j \pi x).
    \end{equation}
\end{proposition}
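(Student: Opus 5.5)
We turn the sine sum into an algebraic polynomial and then use a Remez-type inequality, i.e.\ a quantitative bound on how small a polynomial of degree $n$ can be on a set of positive measure. This is Turán/Nazarov in its simplest form.

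\emph{Step 1: reduction to trigonometric polynomials.}
Extend $f$ oddly to $[-1,1]$. Then $f(x) = \sin(\pi x)\, g(\cos \pi x)$, where $g(s) := \sum_j a_j U_{j-1}(s)$ and $U_{j-1}$ is the Chebyshev polynomial of the second kind, so $\deg g \leq n-1$. A cleaner route is to write $f(x) = \operatorname{Im} \sum_j a_j z^j$ with $z = e^{i\pi x}$. Then $e^{i\pi n x} f(x)$ is, up to a constant, an algebraic polynomial $P(z) = \sum_{|j| \leq n} c_j z^{j+n}$ of degree at most $2n$ in $z$, and $\abs{P(z)} = \abs{f(x)}$ on the unit circle. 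So it suffices to prove the following. For $\Omega \subset \mathbb{T}$ of positive measure (the image of $\omega$ under $x \mapsto e^{i\pi x}$), every polynomial $P$ of degree $N \leq 2n$ satisfies
\[
\sup_{\mathbb{T}} \abs{P} \leq \left(\frac{C}{\abs{\Omega}}\right)^{N} \sup_{\Omega'} \abs{P}
\]
for some subset $\Omega' \subset \Omega$ with $\abs{\Omega'} \geq \abs{\Omega}/2$.

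\emph{Step 2: Remez/Cartan-type bound.}
Factor $P(z) = c \prod_{k=1}^N (z - \zeta_k)$. For any $\varepsilon > 0$, the set $E_k := \{ z \in \mathbb{T} : \abs{z - \zeta_k} < \varepsilon \}$ has arc length at most $C\varepsilon$. Choose $\varepsilon := \abs{\Omega}/(2CN)$. Then $\Omega' := \Omega \setminus \bigcup_k E_k$ satisfies $\abs{\Omega'} \geq \abs{\Omega}/2$, and on $\Omega'$ we have $\abs{P} \geq \abs{c}\, \varepsilon^N$. On the other hand, $\sup_{\mathbb{T}} \abs{P} \leq \abs{c} \prod_k (1 + \abs{\zeta_k})$. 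The roots with $\abs{\zeta_k}$ large are a problem, and we handle them by normalisation: replace each factor $(z - \zeta_k)$ by $(z - \zeta_k)/\max(1, \abs{\zeta_k})$. Each normalised factor is at most $2$ on $\mathbb{T}$, and it is at least $\varepsilon/2$ off $E_k$. (If $\abs{\zeta_k} \geq 2$, the factor is at least $1/2$ everywhere on $\mathbb{T}$.) This gives
\[
\sup_{\mathbb{T}} \abs{P} \leq \left(\frac{4}{\varepsilon}\right)^{N} \inf_{\Omega'} \abs{P} \leq (C' N/\abs{\Omega})^{N} \inf_{\Omega'} \abs{P}.
\]

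\emph{Main obstacle.}
The factor $N^N$ is not of the form $C_\omega^n$. This is the real difficulty, and it is exactly what Nazarov's work handles. The fix I would try first is to avoid sup/inf over sets depending on $N$ and instead use an $L^2$ argument with the Lagrange interpolation/Gautschi-type bound already used in \cref{lem:U-1-exp}. Pick $N+1$ points $t_0, \dotsc, t_N$ in $\Omega$ that are well separated in the sense of a Fekete-type selection: by measure considerations, one can choose them so that the products $\prod_{l \neq k} \abs{t_k - t_l}$ are comparable to those of equispaced points on an arc of length $\abs{\Omega}$. More precisely, take $t_k$ at the quantiles of $\Omega$, so that consecutive gaps are at least $\abs{\Omega}/(N+1)$. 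Then the Lagrange basis polynomials satisfy
\[
\sup_{\mathbb{T}} \abs{\ell_k} \leq \prod_{l \neq k} \frac{2}{\abs{t_k - t_l}} \leq \frac{2^N (N+1)^N}{\abs{\Omega}^N \, k!\,(N-k)!} \leq \left(\frac{2e}{\abs{\Omega}}\right)^{N} \binom{N}{k},
\]
using Stirling. Summing over $k$ gives a total factor $(C/\abs{\Omega})^N$, with no $N^N$.

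\emph{Passing to $L^2$.}
To control point values by $L^2(\omega)$ norms, average this estimate over a family of such quantile selections, i.e.\ shift the quantile levels by a uniform parameter. This bounds $\sup_{\mathbb{T}} \abs{P}^2$ by $(C/\abs{\Omega})^{2N} \cdot N \int_\Omega \abs{P}^2$. Finally $\norm{f}_{L^2(0,1)} \leq \sup \abs{f}$, and the factor $N$ is absorbed into $C^n$, which gives $C_\omega^n$.
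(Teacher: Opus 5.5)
Your argument is correct, and its core is the same as the paper's. The Remez step is exactly \cref{lem:remez-f}: Lagrange interpolation at $N+1$ nodes with $|t_k-t_l|\gtrsim |k-l|\,|\Omega|/N$, where $N^N\le e^N N!$ and $\sum_k\binom{N}{k}=2^N$ remove the $N^N$ loss. Your Cartan-type detour can simply be dropped. Choosing the nodes at quantiles of $\Omega$ rather than greedily is an equivalent way to get separation. You differ from the paper at the two ends of the argument.

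\emph{The reduction to a polynomial.} The paper writes $f(x)=\sin(\pi x)P(\cos\pi x)$ with $\deg P\le n-1$. This keeps a real polynomial on an interval, where \cref{lem:remez-f} applies directly. However, the Jacobian $\pi\sin(\pi x)$ and the weight $1/\sin(\pi x)$ degenerate at $x=0,1$. The paper therefore excises $\omega'=\omega\cap[\delta,1-\delta]$ and only obtains $|E|\gtrsim|\omega|^2$. Your substitution $z=e^{i\pi x}$ has constant speed, so $|P|=|f|$ exactly and $|\Omega|=\pi|\omega|$, with no excision. The price is that the degree doubles to $2n$ and you work with complex polynomials on the circle. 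There you must use chordal rather than arc-length distance; since $\Omega$ lies in the upper half-circle, chord $\ge\frac{2}{\pi}\times$ arc, so your quantile separation survives up to a constant. Both routes end with $C_\omega$ of order $|\omega|^{-2}$.

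\emph{The passage from $L^2(\omega)$ to point values.} The paper uses a level-set argument: $Z:=\{|P|^2\le 2\norm{P}_{L^2(E)}^2/|E|\}$ has $|Z|\ge|E|/2$, and the sup-Remez inequality is applied on $Z$. Your averaging over shifted levels $(k+s)/(N+1)$, $s\in[0,1)$, also works, but you should write out why:
\begin{itemize}
\item these levels cover $[0,1)$ exactly once;
\item the quantile map pushes Lebesgue measure forward to $\mathbf{1}_\Omega/|\Omega|$, so $\int_0^1\sum_k|P(t_k(s))|^2\,\dd s=\frac{N+1}{|\Omega|}\int_\Omega|P|^2$;
\item Cauchy--Schwarz with $\sum_k\binom{N}{k}^2\le 4^N$ then closes the bound.
\end{itemize}
Note also the factor $|\Omega|^{-1}$ that you dropped; it is harmless.

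Your interpolation bound already holds for arbitrary measurable $\Omega$. So the level-set trick, which is the ``subset $\Omega'$ with $|\Omega'|\ge|\Omega|/2$'' reduction you announced in Step 1 with $\Omega'$ chosen as a level set, finishes in one line. That makes the averaging unnecessary.
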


\subsubsection{Small-time null controllability}

We recover the following very well-known result (dating back to \cite{FattoriniRussell1971} or \cite{Seidman1984} in 1D, \cite{LebeauRobbiano1995} for higher dimension, and to \cite{ApraizEscauriaza2013} for measurable control set $\omega$).

\begin{theorem}
    \label{prop:heat-control}
    Assume that $|\omega| > 0$.
    System \eqref{eq:heat-1D-internal} is small-time null controllable and there exists $C > 0$ such that, for all $T > 0$ and $y_0 \in L^2(0,1)$, there exists $u \in L^2((0,T) \times \omega)$ such that the associated solution to \eqref{eq:heat-1D-internal} satisfies $y(T) = 0$ and one has the estimate $\norm{u}_{L^2} \leq C e^{C/T} \norm{y_0}_{L^2}$.
\end{theorem}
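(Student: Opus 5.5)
We apply the abstract time iteration of \cref{thm:LR}, with $Y = L^2(0,1)$, $U = L^2(0,1)$, $p = 2$. The abstract system is $\Sigma_t(y_0,u) := y(t)$, where $y$ solves \eqref{eq:heat-1D-internal}. The control operator is $B u := \mathbf{1}_\omega u$, which is bounded on $L^2(0,1)$.

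By \cref{prop:heat-f-WP}, applied with $f = \mathbf{1}_\omega u \in L^2 L^2 \subset L^2 H^{-1}$, the maps $\Sigma_t$ are continuous linear maps and $t \mapsto \Sigma_t(y_0,u)$ is continuous. The composition property \eqref{eq:Sigma-compo} follows from uniqueness of solutions. The Duhamel representation \eqref{eq:Sigma_T=B} holds because $\T_t$ is the Dirichlet heat semigroup; it can be checked mode by mode on the basis $(\varphi_j)$. We take $P_k$ to be the orthogonal projection onto $\vect\{\varphi_1,\dotsc,\varphi_k\}$. This projection commutes with $\T_t$, which is diagonal in this basis.

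\emph{Dissipation.} Estimate \eqref{eq:heat-1D-dissip} gives \eqref{eq:ass.dissip} with $C_2 = 1$, $c_2 = \pi^2$, $b = 2$, $m = 1$.

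\emph{Spectral inequality.} Since $B$ is self-adjoint, $B^\star P_k y = \mathbf{1}_\omega P_k y$. The function $P_k y$ has the form $\sum_{j \le k} a_j \sqrt 2 \sin(j\pi x)$. \cref{p:turan} then gives $\norm{P_k y}_{L^2} \le C_\omega^k \norm{P_k y}_{L^2(\omega)}$. This is \cref{ass:spectral} with $C_1 = 1$, $c_1 = \ln \max(C_\omega, e)$ and $a = 1$. Note that $a = 1 < 2 = b$, as required.

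\emph{Control of low modes.} \cref{p:dual-easy} yields a control with $P_k \Sigma_T(y_0,u) = 0$ and $\norm{u} \le T^{-1/2} M_1 e^{c_1 k} \norm{y_0}$. This is not exactly \eqref{eq:ass.approx}, because of the prefactor $T^{-1/2}$. By \cref{rk:relax}, however, this prefactor is harmless. It can be absorbed into $\exp(c_1' T^{-\theta})$ with $\theta = 1$, since $T^{-1/2} \le e^{T^{-1}}$. Alternatively, one can use \cref{p:dual-hard} directly in the contraction step, where the same $\tau^{-1/2}$ factor appears. I expect this bookkeeping, namely checking that the relaxed iteration of \cref{rk:relax} (or the direct substitution of \cref{p:dual-hard} for \cref{lem:LR-contract} in the proof of \cref{thm:LR}) still gives exponent $\sigma = \max\{am/(b-a), \theta\} = \max\{1, 1\} = 1$, to be the main point requiring care.

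The conclusion is that for $T \in (0,1]$ there is a control $u$ with $y(T) = 0$ and $\norm{u}_{L^2} \le C_0 e^{c_0/T}\norm{y_0}$. The control produced by \cref{p:dual-easy} has the form $-\frac1T D_k \T_t y_0$ composed with $B$; we take $u$ restricted to $\omega$, which is all that enters the equation. For $T > 1$, we use the control from time $1$ on $(0,1)$ and extend it by zero; the state stays at $0$. This gives a cost bounded by $C_0 e^{c_0} \le C e^{C/T}$ once $C$ is chosen large enough. Both regimes are then combined into a single constant $C$.
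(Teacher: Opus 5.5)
Your proposal is correct and follows the paper's proof essentially step by step: dissipation from the explicit modal decay \eqref{eq:heat-1D-dissip}, the spectral inequality from Tur\'an's \cref{p:turan}, low-mode control from \cref{p:dual-easy}, and absorption of the $T^{-1/2}$ prefactor via \cref{rk:relax} before invoking \cref{thm:LR}. The differences are cosmetic: you take $U = L^2(0,1)$ with the self-adjoint multiplication $B = \mathbf{1}_\omega$ instead of $U = L^2(\omega)$ with extension by zero, and you spell out the zero extension for $T > 1$, which the paper leaves implicit.
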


\begin{proof}
    Applying \cref{prop:heat-f-WP} with $f = u \mathbf{1}_\omega$ entails that \eqref{eq:heat-1D-internal} is an abstract linear control system with state space $Y = L^2(0,1)$, control space $U = L^2(\omega)$ and $p=2$.

    Let $P_k$ denote the projection on $\vect \{ \varphi_1, \dotsc, \varphi_k \}$ in $L^2(0,1)$.
    By \eqref{eq:heat-1D-dissip}, the dissipation assumption \eqref{eq:ass.dissip} holds with $b = 2$ and $m = 1$.
    With the notations of \cref{sec:spectral}, $P_k$ commutes with $\T_t$ and $B \in \mathcal{L}(L^2(\omega);L^2(0,1))$ is the extension by $0$ operator, so $B^\star$ is merely the restriction to $\omega$ operator.
    Hence \cref{p:turan} proves that, for any $y \in L^2(0,1)$,
    \begin{equation}
        \norm{P_k y}_Y \leq e^{c_\omega k} \norm{B^\star P_k y}_U
    \end{equation}
    for some constant $c_\omega > 0$.
    Thus we can apply \cref{p:dual-easy}, which implies that the control cost estimate \eqref{eq:ass.approx} holds with $a = 1$.
    Thus \cref{ass:proj+diss} holds and \cref{thm:LR} applies.

    Since $p = 2$, \cref{p:dual-easy} leads to a singular $T^{-\frac12}$ prefactor in estimate \eqref{eq:ass.approx}.
    However, as noted in \cref{rk:relax}, such a weak degeneracy can be absorbed with the same time iteration.
\end{proof}

\subsection{Example: boundary control of the heat equation on \texorpdfstring{$[0,\pi]$}{[0,pi]}}
\label{s:heat-boundary}

In this paragraph, we investigate the small-time null controllability of the heat equation on the segment $[0,\pi]$, with a boundary control.
More precisely, for $T > 0$, we consider the system:
\begin{equation}
    \label{eq:heat-1D-boundary}
    \begin{cases}
        y_t - y_{xx} = 0 & \text{in } (0,T) \times (0,\pi), \\
        y(t,0) = u(t) & \text{in } (0,T), \\
        y(t,\pi) = 0 & \text{in } (0,T), \\
        y(0,x) = y_0(x) & \text{in } (0,\pi),
    \end{cases}
\end{equation}
where $y_0 \in H^{-1}(0,\pi)$ is the initial data and $u \in L^2((0,T);\R)$ is a control.
One has the following well-posedness result (see \cite[Proposition 7.1.3]{TucsnakWeiss2009})

\begin{proposition}
    For any $T > 0$, $y_0 \in H^{-1}(0,\pi)$ and $u \in L^2((0,T);\R)$, there exists a unique weak solution $y \in C^0([0,T];H^{-1}(0,\pi)) \cap L^2((0,T);L^2(0,\pi))$ (defined by transposition) to \eqref{eq:heat-1D-boundary}.
\end{proposition}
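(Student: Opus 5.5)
The approach is the transposition method: define the solution by duality against the adjoint (backward) Dirichlet heat problem, and reduce well-posedness to a hidden-regularity estimate for the normal derivative of smooth solutions. I work in the sine basis $e_j(x) := \sqrt{2/\pi}\sin(jx)$, which is orthonormal in $L^2(0,\pi)$. In this basis $H^{-1}(0,\pi)$ is identified with sequences $(c_j)$ such that $\sum c_j^2/j^2<\infty$, and $H^1_0$ with $\sum j^2 c_j^2<\infty$.

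\textbf{Step 1 (formal derivation).} For smooth $y$ and smooth $u$, multiply the equation by $e_j$ and integrate by parts twice on $(0,\pi)$. The boundary term at $x=0$ is $-y(t,0)\,e_j'(0) = -\sqrt{2/\pi}\,j\,u(t)$, with the sign fixed by orientation. This gives
$$\frac{\dd}{\dd t} y_j(t) = -j^2 y_j(t) + \sqrt{2/\pi}\, j\, u(t).$$
This is exactly the infinite-dimensional version of \cref{ex:diag}, up to a harmless constant. Accordingly I define the solution coefficientwise by Duhamel:
$$y_j(t) = e^{-j^2 t} y_{0,j} + \sqrt{2/\pi}\, j \int_0^t e^{-j^2(t-s)} u(s)\dd s.$$
Equivalently, $y$ is characterized by the transposition identity
$$\langle y(t),\psi\rangle_{H^{-1},H^1_0} = \langle y_0,\varphi(0)\rangle - \int_0^t u(s)\varphi_x(s,0)\dd s,$$
where $\varphi$ solves the backward Dirichlet heat equation with $\varphi(t)=\psi$. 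Uniqueness is immediate from this formula, since testing against every $e_j$ determines every coefficient.

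\textbf{Step 2 (estimates).} Write $z_j(t) := y_j(t)/j$. The free part satisfies $\sum_j e^{-2j^2t}y_{0,j}^2/j^2 \le \norm{y_0}_{H^{-1}}^2$. The controlled part satisfies
$$\Big|\int_0^t e^{-j^2(t-s)}u(s)\dd s\Big| \le \Big(\frac{1}{2j^2}\Big)^{1/2}\norm{u}_{L^2},$$
which only gives $|z_j| \lesssim \norm{u}_{L^2}/j$. Summing $1/j^2$ then yields the $H^{-1}$ bound uniformly in $t$. For the $L^2_tL^2_x$ bound, I apply Young's convolution inequality in time to each mode: $\norm{j\,e^{-j^2\cdot}*u}_{L^2(0,T)} \le j\cdot j^{-2}\norm{u}_{L^2}$. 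This gives per-mode control but not yet square-summable control, so the mode-by-mode bound is not enough.

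\textbf{Main obstacle.} The real step is proving the summed estimate
$$\sum_j \norm{j\,e^{-j^2\cdot}*u}_{L^2(0,T)}^2 \lesssim \norm{u}_{L^2}^2.$$
I would prove it by duality, as a hidden-regularity estimate for the adjoint problem: for $g\in L^2(0,T;L^2)$, the solution $\varphi$ of the backward heat equation with source $g$ and zero final data satisfies $\norm{\varphi_x(\cdot,0)}_{L^2(0,T)} \lesssim \norm{g}_{L^2L^2}$. This follows from the parabolic regularity $\varphi\in L^2H^2\cap H^1L^2$, given by the standard multiplier $-\varphi_{xx}$, combined with the trace inequality $|\varphi_x(0)|^2 \lesssim \norm{\varphi}_{H^2}\norm{\varphi}_{H^1}$. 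Alternatively, one can take the Fourier transform in time: extend $u$ by $0$ to $\R$, and the symbol bound $\sum_j j^2/(j^4+\xi^2) \lesssim 1$ holds uniformly in $\xi$ by comparison with an integral. I would use the latter, since it is a clean one-line computation.

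\textbf{Step 3 (continuity).} Each $z_j$ is continuous in $t$, and the series converges uniformly thanks to the tail bounds of Step 2. Hence $y\in C^0([0,T];H^{-1})$. Finally, density of smooth data together with the estimates shows that this solution coincides with the classical solution whenever the data are smooth and compatible, which justifies the formal computation of Step 1.
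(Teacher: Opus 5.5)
Your proof is correct, but it does not follow the paper's route. The paper gives no argument for this proposition. It simply defers to the abstract boundary-control and admissibility theory of Tucsnak--Weiss: the Dirichlet trace control is an admissible control operator for the Dirichlet heat semigroup on $H^{-1}(0,\pi)$, which produces the transposition solution in $C^0([0,T];H^{-1})$.

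You instead diagonalize in the sine basis and verify admissibility by hand on the explicit Duhamel coefficients $y_j(t)=e^{-j^2t}y_{0,j}+\sqrt{2/\pi}\,j\int_0^t e^{-j^2(t-s)}u(s)\,\dd s$. This is essentially the formula the paper writes right after the statement and then exploits through the moment problem of \cref{s:large-systems}. The abstract route covers general domains and dimensions where no explicit eigenbasis exists. Yours is elementary, self-contained and quantitative, but it depends on the explicit one-dimensional spectral structure.

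Two corrections are needed.

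\textbf{Sign in the transposition identity.} One has $\frac{\dd}{\dd s}\langle y(s),\varphi(s)\rangle=[y_x\varphi-y\varphi_x]_0^\pi=u(s)\varphi_x(s,0)$. So the identity should read $\langle y(t),\psi\rangle=\langle y_0,\varphi(0)\rangle+\int_0^t u(s)\varphi_x(s,0)\,\dd s$. Testing with $\psi=e_j$ recovers your own Duhamel formula only with the plus sign. You should also note why the identity makes sense for $\psi\in H^1_0$: $\norm{\varphi_x(\cdot,0)}_{L^2(0,t)}\le\pi^{-1/2}\sum_j\abs{\psi_j}\lesssim\norm{\psi}_{H^1_0}$. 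This is just the dual of your $C^0H^{-1}$ bound.

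\textbf{The ``main obstacle'' is not one.} Your Young bound already gives $\norm{y_j^{\mathrm{ctrl}}}_{L^2(0,T)}\le\sqrt{2/\pi}\,\norm{u}_{L^2}/j$. Since $\sum_j j^{-2}<\infty$, this per-mode estimate is square-summable. Together with $\sum_j\int_0^T e^{-2j^2t}y_{0,j}^2\,\dd t\le\frac12\norm{y_0}_{H^{-1}}^2$, it yields $\norm{y}_{L^2L^2}\lesssim\norm{y_0}_{H^{-1}}+\norm{u}_{L^2}$ directly. This is exactly the summation you already performed for the $H^{-1}$ bound. Your Fourier-in-time argument is valid but superfluous: termwise $j^2/(j^4+\xi^2)\le j^{-2}$, so the symbol bound reduces to the same computation. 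The hidden-regularity alternative is likewise unnecessary here.
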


For $j \in \N^*$, let $\varphi_j(x) := \sin (jx)$.
Then $(\varphi_j)_{j \in \N^*}$ is an orthogonal basis of $L^2(0,\pi)$ of eigenfunctions of $- \partial_{xx}$ with Dirichlet boundary conditions, associated with the eigenvalues $\lambda_j = j^2$.
Testing equation \eqref{eq:heat-1D-boundary} against $\varphi_j$ leads to
\begin{equation}
    \frac{\dd}{\dd t} \langle y, \varphi_j \rangle
    = \langle \partial_{xx} y, \varphi_j \rangle
    = [\partial_x y \varphi_j - y \partial_x \varphi_j]_0^\pi + \langle y, \partial_{xx} \varphi_j \rangle
    = j u(t) - j^2 \langle y, \varphi_j \rangle.
\end{equation}
Thus, we have the Duhamel representation formula
\begin{equation}
    \langle y(T), \varphi_j \rangle = e^{-j^2 T} \langle y_0,\varphi_j \rangle + j \sqrt{2/\pi} \int_0^T e^{-j^2(T-t)} u(t) \dd t.
\end{equation}

\subsubsection{Energy dissipation}

In particular, as in \cref{sec:heat-1D-dissip}, with a null control, if $\langle y_0, \varphi_j \rangle = 0$ for all $1 \leq j \leq k$, one has
\begin{equation}
    \label{eq:heat-1D-dissip-bis}
    \norm{y(t)}_{L^2} \leq e^{- k^2 t} \norm{y_0}_{L^2}.
\end{equation}

\subsubsection{Cost of control of the low frequencies}

To control the low frequencies, we will rely on the analysis performed in \cref{s:large-systems}.
In particular, given $k \in \N^*$, $T > 0$ and $y_0 \in H^{-1}(0,\pi)$, finding $u \in L^2((0,T);\R)$ such that $\langle y(T), \varphi_j \rangle = 0$ for all $1 \leq j \leq k$ is equivalent to the null controllability of the finite-dimensional system \cref{ex:diag}.
By \cref{thm:fin-cost}, we know that there exists such a $u$ with
\begin{equation}
    \norm{u}_{L^2} \leq \sqrt{\frac{k^2}{T}} \exp \left( 6 \sqrt{\frac{k}{T}} \right) \norm{y_0}_{H^{-1}}.
\end{equation}
Using $x \leq e^x$ and Young's inequality, we obtain, for $T \in (0,1]$,
\begin{equation}
    \label{eq:heat-1D-BC-cost}
    \norm{u}_{L^2} \leq e^{4 (k - T^{-1})} \norm{y_0}_{H^{-1}}.
\end{equation}

\subsubsection{Small-time null controllability}

We recover the following very well-known result.
Usually, it is proved using the \emph{moment method}, as in the original proof of \cite{FattoriniRussell1971}.
Typically, the Lebeau--Robbiano method is not used for the boundary controllability of \eqref{eq:heat-1D-boundary} because the spectral inequality required in \cref{sec:spectral} is not satisfied (see \cite[Remark IV.2.34]{Boyer2022}).
However, here, we use the time iteration of the Lebeau--Robbiano method, but we prove the controllability of the low frequencies using another technique (see \cref{s:large-systems}).

\begin{theorem}
    \label{prop:heat-1D-control-boundary}
    System \eqref{eq:heat-1D-boundary} is small-time null controllable.
\end{theorem}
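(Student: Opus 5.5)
We cast \eqref{eq:heat-1D-boundary} in the framework of \cref{thm:LR} and verify \cref{ass:proj+diss} in its relaxed form of \cref{rk:relax}. Take the state space $Y = H^{-1}(0,\pi)$ with the norm $\norm{y}_{H^{-1}}^2 \simeq \sum_j \langle y, \varphi_j\rangle^2/j^2$, the control space $U = \R$ and $p=2$. Set $\Sigma_t(y_0,u) := y(t)$, the weak solution given by the well-posedness proposition. Linearity and continuity in $(t,y_0,u)$ come from that result. The composition property \eqref{eq:Sigma-compo} follows from the Duhamel formula mode by mode, since two elements of $H^{-1}$ with the same Fourier coefficients coincide. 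Let $P_k$ be the projection onto $\vect\{\varphi_1,\dotsc,\varphi_k\}$ in Fourier coefficients.

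\emph{Dissipation of high modes.} If $P_k y_0 = 0$, the Duhamel formula with $u=0$ gives $\langle y(T),\varphi_j\rangle = e^{-j^2T}\langle y_0,\varphi_j\rangle$, with only indices $j>k$ present. Hence, exactly as in \eqref{eq:heat-1D-dissip-bis} but with the weighted $H^{-1}$ norm, we get $\norm{\Sigma_T(y_0,0)}_{H^{-1}} \leq e^{-k^2T}\norm{y_0}_{H^{-1}}$. This is \eqref{eq:ass.dissip} with $b=2$, $m=1$ and $C_2=c_2=1$. The same computation shows that $(\Id-P_k)Y$ is invariant under the free flow and that $P_k$ commutes with it.

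\emph{Control of low modes.} The condition $P_k\Sigma_T(y_0,u)=0$ involves only the first $k$ Fourier coefficients. Up to the constant $\sqrt{2/\pi}$ and rescaling, these coefficients obey the diagonal system of \cref{ex:diag} with $n=k$, equipped with the norm $\norm{\cdot}_h$, which is comparable to the $H^{-1}$ norm of $P_k y_0$. By \cref{thm:fin-cost} and the simplification \eqref{eq:heat-1D-BC-cost}, for $T\in(0,1]$ there is $u$ with
$$P_k\Sigma_T(y_0,u)=0 \quad\text{and}\quad \norm{u}_{L^2}\leq C e^{4k+4T^{-1}}\norm{y_0}_{H^{-1}}.$$
The sign in front of $T^{-1}$ in \eqref{eq:heat-1D-BC-cost} appears to be a typo. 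What Young's inequality actually gives is $6\sqrt{k/T}\le 3k+3/T$, together with polynomial prefactors that are absorbed into the exponentials. This is \eqref{eq:ass-approx-u-relax} with $a=1$ and $\theta=1$. We extend $u$ by zero beyond $T$, which is harmless.

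\emph{Conclusion.} We have $a=1<b=2$ and $m=1$, so $\sigma=\max\{am/(b-a),\theta\}=1$. The relaxed version of \cref{thm:LR} in \cref{rk:relax} first gives the one-step contraction of \cref{lem:LR-contract}, with the extra factor $e^{c T^{-1}}$. The geometric time and frequency schedule then yields a control $u\in L^2(0,T)$ with $\Sigma_T(y_0,u)=0$, for every $T\in(0,1]$. For $T>1$, we control to zero on $[0,1]$ and then use $u=0$ afterwards.

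\emph{Main obstacle.} The main point needing care is checking that the $T^{-1}$ term in the cost is admissible. The schedule is chosen so that $T^m k^b\approx k^a$, that is $k_j\approx 1/\tau_j$. At step $j$ the penalty $e^{4/\tau_j}$ is then of the same order as $e^{c k_j}$, so it is dominated by the dissipation gain $e^{-c\tau_j k_j^2}\approx e^{-c k_j}$ once $\beta$ (equivalently $\rho$ in \cref{rk:relax}) is taken large enough. This is exactly the mechanism of \cref{rk:relax}.
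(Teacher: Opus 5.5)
Your proposal is correct and follows the paper's proof. You use the same sine-mode projections $P_k$, dissipation with $b=2$, $m=1$, the low-mode cost from \cref{thm:fin-cost} giving the relaxed estimate \eqref{eq:ass-approx-u-relax} with $a=\theta=1$, and then \cref{thm:LR} via \cref{rk:relax}. Your two refinements are also right: working consistently in $Y=H^{-1}(0,\pi)$ (the paper states the dissipation in $L^2$ but the cost in $H^{-1}$), and reading \eqref{eq:heat-1D-BC-cost} as $e^{4(k+T^{-1})}$, since the printed minus sign is a typo.
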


\begin{proof}
    Let $P_k$ be the projection on $\vect \{ \varphi_1, \dotsc, \varphi_k \}$ in $L^2(0,\pi)$.
    By \eqref{eq:heat-1D-dissip-bis}, the dissipation assumption \eqref{eq:ass.dissip} holds with $b = 2$ and $m = 1$.
    By \eqref{eq:heat-1D-BC-cost}, the relaxed cost estimate \eqref{eq:ass-approx-u-relax} of \cref{rk:relax} holds with $a = 1$ and $\sigma = 1$.
    Thus \cref{ass:proj+diss} is satisfied and \cref{thm:LR} applies.
\end{proof}

\subsection{Example: control of the heat equation in \texorpdfstring{$\R^d$}{Rd} from a thick set}
\label{sec:heat-Rd}

In this paragraph, we investigate the small-time null controllability of the heat equation set in the whole space $\R^d$ (for $d \geq 1$).
We consider
\begin{equation}
    \label{eq:heat-Rd}
    \begin{cases}
        y_t - \Delta y = u \mathbf{1}_\omega & \text{in } (0,T) \times \R^d, \\
        y(0,x) = y_0(x) & \text{in } \R^d
    \end{cases}
\end{equation}
where $y_0 \in L^2(\R^d)$ is the initial data and $u \in L^1((0,T);L^2(\omega))$ a control, supported in a fixed measurable subset $\omega \subset \R^d$.
This system is well-posed (see your favorite textbook, e.g.\ \cite[Sec.\ 9]{Golse2020}).
In particular, it defines an abstract linear control system with $Y = L^2(\R^d)$, $U = L^2(\omega)$ and $p = 1$.

\begin{lemma}
    For any $T > 0$, $y_0 \in L^2(\R^d)$ and $u \in L^1((0,T);L^2(\omega))$, there exists a unique solution $y \in C^0([0,T];L^2(\R^d))$ to \eqref{eq:heat-Rd}.
\end{lemma}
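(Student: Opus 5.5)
I will work on the Fourier side and use the heat semigroup $\T_t := e^{t\Delta}$, which acts as the multiplier $e^{-t\abs{\xi}^2}$. By Plancherel, $\T_t$ is a contraction semigroup on $L^2(\R^d)$, and it is strongly continuous by dominated convergence. The candidate solution is the mild (Duhamel) formula
\begin{equation}
    y(t) := \T_t y_0 + \int_0^t \T_{t-s} \mathbf{1}_\omega u(s) \dd s .
\end{equation}
The integral is a Bochner integral in $L^2(\R^d)$. It is well defined because $s \mapsto \T_{t-s}\mathbf{1}_\omega u(s)$ is strongly measurable, as a limit of simple functions using strong continuity, and has norm at most $\norm{u(s)}_{L^2(\omega)}$, which is integrable since $u \in L^1((0,T);L^2(\omega))$.

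\textbf{Continuity in time.} For $t' > t$ I split
\begin{equation}
    y(t')-y(t) = (\T_{t'}-\T_t)y_0 + \int_t^{t'} \T_{t'-s}\mathbf{1}_\omega u(s)\dd s + \int_0^t (\T_{t'-s}-\T_{t-s})\mathbf{1}_\omega u(s)\dd s .
\end{equation}
The first term tends to $0$ by strong continuity. The second is bounded by $\int_t^{t'}\norm{u(s)}\dd s$, which tends to $0$ by absolute continuity of the integral. The third tends to $0$ by dominated convergence: the integrand converges pointwise to $0$ by strong continuity and is dominated by $2\norm{u(s)}$. The case $t' < t$ is symmetric. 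This gives $y \in C^0([0,T];L^2(\R^d))$ with $\norm{y}_{C^0L^2} \leq \norm{y_0}_{L^2} + \norm{u}_{L^1L^2}$. The same bound shows that \eqref{eq:heat-Rd} defines an abstract linear control system; the composition property follows from the semigroup law.

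\textbf{Notion of solution.} I declare $y \in C^0([0,T];L^2(\R^d))$ to be a solution when it satisfies \eqref{eq:heat-Rd} in the distributional sense on $(0,T)\times\R^d$ and $y(0) = y_0$. I then check that the Duhamel formula gives such a solution. Taking the Fourier transform, $\hat y(t,\xi) = e^{-t\abs{\xi}^2}\hat y_0(\xi) + \int_0^t e^{-(t-s)\abs{\xi}^2}\widehat{\mathbf{1}_\omega u}(s,\xi)\dd s$. For almost every $\xi$, this is absolutely continuous in $t$ and satisfies $\partial_t \hat y + \abs{\xi}^2\hat y = \widehat{\mathbf{1}_\omega u}$. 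Pairing with a test function and integrating by parts in $t$ then yields the distributional equation.

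\textbf{Uniqueness.} This is the step where care is needed. The difference $w$ of two solutions lies in $C^0([0,T];L^2)$, solves $w_t = \Delta w$ in $\mathcal{D}'$ and has $w(0)=0$. Taking the Fourier transform in $x$, which is legitimate since $w(t)\in L^2$, I get that for almost every $\xi$ the function $\hat w(\cdot,\xi)$ solves $\partial_t \hat w = -\abs{\xi}^2 \hat w$ in the distributional sense in $t$. To justify this I test against $\chi(t)\psi(\xi)$ with $\psi$ ranging over a countable dense family. Hence $e^{t\abs{\xi}^2}\hat w(t,\xi)$ is constant in $t$, and this constant equals $0$ by the initial condition (using continuity at $t=0$ in $L^2$). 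So $w = 0$.

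The main obstacle I expect is handling the distributional-in-time derivative for almost every $\xi$ rigorously. An alternative route is duality: test $w$ against $\T_{t-s}\phi$ for $\phi \in \mathcal{S}$ and show that $s\mapsto\langle w(s),\T_{t-s}\phi\rangle$ is constant. A third option is simply to cite the classical theory, e.g.\ \cite[Sec.\ 9]{Golse2020}.
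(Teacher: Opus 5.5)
The paper gives no proof of this lemma. It only asserts well-posedness and refers to a textbook (\cite{Golse2020}, Sec.~9). Your proposal is a correct, self-contained replacement.

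\textbf{What your proof adds.} Your existence part builds exactly the object the paper uses afterwards. Your mild solution is the Duhamel representation \eqref{eq:Sigma_T=B}, with $B$ the extension by zero and $\T_t$ the Fourier multiplier of \eqref{eq:heat-fourier}. Your bound $\norm{y}_{C^0L^2} \leq \norm{y_0}_{L^2} + \norm{u}_{L^1L^2}$, together with the semigroup law, yields the abstract linear control system with $p=1$ that the paper invokes in \cref{sec:heat-Rd}.

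Your uniqueness part adds something the paper leaves implicit. It fixes a notion of solution (distributional, in $C^0([0,T];L^2(\R^d))$) and shows the mild solution is the only one. This is what justifies identifying the PDE with $\Sigma$. Strictly speaking, the later arguments only use $\Sigma$ and \eqref{eq:Sigma_T=B} (e.g.\ through \cref{p:dual-easy}), so uniqueness is not needed downstream.

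The citation buys brevity. Your argument buys an explicit solution concept and a check that the abstract framework really describes the heat equation.

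\textbf{Two points to fix in the uniqueness step.}
\begin{itemize}
    \item \emph{Countability belongs on $\chi$, not on $\psi$.} First extend the distributional equation from $C_c^\infty$ test functions in $x$ to $H^2$ ones by density, so that $\psi$ may be any bounded, compactly supported function. For each fixed $\chi$, testing with $\chi(t)\psi(\xi)$ gives $\int_0^T \hat w(t,\xi)\,(-\chi'(t)+\abs{\xi}^2\chi(t))\dd t = 0$. This holds only for $\xi$ outside a null set that depends on $\chi$. To get a common null set, let $\chi$ range over a countable family dense in the $C^1$ topology with supports in fixed compacts of $(0,T)$. Then pass to the limit using $\hat w(\cdot,\xi)\in L^1(0,T)$ for a.e.\ $\xi$. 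No countability is needed for $\psi$.
    \item \emph{The limit $t\to0$.} The resulting identity $\hat w(t,\xi) = c(\xi)e^{-t\abs{\xi}^2}$ holds only for a.e.\ $t$. To conclude $c=0$, let $t\to 0$ along such times and use monotone convergence in $\norm{c\,e^{-t\abs{\cdot}^2}}_{L^2} = \norm{\hat w(t)}_{L^2}\to 0$.
\end{itemize}

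With these two adjustments the argument is complete.
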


\subsubsection{Energy dissipation and projections}

In the absence of control, the Fourier transform of the solution is given for any $t \geq 0$ and $\xi \in \R^d$ by
\begin{equation}
    \label{eq:heat-fourier}
    \widehat{y}(t,\xi) = e^{-t |\xi|^2} \widehat{y_0}(\xi).
\end{equation}
In particular, if $\supp \widehat{y_0} \subset B_{\R^d}(0,k)$ for some $k \geq 0$, one has
\begin{equation}
    \label{eq:heat-etk2}
    \norm{y(t)}_{L^2} \leq e^{-t k^2} \norm{y_0}_{L^2}.
\end{equation}

\subsubsection{Spectral inequality}
\label{sec:heat-Rd-spectral}

Hence, by \cref{p:dual-easy}, we must find an inequality of the form
\begin{equation}
    \norm{P_k y}_{L^2(\R^d)} \leq C_k \norm{P_k y}_{L^2(\omega)}.
\end{equation}

For \eqref{eq:heat-Rd} to be small-time controllable, it is natural to require that the control domain $\omega$ is well spread-out all over $\R^d$.
The modern theory involves the following definition which is now well understood in the control theory community (see e.g.\ \cite{BeauchardEgidiPravdaStarov2020,EgidiVeselic2018,WangWangZhangZhang2019}).

\begin{definition}
    Let $\gamma \in (0,1]$ and $a > 0$.
    We say that a measurable $\omega \subset \R^d$ is $(\gamma,a)$-\emph{thick} when
    \begin{equation}
        \label{eq:thick}
        \forall x \in \R^d, \quad
        \left| \omega \cap (x + [0,a]^d) \right| \geq \gamma a^d.
    \end{equation}
\end{definition}

It is well-known that band-limited functions cannot have compact support (uncertainty principle).
More quantitatively, for such thick control domains, one has the following important inequality, initially due to Logvinenko and Sereda \cite{LogvinenkoSereda1974} with a constant $C^{Na/\gamma}$ and enhanced as below by Kovrijkine \cite{Kovrijkine2001}.
We give a short self-contained proof in \cref{sec:LS-proof}.
The heuristic idea of the proof is to split $\R^d$ into cells of size $a$.
Within each cell, band-limited functions are well-approximated by their Taylor expansion, to which we can apply a Remez inequality.

\begin{proposition}
    \label{thm:LS}
    There exists a universal constant $C > 0$ such that, for any $\gamma \in (0,1]$, $a > 0$ and any measurable subset $\omega \subset \R^d$ which is $(\gamma,a)$-thick, one has
    \begin{equation}
        \label{eq:logvinenko-sereda}
        \norm{f}_{L^2(\R^d)} \leq \left(\frac{C}{\gamma}\right)^{C (N a + 1)} \norm{f}_{L^2(\omega)}
    \end{equation}
    for all $N \geq 0$ and $f \in L^2(\R^d;\C)$ such that $\supp \widehat{f} \subset B_{\R^d}(0,N)$.
\end{proposition}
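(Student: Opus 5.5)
By scaling, I reduce to $a=1$. If $\omega$ is $(\gamma,a)$-thick, then $\tilde\omega := \omega/a$ is $(\gamma,1)$-thick, and $\tilde f(x) := f(ax)$ has $\supp \widehat{\tilde f} \subset B(0,Na)$. Both sides of \eqref{eq:logvinenko-sereda} scale by the same factor $a^{-d/2}$, so it suffices to prove $\norm{f}_{L^2(\R^d)} \leq (C/\gamma)^{C(N+1)} \norm{f}_{L^2(\omega)}$ for $a = 1$.

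I tile $\R^d$ by unit cubes $Q_z := z + [0,1]^d$ with $z \in \Z^d$. The key tool is the Bernstein inequality for band-limited functions,
\[
\norm{\partial^\alpha f}_{L^2(\R^d)} \leq N^{|\alpha|} \norm{f}_{L^2(\R^d)},
\]
which follows from Plancherel. Summing over multi-indices with weights, I get
\[
\sum_z \sum_\alpha \frac{\norm{\partial^\alpha f}^2_{L^2(Q_z)}}{(2\sqrt d N)^{2|\alpha|}\,\alpha!^{0}} \lesssim \norm{f}^2_{L^2},
\]
or more precisely a version with an extra geometric weight $A^{-2|\alpha|}$, where $A$ is a large multiple of $N$ so that the $\alpha$-sum converges. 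I call a cube \emph{good} if $\norm{\partial^\alpha f}_{L^2(Q_z)} \leq (AN)^{|\alpha|} \cdot 2^{|\alpha|} \cdot C' \norm{f}_{L^2(Q_z)}$ holds for all $\alpha$. By Chebyshev and the summed Bernstein bound, the bad cubes carry at most half of $\norm{f}_{L^2}^2$. It is therefore enough to prove $\norm{f}_{L^2(Q_z)} \leq (C/\gamma)^{C(N+1)} \norm{f}_{L^2(\omega\cap Q_z)}$ on each good cube, and then sum.

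On a good cube I will use a Sobolev embedding to convert the $L^2$ derivative bounds into pointwise bounds: $\sup_{Q_z}|\partial^\alpha f| \lesssim (C N)^{|\alpha|}\norm{f}_{L^2(Q_z)}$, with the constant $C$ absorbing the $d$ extra derivatives. Pick $x_0 \in Q_z$ where $|f(x_0)| \geq \norm{f}_{L^2(Q_z)}$. The Taylor series at $x_0$ then shows that $f$ extends holomorphically to a complex polydisc of radius about $4\sqrt d$ around $Q_z$, with $|f| \leq e^{CN}\norm{f}_{L^2(Q_z)}$ there. This is the main obstacle. The ratio $M := \sup|F|/|F(x_0)| \leq e^{C(N+1)}$ must be controlled with exponent linear in $N$, which forces a careful count of derivative factorials: the growth has to be $(CN)^k/k!$ and not $(CN)^k$, so that the series sums to $e^{CN}$. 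This is why the weights in the good-cube condition must contain $|\alpha|!$ appropriately; I will adjust the Bernstein summation to $\sum_\alpha \norm{\partial^\alpha f}^2 / (A^{2|\alpha|}\alpha!^2)$ if needed. Some care is required here.

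Next I apply the Kovrijkine--Remez-type lemma for analytic functions. If $\Phi$ is analytic in a disc $D(0,5)$ with $|\Phi(0)| \geq 1$ and $\sup|\Phi| \leq M$, and $E \subset [0,1]$ is measurable with $|E| > 0$, then
\[
\sup_{[0,1]}|\Phi| \leq \left(\frac{C}{|E|}\right)^{\frac{\ln M}{\ln 2}} \sup_E |\Phi|.
\]
This follows from the Jensen zero count (at most $\ln M/\ln 2$ zeros in a smaller disc), Blaschke factorization, and the polynomial Remez inequality. To use it in $d$ dimensions, I restrict $f$ to a line through $x_0$ chosen, by a polar-coordinates averaging, so that its intersection with $\omega\cap Q_z$ has one-dimensional measure $\gtrsim \gamma$. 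This gives
\[
\norm{f}_{L^2(Q_z)} \leq |f(x_0)| \leq (C/\gamma)^{C(N+1)} \sup_{E}|f|.
\]
Finally, to pass from $\sup$ to $L^2$ on $\omega\cap Q_z$, I use the standard trick of applying the lemma to the set $E' := \{x \in \omega\cap Q_z : |f(x)| \leq (2/|\omega\cap Q_z|)^{1/2}\norm{f}_{L^2(\omega\cap Q_z)}\}$, which by Chebyshev has measure at least $\gamma/2$. Summing over good cubes then concludes the proof.
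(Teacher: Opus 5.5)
Your proposal is Kovrijkine's original argument. Once the point in the second paragraph is fixed, it proves the statement, but it takes a genuinely different route from the paper.

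\textbf{Comparison with the paper.} You split the unit cells into good and bad ones. On a good cell you use Paley--Wiener analyticity and the derivative bounds to control $f$ on a complex polydisc by $e^{C(N+1)}\abs{f(x_0)}$. You then apply a Remez lemma for analytic functions (Jensen zero count, Blaschke factorization, polynomial Remez) along a well-chosen line through $x_0$.

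The paper uses none of this. On \emph{every} cell $E_j$, with one common degree $n$, it applies the real-variable Remez inequality with Taylor remainder of \cref{cor:remez-f-Rd-L2}. This gives $\norm{f}_{L^2(E_j)}\le (16ed/\gamma)^n\norm{f}_{L^2(\omega_j)}$ plus a remainder involving $\sup_{E_j}\abs{D^\alpha f}$ with $\abs{\alpha}=n$. After the Sobolev embedding, these remainders are summed over all cells and bounded globally by Bernstein's inequality. The result is $\varepsilon_N(n)^2\norm{f}_{L^2}^2$ with $\varepsilon_N(n)\approx(ed^2N/n)^n$ up to polynomial factors, which is absorbed into the left-hand side by taking $n=\lceil 2ed^2N\rceil$.

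So the degree $n\sim N$ plays the role of your $\ln M$, and the global absorption replaces the good/bad dichotomy. The paper's route is shorter and purely real-variable: no complex analysis, no Jensen or Harnack. Yours is the classical one; it gives a genuinely local estimate on each good cell, and its analytic Remez lemma is a more flexible tool in other settings.

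\textbf{What to fix.} Your discussion of factorials is off, and the proposed ``adjustment'' would break the proof.
\begin{itemize}
\item The $1/\alpha!$ comes from Taylor's formula itself. If $\abs{\partial^\alpha f(x_0)}\le K(AN)^{\abs{\alpha}}$ for all $\alpha$, then $\abs{F(z)}\le K\prod_i e^{AN\abs{z_i-x_{0,i}}}$.
\item Hence the good-cube threshold must be purely geometric, $(AN)^{\abs{\alpha}}$, with $A$ a constant depending only on $d$. For example $A=2$ works: Plancherel gives $\sum_{\abs{\alpha}=k}\norm{\partial^\alpha f}_{L^2}^2\le N^{2k}\norm{f}_{L^2}^2$, so bad cubes carry at most $\frac13\norm{f}_{L^2}^2$.
\item If $A$ is itself a multiple of $N$, as you first wrote, you get $\ln M\sim N^2$, which is the wrong exponent.
\item With the weights $A^{-2\abs{\alpha}}\alpha!^{-2}$ you suggest, the threshold becomes $A^{\abs{\alpha}}\alpha!$. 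The Taylor coefficients are then only $\lesssim A^{\abs{\alpha}}$, and the series converges on a polydisc of radius $\sim 1/A$, not the radius $\sim 5\sqrt d$ the analytic Remez lemma needs.
\item Putting $\alpha!$ in the denominator of the threshold instead makes the Bernstein sum controlling the bad cubes diverge.
\end{itemize}

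Two smaller corrections. The Sobolev step costs a factor $(1+AN)^d$ rather than a power $C^{\abs{\alpha}}$; this is harmless since it is $\le e^{C(N+1)}$. The line through $x_0$ must be chosen by averaging over $E'$, not over $\omega\cap Q_z$, so that it meets $E'$ in one-dimensional measure $\gtrsim_d\gamma$.
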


\subsubsection{Small-time null controllability}

We recover the following result, due to \cite[Theorem 3]{EgidiVeselic2018} and \cite[Theorem 1.1]{WangWangZhangZhang2019} (which moreover prove that thickness is in fact also a necessary condition for small-time null controllability).

\begin{theorem}
    Let $\gamma \in (0,1]$ and $a > 0$.
    Assume that $\omega$ is $(\gamma,a)$-thick.
    Then the heat equation~\eqref{eq:heat-Rd} is small-time null controllable.
\end{theorem}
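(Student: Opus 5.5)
The plan is to apply the black-box time iteration of \cref{thm:LR} with $Y = L^2(\R^d)$, $U = L^2(\omega)$, $p = 1$. For the projections we take Fourier multipliers: for $k \in \N$, let $P_k$ be the orthogonal projection onto $\{ f \in L^2(\R^d) : \supp \widehat{f} \subset \overline{B_{\R^d}(0,k)} \}$, i.e.\ $\widehat{P_k f} = \mathbf{1}_{|\xi| \leq k} \widehat{f}$. We need to check the two items of \cref{ass:proj+diss}.

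For the dissipation of high modes, assume $P_k y_0 = 0$. Then $\widehat{y_0}$ vanishes on $B(0,k)$. By \eqref{eq:heat-fourier} and Plancherel,
\[
\norm{\Sigma_T(y_0,0)}_{L^2} = \norm{e^{-T|\xi|^2} \widehat{y_0}}_{L^2} \leq e^{-T k^2} \norm{y_0}_{L^2}.
\]
This is \eqref{eq:heat-etk2} read on the complement of the ball, and it gives \eqref{eq:ass.dissip} with $C_2 = 1$, $c_2 = 1$, $m = 1$, $b = 2$.

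For the control of low modes, the $P_k$ are Fourier multipliers, so they commute with the semigroup $\T_t$, which is itself the multiplier $e^{-t|\xi|^2}$. The control operator $B \in \cL(L^2(\omega), L^2(\R^d))$ is extension by zero, so $B^\star$ is restriction to $\omega$, and the representation \eqref{eq:Sigma_T=B} holds. \Cref{thm:LS} applied to $f = P_k y$ with $N = k$ gives
\[
\norm{P_k y}_{L^2(\R^d)} \leq (C/\gamma)^{C(ka+1)} \norm{B^\star P_k y}_{L^2(\omega)},
\]
which is \cref{ass:spectral} with $C_1 = (C/\gamma)^C$, $c_1 = C a \ln(C/\gamma)$ (enlarging $C$ so that $C \geq 1$), and exponent $1$ in place of the generic $a$. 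We apply \cref{p:dual-easy} with $p = 1$. The prefactor $T^{\frac1p - 1}$ then equals $1$, so \eqref{eq:ass.approx} holds with exponent $1 < 2 = b$ and no singular prefactor. The complex-valued formulation of \cref{thm:LS} is harmless, since real $y$ satisfy it as well.

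\Cref{thm:LR} then yields small-time null controllability, with cost $C_0 e^{c_0/T}$ since $\sigma = \frac{1 \cdot 1}{2-1} = 1$. The only points requiring care are the following. First, $P_k$ should be taken as an orthogonal projection on $Y$, with the correct real/complex setting. Second, the abstract framework of \cref{def:linear-io} must hold for $p = 1$; here $B$ is bounded and the semigroup is contractive, so the mild solution formula defines $\Sigma_t$ continuously. Neither should be a real obstacle; the substantive input is entirely \cref{thm:LS}, which we take as given.
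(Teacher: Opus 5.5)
Your proposal is correct and follows essentially the same route as the paper. The ingredients match step for step: Fourier-multiplier projections $P_k$ onto $|\xi|\le k$, dissipation with $m=1$, $b=2$, the Logvinenko--Sereda inequality of \cref{thm:LS} as the spectral inequality, \cref{p:dual-easy} with $p=1$ (so no singular prefactor in $T$), and finally \cref{thm:LR}. You also correctly read \eqref{eq:heat-etk2} as applying to data whose Fourier transform vanishes on $B(0,k)$, which is what the dissipation assumption actually requires.
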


\begin{proof}
    Define $P_k$ as the projection on the Fourier modes $|\xi| \leq k$.
    More precisely, set
    \begin{equation}
        P_k y := \mathcal{F}^{-1} \left( \xi \mapsto \widehat{y}(\xi) \mathbf{1}_{|\xi| \leq k} \right).
    \end{equation}
    First, by \eqref{eq:heat-etk2}, the dissipation assumption \eqref{eq:ass.dissip} holds immediately with $m = 1$ and $b = 2$.

    Second, since $\omega$ is $(\gamma,a)$-thick, by \cref{thm:LS}, for any $y \in L^2(\R^d)$,
    \begin{equation}
        \norm{P_k y}_{L^2(\R^d)} \leq C_1 e^{c_1 k} \norm{P_k y}_{L^2(\omega)}
    \end{equation}
    for some constants $C_1, c_1 > 0$ depending on $a$ and $\gamma$.
    By \eqref{eq:heat-fourier}, it is clear that $P_k$ commutes with the semigroup.
    Written in the Duhamel form \eqref{eq:Sigma_T=B}, the operator $B : L^2(\omega) \to L^2(\R^d)$ is the extension by $0$.
    Thus $B^\star$ is merely the restriction to $\omega$ operator and $\norm{P_k y}_{L^2(\omega)} = \norm{B^\star P_k y}_{L^2(\omega)}$.
    Thus, by \cref{p:dual-easy}, for any $T \in (0,1]$ and $y_0 \in L^2(\R^d)$, there exists $u \in L^1((0,T);L^2(\omega))$ such that $P_k \Sigma_T(y_0,u) = 0$ and $\norm{u}_{L^1} \leq M_1 C_1 e^{c_1 k} \norm{y_0}_{L^2(\R^d)}$.
    Hence \eqref{eq:ass.approx} holds with $a = 1$.

    Thus, \cref{ass:proj+diss} holds, and the result follows from \cref{thm:LR}.
\end{proof}

\subsection{Example: 2D Kolmogorov equation}
\label{sec:kolmogorov}

In this paragraph, we investigate the small-time null controllability of a 2D Kolmogorov equation (also called Fokker--Planck equation), based on the hypoelliptic operator $z \partial_x - \partial_{zz}$.
PDEs based on this operator are ubiquitous in kinetic theory or boundary layer theory.
We consider:
\begin{equation}
    \label{eq:kolmogorov}
    \begin{cases}
        y_t + z y_x - y_{zz} = u \mathbf{1}_\omega & \text{in } (0,T) \times \R^2, \\
        y(0,x,z) = y_0(x,z) & \text{in } \R^2,
    \end{cases}
\end{equation}
where $y_0 \in L^2(\R^2)$ is the initial data and $u \in L^1((0,T);L^2(\omega))$ is a control, supported in a fixed measurable subset $\omega \subset \R^2$.
We have the following well-posedness result (see \cite[Section 2]{LeRousseauMoyano2016}).

\begin{proposition}
    For any $T > 0$, $y_0 \in L^2(\R^2)$ and $u \in L^1((0,T);L^2(\omega))$, there exists a unique solution $y \in C^0([0,T];L^2(\R^2))$ to \eqref{eq:kolmogorov}.
\end{proposition}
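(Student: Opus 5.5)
The statement is a well-posedness result: existence and uniqueness of a solution $y \in C^0([0,T];L^2(\R^2))$ of \eqref{eq:kolmogorov}. I would derive it from the explicit semigroup generated by the Kolmogorov operator, obtained by partial Fourier transform in $x$ together with a Duhamel formula for the source $f := u\mathbf{1}_\omega \in L^1((0,T);L^2(\R^2))$.

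\emph{Step 1: free dynamics.} Take the Fourier transform in $x$ only, with dual variable $\xi$. For each fixed $\xi$ the equation becomes $\hat y_t + i\xi z \hat y - \hat y_{zz} = \hat f$, a heat equation in $z$ with a purely imaginary potential. Next take the Fourier transform in $z$ as well, with dual variable $\eta$. The multiplication by $i\xi z$ becomes the transport term $-\xi \partial_\eta$, since $z \leftrightarrow i\partial_\eta$ up to sign conventions. This gives the first-order equation
\[
\partial_t \tilde y - \xi \partial_\eta \tilde y + \eta^2 \tilde y = \tilde f .
\]
Integrating along the characteristics $\eta(s) = \eta + \xi(t-s)$ yields the explicit formula
\[
\tilde y(t,\xi,\eta) = \exp\Bigl(-\int_0^t (\eta+\xi s)^2 \dd s\Bigr)\, \tilde y_0(\xi,\eta+\xi t),
\]
with the sign of $\xi$ fixed by the chosen convention. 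This formula defines operators $\T_t$ on $L^2(\R^2)$. By Plancherel and the measure-preserving change of variables $\eta \mapsto \eta + \xi t$, one gets $\norm{\T_t}\le 1$. A direct check shows that $(\T_t)_{t\ge0}$ is a semigroup. Strong continuity follows from dominated convergence, after approximating $\tilde y_0$ by continuous compactly supported functions.

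\emph{Step 2: existence.} Define $y(t) := \T_t y_0 + \int_0^t \T_{t-s} f(s)\dd s$. Since $f \in L^1((0,T);L^2)$ and the semigroup is contractive and strongly continuous, this Bochner integral is continuous in $t$ with values in $L^2$. One then checks that $y$ solves \eqref{eq:kolmogorov} in the sense of distributions. I would do this by testing against $\phi \in C^\infty_c((0,T)\times\R^2)$ and using that $\T_t$ solves the equation classically on Schwartz data. Density and the bound $\norm{y}_{C^0L^2} \le \norm{y_0}_{L^2} + \norm{u}_{L^1L^2}$ then extend this to general data.

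\emph{Step 3: uniqueness.} Uniqueness is where I expect the main obstacle, because the coefficient $z$ is unbounded, so a naive energy estimate on a merely $C^0L^2$ distributional solution is not justified. Take the difference $w$ of two solutions, so that $w$ solves the homogeneous problem with $w(0)=0$. My first attempt would be a duality argument. Pair $w$ with $\T^*_{t-s}\psi$, where $\psi$ is Schwartz and $\T^*$ is the adjoint semigroup, which has the same explicit structure with $z\partial_x$ reversed. This shows that $\langle w(t),\psi\rangle$ is constant in time, hence zero. Justifying this requires $\T^*_{\cdot}\psi$ to stay in a class of test functions with enough decay to absorb $z$. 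That holds because the Fourier formula preserves the Schwartz class: the Gaussian factor and the shear in $\eta$ both map Schwartz functions to Schwartz functions.

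As a fallback, I would cut off with $\chi(z/R)$ and $\chi(x/R)$ in the energy identity and let $R \to \infty$. This works, with the commutator terms vanishing, once the extra regularity $w \in L^2(H^1_z)$ has been established.
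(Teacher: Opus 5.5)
Your proof is correct, but it does not follow the paper, which gives no argument for this proposition and simply refers to \cite[Section 2]{LeRousseauMoyano2016}. You instead build everything explicitly from the constant-coefficient, quadratic structure:
the double Fourier transform turns $z\partial_x$ into the shear $-\xi\partial_\eta$;
characteristics give an explicit contraction semigroup $\T_t$;
Duhamel's formula gives existence with $\norm{y}_{C^0L^2}\le\norm{y_0}_{L^2}+\norm{u}_{L^1L^2}$;
duality with the adjoint semigroup gives uniqueness among all distributional solutions in $C^0([0,T];L^2(\R^2))$, not merely among mild ones.
The adjoint semigroup is generated by $z\partial_x+\partial_{zz}$, has the same Fourier structure, and preserves the Schwartz class, which is what makes the duality work.

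Your route produces exactly the objects the paper uses afterwards. First, it gives the representation \eqref{eq:Sigma_T=B} with $B$ the extension by zero, which \cref{p:dual-hard} requires in the proof of \cref{thm:kolmogorov}. Second, your shear $\eta\mapsto\eta+\xi t$ is the Fourier-side counterpart of the change of variables $\phi(t,x,z)=y(t,x+zt,z)$ that leads to \eqref{eq:phi-explicit} in \cref{p:kolmogorov-dissip}. So well-posedness and dissipation come out of one computation. The citation keeps the notes short. A general generation argument (for instance m-dissipativity plus Lumer--Phillips) would need no explicit formula and would survive variable coefficients, whereas your argument depends essentially on the quadratic structure.

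Two small precisions on your uniqueness step.
\begin{itemize}
\item What the duality argument shows is that $s\mapsto\langle w(s),\T^*_{t-s}\psi\rangle$ is constant on $[0,t]$. Hence $\langle w(t),\psi\rangle=\langle w(0),\T^*_t\psi\rangle=0$; it is not $\langle w(t),\psi\rangle$ itself that is constant in time.
\item To use $\theta(s)\,\T^*_{t-s}\psi$ as a test function, you must cut off with $\chi(x/R)\chi(z/R)$. The commutator with $z\partial_x$ has $L^2$ norm $\cO(R^{-1}\norm{z\,\T^*_{t-s}\psi}_{L^2})$, and this is exactly where the Schwartz decay absorbs the unbounded coefficient.
\end{itemize}
Your energy-estimate fallback is therefore unnecessary. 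It would in any case require first proving $w\in L^2(H^1_z)$ for a merely distributional solution, which is not immediate.
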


Thus, we have an abstract linear control system with $Y = L^2(\R^2)$, $U = L^2(\omega)$ and $p = 1$.

\medskip

To illustrate that the theory exposed in \cref{sec:LR-iteration} can be applied with projections which are not on eigenspaces of the considered operator, we will here use the same projections on Fourier modes as for the heat equation in \cref{sec:heat-Rd}, thus re-using the spectral theory of \cref{sec:heat-Rd-spectral} involving thick sets and Logvinenko--Sereda inequality.

\subsubsection{Energy dissipation}

The following decay estimate is also proved in \cite[Section 3]{Zhang2016}.

\begin{proposition}
    \label{p:kolmogorov-dissip}
    There exists $c_0 := (4-\sqrt{13})/6 > 0$ such that, for any $k \geq 1$ and $y_0 \in L^2(\R^2)$ such that $\widehat{y_0} = 0$ on $B(0,k)$, we have, for any $t \geq 0$,
    \begin{equation}
        \norm{y(t)}_{L^2} \leq e^{-c_0 \min \{ t, t^3 \} k^2} \norm{y_0}_{L^2}.
    \end{equation}
\end{proposition}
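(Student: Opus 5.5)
Since the equation has polynomial coefficients, I will work on the Fourier side. Take the Fourier transform in $(x,z)$ with dual variables $(\xi,\eta)$. The transport term $z y_x$ becomes $-\xi \partial_\eta \widehat{y}$, up to a sign convention, and $-y_{zz}$ becomes $\eta^2 \widehat{y}$. The free equation is therefore the first-order equation
\begin{equation}
\partial_t \widehat{y} - \xi \partial_\eta \widehat{y} + \eta^2 \widehat{y} = 0.
\end{equation}
Solving it by characteristics ($\xi$ is conserved, $\eta$ moves linearly in $t$) gives
\begin{equation}
\widehat{y}(t,\xi,\eta) = \exp\Big( -\int_0^t (\eta + \xi s)^2 \dd s \Big) \widehat{y_0}(\xi, \eta + \xi t),
\end{equation}
possibly with $\xi$ replaced by $-\xi$ depending on conventions. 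The map $(\xi,\eta) \mapsto (\xi,\eta+\xi t)$ preserves Lebesgue measure. By Plancherel it therefore suffices to bound the multiplier uniformly on the support. Writing $\zeta := \eta + \xi t$, this support is the set of $(\xi,\zeta)$ with $|(\xi,\zeta)| \geq k$ where $\widehat{y_0}$ may be nonzero.

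The core of the proof is an elementary lower bound on
\begin{equation}
Q_t(\xi,\zeta) := \int_0^t (\zeta - \xi s)^2 \dd s = t\zeta^2 - t^2 \xi\zeta + \tfrac{t^3}{3}\xi^2 ,
\end{equation}
namely $Q_t(\xi,\zeta) \geq c_0 \min\{t,t^3\} (\xi^2+\zeta^2)$. For $t \leq 1$ I will show $Q_t \geq c_0 t^3(\xi^2+\zeta^2)$, using $t \geq t^3$ to lower the $\zeta^2$ coefficient to $t^3$. By homogeneity this reduces to positivity of the quadratic form
\begin{equation}
\begin{pmatrix} t^3/3 - c & -t^2/2 \\ -t^2/2 & t - c \end{pmatrix}.
\end{equation}
To make the problem scale-free, I substitute $\xi = \xi'/t$. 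The form then becomes $t\,(\zeta^2 - \xi'\zeta + \xi'^2/3)$. Its smallest eigenvalue is that of the matrix $\begin{pmatrix}1/3 & -1/2\\ -1/2 & 1\end{pmatrix}$, which is
\begin{equation}
\lambda_{\min} = \frac{4-\sqrt{13}}{6}.
\end{equation}
This matches the stated $c_0$ exactly, which confirms the approach. The scaling is what produces the $\min\{t,t^3\}$: with $\zeta$ fixed and $\xi = \xi'/t$, the norm satisfies $\xi^2+\zeta^2 \leq t^{-2}(\xi'^2+\zeta^2)$ when $t \leq 1$, so $Q_t \geq t \lambda_{\min}(\xi'^2+\zeta^2) \geq t^3\lambda_{\min}(\xi^2+\zeta^2)$. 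For $t \geq 1$, the bound $\xi^2 + \zeta^2 \leq \xi'^2+\zeta^2$ (since $t \geq 1$) gives $Q_t \geq t\lambda_{\min}(\xi^2+\zeta^2)$ in the same way.

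To conclude, note that on the support $\xi^2+\zeta^2 \geq k^2$, so the multiplier is at most $e^{-c_0 \min\{t,t^3\}k^2}$, and Plancherel gives the estimate.

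The main obstacle is the bookkeeping rather than any deep step. I need to apply the change of variables consistently, so that the vanishing of $\widehat{y_0}$ on $B(0,k)$ is expressed in the variables $(\xi,\zeta)$ where $Q_t$ is evaluated. I also need to verify the scaling inequality in both regimes of $t$ without losing the exact constant. Justifying the Fourier-side formula for the $L^2$ solution is routine: I would check it for Schwartz data and extend by density and uniqueness.
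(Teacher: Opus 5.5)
Your proposal is correct and follows essentially the paper's route. The paper straightens the characteristics in physical space via $\phi(t,x,z) := y(t,x+zt,z)$ before taking the Fourier transform, whereas you solve the Fourier-side transport equation by characteristics and then undo the measure-preserving shear $(\xi,\eta)\mapsto(\xi,\eta+\xi t)$; this is the dual form of the same step and gives the same multiplier, up to the sign of the cross term. Likewise, your eigenvalue computation for the rescaled form $\zeta^2-\xi'\zeta+\xi'^2/3$ is the paper's optimized Young inequality (with $\alpha=(2+\sqrt{13})/3$) in another guise: both amount to $Q_t \geq c_0 (t\zeta^2 + t^3\xi^2)$, from which the $\min\{t,t^3\}$ bound follows immediately.
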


\begin{proof}
    We perform a change of variables to follow the characteristics of the Kolmogorov equation.
    Set $\phi(t,x,z) := y(t,x+z t,z)$ (or conversely $y(t,x,z) = \phi(t,x-z t,z)$).
    In particular, $\phi(0) = y_0$.

    Then, since $\partial_t y = \partial_t \phi - z \partial_x \phi$, $\partial_x y = \partial_x \phi$ and $\partial_{zz} y = \partial_{zz} \phi - 2 t \partial_{xz} \phi + t^2 \partial_{xx} \phi$, we get
    \begin{equation}
        (\partial_t - \partial_{zz} + 2 t \partial_{xz}- t^2 \partial_{xx}) \phi = 0.
    \end{equation}
    Hence we have a PDE with time-varying but constant in space coefficients.
    Taking the Fourier transform in both directions $(x,z) \to (\xi,\zeta)$, we get
    \begin{equation}
        (\partial_t + \zeta^2 + 2 t \xi \zeta + t^2 \xi^2) \hat{\phi} = 0.
    \end{equation}
    Explicit integration leads to
    \begin{equation}
        \label{eq:phi-explicit}
        \hat{\phi}(t,\xi,\zeta) = \exp \left( - t \zeta^2 - t^2 \xi \zeta - \frac{t^3}{3} \xi^2 \right) \hat{\phi_0}(\xi,\zeta).
    \end{equation}
    Writing
    \begin{equation}
        | t^2 \xi \zeta | \leq \frac{\alpha}{2} t \zeta^2 + \frac{1}{2\alpha} t^3 \xi^2
    \end{equation}
    and optimizing with respect to $\alpha := (2+\sqrt{13})/3$, we obtain that
    \begin{equation}
        t \zeta^2 + t^2 \xi \zeta + \frac{t^3}{3} \xi^2 \geq \frac{4-\sqrt{13}}{6} (t\zeta^2 + t^3\xi^2).
    \end{equation}
    Since the initial data is supported for $\zeta^2 +\xi^2 \geq k^2$, we obtain that
    \begin{equation}
        \norm{\phi(t)}_{L^2} \leq e^{-c_0 t \min \{1,t^2\} k^2} \norm{\phi(0)}_{L^2},
    \end{equation}
    which is the claimed estimate since, for any $t \geq 0$, $\norm{\phi(t)}_{L^2} = \norm{y(t)}_{L^2}$.
\end{proof}

\subsubsection{Small-time null controllability}

We now recover the following theorem, due to \cite{LeRousseauMoyano2016} (when $\omega$ has a particular product structure $\omega = \omega_x \times \omega_z$) and \cite{Zhang2016} for general $\omega$.
Both of these references make the stronger assumption on the control set that it contains open balls of fixed radius near all points.

\begin{theorem}
    \label{thm:kolmogorov}
    Let $\gamma \in (0,1]$ and $a > 0$.
    Assume that $\omega$ is $(\gamma,a)$-thick.
    Then the Kolmogorov equation \eqref{eq:kolmogorov} is small-time null controllable.
\end{theorem}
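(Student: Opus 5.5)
We apply \cref{thm:LR} in its non-commutative form, with the same Fourier projections as in the proof for the heat equation in $\R^d$. Namely, we set
\begin{equation}
    P_k y := \mathcal{F}^{-1} \left( \widehat{y} \, \mathbf{1}_{|(\xi,\zeta)| \leq k} \right).
\end{equation}
These are orthogonal projections on $Y = L^2(\R^2)$. They do \emph{not} commute with the Kolmogorov semigroup, because the transport term $z\partial_x$ shears frequencies: $(\xi,\zeta) \mapsto (\xi, \zeta + t\xi)$. So \cref{p:dual-easy} is unavailable, and we instead go through \cref{p:dual-hard}, which only requires the spectral inequality together with the dissipation estimate \eqref{eq:ass.dissip}.

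\textbf{Step 1 (spectral inequality).} The control operator $B : L^2(\omega) \to L^2(\R^2)$ is extension by zero, and $B^\star$ is restriction to $\omega$. Since $\omega$ is $(\gamma,a)$-thick, \cref{thm:LS} applied with $N = k$ gives
\begin{equation}
    \norm{P_k y}_{L^2(\R^2)} \leq C_1 e^{c_1 k} \norm{B^\star P_k y}_{L^2(\omega)}.
\end{equation}
This is \cref{ass:spectral} with $a = 1$. It is a static statement, so it is identical to the heat case.

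\textbf{Step 2 (dissipation).} If $P_k y_0 = 0$, then $\widehat{y_0}$ vanishes on $B(0,k)$. For $T \in (0,1]$ we have $\min\{T,T^3\} = T^3$, so \cref{p:kolmogorov-dissip} gives
\begin{equation}
    \norm{\Sigma_T(y_0,0)}_{L^2} \leq e^{-c_0 T^3 k^2} \norm{y_0}_{L^2}.
\end{equation}
This is \eqref{eq:ass.dissip} with $C_2 = 1$, $b = 2$, $m = 3$. For $k = 0$ the estimate holds trivially, since the semigroup is a contraction. In particular $a = 1 < 2 = b$.

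\textbf{Step 3 (iteration).} \cref{p:dual-hard} now yields the one-step contraction of \cref{lem:LR-contract}, with a control bound carrying an extra prefactor $\tau^{\frac1p - 1}$. Since $p = 1$, this prefactor equals $1$, so no relaxation is needed. One should check that the proof of \cref{thm:LR} uses \cref{ass:proj+diss} only through \cref{lem:LR-contract}. It does: Steps 2--6 only invoke \eqref{eq:ass.contract-u} and \eqref{eq:ass.contract-y}, and the constant $2M_1C_1$ simply replaces $C_1$. Running the time iteration then gives small-time null controllability, with cost $C_0 \exp(c_0 T^{-\sigma})$ where $\sigma = \frac{am}{b-a} = 3$.

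\textbf{Main obstacle.} The only delicate point is the non-commutation of $P_k$ with the dynamics. It is handled entirely by \cref{p:dual-hard}, which needs dissipation for arbitrary states in $\Ran(\Id - P_k)$. That is exactly what \cref{p:kolmogorov-dissip} provides, since it is stated for any $y_0$ whose Fourier transform vanishes on $B(0,k)$, with no invariance of that set required.
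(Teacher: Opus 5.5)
Your proposal is correct and follows the paper's proof essentially verbatim. Both use the Fourier projections onto the ball of radius $k$, the Logvinenko--Sereda inequality to get \cref{ass:spectral} with $a=1$, \cref{p:kolmogorov-dissip} to get \eqref{eq:ass.dissip} with $m=3$ and $b=2$, and \cref{p:dual-hard} to handle the lack of commutation before running \cref{thm:LR}. Your extra checks make explicit what the paper leaves implicit: the prefactor $\tau^{\frac1p-1}=1$ since $p=1$, the case $k=0$, and the fact that the proof of \cref{thm:LR} only uses the one-step contraction of \cref{lem:LR-contract}.
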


\begin{proof}
    Define $P_k$ as the projection on the Fourier modes $|\xi|^2+|\zeta|^2 \leq k^2$.
    First, by \cref{p:kolmogorov-dissip}, the dissipation condition \eqref{eq:ass.dissip} holds with $m = 3$ and $b = 2$.

    Second, since $\omega$ is $(\gamma,a)$-thick, by \cref{thm:LS}, for any $y \in L^2(\R^2)$,
    \begin{equation}
        \norm{P_k y}_{L^2(\R^2)} \leq C_1 e^{c_1 k} \norm{P_k y}_{L^2(\omega)}
    \end{equation}
    for some constants $C_1, c_1 > 0$ depending on $a$ and $\gamma$.
    Written in the Duhamel form \eqref{eq:Sigma_T=B}, the operator $B : L^2(\omega) \to L^2(\R^2)$ is the extension by $0$.
    Thus $B^\star$ is merely the restriction to $\omega$ operator and $\norm{P_k y}_{L^2(\omega)} = \norm{B^\star P_k y}_{L^2(\omega)}$.

    Contrary to the heat case, here $P_k$ does not commute with the semigroup.
    Nevertheless, by \cref{p:dual-hard}, the conclusion of \cref{lem:LR-contract} holds with $a = 1$.

    Thus the result follows from \cref{thm:LR}.
\end{proof}

\begin{remark}
    The Kolmogorov equation \eqref{eq:kolmogorov} is a particular case of a more general class of hypoelliptic equations, the Ornstein--Uhlenbeck equations, studied in \cite{BeauchardPravdaStarov2018}, of the form
    \begin{equation}
        y_t - \frac 12 \operatorname{Tr} (Q \nabla^2 y) - \langle Bx, \nabla \cdot y \rangle = u \mathbf{1}_\omega.
    \end{equation}
    Our 2D Kolmogorov case is
    \begin{equation}
        Q =
        \begin{pmatrix}
            0 & 0 \\ 0 & 2
        \end{pmatrix}
        \quad \text{and} \quad
        B =
        \begin{pmatrix}
            0 & -1 \\ 0 & 0
        \end{pmatrix}
    \end{equation}
    which satisfies the condition $\operatorname{rank} (Q^{\frac 12}, B Q^{\frac 12}) = 2$.
    In particular, the dissipation estimate \cref{p:kolmogorov-dissip} is included in \cite[Lemma 3.1]{BeauchardPravdaStarov2018} and \cref{thm:kolmogorov} in \cite[Theorem 1.3]{BeauchardPravdaStarov2018}.
\end{remark}

\subsection{Comments on our formalism}
\label{sec:weiss}

\subsubsection{Time continuity and comparison with the usual definition}

Our \cref{def:linear-io} of an \emph{abstract linear control system} slightly differs from the reference one given by \cite[Definition 2.1]{Weiss1989}.
Indeed, as in \cite[Section 3]{Salamon1988}, we require time continuity.
Given Banach spaces $Y$ and $U$, and $p \in [1,\infty]$, Weiss' definition of \cite{Weiss1989} is:

\begin{definition}
    \label{def:linear-io-weiss}
    An \emph{abstract linear control system} on $Y$ is a pair $\Sigma = (\T,\Phi)$, where $\T = (\T_t)_{t \geq 0}$ is a strongly continuous semigroup on $Y$ and $\Phi = (\Phi_t)_{t \geq 0}$ is a family of bounded operators from $L^p(\R_+;U)$ to $Y$ such that, for all $u,v \in L^p(\R_+;U)$ and $\tau,t \geq 0$,
    \begin{equation}
        \label{eq:composition-weiss}
        \Phi_{\tau+t}(u \underset{\tau}{\diamond} v)
        = \T_t \Phi_{\tau} u + \Phi_t v.
    \end{equation}
\end{definition}

Recall the definition of a strongly continuous semigroup on $Y$ (see \cite{Pazy1983} for more details).

\begin{definition}
    A \emph{strongly continuous semigroup on $Y$} is a family $\T = (\T_t)_{t \geq 0}$ of bounded linear operators from $Y$ to $Y$ such that $\T_0 = \Id$, $\T_{t+s} = \T_t \circ \T_s$ (for all $t,s \geq 0$), and $\lim_{t \to 0} \T_t y_0 = y_0$ for all fixed $y_0 \in Y$.
\end{definition}

It is almost straightforward to go back and forth between Weiss' definition and ours.

\begin{lemma}
    \cref{def:linear-io} and \cref{def:linear-io-weiss} are equivalent for $p \in [1,\infty)$.

    When $p = \infty$, \cref{def:linear-io} is equivalent to \cref{def:linear-io-weiss} with the added constraint that, for any $u \in L^p(\R_+;U)$, $t \mapsto \Phi_t u$ is continuous on $\R_+$.
\end{lemma}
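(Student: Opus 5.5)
The plan is to construct the correspondence explicitly in both directions and then check the axioms, using linearity to split $\Sigma_t$ into a free part and a forced part.

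\emph{From \cref{def:linear-io} to \cref{def:linear-io-weiss}.} Given $\Sigma$, set $\T_t y_0 := \Sigma_t(y_0,0)$ and $\Phi_t u := \Sigma_t(0,u)$. Then linearity gives $\Sigma_t(y_0,u) = \T_t y_0 + \Phi_t u$.
\begin{itemize}
\item Boundedness of $\T_t$ and $\Phi_t$ is inherited from continuity of $\Sigma_t$.
\item $\T_0 = \Id$ follows from $\Sigma_0(y_0,u) = y_0$.
\item The semigroup law comes from \eqref{eq:Sigma-compo} with $u = v = 0$, since $0 \underset{\tau}{\diamond} 0 = 0$.
\item Strong continuity follows from time continuity of $t \mapsto \Sigma_t(y_0,0)$.
\item For \eqref{eq:composition-weiss}, apply \eqref{eq:Sigma-compo} with $y_0 = 0$. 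This gives $\Phi_{t+\tau}(u \underset{\tau}{\diamond} v) = \Sigma_t(\Phi_\tau u, v) = \T_t \Phi_\tau u + \Phi_t v$.
\item Continuity of $t \mapsto \Phi_t u$ holds automatically, which covers the extra constraint in the case $p = \infty$.
\end{itemize}

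\emph{From \cref{def:linear-io-weiss} to \cref{def:linear-io}.} Conversely, define $\Sigma_t(y_0,u) := \T_t y_0 + \Phi_t u$. Linearity and continuity are immediate. Next, $\Phi_0 = 0$: take $\tau = t = 0$ in \eqref{eq:composition-weiss}, noting $u \underset{0}{\diamond} v = v$, to get $\Phi_0 v = \Phi_0 u + \Phi_0 v$ for all $u,v$. Hence $\Sigma_0(y_0,u) = y_0$. The composition property is then a direct computation:
\begin{equation*}
\Sigma_{t+\tau}(y_0, u \underset{\tau}{\diamond} v) = \T_t \T_\tau y_0 + \T_t \Phi_\tau u + \Phi_t v = \Sigma_t(\Sigma_\tau(y_0,u),v).
\end{equation*}
What remains is continuity of $t \mapsto \Phi_t u$. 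For $p = \infty$ it is exactly the added hypothesis, so only $p < \infty$ needs work.

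\emph{Main obstacle: continuity of $t \mapsto \Phi_t u$ for $p < \infty$.}
\begin{itemize}
\item \emph{Causality.} Taking $v = 0$ in \eqref{eq:composition-weiss} gives $\Phi_\tau(u \underset{\tau}{\diamond} 0) = \Phi_\tau u$, so $\Phi_\tau u$ depends only on $u|_{[0,\tau)}$.
\item \emph{Local boundedness.} I expect to need $\sup_{t \le T}\norm{\Phi_t} < \infty$. Fix $t_0 \le T$ and split $u = u\mathbf 1_{[0,t_0)} + \mathbf 1_{[t_0,\infty)}u$. For $s \le T$, causality and \eqref{eq:composition-weiss} give $\Phi_{t_0+s}u = \T_s\Phi_{t_0}u + \Phi_s w$, where $w = u(\cdot+t_0)$ and $\norm{w}_{L^p} \le \norm{u}_{L^p}$. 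Consequently $\norm{\Phi_{t_0+s}} \le \norm{\T_s}\norm{\Phi_{t_0}} + \norm{\Phi_s}$. Hence a bound on $[0,\varepsilon]$ propagates by induction to any $[0,T]$, using the standard bound $\sup_{s\le T}\norm{\T_s}<\infty$.
\item \emph{Bound near $0$.} I would argue by contradiction with Banach--Steinhaus, or use the known result \cite[Proposition 2.3]{Weiss1989}, which I would cite. Weiss proves that $\Phi_t u$ is continuous in $t$ for $p<\infty$ and gives the integral representation behind \eqref{eq:Sigma_T=B}.
\item \emph{Continuity from boundedness.} Right-continuity at $t_0$ follows from the splitting identity: $\Phi_{t_0+s}u - \Phi_{t_0}u = (\T_s - \Id)\Phi_{t_0}u + \Phi_s w_s$, with $w_s = \mathbf 1_{[0,s)}u(\cdot+t_0)$. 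The first term tends to $0$ by strong continuity. For the second, $\norm{\Phi_s w_s} \le M \norm{u}_{L^p(t_0,t_0+s)} \to 0$, and this step is where $p < \infty$ is used.
\item \emph{Left-continuity.} Write $\Phi_{t_0}u = \T_s\Phi_{t_0-s}u + \Phi_s(\text{shifted tail})$. The tail term vanishes as $s \to 0$ as before, and $\T_s\Phi_{t_0-s}u - \Phi_{t_0-s}u \to 0$ by uniform boundedness of $\Phi$ combined with strong continuity applied to a precompact set. Precompactness is the delicate point; to handle it I would approximate $u$ in $L^p$ by step functions, relying on the uniform bounds.

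Since I rely on Weiss for the initial bound on $[0,\varepsilon]$, I would simply cite \cite[Proposition 2.3]{Weiss1989} for the continuity of $t\mapsto\Phi_t u$ when $p<\infty$ rather than redo it.
\end{itemize}
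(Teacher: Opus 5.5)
Your proposal is correct and follows the paper's proof: the same correspondence $\T_t y_0 := \Sigma_t(y_0,0)$, $\Phi_t u := \Sigma_t(0,u)$ and conversely $\Sigma_t(y_0,u) := \T_t y_0 + \Phi_t u$, with continuity of $t \mapsto \Phi_t u$ for $p<\infty$ taken from \cite[Proposition 2.3]{Weiss1989}. Your derivation of $\Phi_0 = 0$, which is needed for $\Sigma_0(y_0,u) = y_0$, makes explicit a point the paper leaves implicit.

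If you want your sketch to be self-contained, drop the Banach--Steinhaus and precompactness steps; the Banach--Steinhaus one would need pointwise bounds in $t$ that you do not yet have. Composition with $u = 0$ gives $\Phi_t v = \Phi_{t+\tau}(0 \underset{\tau}{\diamond} v)$, hence $\norm{\Phi_t} \leq \norm{\Phi_T}$ for $t \leq T$. Splitting $u = u \underset{s}{\diamond} u(\cdot+s)$ then gives $\Phi_{t+s} u - \Phi_t u = \T_t \Phi_s (u \mathbf{1}_{[0,s)}) + \Phi_t (u(\cdot+s) - u)$. This tends to $0$ uniformly for $t \in [0,T]$ as $s \to 0^+$, by continuity of translations in $L^p$, which yields both one-sided continuities at once, exactly as in the paper's hint.
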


\begin{proof}
    Given an abstract linear control system $\Sigma$ in the sense of \cref{def:linear-io}, it suffices to define $\T_t y := \Sigma_t(y,0)$ for any $y \in Y$ and $t \geq 0$ and $\Phi_t u := \Sigma_t(0,u)$ for any $u \in L^p(\R_+;U)$ and $t \geq 0$.
    One easily checks that $(\T,\Phi)$ then satisfies \cref{def:linear-io-weiss} (where \eqref{eq:composition-weiss} follows from \eqref{eq:Sigma-compo}).

    \medskip

    Conversely, given an abstract linear control system $(\T,\Phi)$ in the sense of \cref{def:linear-io-weiss}, it suffices to define $\Sigma_t(y_0,u) := \T_t y_0 + \Phi_t u$.
    One easily checks that $\Sigma$ is a family of continuous linear maps satisfying \eqref{eq:Sigma-compo} (thanks to \eqref{eq:composition-weiss}).
    The only subtlety is that \cref{def:linear-io} requires that $t \mapsto \Sigma_t(y_0,u)$ is continuous in time for any fixed $y_0 \in Y$ and $u \in L^p(\R_+;U)$.

    The continuity of $t \mapsto \T_t y_0$ from $\R_+$ to $Y$ is a basic property of strongly continuous semigroups (resulting from the required continuity at $t = 0$, the uniform boundedness principle and the semigroup property, see \cite[Chapter 1, Corollary 2.3]{Pazy1983}).

    The continuity of $t \mapsto \Phi_t u$ from $\R_+$ to $Y$ (for a fixed $u \in L^p(\R_+;U)$) is more subtle.
    When $p \in [1,\infty)$, one can use that $u \mathbf{1}_{(0,t)} \to 0$ in $L^p$ as $t \to 0$, and that translations are continuous in $L^p$ to prove that $t \mapsto \Phi_t u$ is indeed continuous (see \cite[Proposition 2.3]{Weiss1989}).
\end{proof}

\begin{remark}
    When $p = \infty$, it appears to be unknown whether the time continuity of $t \mapsto \Phi_t u$ follows from \cref{def:linear-io-weiss} (see \cite[Problem 2.4]{Weiss1989} for the initial statement, and \cite[p.\ 23]{MirochenkoPrieur2020} or \cite[Section 6]{JacobRobertPartingtonSchwenninger2018} recent accounts, and \cite[Theorem 4.3.1]{Staffans2005} for continuity with regulated controls).
\end{remark}

\subsubsection{Relation with the evolution equation}

The relation between a time-independent uncontrolled evolution equation of the form $\dot{y} = A y$ with initial data $y(0) = y_0$ and its integrated version $y(t) = \T_t y_0$ is well understood.
$A$ is called the \emph{generator} of the semigroup $\T_t$.
One knows how to define $A$ from $\T$, and conversely one knows many sufficient conditions on $A$ to ensure that it generates a strongly continuous semigroup $\T$.
We refer to the classical monograph \cite{Pazy1983}.

For a controlled equation of the form $\dot{y} = Ay + Bu$ and $u \in L^p(\R_+;U)$, many results are known.

Given $A$ and $B$, the concept of ``admissibility'' (of $B$ with respect to $A$) determines whether they will generate an abstract linear control system.
We refer to \cite{Weiss1989} or \cite{TucsnakWeiss2009}.

Conversely, given an abstract linear control system as in \cref{def:linear-io-weiss}, when $p < \infty$, one can construct an operator $B$ (see e.g.\ \cite[Thm.\ 3.9]{Weiss1989}).
When $p = \infty$, the existence is not guaranteed (see \cite[Remark 3.7]{Weiss1991} for a counter-example and \cite[Thm.\ 4.2.1]{Staffans2005} for regulated controls).

More generally, one can also consider abstract linear control systems along with observation operators (see e.g.\ \cite{TucsnakWeiss2014} and \cite{JacobPartington2004} for a survey of admissibility of observation operators).

\subsection{Why only null controllability?}
\label{sec:why-null}

For ODEs (such as in \cref{sec:intro}) and for some conservative PDEs (such as the semilinear wave equation in \cref{sec:wave}), one can often prove \emph{exact controllability}: for any initial state $y_0\in Y$ and any target $y_T\in Y$, there exists a control $u$ steering the system from $y_0$ to $y_T$ in time $T$.

However, for the dissipative PDEs studied in this section, one cannot hope for such a result, due to the regularizing effect of the dynamics.
Roughly speaking, as soon as $t>0$, solutions become smoother than the initial data, even with a non-zero control.
Consequently, for fixed $T>0$, the set of states that can be attained at time $T$ is usually contained in a proper subspace of the state space $Y$.
In particular, one cannot expect to reach an arbitrary target $y_T \in Y$.

\begin{example}
    Consider the 1D heat equation~\eqref{eq:heat-1D-internal} with internal control supported in $\omega=(\tfrac12,1)$.
    Let $y_0\in L^2(0,1)$, $T>0$, and $u\in L^2((0,T)\times\omega)$, and denote by $y$ the corresponding solution.
    On the uncontrolled region $(0,\tfrac12)$, the function $y$ satisfies the \emph{homogeneous} heat equation (since $u\equiv 0$ there).
    Standard interior regularity (which can be proved by energy estimates and bootstrapping) then yields $y(T) \in C^\infty(0,\tfrac 12)$.
    This rules out exact controllability in the state space $Y=L^2(0,1)$.
    Note that $y(T)$ is actually even smoother than $C^\infty$, since it typically enjoys analytic regularity.
\end{example}

\paragraph{Reachable set.}
Obtaining a precise description of the \emph{reachable set} at time $T$ (i.e.\ the range of the input-to-state map) is a delicate problem, even in very simple settings.
For instance, for the 1D heat equation controlled from both endpoints, the program initiated in \cite{FattoriniRussell1971} was only completed much more recently in \cite{HartmannKellayTucsnak2020,HartmannOrsoni2021}; see also \cite[Section~3]{ErvedozaLeBalchTucsnak2022} for a brief survey of the many intermediate contributions.
Beyond its intrinsic interest, a sharp characterization of reachable states unlocks new techniques for nonlinear control (see \cref{sec:reachable}).

\paragraph{Null controllability and controllability to trajectories.}
In more complex PDE models, a common and robust objective is \emph{controllability to trajectories}.
Fix a \emph{reference pair} $(\bar y_0,\bar u)$, and let $\bar y$ be the associated solution.
One asks whether, for any other initial state $y_0\in Y$, there exists a control $u\in L^p((0,T);U)$ such that $y(T) = \bar{y}(T)$.
The special case $\bar y_0=0$, $\bar u=0$ (hence $\bar y\equiv 0$) is the \emph{null controllability} problem:
given $y_0\in Y$, can one find $u$ such that $y(T)=0$?
This is also the most natural goal in many applications, where the objective is to drive the system to rest.

For \emph{linear} systems, controllability to trajectories is equivalent to null controllability: indeed, if $\bar y$ solves the system with data $(\bar y_0,\bar u)$, then the difference $z:=y-\bar y$ solves the same linear dynamics with initial condition $z(0)=y_0-\bar y_0$ and control $u-\bar u$; thus $y(T)=\bar y(T)$ is equivalent to $z(T)=0$.

\paragraph{Operator theoretic definition.}
We can formalize these notions using the input-output maps introduced in \cref{sec:weiss}.
For $T>0$, recall that the \emph{semigroup} $\T_T : Y \to Y$ and the \emph{control map} $\Phi_T : L^p((0,T);U) \to Y$ are defined by $\T_T y_0 := \Sigma_T(y_0, 0)$ and $\Phi_T u := \Sigma_T(0, u)$.
By linearity, the final state is given by $y(T) = \T_T y_0 + \Phi_T u$.
Thus, with these notations
\begin{itemize}
    \item \emph{Exact controllability} is the surjectivity condition $\Ran \Phi_T = Y$.
    \item \emph{Null controllability} is the inclusion $\Ran \T_T \subset \Ran \Phi_T$.
    \item \emph{Approximate controllability} is the density condition $\overline{\Ran \Phi_T}=Y$.
\end{itemize}
For dissipative systems, since $\T_T$ maps $Y$ into a smaller subspace of smooth functions, the inclusion $\Ran \T_T \subset \Ran \Phi_T$ is much easier to satisfy than $\Ran \Phi_T = Y$.

\newpage
\section{From linear to local nonlinear controllability}
\label{sec:lin-2-nonlin}

In this section, we consider a nonlinear control system.
Assuming that its linearization at the null equilibrium is controllable, we will present methods to establish the local controllability of the nonlinear system.
Heuristically, one can think that we consider systems of the form:
\begin{equation}
    \label{eq:nonlin-system}
    \dot{y} = A y + B u + N(u,y),
\end{equation}
where $N$ models a nonlinearity such that $N(0,0) = 0$ and $D N(0,0) = 0$.
For small data, one expects that the contribution of $N$ to the dynamics can be estimated perturbatively and that we can transfer the controllability of the linearized system $\dot{y} = Ay + Bu$ to \eqref{eq:nonlin-system}.

\subsection{Input-output formalism}
\label{sec:abstract-nonlinear}

As in \cref{sec:linear-io}, we will use an abstract ``input-output'' formalism which avoids talking about the objects $A$, $B$ and $N$ involved in the heuristic system \eqref{eq:nonlin-system}.
Related nonlinear definitions can for example be found in \cite[Definition 1.3]{MirochenkoPrieur2020}.
As in \cref{sec:linear-io}, fix $p \in [1,\infty]$.

\begin{definition}
    \label{def:nonlinear-io}
    We say that $\cS$ is an \emph{abstract nonlinear control system} when it is a family $(\cS_t)_{t \geq 0}$ of maps from $Y \times L^p(\R_+;U)$ to $Y$ such that:
    \begin{itemize}
        \item for all $y_0 \in Y$ and $u \in L^p(\R_+;U)$, $t \mapsto \cS_t(y_0,u)$ is continuous on $\R_+$ with $\cS_0(y_0,u) = y_0$,
        \item for all $t \geq 0$, $\cS_t(0,0) = 0$,
        \item for all $t, \tau \geq 0$, $y_0 \in Y$ and $u,v \in L^p(\R_+;U)$,
        \begin{equation}
            \label{eq:cS-compo}
            \cS_{t+\tau}(y_0,u \underset{\tau}{\diamond} v) = \cS_t(\cS_\tau(y_0,u), v).
        \end{equation}
    \end{itemize}
\end{definition}

Hence $\cS$ represents the nonlinear state transition map for the heuristic evolution equation \eqref{eq:nonlin-system}, while $\Sigma$ of \cref{def:linear-io} represents the linear state transition map for its linearized version \eqref{eq:yt=Ay+Bu}.
We give multiple concrete examples in the following subsections.

Typically, for well-posed systems in the Hadamard sense, the solution state $\cS_t(y_0,u)$ depends continuously on $y_0$ and $u$, but we will not need this property here.

\begin{remark}
    To avoid burdening the exposition, \cref{def:nonlinear-io} corresponds to globally well-posed systems.
    Since we will be considering only local properties near the equilibrium, it would be sufficient in this section that $\cS$ be defined only for small enough times, controls and initial states.
\end{remark}

\subsection{Transferring exact controllability}
\label{sec:lin-2-nonlin-exact}

We start with the easier case where the underlying linear system is exactly controllable.

\begin{definition}
    \label{def:lin-exact-cont}
    For $T > 0$, we say that the abstract linear control system $\Sigma$ is \emph{exactly controllable} in time $T$ when there exists a bounded linear map $L : Y \to L^p((0,T);U)$ such that, for all $y_1 \in Y$, the control $u := L(y_1)$ satisfies $\Sigma_T(0,u) = y_1$.
\end{definition}

Under a natural regularity assumption, the local controllability of the nonlinear system is a direct consequence of the controllability of its linearization.
To lighten the conditions to be checked, we use the notion of strong Fréchet-differentiability.
We refer the reader to \cite[Chapter 25]{Schechter1997}.

\begin{definition}
    \label{def:strong-Frechet}
    Let $E, F$ be Banach spaces and $f : E \to F$.
    We say that $f$ is \emph{strongly Fréchet-differentiable at $0 \in E$} when there exists a continuous linear map $Df(0) : E \to F$ such that
    \begin{equation}
        \norm{f(x_1) - f(x_2) - Df(0) (x_1 - x_2)}_F = \underset{x_1, x_2 \to 0}{o} \left( \norm{x_1 - x_2}_{E} \right).
    \end{equation}
\end{definition}

The following implicit function theorem is proved in \cite[Paragraph 25.13]{Schechter1997}.

\begin{lemma}
    \label{lem:TFI-Strong-Frechet}
    Let $E_1, E_2, F$ be Banach spaces and $f : E_1 \times E_2 \to F$ such that $f(0,0) = 0$.
    Assume that $f$ is strongly Fréchet-differentiable at $(0,0)$ and that $D_2 f(0,0) : E_2 \to F$ is a linear isomorphism.
    Then there exists a Lipschitz-continuous map $g:E_1\to E_2$ defined in a neighborhood of $0 \in E_1$ such that, for every $(x,y)$ in a neighborhood of $(0,0) \in E_1 \times E_2$, $f(x,y) = 0$ if and only if $y = g(x)$.
    Moreover, $g$ is strongly Fréchet-differentiable at $0$ and $Dg(0) = - (D_2 f(0,0))^{-1} D_1 f(0,0)$.
\end{lemma}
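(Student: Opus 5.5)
The plan is the classical contraction-mapping proof of the implicit function theorem, adapted to strong Fréchet differentiability in place of $C^1$ regularity. Set $L_1 := D_1 f(0,0)$, $L_2 := D_2 f(0,0)$, and $M := L_2^{-1}$, which is bounded by the open mapping theorem. For $x \in E_1$, the equation $f(x,y) = 0$ is equivalent to $y$ being a fixed point of
\begin{equation}
    \Phi_x(y) := y - M f(x,y).
\end{equation}
The key estimate comes from strong differentiability applied to the pairs $(x,y_1)$ and $(x,y_2)$:
\begin{equation}
    \Phi_x(y_1) - \Phi_x(y_2) = - M \big( f(x,y_1) - f(x,y_2) - L_2 (y_1 - y_2) \big).
\end{equation}
Therefore $\norm{\Phi_x(y_1) - \Phi_x(y_2)} \leq \frac 12 \norm{y_1 - y_2}$ whenever $\norm{x}, \norm{y_i} \leq r$, for $r$ small enough. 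This uniform contraction is exactly where strong (rather than mere) Fréchet differentiability is needed.

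Next I would check that $\Phi_x$ maps the closed ball $\bar B(0,r) \subset E_2$ into itself for $\norm{x} \leq \delta$. Comparing the points $(x,y)$ and $(0,0)$ gives $\norm{f(x,y) - L_1 x - L_2 y} \leq \varepsilon(\norm{x} + \norm{y})$, so
\begin{equation}
    \norm{\Phi_x(y)} \leq \norm{M}\big(\norm{L_1}\norm{x} + \varepsilon \norm{x} + \varepsilon r\big).
\end{equation}
With $\norm{M}\varepsilon \leq \frac 12$, this is at most $r$ once $\delta$ is small relative to $r$. The Banach fixed-point theorem then yields a unique $g(x) \in \bar B(0,r)$ with $f(x,g(x)) = 0$. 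Uniqueness in the ball gives the claimed ``if and only if'' on the neighborhood $B(0,\delta) \times B(0,r)$.

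For the Lipschitz bound, I would use the fixed-point identity:
\begin{equation}
    g(x_1) - g(x_2) = \Phi_{x_1}(g(x_1)) - \Phi_{x_2}(g(x_2)).
\end{equation}
Splitting this difference and using strong differentiability in the $x$-variable gives
\begin{equation}
    \norm{g(x_1) - g(x_2)} \leq \tfrac12 \norm{g(x_1)-g(x_2)} + \norm{M}\big(\norm{L_1} + \varepsilon\big) \norm{x_1 - x_2},
\end{equation}
hence $g$ is Lipschitz with constant $2\norm{M}(\norm{L_1}+1)$.

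Finally, for strong differentiability of $g$ at $0$, I would write
\begin{equation}
    0 = f(x_1,g(x_1)) - f(x_2,g(x_2)) = L_1(x_1-x_2) + L_2(g(x_1)-g(x_2)) + o\big(\norm{x_1-x_2} + \norm{g(x_1)-g(x_2)}\big).
\end{equation}
By the Lipschitz bound, the remainder is $o(\norm{x_1-x_2})$. Applying $M$ then gives $g(x_1) - g(x_2) = -M L_1 (x_1-x_2) + o(\norm{x_1 - x_2})$, which identifies $Dg(0) = -(D_2 f(0,0))^{-1} D_1 f(0,0)$.

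The main obstacle is keeping the radii straight: the same small $\varepsilon$ controls the contraction, the self-mapping, and the remainder terms. The Lipschitz estimate must also be proved before differentiability, since it is what converts the $o(\norm{\Delta x} + \norm{\Delta y})$ remainder into $o(\norm{\Delta x})$.
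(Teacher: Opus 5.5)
Your proof is correct and complete. The paper gives no argument for this lemma; it only cites Schechter \cite[Paragraph 25.13]{Schechter1997}. Your contraction-mapping proof therefore supplies a self-contained argument where the paper has none. You apply strong differentiability to the pairs $(x,y_1),(x,y_2)$ for the contraction, to $(x,y),(0,0)$ for the self-map estimate, and to $(x_1,y),(x_2,y)$ for the Lipschitz bound. These are exactly the three places where it is needed.

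A common alternative route, in the setting of the Nijenhuis reference cited in the paper's remark, goes through an inverse function theorem instead. One first proves that theorem for maps strongly differentiable at a point. One then applies it to $(x,y)\mapsto (x,f(x,y))$, whose strong derivative at $(0,0)$ is invertible exactly when $D_2 f(0,0)$ is. That route is more modular: one theorem yields both statements, and local uniqueness comes from the injectivity of the local inverse. Your direct route avoids the detour and gives explicit radii $r,\delta$ and the explicit Lipschitz constant $2\norm{M}(\norm{L_1}+1)$.

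One small point should be written out. Since $\Phi_0(0)=0$ and the fixed point is unique, $g(0)=0$. Your Lipschitz bound then gives $\norm{g(x)}\leq 2\norm{M}(\norm{L_1}+1)\norm{x}$. This is what guarantees that $(x_i,g(x_i))\to(0,0)$ in your last step. Without it you could only invoke the fixed $\varepsilon$ valid on $B(0,\delta)\times \bar B(0,r)$, not the $o(\cdot)$ of strong differentiability at $(0,0)$ with arbitrarily small $\varepsilon$.
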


\begin{remark}
    Strong differentiability at $0$ is stronger than usual Fréchet-differentiability, but weaker than being of class $C^1$ in a neighborhood of $0$ (see \cite[end of Section 2]{Nijenhuis1974}).
    It is tailored for applying implicit function arguments without requiring the knowledge (or the existence) of the derivatives in a neighborhood of the point; only a single derivative at the point suffices.
\end{remark}

\begin{corollary}
    \label{cor:TFI-surjective}
    Let $E_1, E_2, F$ be Banach spaces and $f : E_1 \times E_2 \to F$ such that $f(0,0) = 0$.
    Assume that $f$ is strongly Fréchet-differentiable at $(0,0)$ and that $D_2 f(0,0) : E_2 \to F$ has a continuous right inverse $R : F \to E_2$.
    Then there exists a Lipschitz-continuous map $g:E_1 \to E_2$ defined in a neighborhood of $0 \in E_1$ with $g(0) = 0$ such that, for every $x$ in a neighborhood of $0 \in E_1$, $f(x,g(x)) = 0$.
    Moreover, $g$ is strongly Fréchet-differentiable at $0$ and $Dg(0) = - R \circ D_1 f(0,0)$.
\end{corollary}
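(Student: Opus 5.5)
The plan is to reduce \cref{cor:TFI-surjective} to the implicit function theorem of \cref{lem:TFI-Strong-Frechet} by restricting the second variable to the range of the right inverse $R$. Define $\tilde f : E_1 \times F \to F$ by
\begin{equation}
    \tilde f(x,z) := f(x, R z).
\end{equation}
Then $\tilde f(0,0) = f(0,0) = 0$, and we want to check that $\tilde f$ satisfies the hypotheses of \cref{lem:TFI-Strong-Frechet} with $E_2$ replaced by $F$.

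\emph{Strong differentiability of $\tilde f$.} Let $\iota : E_1 \times F \to E_1 \times E_2$ be the bounded linear map $(x,z) \mapsto (x,Rz)$. For $(x_1,z_1),(x_2,z_2)$ near $0$, the points $\iota(x_i,z_i)$ are near $0$, and we have $\norm{\iota(x_1,z_1) - \iota(x_2,z_2)} \leq C \norm{(x_1,z_1)-(x_2,z_2)}$ with $C := \max\{1,\norm{R}\}$. Applying \cref{def:strong-Frechet} to $f$ at $\iota(x_i,z_i)$, we get
\begin{equation}
    \norm{\tilde f(x_1,z_1) - \tilde f(x_2,z_2) - Df(0,0)\,\iota(x_1-x_2,z_1-z_2)}_F = o\left(\norm{(x_1,z_1)-(x_2,z_2)}\right).
\end{equation}
Hence $\tilde f$ is strongly Fréchet-differentiable at $(0,0)$ with $D\tilde f(0,0) = Df(0,0) \circ \iota$. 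In particular, $D_1 \tilde f(0,0) = D_1 f(0,0)$ and $D_2 \tilde f(0,0) = D_2 f(0,0) \circ R = \Id_F$, which is a linear isomorphism of $F$.

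\emph{Application and conclusion.} By \cref{lem:TFI-Strong-Frechet}, there is a Lipschitz map $h : E_1 \to F$, defined near $0$, such that $\tilde f(x,h(x)) = 0$. Moreover $h$ is strongly differentiable at $0$ with $Dh(0) = -D_1 f(0,0)$. Since $(0,0)$ solves $\tilde f = 0$ and lies in the uniqueness neighborhood, $h(0) = 0$. Set $g := R \circ h$. Then $g$ is Lipschitz, $g(0) = 0$, and $f(x,g(x)) = \tilde f(x,h(x)) = 0$. Composition with the bounded linear map $R$ preserves strong differentiability (the $o(\cdot)$ remainder is multiplied by $\norm{R}$), so $Dg(0) = R \circ Dh(0) = -R \circ D_1 f(0,0)$.

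There is no real obstacle here. The only point needing care is the strong differentiability of the composition, which works because $\iota$ is linear and bounded, so small inputs map to small inputs and differences scale linearly. Uniqueness is lost in the conclusion, which is expected, because $R$ selects only one particular solution branch.
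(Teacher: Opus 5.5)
Your proof is correct and follows essentially the paper's route. The paper restricts the second variable to the closed subspace $\operatorname{im}(R \circ D_2 f(0,0)) = \operatorname{im} R$, on which $D_2 f(0,0)$ becomes an isomorphism onto $F$, and then (implicitly) applies \cref{lem:TFI-Strong-Frechet}. Your reparametrization $\tilde f(x,z) := f(x,Rz)$ is the same idea transported through the isomorphism $R : F \to \operatorname{im} R$, and it has small advantages: $D_2 \tilde f(0,0) = \Id_F$ comes for free, no closedness or complementation of a subspace needs checking, and you spell out the strong differentiability of the composition and the formula for $Dg(0)$, which the paper leaves unstated.
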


\begin{proof}
    The strategy is to construct a subspace of $E_2$ on which the partial derivative becomes an isomorphism, and then apply \cref{lem:TFI-Strong-Frechet}.
    Let $L_2 := D_2 f(0,0) : E_2 \to F$.
    By assumption, $L_2$ admits a continuous right inverse $R : F \to E_2$.
    Let us define the operator $\pi := R \circ L_2 : E_2 \to E_2$.
    Then $\pi$ is a continuous projection with $\ker \pi = \ker L_2$.
    Thus, we can write, as a topological vector sum $E_2 = \ker L_2 \oplus \operatorname{im} \pi$.
    Moreover, $L_2$ is an isomorphism from $\operatorname{im} \pi$ to $F$.
\end{proof}

\begin{theorem}
    \label{thm:linear-to-nonlinear}
    Let $T > 0$, $\cS$ be an abstract nonlinear control system and $\Sigma$ be a linear one.
    Assume that:
    \begin{enumerate}
        \item $\Sigma$ is exactly controllable in time $T$;
        \item $\cS_T : Y \times L^p(\R_+;U) \to Y$ is strongly Fréchet-differentiable at $(0,0)$, with derivative~$\Sigma_T$.
    \end{enumerate}
    Then the nonlinear system $\cS$ is \emph{locally exactly controllable} in time $T$ at $(0,0)$.
    That is, there exists $\delta > 0$ such that for any $y_0, y_1 \in B_Y(0,\delta)$, there exists $u \in L^p((0,T);U)$ such that $\cS_T(y_0,u) = y_1$.
    Moreover, there exists $C > 0$ such that $\norm{u}_{L^p} \leq C (\norm{y_0}_Y + \norm{y_1}_Y)$.
\end{theorem}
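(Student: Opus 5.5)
We will deduce the statement from \cref{cor:TFI-surjective}, applied with $E_1 := Y \times Y$ (the pair of initial and final data), $E_2 := L^p((0,T);U)$ (the control, extended by $0$ on $(T,\infty)$ so as to be an element of $L^p(\R_+;U)$), $F := Y$, and
\begin{equation}
    f((y_0,y_1),u) := \cS_T(y_0,u) - y_1 .
\end{equation}
Since $\cS_T(0,0) = 0$ by \cref{def:nonlinear-io}, we have $f(0,0) = 0$.

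\emph{Step 1: strong differentiability of $f$.}
The map $(y_0,y_1,u) \mapsto \cS_T(y_0,u)$ is the composition of $\cS_T$ with the bounded linear map $(y_0,y_1,u) \mapsto (y_0, \iota u)$, where $\iota$ is extension by zero. Strong Fréchet-differentiability at $0$ is preserved under composition with a bounded linear map, because the remainder is controlled by $o(\norm{x_1-x_2})$ of the image points and $\norm{\iota}\le 1$. The term $-y_1$ is linear. Hence $f$ is strongly Fréchet-differentiable at $0$, with
\begin{equation}
    D_1 f(0,0)\cdot(z_0,z_1) = \Sigma_T(z_0,0) - z_1,
    \qquad
    D_2 f(0,0)\cdot v = \Sigma_T(0,v).
\end{equation}

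\emph{Step 2: right inverse.}
By assumption 1 and \cref{def:lin-exact-cont}, the bounded map $L : Y \to L^p((0,T);U)$ satisfies $\Sigma_T(0,L y) = y$ for all $y \in Y$. So $R := L$ is a continuous right inverse of $D_2 f(0,0)$.

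\emph{Step 3: conclusion.}
\cref{cor:TFI-surjective} now gives a neighborhood $B_{E_1}(0,\delta')$ and a Lipschitz map $g$ on it with $g(0) = 0$ and $f((y_0,y_1), g(y_0,y_1)) = 0$. Take $\delta$ small enough that $\norm{y_0}, \norm{y_1} < \delta$ puts $(y_0,y_1)$ in this neighborhood, and set $u := g(y_0,y_1)$. Then $\cS_T(y_0,u) = y_1$. The Lipschitz property together with $g(0)=0$ gives $\norm{u}_{L^p} \le \mathrm{Lip}(g)\,(\norm{y_0}+\norm{y_1})$, which is the claimed estimate with $C := \mathrm{Lip}(g)$.

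The main obstacle is that the proof of \cref{cor:TFI-surjective} above stops after building the splitting, so it must be completed. Set $\pi := RL_2$ and $V := \operatorname{im}\pi$, which is closed because $\pi$ is a continuous projection. Apply \cref{lem:TFI-Strong-Frechet} to $\tilde f := f|_{E_1 \times V}$. Its partial derivative in the $V$ variable is $L_2|_V$. This map is injective, since $\ker L_2 \cap \operatorname{im}\pi = \{0\}$. It is surjective, since $L_2 R y = y$ with $Ry \in V$. Its inverse is $R$, which is bounded. Strong differentiability passes to the restriction, so the lemma yields a map $g$ with values in $V \subset E_2$ and $Dg(0) = -R\,D_1 f(0,0)$.
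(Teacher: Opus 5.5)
Your proof is correct and follows the paper's route exactly: the same map $f((y_0,y_1),u) = \cS_T(y_0,u) - y_1$ on $E_1 = Y \times Y$, $E_2 = L^p((0,T);U)$, with the bounded operator $L$ of \cref{def:lin-exact-cont} serving as right inverse of $D_2 f(0,0)$, fed into \cref{cor:TFI-surjective}, and the Lipschitz bound on $g$ (with $g(0)=0$) giving the cost estimate. Your additional details are also correct and fill in what the paper leaves implicit: the extension-by-zero isometry, and the completion of the corollary's proof by applying \cref{lem:TFI-Strong-Frechet} on $E_1 \times \operatorname{im}\pi$, where $D_2 f(0,0)$ restricted to $\operatorname{im}\pi$ is invertible with inverse $R$.
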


\begin{proof}
    Let $E_1 := Y \times Y$, $E_2 := L^p((0,T);U)$ and $F := Y$.
    Define $f: E_1 \times E_2 \to F$ by
    \begin{equation}
        f((y_0, y_1), u) := \cS_T(y_0, u) - y_1.
    \end{equation}
    By \cref{def:nonlinear-io}, $f(0,0) = 0$.
    By the second assumption, $f$ is strongly Fréchet-differentiable at $(0,0)$.
    By the first assumption and our \cref{def:lin-exact-cont}, $D_2 f(0,0)$ has a bounded right inverse.
    Hence, by \cref{cor:TFI-surjective}, for $y_0, y_1 \in Y$ small enough, there exists $u \in L^p((0,T);U)$ such that $f((y_0,y_1),u) = 0$, i.e., $\cS_T(y_0,u) = y_1$.
    The Lipschitz-continuity of the inverse given by \cref{cor:TFI-surjective} gives the estimate of $\norm{u}_{L^p}$.
\end{proof}

\subsubsection{Example: linear test for ODEs}
\label{ex:ODE-abstract-nonlinear}

Let us revisit the linear test of \cref{sec:linear-test-ODEs} for ODEs using our abstract setting.

Let $n, m \in \N^*$ and $f_0, f_1, \dotsc, f_m \in C^1(\R^n;\R^n)$ globally Lipschitz and globally bounded vector fields with $f_0(0) = 0$.
We consider the control-affine system \eqref{eq:affine}.

Set $Y := \R^n$, $p \in [1,\infty]$ and $U := \R^m$.
For $t \geq 0$, $y_0 \in Y$ and $u \in L^p(\R_+;U)$, define $\cS_t(y_0,u) := y(t;u,y_0)$ with the notation of \cref{lem:WP}, which implies that $\cS$ satisfies \cref{def:nonlinear-io}.

Set $A := D f_0(0) \in \R^{n \times n}$ and $B := [f_1(0), \dotsc, f_m(0)] \in \R^{n \times m}$.
For $t \geq 0$, $y_0 \in Y$ and $u \in L^p(\R_+;U)$, define $\Sigma_t(y_0,u) := e^{tA} y_0 + \int_0^t e^{(t-s)A} B u(s) \dd s$.
It is straightforward to check that $\Sigma$ satisfies \cref{def:linear-io}.

Assume that the matrices $A$ and $B$ satisfy the rank condition \eqref{eq:kalman}.
Let $T > 0$.
By \cref{thm:kalman}, $\Sigma$ is exactly controllable in time $T$ in the sense of \cref{def:lin-exact-cont}.
By \cref{lem:ODE-C1}, $\cS_T$ is $C^1$ in a neighborhood of $(0,0)$ with $D\cS_T(0,0) = \Sigma_T$.

Thus \cref{thm:linear-to-nonlinear} applies and the control-affine system \eqref{eq:affine} is $L^p$-STLC in the sense of \cref{def:STLC} (as was proved in \cref{thm:ODE-linear}).

\subsubsection{Example: semilinear wave equation}
\label{sec:wave}

As a PDE example, let us study a 1D semilinear wave equation.
This is both an easy toy model, and relevant for many applications; see \cite[Chapter 14.1]{Whitham1999} or \cite[Section 2.1]{Rajaraman1987} for examples of various physical contexts in which such nonlinearities appear.
We consider:
\begin{equation}
    \label{eq:nonlinear-wave}
    \begin{cases}
        y_{tt} - y_{xx} = G(y) & \quad \text{in } (0,T) \times (0,\pi), \\
        y(t,0) = u(t) & \quad \text{on } (0,T), \\
        y(t,\pi) = 0 & \quad \text{on } (0,T)
    \end{cases}
\end{equation}
where the nonlinearity $G \in C^1(\R;\R)$ is such that $G(0) = G'(0) = 0$ and $|G'(s)| \leq |s|$ for all $s \in \R$.
We start with the linear theory before proving the exact controllability of \eqref{eq:nonlinear-wave} for $T \geq 2 \pi$.

Since \eqref{eq:nonlinear-wave} is a PDE of second order with respect to time, the state of the system is actually the pair $(y, y_t)$.
We therefore introduce the state space $Y := L^2(0,\pi) \times H^{-1}(0,\pi)$ and control space $U := \R$.
We will look for solutions in the solution space
\begin{equation}
    \label{eq:cY-wave}
    \cY := C^0([0,T];L^2(0,\pi)) \cap C^1([0,T];H^{-1}(0,\pi)).
\end{equation}
Linear well-posedness is standard (see \cite[Theorem 10.9.3 and Section 4.1]{TucsnakWeiss2009} or \cite[Theorem 6.6.1]{Raymond2016}).

\begin{lemma}
    \label{lem:linear-wave-WP}
    Let $T > 0$, $(y_0, y_0') \in Y$, $f \in L^1((0,T);H^{-1}(0,\pi))$ and $u \in L^2((0,T);\R)$.
    There exists a unique weak solution $y \in \cY$ to the following linear wave equation
    \begin{equation}
        \label{eq:linear-wave}
        \begin{cases}
            y_{tt} - y_{xx} = f & \quad \text{in } (0,T) \times (0,\pi), \\
            y(t,0) = u(t) & \quad \text{on } (0,T), \\
            y(t,\pi) = 0 & \quad \text{on } (0,T),
        \end{cases}
    \end{equation}
    with initial data $(y,y_t)(0) = (y_0,y_0')$.
    Moreover, there exists $C_T > 0$ such that
    \begin{equation}
        \label{eq:estimate-linear-wave}
        \norm{y}_{C^0 L^2} + \norm{y_t}_{C^0 H^{-1}} \leq C_T \left( \norm{f}_{L^1 H^{-1}} + \norm{y_0}_{L^2} + \norm{y_0'}_{H^{-1}} + \norm{u}_{L^2} \right).
    \end{equation}
    In particular, setting $\Sigma_s((y_0,y_0'),u) := (y,y_t)(s)$ for $s \in [0,T]$ defines an abstract linear control system in the sense of \cref{def:linear-io} with $p = 2$.
\end{lemma}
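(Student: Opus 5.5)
The plan is to reduce to a wave equation with homogeneous Dirichlet data and a source term, handled by spectral (sine series) methods, and then to add the boundary contribution by transposition.

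\emph{Lifting the boundary data.} I first treat $u$ smooth, say $u \in C^2_c$ or $H^2(0,T)$, and later conclude by density. Set $\chi(x) := (1 - x/\pi)$ and $w := y - u(t)\chi(x)$. Then $w$ solves
\begin{equation*}
    w_{tt} - w_{xx} = f - u''(t)\chi
\end{equation*}
with homogeneous Dirichlet conditions and initial data $(y_0 - u(0)\chi,\, y_0' - u'(0)\chi)$.

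\emph{The homogeneous Dirichlet problem.} I expand on $\varphi_j(x) = \sin(jx)$. For $w = \sum_j w_j(t)\varphi_j$, each coefficient solves $w_j'' + j^2 w_j = f_j$, so Duhamel gives
\begin{equation*}
    w_j(t) = \cos(jt)\,w_j(0) + \frac{\sin(jt)}{j}\,w_j'(0) + \int_0^t \frac{\sin(j(t-s))}{j}\, f_j(s)\,\dd s .
\end{equation*}
The norms involved are $\norm{w}_{L^2}^2 \simeq \sum |w_j|^2$ and $\norm{w_t}_{H^{-1}}^2 \simeq \sum |w_j'|^2/j^2$. In the rescaled pair $(w_j,\, w_j'/j)$ the homogeneous evolution is a rotation, so the energy is conserved exactly. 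The source contributes at most $\int_0^t |f_j|/j$ per mode; summing with Minkowski gives a bound by $\norm{f}_{L^1 H^{-1}}$. This yields existence, uniqueness (the coefficients are determined) and continuity in time in $L^2 \times H^{-1}$ via dominated convergence on the series. This part is routine.

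\emph{The boundary term (main obstacle).} Estimating $u''\chi$ naively costs two derivatives of $u$, so I instead work directly on the Fourier coefficients of $y$. Testing against $\varphi_j$, as done for the heat equation in \cref{s:heat-boundary}, gives
\begin{equation*}
    y_j'' + j^2 y_j = f_j + c\, j\, u(t)
\end{equation*}
for some normalization constant $c$. The boundary contribution to $(y_j, y_j'/j)$ is then
\begin{equation*}
    c \int_0^t \bigl(\sin(j(t-s)),\, \cos(j(t-s))\bigr)\, u(s)\,\dd s ,
\end{equation*}
with no $1/j$ gain. To bound $\sum_j |\cdot|^2$ by $C_T \norm{u}_{L^2}^2$, I will use an Ingham/Parseval-type estimate. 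Extend $u$ by zero; the sums $\sum_j |\int_0^t e^{ijs} u(s)\,\dd s|^2$ are Fourier coefficients on windows of length $2\pi$. Covering $[0,t]$ by $\lceil T/2\pi\rceil$ such windows and applying Bessel's inequality on each gives the bound with $C_T \lesssim 1 + T$. This is the hidden-regularity step and I expect it to be the only delicate point. It also gives continuity in time in $Y$, by uniform convergence of the series (tails controlled uniformly in $t$ via the same Bessel argument). By density of smooth $u$, together with the fact that the series formula defines the transposition solution, this extends to $u \in L^2$ and identifies the weak solution.

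\emph{Abstract system properties.} Define $\Sigma_s$ by the solution formula. Linearity and continuity of $\Sigma_s$ follow from the estimate \eqref{eq:estimate-linear-wave} with $f = 0$, and $\Sigma_0 = \Id$ holds on the initial data. The composition property \eqref{eq:Sigma-compo} follows from uniqueness, since restarting at time $\tau$ from $\Sigma_\tau$ with the shifted control solves the same problem. Equivalently, it can be read off from the group property of the rotations in the explicit modal formula.
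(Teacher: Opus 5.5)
Your argument is correct, but it is not the paper's route. The paper gives no proof here and simply defers to \cite[Theorem 10.9.3 and Section 4.1]{TucsnakWeiss2009} and \cite[Theorem 6.6.1]{Raymond2016}. There, well-posedness in $L^2 \times H^{-1}$ comes from the theory of admissible boundary control operators or from transposition. In both cases it rests, by duality, on the hidden-regularity estimate $\norm{\phi_x(\cdot,0)}_{L^2(0,T)} \leq C_T \norm{(\phi,\phi_t)(0)}_{H^1_0 \times L^2}$ for the adjoint homogeneous Dirichlet problem, typically proved with multipliers and valid in any dimension.

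You instead work entirely in the sine basis:
\begin{itemize}
\item the modal equation $y_j'' + j^2 y_j = f_j + c\, j\, u$, which your lifting reproduces since $\langle 1 - x/\pi, \sin(jx) \rangle = 1/j$;
\item exact conservation of $\abs{y_j}^2 + \abs{y_j'/j}^2$ under the free flow, with Minkowski's inequality for the source;
\item for the boundary term, the identity $\abs{\int_0^t \sin(j(t-s)) u}^2 + \abs{\int_0^t \cos(j(t-s)) u}^2 = \abs{\int_0^t e^{-ijs} u(s) \,\dd s}^2$ for real $u$, followed by Bessel's inequality on windows of length $2\pi$.
\end{itemize}
In this one-dimensional setting, admissibility is exactly that Bessel bound. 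It is the same orthogonality of $(e^{ikt})_{k}$ on $(0,2\pi)$ that the paper uses in \cref{lem:control-linear-wave} for the converse (controllability) direction.

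What each approach buys: yours is elementary, self-contained and gives an explicit constant. It also survives spectral perturbations such as $\lambda_j = \sqrt{j^2+1}$ if you replace Bessel by the direct Ingham inequality for separated frequencies. The cited approach is what is needed on general domains.

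Two steps deserve one explicit line each.
\begin{itemize}
\item Uniform-in-$t$ control of the tails is cleanest if you apply your Bessel bound to $u \mathbf{1}_{(s,t)}$. This gives continuity of $t \mapsto (\int_0^t e^{-ijs} u)_j$ in $\ell^2$ directly, and the phases $e^{ijt}$ then act as a strongly continuous unitary group.
\item The resulting uniform convergence in $H^{-1}$ of the differentiated series $\sum_j y_j'(t) \varphi_j$ is what gives $y \in C^1([0,T];H^{-1})$, hence $y \in \cY$. Continuity of the pair $(y, y_t)$ alone does not show that $y_t$ is the time derivative of $y$.
\end{itemize}
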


\begin{lemma}
    \label{lem:control-linear-wave}
    Let $T \geq 2 \pi$.
    There exists a bounded linear map $L : Y \to L^2((0,T);\R)$ such that, for all $(y_1, y_1') \in Y$, the control $u := L (y_1,y_1')$ is such that the solution to~\eqref{eq:linear-wave} with null initial data, control $u$ and null source term satisfies $(y,y_t)(T) = (y_1, y_1')$.
\end{lemma}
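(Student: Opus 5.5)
A concrete approach is a Fourier-series moment computation combined with a time-reversal argument. Expand the state on the basis $\varphi_j(x) = \sin(jx)$. For the linear wave equation with null source, testing against $\varphi_j$ as in \cref{s:heat-boundary} (two integrations by parts, boundary term at $x=0$) gives, for $a_j(t) := \langle y(t), \varphi_j\rangle$,
\begin{equation}
    a_j''(t) + j^2 a_j(t) = j\, u(t).
\end{equation}
With null initial data, Duhamel yields
\begin{equation}
    a_j(T) = \int_0^T \sin(j(T-t))\, u(t) \dd t
    \quad \text{and} \quad
    a_j'(T) = j \int_0^T \cos(j(T-t))\, u(t) \dd t.
\end{equation}
In $Y = L^2 \times H^{-1}$, the target $(y_1,y_1')$ is described by $b_j := \langle y_1,\varphi_j\rangle$ and $b_j' := \langle y_1',\varphi_j\rangle$. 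These satisfy $\sum_j |b_j|^2 + |b_j'|^2/j^2 \approx \norm{y_1}_{L^2}^2 + \norm{y_1'}_{H^{-1}}^2$. The controllability problem therefore becomes a moment problem: find $u$ on an interval of length $2\pi$ with
\begin{equation}
    \int_0^{2\pi} \sin(js)\, v(s) \dd s = b_j
    \quad \text{and} \quad
    \int_0^{2\pi} \cos(js)\, v(s) \dd s = b_j'/j
    \quad \text{for all } j \geq 1,
\end{equation}
where $v(s) := u(T-s)$.

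For $T = 2\pi$, this moment problem is solved explicitly. The family $\{1, \cos(js), \sin(js)\}_{j \geq 1}$ is an orthogonal basis of $L^2(0,2\pi)$, each element having squared norm $\pi$ apart from the constant. I would set
\begin{equation}
    v(s) := \frac{1}{\pi} \sum_{j \geq 1} \left( b_j \sin(js) + \frac{b_j'}{j} \cos(js) \right).
\end{equation}
By Parseval, $\norm{v}_{L^2}^2 = \frac{1}{\pi} \sum_j \big(|b_j|^2 + |b_j'|^2/j^2\big) \leq C \norm{(y_1,y_1')}_Y^2$, and $v$ depends linearly on the target. Defining $L(y_1,y_1') := u$ with $u(t) := v(T-t)$ then gives the required bounded linear map.

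For $T > 2\pi$, I would apply the above construction on the last window $[T-2\pi, T]$ and take $u := 0$ on $[0, T-2\pi]$. With null initial data and zero control, the state stays $0$ up to time $T-2\pi$. The time-invariance and composition property of \cref{lem:linear-wave-WP}, as in \cref{def:linear-io}, then reduce the problem to the case $T = 2\pi$.

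The main obstacle is justifying the modal computation for weak solutions in $\cY$, where $y(t,0) = u(t)$ holds only in a transposition sense, and checking that these Fourier coefficients genuinely determine $(y,y_t)(T) \in Y$. The route I would take is to do the computation for smooth $u$ compactly supported in $(0,T)$, where the solution is classical. Both sides of the identity are continuous in $u \in L^2$: the state side by the estimate \eqref{eq:estimate-linear-wave}, the formula side by Cauchy--Schwarz applied to each coefficient. A density argument then extends the modal identities to all $u \in L^2$. Finally, the coefficient sequences $(a_j(T))$ and $(a_j'(T))$ characterize elements of $L^2$ and $H^{-1}$ respectively, since $\{\varphi_j\}$ is an orthogonal basis of $L^2$ and $\{\varphi_j/j\}$ is one of $H^{-1}$ up to normalization. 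Hence $(y,y_t)(T) = (y_1,y_1')$.
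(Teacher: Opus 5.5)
Your proposal is correct and follows essentially the same route as the paper: you decompose into forced harmonic oscillators, reduce to a trigonometric moment problem on $(0,2\pi)$ solved explicitly by orthogonality and Parseval, and extend the control by zero on $[0,T-2\pi]$ when $T>2\pi$. The differences are cosmetic — you use the real sine/cosine system and the reversal $v(s)=u(T-s)$, where the paper uses $z_k = k y_k + i\dot y_k$ and complex exponentials with $a_{-k}=\overline{a_k}$ — and you add a density argument justifying the modal identities for transposition solutions, which the paper takes for granted.
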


\begin{proof}
    We decompose the state on the Hilbert basis of $L^2(0,\pi)$ given by the eigenfunctions of the Dirichlet Laplacian: $e_k(x) := \sqrt{2/\pi} \sin(k x)$ for $k \in \N^*$.
    We write $y(t,x) = \sum_{k \geq 1} y_k(t) e_k(x)$.
    Taking the inner product of \eqref{eq:linear-wave} with $e_k$, we obtain the following cascade of independent harmonic oscillators:
    \begin{equation}
        \ddot{y}_k(t) + k^2 y_k(t) = b_k u(t), \quad \text{with } b_k := k \sqrt{\frac{2}{\pi}}.
    \end{equation}
    We introduce the complex unknown $z_k(t) := k y_k(t) + i \dot{y}_k(t)$ which satisfies $\dot{z}_k(t) = - i k z_k(t) + i b_k u(t)$ and $z_k(0) = 0$.
    Therefore
    \begin{equation}
        z_k(T) = i b_k \int_0^T e^{-ik(T-t)} u(t) \dd t.
    \end{equation}
    Let $T = 2\pi$ (if $T > 2\pi$, we extend $u$ by $0$ on $[0, T-2\pi]$).
    Hence, solving the control problem $y(T) = y_1$ and $y_t(T) = y_1'$ amounts to finding $u \in L^2((0,2\pi);\R)$ solving the moment problem:
    \begin{equation}
        \int_0^{2 \pi} e^{i k t} u(t) \dd t = \frac{e^{i k T}}{i b_k} (k y_{1,k} + i y'_{1,k}) =: a_k \in \C
    \end{equation}
    where $(y_{1,k})_k$ and $(y'_{1,k})_k$ are the coefficients of the target data $y_1$ and $y_1'$.
    Setting $a_0 := 0$ and $a_{-k} := \overline{a_k}$ for $k \in \N^*$ we can extend the moment problem to all $k \in \Z$.

    Using that the $(e^{i k t})_{k \in \Z}$ form an orthonormal basis of $L^2(0,2\pi)$, we set
    \begin{equation}
        u(t) := \frac{1}{2\pi} \sum_{j \in \Z} a_j e^{- i j t}.
    \end{equation}
    Then $u$ solves the moment problem by orthogonality of the basis.
    By Parseval's identity,
    \begin{equation}
        \norm{u}_{L^2}^2 = \frac{1}{2 \pi} \sum_{k \in \Z} |a_k|^2
        \leq C \norm{y_1}_{L^2}^2 + C \norm{y_1'}_{H^{-1}}^2
    \end{equation}
    which concludes the proof.
\end{proof}

\begin{remark}
    The proof of \cref{lem:control-linear-wave} relies on Parseval's identity because the PDE we are studying is exactly the wave equation $y_{tt} - y_{xx} = 0$, associated with eigenvalues $\lambda_k = k$.
    For more general hyperbolic equations, for example if one considers the ubiquitous Sine-Gordon model $G(y) = - \sin y$, one must study the linearized equation $y_{tt} - y_{xx} + y = 0$, associated with eigenvalues $\lambda_k = \sqrt{k^2+1}$.
    One loses the time-orthogonality of the $e^{i \lambda_k t}$ on $(0,2\pi)$, so one must replace Parseval's identity with more powerful tools (see e.g.\ \cite{Russell1967}), called Ingham inequalities (see \cite{Ingham1936}).
    A particularly useful version of Ingham's inequality in such contexts relating the minimal control time with the asymptotic gap of the eigenvalues is due to Haraux in \cite[Théorème 4]{Haraux1989}.
\end{remark}

We can now move to the nonlinear equation \eqref{eq:nonlinear-wave}.

\begin{lemma}
    Let $T > 0$.
    There exists $\delta > 0 $ such that, for all $(y_0, y_0') \in Y$ and $u \in L^2((0,T);\R)$ satisfying $\norm{y_0}_{L^2} + \norm{y_0'}_{H^{-1}} + \norm{u}_{L^2} \leq \delta$, there exists a unique weak solution $y \in \cY$ to \eqref{eq:nonlinear-wave} with initial data $(y,y_t)(0) = (y_0,y_0')$.
    It moreover satisfies
    \begin{equation}
        \label{eq:estimate-nonlinear-wave}
        \norm{y}_{C^0 L^2} + \norm{y_t}_{C^0 H^{-1}} \leq 2 C_T \left( \norm{y_0}_{L^2} + \norm{y_0'}_{H^{-1}} + \norm{u}_{L^2} \right).
    \end{equation}
\end{lemma}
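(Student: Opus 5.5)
We will obtain the nonlinear solution as a fixed point of the Duhamel map built on \cref{lem:linear-wave-WP}. Fix $T > 0$ and data $(y_0,y_0') \in Y$, $u \in L^2((0,T);\R)$, and set $\varepsilon := \norm{y_0}_{L^2} + \norm{y_0'}_{H^{-1}} + \norm{u}_{L^2}$. On the closed ball
\begin{equation}
    \mathcal{B} := \left\{ z \in \cY \mid \norm{z}_{C^0 L^2} + \norm{z_t}_{C^0 H^{-1}} \leq 2 C_T \varepsilon \right\},
\end{equation}
which is complete for the $\cY$ norm, define $\Theta(z) := y$, where $y \in \cY$ is the unique solution of the linear problem \eqref{eq:linear-wave} with source $f := G(z)$, control $u$ and initial data $(y_0,y_0')$. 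A fixed point of $\Theta$ is exactly a weak solution of \eqref{eq:nonlinear-wave}. Since the bound defining $\mathcal{B}$ is \eqref{eq:estimate-nonlinear-wave}, any fixed point in $\mathcal{B}$ satisfies that estimate automatically.

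\textbf{Key nonlinear estimate.} The assumptions $G(0)=0$ and $|G'(s)| \leq |s|$ give the pointwise bounds $|G(a)| \leq a^2/2$ and
\begin{equation}
    |G(a) - G(b)| \leq \max(|a|,|b|)\, |a-b| \leq (|a|+|b|)|a-b|.
\end{equation}
For $z, w \in C^0([0,T];L^2(0,\pi))$, Cauchy--Schwarz in $x$ then shows that $G(z(t)) - G(w(t))$ lies in $L^1(0,\pi)$, with $L^1$ norm at most $(\norm{z(t)}_{L^2} + \norm{w(t)}_{L^2}) \norm{z(t)-w(t)}_{L^2}$. In one dimension, $H^1_0(0,\pi) \hookrightarrow L^\infty(0,\pi)$, so by duality $L^1(0,\pi) \hookrightarrow H^{-1}(0,\pi)$ with some constant $C_S$. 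Integrating over $(0,T)$ yields
\begin{equation}
    \norm{G(z)-G(w)}_{L^1 H^{-1}} \leq C_S T \left( \norm{z}_{C^0L^2} + \norm{w}_{C^0L^2} \right) \norm{z-w}_{C^0L^2},
\end{equation}
and, taking $w = 0$, $\norm{G(z)}_{L^1 H^{-1}} \leq C_S T \norm{z}_{C^0 L^2}^2$. This is the main step: the nonlinearity is only quadratically controlled in $L^2$, so it lands in $L^1 H^{-1}$ rather than in $L^2$. This works only because the linear estimate \eqref{eq:estimate-linear-wave} accepts sources in $L^1 H^{-1}$ and because the one-dimensional embedding $L^1 \subset H^{-1}$ holds. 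Measurability in time of $G(z)$ as an $H^{-1}$-valued map follows from its continuity, which is a consequence of the Lipschitz bound above.

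\textbf{Stability and contraction.} For $z \in \mathcal{B}$, \eqref{eq:estimate-linear-wave} gives
\begin{equation}
    \norm{\Theta(z)} \leq C_T \varepsilon + C_T C_S T (2C_T\varepsilon)^2,
\end{equation}
which is at most $2C_T\varepsilon$ as soon as $4 C_T^2 C_S T \varepsilon \leq 1$. For $z, w \in \mathcal{B}$, the difference $\Theta(z) - \Theta(w)$ solves \eqref{eq:linear-wave} with zero initial data, zero control and source $G(z) - G(w)$. Hence
\begin{equation}
    \norm{\Theta(z)-\Theta(w)} \leq C_T C_S T \cdot 4 C_T \varepsilon \, \norm{z-w},
\end{equation}
and the Lipschitz constant is at most $1/2$ when $8 C_T^2 C_S T \varepsilon \leq 1$. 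Choosing $\delta := (8 C_T^2 C_S T)^{-1}$, Banach's fixed point theorem gives a unique fixed point in $\mathcal{B}$, and hence existence together with \eqref{eq:estimate-nonlinear-wave}.

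\textbf{Uniqueness.} Uniqueness within $\mathcal{B}$ is immediate from the contraction. To obtain uniqueness among all weak solutions in $\cY$, we argue by continuation. Let $\tilde y \in \cY$ be another weak solution with the same data, and let $t^* \leq T$ be the supremal time up to which $y = \tilde y$. Just after $t^*$, both solutions are small on a short interval $[t^*, t^*+h]$. Applying the same difference estimate on that interval, with the factor $T$ replaced by $h$, makes the Lipschitz constant smaller than $1$. This forces $y = \tilde y$ on $[t^*, t^*+h]$, contradicting the maximality of $t^*$ unless $t^* = T$.
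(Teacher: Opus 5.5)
Your proof is correct and follows essentially the paper's route: a Banach fixed point for the Duhamel map $\Theta$ built on \cref{lem:linear-wave-WP}, with $G(z)$ estimated in $L^1H^{-1}$ via $|G(a)-G(b)|\leq(|a|+|b|)|a-b|$ and $L^1\subset H^{-1}$. The differences are minor: you use the data-dependent ball of radius $2C_T\varepsilon$, so \eqref{eq:estimate-nonlinear-wave} is immediate, whereas the paper uses the fixed radius $R=1/(4TC_T)$ and deduces the estimate from $\norm{y}_{\cY}\leq 2\norm{\Theta(0)}_{\cY}$; and your continuation argument upgrades uniqueness from the ball to all of $\cY$, which the paper asserts but only proves within $B_{\cY}(0,R)$ (in that argument, boundedness of $y,\tilde y$ together with the factor $h$ suffices, so smallness is not needed).
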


\begin{proof}
    Fix $T > 0$.
    Let $\delta > 0$ to be chosen later.
    Let $(y_0, y_0') \in Y$ and $u \in L^2((0,T);\R)$ such that $\norm{y_0}_{L^2} + \norm{y_0'}_{H^{-1}} + \norm{u}_{L^2} \leq \delta$.
    We define a map $\Theta : \cY \to \cY$ as follows.
    Given $z \in \cY$, let $y \in \cY$ be the solution given by \cref{lem:linear-wave-WP} to \eqref{eq:linear-wave} for the source term $f := G(z)$.

    Using the assumptions on $G$, $|G(s)| \leq s^2$.
    Hence, one has
    \begin{equation}
        \norm{\Theta(z)}_{\cY} \leq C_T \left( \delta + \norm{G(z)}_{L^1 H^{-1}}\right)
        \leq C_T \left( \delta + T \norm{G(z)}_{L^\infty L^1} \right)
        \leq C_T \left( \delta + T \norm{z}_{L^\infty L^2}^2 \right).
    \end{equation}
    Thus, given $R > 0$, if $z \in B_{\cY}(0,R)$, $\norm{\Theta(z)}_{\cY} \leq C_T(\delta+TR^2)$.
    In particular, with $R := 1/(4 T C_T)$ and $\delta := R / (4 C_T)$, one has $\Theta(z) \in B_{\cY}(0,R/2)$, so $\Theta$ preserves $B_{\cY}(0,R)$.

    Given $z_1, z_2 \in \cY$, $\Theta(z_1) - \Theta(z_2)$ is a solution to \eqref{eq:linear-wave} with null initial data and null control, and a source term $G(z_1)-G(z_2)$.
    Using the assumptions on $G$, for any $s_1, s_2 \in \R$, one has $|G(s_1)-G(s_2)| \leq (|s_1|+|s_2|)|s_1-s_2|$.
    This entails that
    \begin{equation}
        \label{eq:G-H1}
        \begin{split}
            \norm{G(z_1)-G(z_2)}_{L^1(H^{-1})}
            & \leq T \norm{G(z_1)-G(z_2)}_{C^0(L^1)} \\
            & \leq T \sup_{t \in [0,T]} \int_0^\pi (|z_1(t)| + |z_2(t)|) |z_1(t)-z_2(t)| \\
            & \leq T \left( \norm{z_1}_{C^0 L^2} + \norm{z_2}_{C^0 L^2}\right) \norm{z_1-z_2}_{C^0 L^2}.
        \end{split}
    \end{equation}
    Hence $\norm{\Theta(z_1) - \Theta(z_2)}_{\cY} \leq 2 R T C_T \norm{z_1-z_2}_{\cY} \leq \frac 12 \norm{z_1-z_2}_{\cY}$ with our choice of $R$.

    Hence $\Theta$ is a contraction from $B_{\cY}(0,R)$ to itself.
    By the Banach fixed-point theorem, it admits a unique fixed-point $y \in \cY$, which is therefore the unique weak solution to \eqref{eq:nonlinear-wave}.

    Moreover, using $z_2 = 0$, we obtain from the previous estimate that, at the fixed-point $y = \Theta(y)$, $\norm{y-\Theta(0)}_{\cY} = \norm{\Theta(y)-\Theta(0)}_{\cY} \leq \frac 12\norm{y-0}_{\cY}$ so $\norm{y}_{\cY} \leq 2 \norm{\Theta(0)}_{\cY}$.
    Using the linear estimate \eqref{eq:estimate-linear-wave} for $\Theta(0)$ proves the nonlinear estimate \eqref{eq:estimate-nonlinear-wave}.
\end{proof}

\begin{lemma}
    \label{lem:frechet-nonlinear-wave}
    For $T > 0$, let $\Sigma_T := (y, y_t)(T)$ and $\cS_T := (y,y_t)(T)$ be the respective $Y$-valued solutions to \eqref{eq:linear-wave} and \eqref{eq:nonlinear-wave} at time $T$ (depending on $(y_0, y_0') \in Y$ and $u \in L^2((0,T);\R)$).
    Then $\cS_T$ is strongly Fréchet-differentiable at $0$ with derivative $\Sigma_T$.
\end{lemma}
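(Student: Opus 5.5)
Take two small data $x_i := ((y_0^i, y_0'^i), u_i)$, $i=1,2$, in the ball where the previous lemma applies. Let $y^i$ be the corresponding nonlinear solutions and $w^i$ the solutions to the linear problem \eqref{eq:linear-wave} with the same data and $f=0$, so that $(w^i,w^i_t)(T) = \Sigma_T(x_i)$. The plan is to estimate the remainder $r := (y^1 - y^2) - (w^1 - w^2)$ in $\cY$.

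By uniqueness in \cref{lem:linear-wave-WP}, $r$ solves \eqref{eq:linear-wave} with null initial data, null boundary control and source term $f = G(y^1) - G(y^2)$. This is because $y^i$ solves the linear problem with source $G(y^i)$, and linearity lets us subtract. Applying the linear estimate \eqref{eq:estimate-linear-wave} then gives
\begin{equation*}
    \norm{r}_{C^0 L^2} + \norm{r_t}_{C^0 H^{-1}} \leq C_T \norm{G(y^1) - G(y^2)}_{L^1 H^{-1}}.
\end{equation*}
Exactly as in \eqref{eq:G-H1}, using the embedding $L^1(0,\pi) \hookrightarrow H^{-1}(0,\pi)$ (valid in 1D since $H^1_0 \hookrightarrow L^\infty$) and $|G(s_1)-G(s_2)| \leq (|s_1|+|s_2|)|s_1-s_2|$, this is bounded by $C_T T (\norm{y^1}_{C^0L^2} + \norm{y^2}_{C^0L^2}) \norm{y^1-y^2}_{C^0L^2}$.

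It remains to control the two factors by the size of the data. The a priori bound \eqref{eq:estimate-nonlinear-wave} gives $\norm{y^i}_{C^0L^2} \leq 2C_T \norm{x_i}$, which tends to $0$ as $x_1,x_2 \to 0$. The main point is then a Lipschitz estimate $\norm{y^1-y^2}_{\cY} \leq C \norm{x_1 - x_2}$. For this, write $y^1 - y^2 = (w^1 - w^2) + r$. The linear part is bounded by $C_T\norm{x_1-x_2}$ by \eqref{eq:estimate-linear-wave}. The previous display bounds $\norm{r}$ by $2C_T^2 T(\norm{x_1}+\norm{x_2})\norm{y^1-y^2}$. Once $\delta$ is small enough that this factor is at most $\frac12$, the $r$ term can be absorbed into the left-hand side, yielding $\norm{y^1-y^2} \leq 2C_T \norm{x_1-x_2}$. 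Equivalently, one can reuse the contraction property of $\Theta$ from the previous proof: the fixed points depend Lipschitz-continuously on the data because $\Theta$ is a $\frac12$-contraction whose affine part is Lipschitz in the data.

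Combining the steps gives $\norm{\cS_T(x_1) - \cS_T(x_2) - \Sigma_T(x_1-x_2)}_Y \leq C(\norm{x_1}+\norm{x_2})\norm{x_1-x_2} = o(\norm{x_1-x_2})$. This is exactly strong Fréchet-differentiability at $0$ in the sense of \cref{def:strong-Frechet}, with derivative $\Sigma_T$. The only delicate step is the Lipschitz dependence used above; it is handled by the absorption argument, which relies on the smallness of $\delta$ chosen in the well-posedness lemma.
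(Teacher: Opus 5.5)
Your proposal is correct and follows essentially the same route as the paper: you identify the remainder as the solution of \eqref{eq:linear-wave} with null data and source $G(y^1)-G(y^2)$, bound it through \eqref{eq:G-H1} by $T C_T(\norm{y^1}_{\cY}+\norm{y^2}_{\cY})\norm{y^1-y^2}_{\cY}$, and get the Lipschitz bound on $y^1-y^2$ by absorption, using the smallness given by \eqref{eq:estimate-nonlinear-wave}. Your splitting $y^1-y^2=(w^1-w^2)+r$ is only a rephrasing of the paper's direct application of \eqref{eq:estimate-linear-wave} (with source term) to $y-\tilde{y}$.
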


\begin{proof}
    Recalling \cref{def:strong-Frechet} of strong Fréchet-differentiability, we need to prove that, as $(y_0,y_0')$, $(\tilde{y}_0,\tilde{y}_0') \in Y$ and $u, \tilde{u} \in L^2((0,T);\R)$ go to $0$,
    \begin{equation}
        \label{eq:wT-wave}
        \norm{w(T)}_{L^2 \times H^{-1}} = o \big(\norm{y_0-\tilde{y}_0}_{L^2} + \norm{y_0'-\tilde{y}_0'}_{H^{-1}} + \norm{u-\tilde{u}}_{L^2}\big)
    \end{equation}
    where $w(t) := \cS_t((y_0,y_0'),u) - \cS_t((\tilde{y}_0,\tilde{y}_0'),\tilde{u}) - \Sigma_t((y_0-\tilde{y}_0,y_0'-\tilde{y}_0'),u-\tilde{u})$.
    Thus $w \in \cY$ is a solution to \eqref{eq:linear-wave} with null initial and boundary data, and a source term $G(y) - G(\tilde{y})$, where we denote by $y, \tilde{y} \in \cY$ the two nonlinear solutions.
    As in \eqref{eq:G-H1}, this entails that
    \begin{equation}
        \norm{w}_{\cY}
        \leq T C_T (\norm{y}_{\cY} + \norm{\tilde{y}}_{\cY}) \norm{y-\tilde{y}}_{\cY}.
    \end{equation}
    Moreover, $y - \tilde{y}$ is a solution to \eqref{eq:linear-wave} with initial data $(y_0-\tilde{y}_0, y_0'-\tilde{y}_0')$, control $u - \tilde{u}$ and source term $G(y) - G(\tilde{y})$.
    Combining \eqref{eq:estimate-linear-wave} and \eqref{eq:G-H1},
    \begin{equation}
        \norm{y-\tilde{y}}_{\cY} \leq C_T \left( \norm{y_0-\tilde{y}_0}_{L^2} + \norm{y_0'-\tilde{y}_0'}_{H^{-1}} + \norm{u-\tilde{u}}_{L^2} + T (\norm{y}_{\cY} + \norm{\tilde{y}}_{\cY}) \norm{y-\tilde{y}}_{\cY} \right).
    \end{equation}
    From the nonlinear estimate \eqref{eq:estimate-nonlinear-wave}, $\norm{y}_{\cY}$ and $\norm{\tilde{y}}_{\cY}$ are small when $y_0, \tilde{y}_0, y_0', \tilde{y}_0', u, \tilde{u}$ go to $0$.
    Thus we can absorb the right-hand side in the left-hand side, and both estimates entail \eqref{eq:wT-wave}.
\end{proof}

We thus obtain the following local exact controllability result in time $T \geq 2 \pi$ for \eqref{eq:nonlinear-wave}.
Such a result is well-known.
Local controllability for general semilinear wave equations was proved in \cite{Fattorini1975} using the linear theory of \cite{Russell1967}, and the linear test inspired by \cite[Theorem~3]{Markus1965} for ODEs.
There are nonlinearities $G$ for which one can also prove global controllability results (see \cite{Zuazua1993}).

\begin{proposition}
    Let $T \geq 2 \pi$.
    There exists $\delta > 0$ such that, for any $(y_0, y_0'), (y_1,y_1') \in Y$ with $\norm{y_0}_{L^2} + \norm{y_0'}_{H^{-1}} + \norm{y_1}_{L^2} + \norm{y_1'}_{H^{-1}} \leq \delta$, there exists $u \in L^2((0,T);\R)$ such that the unique solution to \eqref{eq:nonlinear-wave} with initial data $(y_0,y_0')$ satisfies $(y,y_t)(T) = (y_1,y_1')$.
\end{proposition}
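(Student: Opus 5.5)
The plan is to apply \cref{thm:linear-to-nonlinear} with $Y = L^2(0,\pi) \times H^{-1}(0,\pi)$, $U = \R$ and $p = 2$. The linear system $\Sigma$ is the one built in \cref{lem:linear-wave-WP}. The nonlinear system $\cS$ is obtained by setting $\cS_t((y_0,y_0'),u) := (y,y_t)(t)$, where $y$ solves \eqref{eq:nonlinear-wave}. The two hypotheses of the theorem are already available. Exact controllability of $\Sigma$ in time $T \geq 2\pi$, in the sense of \cref{def:lin-exact-cont}, is exactly \cref{lem:control-linear-wave}. Strong Fréchet-differentiability of $\cS_T$ at $(0,0)$ with derivative $\Sigma_T$ is \cref{lem:frechet-nonlinear-wave}. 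Once these are in place, the conclusion of \cref{thm:linear-to-nonlinear} gives the statement directly. We take $\delta$ small enough that both $(y_0,y_0')$ and $(y_1,y_1')$ lie in the ball provided by the theorem, and the resulting control also satisfies $\norm{u}_{L^2} \leq C(\dots)$.

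The main step requiring care is the global-versus-local issue. \cref{def:nonlinear-io} asks for globally defined maps, but the nonlinear well-posedness lemma only provides solutions for small data, namely $\norm{y_0}_{L^2}+\norm{y_0'}_{H^{-1}}+\norm{u}_{L^2}\leq \delta_0$. By the remark following \cref{def:nonlinear-io}, a locally defined system is enough here. Concretely, I would apply \cref{cor:TFI-surjective} directly to
\[
f(((y_0,y_0'),(y_1,y_1')),u) := \cS_T((y_0,y_0'),u) - (y_1,y_1').
\]
This map is defined on a neighborhood of $0$ in $(Y\times Y)\times L^2((0,T);\R)$, and the corollary only uses $f$ near the origin. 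Its partial derivative in $u$ at $0$ is $u\mapsto \Sigma_T(0,u)$, which has the bounded right inverse $L$ from \cref{lem:control-linear-wave}. The corollary then yields a Lipschitz map $g$ with $g(0)=0$, so the control $u=g(\cdot)$ is small. In particular it stays inside the well-posedness regime, which justifies the construction after shrinking $\delta$.

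For $T>2\pi$, one small check is needed because the controls are zero-extended as in the proof of \cref{lem:control-linear-wave}. The right inverse $L$ must be a bounded map into $L^2((0,T);\R)$, and it is, since the extension by $0$ is an isometry. Beyond this I expect no real obstacle: all the analytic work (Parseval for the moment problem, the contraction estimate, and the strong differentiability) has already been done. The only point to verify is that the differentiability in \cref{lem:frechet-nonlinear-wave} is joint in the initial data and the control, which its proof indeed covers.
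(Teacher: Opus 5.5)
Your proposal is correct and matches the paper's proof, which applies \cref{thm:linear-to-nonlinear} with $p=2$ and takes its two hypotheses from \cref{lem:control-linear-wave} and \cref{lem:frechet-nonlinear-wave}. Your extra remarks are sound refinements of points the paper leaves implicit: you work only with the locally defined nonlinear flow, and the control stays small (since $g$ is Lipschitz with $g(0)=0$), hence inside the well-posedness regime; you also treat the zero extension for $T>2\pi$.
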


\begin{proof}
    We apply the general linear test of \cref{thm:linear-to-nonlinear} with $p = 2$.
    We proved that its two assumptions are satisfied in \cref{lem:control-linear-wave} and \cref{lem:frechet-nonlinear-wave}.
\end{proof}

\subsubsection{Other examples}

The linear test of \cref{thm:linear-to-nonlinear} (or variations of it) is ubiquitous in the local controllability literature, for many PDEs of different nature.
Some of the most frequent examples are the Korteweg--de Vries equation (see e.g.\ \cite{Rosier1997} for small-time local controllability when the length of the domain is not ``critical''), or the bilinear Schrödinger equation (see e.g.\ \cite{BeauchardLaurent2010}).

\newpage

\subsection{Transferring null controllability}
\label{sec:LTT}

We move on to the harder case where the underlying linear system is not exactly controllable as in \cref{sec:lin-2-nonlin-exact}, but only null-controllable.
We present a general abstract result.

\subsubsection{A general abstract result}

In 2013, Liu, Takahashi and Tucsnak introduced in \cite{LiuTakahashiTucsnak2013} a time-iteration argument which allows one to deduce the small-time local null controllability of a nonlinear system from the cost of the small-time null controllability of its linearization at the equilibrium.
This time iteration is also at the heart of a series of recent abstract results \cite{AlabauCannarsaUrbani2021,AlabauCannarsaUrbani2022,AlabauCannarsaUrbani2024} by Alabau-Boussouira, Cannarsa and Urbani concerning bilinear parabolic equations (see \cref{sec:LTT-biblio}).

We present here a streamlined reformulation of these general arguments, which bypasses the historical intermediate ``source term method'' and makes the approach directly applicable as a black box.
It is straightforward to use in concrete systems and avoids too many assumptions on the operators $A$ and $B$ of \eqref{eq:nonlin-system}: one only checks the two natural assumptions below.

\begin{assumption}
    \label{ass:LTT-cost}
    The abstract linear control system $\Sigma$ is \emph{small-time null controllable with exponential cost}: for all $T > 0$ and $y_0 \in Y$, there exists $u \in L^p((0,T);U)$ such that $\Sigma_T(y_0,u) = 0$ and
    \begin{equation}
        \label{eq:cost.linear}
        \norm{u}_{L^p} \leq C_L \exp \left( c T^{-\sigma} \right) \norm{y_0}_{Y}
    \end{equation}
    for some given constants $\sigma, c, C_L > 0$ (independent of $T$).
\end{assumption}

\begin{assumption}
    \label{ass:LTT-power}
    The abstract nonlinear control system $\cS$ is a \emph{nonlinear perturbation} of~$\Sigma$: there exist $\gamma > 1$ and $C_N > 0$ such that, for all $T > 0$, $y_0 \in Y$ and $u \in L^p((0,T);U)$,
    \begin{equation}
        \norm{\cS_T(y_0,u) - \Sigma_T(y_0,u)}_Y \leq C_N \left( \norm{y_0}_Y + \norm{u}_{L^p} \right)^{\gamma}.
    \end{equation}
\end{assumption}

\begin{theorem}
    \label{thm:LTT}
    Under \cref{ass:LTT-cost,ass:LTT-power}, the system $\cS$ is small-time locally null controllable: for every $T,\eta > 0$, there exists $\delta > 0$ such that, for all $y_0 \in Y$ with $\norm{y_0}_Y \leq \delta$, there exists $u \in L^p((0,T);U)$ with $\norm{u}_{L^p} \leq \eta$ such that $\cS_T(y_0,u) = 0$.
\end{theorem}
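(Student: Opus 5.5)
The plan is a Liu--Takahashi--Tucsnak type time iteration on a geometric time schedule. At each step the linear null control of \cref{ass:LTT-cost} is applied to the current nonlinear state. The nonlinear error \cref{ass:LTT-power} then leaves a residual of order $\norm{y_j}^\gamma$ times an exponential factor. The superlinear power $\gamma>1$ beats the exponential cost growth, provided the time steps shrink slowly enough and the initial datum is small enough.

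Fix $T,\eta>0$ and $r>1$ (to be chosen, close to $1$). Set $\tau_j:=T(1-\frac1r)r^{-j}$, so that $\sum_j\tau_j=T$, with $T_0=0$ and $T_{j+1}=T_j+\tau_j$. Put $y_0$ the given datum, and for each $j$ let $u_j\in L^p((0,\tau_j);U)$ be given by \cref{ass:LTT-cost} with $\Sigma_{\tau_j}(y_j,u_j)=0$ and $\norm{u_j}\le C_Le^{c\tau_j^{-\sigma}}\norm{y_j}$. Define $y_{j+1}:=\cS_{\tau_j}(y_j,u_j)$. By \cref{ass:LTT-power},
\begin{equation*}
\norm{y_{j+1}}_Y=\norm{\cS_{\tau_j}(y_j,u_j)-\Sigma_{\tau_j}(y_j,u_j)}_Y\le C_N(1+C_L)^\gamma e^{\gamma c\tau_j^{-\sigma}}\norm{y_j}^\gamma .
\end{equation*}
Writing $\tau_j^{-\sigma}=\tau_0^{-\sigma}\rho^j$ with $\rho:=r^\sigma$, this reads $\norm{y_{j+1}}\le K e^{\gamma c\tau_0^{-\sigma}\rho^j}\norm{y_j}^\gamma$.

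The core step is to show by induction an ansatz of the form
\begin{equation*}
\norm{y_j}\le \exp\!\left(-M\tau_0^{-\sigma}\kappa^j\right)\quad\text{with } 1<\rho<\kappa<\gamma .
\end{equation*}
This is where the choice of $r$ enters: pick $r$ with $r^\sigma<\gamma$, then $\kappa\in(\rho,\gamma)$. Plugging in, the induction requires
\begin{equation*}
\ln K+\gamma c\tau_0^{-\sigma}\rho^j-\gamma M\tau_0^{-\sigma}\kappa^j\le -M\tau_0^{-\sigma}\kappa^{j+1},
\end{equation*}
that is, $(\gamma-\kappa)M\kappa^j\tau_0^{-\sigma}\ge \gamma c\rho^j\tau_0^{-\sigma}+\ln K$. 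Since $\rho<\kappa$ and $\kappa^j\ge1$, this holds for all $j$ once $M$ is large enough (depending on $T$, since $\tau_0^{-\sigma}$ is fixed), absorbing $\ln K$ through $\tau_0^{-\sigma}\kappa^j\ge\tau_0^{-\sigma}$. The base case $j=0$ is exactly the smallness requirement $\norm{y_0}\le\delta:=\exp(-M\tau_0^{-\sigma})$. Getting this bookkeeping right, i.e.\ choosing $r$, $\kappa$, $M$ in that order so the doubly exponential decay dominates the exponential cost, is the main obstacle.

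Next, the control bound: $\norm{u_j}\le C_L\exp\!\big(\tau_0^{-\sigma}(c\rho^j-M\kappa^j)\big)$. Taking $M\ge 2c$ makes the exponent at most $-\tfrac M2\tau_0^{-\sigma}\kappa^j$, which is summable. The sum $\sum_j\norm{u_j}$ is bounded by $C\exp(-\tfrac M2\tau_0^{-\sigma})$, which is at most $\eta$ after enlarging $M$, i.e.\ shrinking $\delta$. For $p<\infty$ the concatenated control has $L^p$ norm at most $\sum_j\norm{u_j}$, and for $p=\infty$ it is bounded by $\sup_j\norm{u_j}$; in either case it lies in $L^p((0,T);U)$. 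Finally, by the composition property \eqref{eq:cS-compo}, $y(T_j)=y_j\to0$. The continuity of $t\mapsto\cS_t(y_0,u)$ from \cref{def:nonlinear-io}, together with $T_j\to T$, then gives $\cS_T(y_0,u)=0$.
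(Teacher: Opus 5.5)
Your proof is correct and is essentially the paper's argument: the same geometric time schedule, the linear null control of \cref{ass:LTT-cost} applied to the current nonlinear state on each slice, the stepwise bound $\norm{y_{j+1}}\le C_N(1+C_L)^\gamma e^{c\gamma\tau_j^{-\sigma}}\norm{y_j}^\gamma$, and concatenation plus time continuity to conclude $\cS_T(y_0,u)=0$. The difference is only in bookkeeping: you propagate the ansatz $\norm{y_j}\le\exp(-M\tau_0^{-\sigma}\kappa^j)$ by induction, whereas the paper takes logarithms, divides by $\gamma^{j+1}$ and telescopes with the specific ratio $\rho=\gamma^{-1/(\sigma+1)}$, which keeps $\norm{y_0}$ explicit and gives a control bound linear in $\norm{y_0}$ instead of your uniform bound (more than the statement needs).
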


\begin{proof}
    The proof relies on a time iteration.
    Within each time interval, we apply the control obtained from the linear approximation.
    Reminiscent of Newton's method, the doubly-exponential convergence of the nonlinear error mitigates the small-time blow-up of the linear control cost.

    \medskip

    \step{Time iteration and linear step on each slice}
    Partition $[0,T]$ into subintervals $[T_j, T_{j+1}]$, where $T_0 = 0$ and $T_{j+1} = T_j + \tau_j$, with the constraint $\sum_{j \geq 0} \tau_j = T$.
    We choose
    \begin{equation}
        \label{eq:LTT-tau_j}
        \tau_j := T (1-\rho) \rho^j
        \quad \text{for} \quad
        \rho := \gamma^{-\frac{1}{\sigma+1}} \in (0,1).
    \end{equation}
    Given an initial state $y_0$, define inductively $y_{j+1} := \cS_{\tau_j}(y_j, u_j)$, where $u_j \in L^p((0,\tau_j);U)$ is the control given by \cref{ass:LTT-cost}, such that $\Sigma_{\tau_j}(y_j,u_j) = 0$ and
    \begin{equation}
        \label{eq:LTT-ujyj}
        \norm{u_j}_{L^p((0,\tau_j);U)} \leq C_L \exp (c \tau_j^{-\sigma}) \norm{y_j}_Y.
    \end{equation}

    \step{Stepwise nonlinear contraction}
    By \cref{ass:LTT-power} and \eqref{eq:LTT-ujyj},
    \begin{equation}
        \begin{split}
            \norm{y_{j+1}}
            = \norm{\cS_{\tau_j}(y_j,u_j) - \Sigma_{\tau_j}(y_j,u_j)}
            & \leq C_N (\norm{y_j}+\norm{u_j})^{\gamma} \\
            & \leq C_N (1+C_L)^{\gamma} \exp (c \gamma \tau_j^{-\sigma}) \norm{y_j}^{\gamma}.
        \end{split}
    \end{equation}
    Taking logs and dividing by $\gamma^{j+1}$,
    \begin{equation}
        \frac{\ln \norm{y_{j+1}}}{\gamma^{j+1}}
        \leq
        \frac{\ln \norm{y_j}}{\gamma^j} +
        \frac{\ln \left( C_N (1+C_L)^{\gamma} \right)}{\gamma^{j+1}} +
        \frac{c}{\gamma^j \tau_j^{\sigma}}.
    \end{equation}
    Summing this telescoping inequality over $j$ and recalling \eqref{eq:LTT-tau_j} leads to
    \begin{equation}
        \label{eq:yj-y0-sumtau}
        \frac{\ln \norm{y_j}}{\gamma^j}
        \leq \ln \norm{y_0} + C_T
        \quad \text{where} \quad
        C_T := \frac{\ln \left( C_N (1+C_L)^{\gamma} \right)}{\gamma-1} + \frac{c T^{-\sigma}}{(1-\rho)^{\sigma+1}}.
    \end{equation}
    Given a constant $M_T > 0$, when
    \begin{equation}
        \label{hyp:nl.local}
        \norm{y_0} \leq \exp \left(-M_T - C_T \right),
    \end{equation}
    estimate \eqref{eq:yj-y0-sumtau} yields
    \begin{equation}
        \label{eq:norm-yj-to-0}
        \norm{y_j} \leq \exp (- M_T \gamma^j) \underset{j \to \infty}{\longrightarrow} 0.
    \end{equation}

    \step{Conclusion}
    The overall control $u$ is the concatenation of the controls $u_j$ on $[T_j,T_{j+1}]$.
    We prove below that there exists $M_T > 0$ such that, under condition \eqref{hyp:nl.local}, for all $j \in \N$,
    \begin{equation}
        \label{eq:claim-LTT}
        \norm{u_j}_{L^p((0,\tau_j);U)} \leq 2^{-j} C_L \exp (C_T + c \tau_0^{-\sigma}) \norm{y_0}_Y.
    \end{equation}
    Hence, by the triangle inequality, $u \in L^p((0,T);U)$ and
    \begin{equation}
        \label{eq:cost-LTT}
        \norm{u}_{L^p((0,T);U)} \leq \sum_{j \geq 0} \norm{u_j}_{L^p((0,\tau_j);U)}
        \leq 2 C_L \exp (C_T + c \tau_0^{-\sigma}) \norm{y_0}_Y.
    \end{equation}
    Since $\cS$ is an abstract nonlinear control system, the associated solution $y(t) := \cS_t(y_0,u)$ belongs to $C^0([0,T];Y)$.
    By the composition property \eqref{eq:cS-compo}, $y(T_j) = y_j$.
    By \eqref{eq:norm-yj-to-0}, $y_j \to 0$.
    Since $T_j \to T$, by continuity, we conclude that $\cS_T(y_0,u) = y(T) = 0$.
    By \eqref{eq:cost-LTT}, choosing $\norm{y_0} \leq \delta$ small enough as in the theorem statement, we can ensure that the control is as small as desired.

    \step{Proof of \eqref{eq:claim-LTT}}
    We fix
    \begin{equation}
        M_T := \frac{\ln 2}{\gamma-1} + c \frac{\rho^{-\sigma}-1}{\gamma-1} \tau_0^{-\sigma} > 0.
    \end{equation}
    From \eqref{eq:LTT-ujyj} and \eqref{eq:yj-y0-sumtau}, under the smallness hypothesis \eqref{hyp:nl.local}, for all $j \geq 0$,
    \begin{equation}
        \label{eq:key-line}
        \begin{split}
            \ln \frac{\norm{u_j}}{\norm{y_0}}
            & \leq \ln C_L + c \tau_j^{-\sigma} + (\gamma^j-1)\ln\norm{y_0} + \gamma^j C_T \\
            & \leq \ln C_L + c \tau_j^{-\sigma} - (\gamma^j-1)M_T + C_T.
        \end{split}
    \end{equation}
    Then, using $\frac{\gamma^j-1}{\gamma-1}\geq j$ and the monotonicity of $s \mapsto \frac{s^j-1}{s-1}$ on $(1,\infty)$ (together with $\gamma>\rho^{-\sigma} = \gamma^{\frac{\sigma}{\sigma+1}}$),
    \begin{equation}
        (\gamma^j-1)M_T
        \geq j\ln2 + c \tau_0^{-\sigma} (\rho^{-\sigma j}-1)
        = j\ln2 + c (\tau_j^{-\sigma}-\tau_0^{-\sigma}).
    \end{equation}
    Plugging this into \eqref{eq:key-line} cancels the $c \tau_j^{-\sigma}$ term and yields
    \begin{equation}
        \ln\frac{\norm{u_j}}{\norm{y_0}} \leq - j \ln 2 + \ln C_L + c \tau_0^{-\sigma} + C_T,
    \end{equation}
    which proves \eqref{eq:claim-LTT} and completes the proof.
\end{proof}

\begin{remark}
    \label{rk:LTT}
    To avoid burdening the exposition, we stated \cref{thm:LTT} for globally well-posed time-invariant control systems.
    Nevertheless, it is clear from the proof that
    \begin{itemize}
        \item the exact same result holds even if the nonlinear system $\cS$ is only locally well-posed (for example for small enough times, or controls, or initial states),
        \item an analogous result holds \emph{mutatis mutandis} for time-dependent systems provided that \cref{ass:LTT-cost} and \cref{ass:LTT-power} hold uniformly in the considered time interval.
    \end{itemize}
\end{remark}

\begin{remark}
    As highlighted in \cite{AlabauCannarsaUrbani2022,AlabauCannarsaUrbani2024}, this construction is entirely explicit (no abstract fixed-point argument is used).
    Moreover, tracking the constants in the proof, one obtains a lower bound for the size of the neighborhood $\delta$ (which is $\exp(-M_T-C_T)$ in our proof) and an upper bound for the control cost $\norm{u} / \norm{y_0}$ (which is $2 C_L \exp(C_T + c \tau_0^{-\sigma})$ in our proof).
\end{remark}

\subsubsection{Historical perspective}
\label{sec:LTT-biblio}

We recall the origins of the method and its recent enhancements.

\paragraph{Source term method.}
The approach introduced in \cite{LiuTakahashiTucsnak2013} has been extensively applied to many PDEs and is commonly referred to as the ``source term method''.
In that paper, the strategy can be summarized as follows:
\begin{enumerate}
    \item \label{it:ltt-1}
    Starting from the small-time null controllability of a linear system $\dot{y} = Ay + Bu$, the authors use the abstract time-iteration argument described above to deduce that any system of the form $\dot{y} = Ay + Bu + f$, where $f$ is a given \emph{source term}, is also small-time null controllable, provided that $f \in \mathcal{F}$, a weighted functional space ensuring that $f$ decays very fast as $t \to T$.
    The associated control and state then also decay very fast as $t \to T$.
    Typically, the weights for the source term, the state and the control are of the form $\exp(1/(T-t))$.

    \item \label{it:ltt-2}
    For a sufficiently small initial state $y_0$, they define a map $\Theta : \mathcal{F} \to \mathcal{F}$ by $\Theta(f) := N(y,u)$, where $u$ is the control steering $y_0$ to $0$ in the presence of the source term $f$, $y$ is the associated solution and $N$ is the nonlinearity of \eqref{eq:nonlin-system}.
    They prove that $\Theta$ is a contraction that preserves a small ball in $\mathcal{F}$, so that the Banach fixed-point theorem provides a fixed point $f^\star$.
    This yields a solution of the nonlinear system $\dot{y} = Ay + Bu + N(y,u)$ with $y(0)=y_0$ and $y(T) = 0$.
\end{enumerate}

The idea of obtaining local controllability for a nonlinear system via a fixed-point argument based on linear controllability in the presence of rapidly decaying source terms had already been used in \cite{Imanuvilov2001,ImanuvilovTakahashi2007}.
In these works, the first step was carried out by means of Carleman estimates directly handling the source terms, without resorting to time iteration.

A key contribution of \cite{LiuTakahashiTucsnak2013} is therefore the time-iteration argument, which is independent of the specific method used to establish controllability of $\dot{y} = Ay + Bu$, and allows to automatically derive from it the controllability of $\dot{y} = Ay + Bu + f$, for rapidly decaying $f$.

Although \cite{LiuTakahashiTucsnak2013} has been widely used, its application can be somewhat cumbersome: for each concrete PDE system, one must introduce suitable weighted functional spaces (for the source terms, the controls, and the states), re-prove controllability in the presence of source terms, and re-establish the properties of the map $\Theta$ (for example, in \cref{sec:ex-bilin-heat}, we explain how we can avoid 4 pages of boilerplate of \cite{BeauchardMarbach2020}).

\medskip
The source term method has been very successfully used in many contexts.
To cite a few: for a Burgers equation with non-local viscosity in \cite{MicuTakahashi2018}; for systems of parabolic equations coupled by nonlinear boundary conditions coming from biology \cite{BhandariBoyer2024}; for free boundary problems, such as the Stefan problem in \cite{GeshkovskiMaity2023} or a nonlinear Burgers equation with free endpoint \cite{GeshkovskiZuazua2021}.
Outside of null controllability (recall \cref{rk:LTT}), it has also been adapted in \cite{DuprezLissy2022} to prove controllability to non-zero trajectories of a Fokker--Planck equation.

\paragraph{Direct time iteration for bilinear parabolic equations.}
The time iteration presented above and dating back to the source term method is also at the heart of the series of recent papers \cite{AlabauCannarsaUrbani2021,AlabauCannarsaUrbani2022,AlabauCannarsaUrbani2024} by Alabau-Boussouira, Cannarsa and Urbani.
These articles concern abstract parabolic systems of the form $\dot{y} + A y + u(t) B y = 0$, studied in the vicinity of non-zero ground states or eigenfunctions.
Up to a change of variables, they are included in the class \eqref{eq:nonlin-system}.

\begin{itemize}
    \item First, in \cite{AlabauCannarsaUrbani2021}, the authors construct controls on $t \in [0,\infty)$ such that the state converges superexponentially towards the target, by working on a sequence of uniform time intervals $[j T_0, (j+1)T_0]$.
    As in \cref{it:ltt-1} above, on each time interval, they apply the control given by the linear theory to the nonlinear system.
    By a nonlinear estimate similar to \cref{ass:LTT-power}, one roughly obtains $\norm{y((j+1)T_0)} \leq \norm{y(jT_0)}^2$, which entails the superexponential convergence towards zero.

    \item Then in \cite{AlabauCannarsaUrbani2022} for bounded $B$ and \cite{AlabauCannarsaUrbani2024} for unbounded $B$, they adapt this strategy to obtain small-time local null controllability.
    Here, as in \cref{it:ltt-1}, the sequence of time intervals is geometrically decreasing.
    The increasing cost of linear controllability of \cref{ass:LTT-cost} is mitigated by the superexponential quadratic decay.
\end{itemize}

An advantage of their presentation is that the Banach fixed-point argument of \cref{it:ltt-2} above is not necessary.
One way to understand it is that, as in our proof of \cref{thm:LTT}, the time iteration is performed directly with $f = N(y,u)$ on each time interval.

Their formulation has been adapted to a nonlinear heat equation in \cite[Section~4.2]{DucaPozzoliUrbani2025}.

\bigskip

By bypassing the intermediate source term argument altogether, and avoiding any direct assumption on the operators $A$ and $B$ of \eqref{eq:nonlin-system}, our reformulation of the Liu--Takahashi--Tucsnak and Alabau-Boussouira--Cannarsa--Urbani time iteration stated in \cref{thm:LTT} can be applied directly, requiring only the verification of the minimal and natural \cref{ass:LTT-cost,ass:LTT-power}.

We provide examples below.

\subsubsection{Example: a bilinear heat equation}
\label{sec:ex-bilin-heat}

In this paragraph we investigate the small-time local controllability of a bilinear heat equation introduced in \cite{BeauchardMarbach2020} (to which we refer for more details).
Using our abstract result, we give a much shorter proof of its controllability.
We consider the system
\begin{equation}
    \label{eq:heat-bilin}
    \begin{cases}
        y_t(t,x) - y_{xx}(t,x) = u(t) \Gamma[y(t)](x) & \text{in } (0,T) \times (0,\pi) \\
        y_x(t,0) = y_x(t,\pi) = 0 & \text{in } (0,T), \\
        y(0,x) = y_0(x) & \text{in } (0,\pi).
    \end{cases}
\end{equation}
where the nonlinearity $\Gamma \in C^1(H^1_N;H^{-1}_N)$ is globally Lipschitz, $u \in L^\infty((0,T);\R)$ and $y_0 \in L^2(0,\pi)$.
Using a Banach fixed-point argument, one easily proves (see \cite[Lemma 2.2]{BeauchardMarbach2020}) that system \eqref{eq:heat-bilin} is locally well-posed in the following sense.

\begin{lemma}
    For every $T > 0$, $u \in L^\infty((0,T);\R)$ and $y_0 \in L^2(0,\pi)$ small enough, system~\eqref{eq:heat-bilin} admits a unique solution $y \in C^0([0,T];L^2(0,\pi))$ which moreover satisfies
    \begin{equation}
        \label{eq:heat-bilin-est}
        \| y \|_{C^0(L^2)} + \| y \|_{L^2(H^1_N)} \leq C \| y_0 \|_{L^2} + C \|u \|_{L^\infty}.
    \end{equation}
\end{lemma}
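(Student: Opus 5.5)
The statement is a local well-posedness result for \eqref{eq:heat-bilin}, so I will prove it with a Banach fixed point in the energy space
\begin{equation}
    \mathcal{Z} := C^0([0,T];L^2(0,\pi)) \cap L^2((0,T);H^1_N(0,\pi)),
\end{equation}
using the Neumann analogue of \cref{prop:heat-f-WP}. That analogue gives, for $f \in L^2((0,T);H^{-1}_N)$, a unique solution of $y_t - y_{xx} = f$ with homogeneous Neumann data. The energy estimate is proved the same way as \eqref{eq:energy-heat-f-WP}: multiply by $y$ and integrate by parts. The one change is that the $H^1_N$ norm includes the $L^2$ part, which is not dissipated. I handle this with a Grönwall factor on $[0,T]$, giving
\begin{equation}
    \norm{y}_{\mathcal{Z}} \leq C_T \left( \norm{y_0}_{L^2} + \norm{f}_{L^2 H^{-1}_N} \right).
\end{equation}

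Given $z \in \mathcal{Z}$, define $\Theta(z)$ as the solution with initial data $y_0$ and source $f := u(t)\,\Gamma[z(t)]$. Since $\Gamma$ is globally Lipschitz from $H^1_N$ to $H^{-1}_N$, we have $\norm{\Gamma[z]}_{H^{-1}_N} \leq \norm{\Gamma[0]}_{H^{-1}_N} + L\norm{z}_{H^1_N}$. This yields
\begin{align}
    \norm{u\Gamma[z]}_{L^2 H^{-1}_N} &\leq \norm{u}_{L^\infty}\left(\sqrt{T}\,\norm{\Gamma[0]}_{H^{-1}_N} + L\norm{z}_{L^2H^1_N}\right), \\
    \norm{u(\Gamma[z_1]-\Gamma[z_2])}_{L^2 H^{-1}_N} &\leq L\norm{u}_{L^\infty}\norm{z_1-z_2}_{\mathcal{Z}}.
\end{align}
The smallness assumption enters here. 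If $\norm{u}_{L^\infty}$ is small enough that $C_T L \norm{u}_{L^\infty} \leq \tfrac12$, then $\Theta$ is a $\tfrac12$-contraction on all of $\mathcal{Z}$. The statement says ``small enough'' and I will read it as applying to both $u$ and $y_0$, which suffices. Banach's theorem then gives a unique fixed point, and it lies in $C^0([0,T];L^2)$ by construction.

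For the estimate \eqref{eq:heat-bilin-est}, I use the contraction as in the wave case: $\norm{y}_{\mathcal{Z}} \leq 2\norm{\Theta(0)}_{\mathcal{Z}}$. Since $\Theta(0)$ has source $u\,\Gamma[0]$, this gives
\begin{equation}
    \norm{y}_{\mathcal{Z}} \leq 2C_T\left(\norm{y_0}_{L^2} + \sqrt{T}\,\norm{\Gamma[0]}_{H^{-1}_N}\norm{u}_{L^\infty}\right),
\end{equation}
which is \eqref{eq:heat-bilin-est}. Note that the estimate is affine in $\norm{u}_{L^\infty}$ rather than bilinear in $\norm{u}_{L^\infty}$ and $\norm{y_0}_{L^2}$, precisely because $\Gamma[0]$ need not vanish. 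For uniqueness, the contraction already gives uniqueness in $\mathcal{Z}$. Uniqueness among merely $C^0L^2$ solutions would require the weak formulation to make sense, which forces $y \in L^2H^1$ anyway; I will define solutions in $\mathcal{Z}$.

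The main obstacle is the loss of derivatives. The source $u\Gamma[z]$ is only in $L^2 H^{-1}_N$, and it requires $z \in L^2H^1_N$. The linear estimate exactly recovers this regularity, but with no gain, so the contraction cannot come from a small-time factor. It must come from the smallness of $\norm{u}_{L^\infty}$ multiplied by $L$ and by the parabolic constant $C_T$. I need to check that $C_T$ stays bounded for $T$ in bounded sets; the Grönwall factor from the zero Neumann mode is harmless there. If one insisted on arbitrary $u$ instead, I would iterate the same argument on short subintervals, but that is not needed for this statement.
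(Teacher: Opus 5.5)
Your proof is correct and follows the route the paper indicates, namely a Banach fixed point (as in the cited Lemma 2.2 of \cite{BeauchardMarbach2020}), with the contraction coming from the smallness of $\|u\|_{L^\infty}$ against the non-small $L^2H^{-1}_N\to L^2H^1_N$ parabolic constant. Drop your closing aside, though: iterating on short subintervals cannot remove the smallness requirement on $u$, because neither that constant nor $\|u\|_{L^\infty}$ shrinks with the interval length; indeed $\Gamma[y]:=y_{xx}$ with $u\equiv-2$ gives the ill-posed backward heat equation.
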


Hence, setting $\cS_t := (y_0,u) \mapsto y(t)$ locally defines an abstract nonlinear control system in the sense of \cref{def:nonlinear-io} with $Y = L^2(0,\pi)$, $U = \R$ and $p = \infty$.
Its linearization at the null equilibrium is
\begin{equation}
    \label{eq:heat-bilin-lin}
    \begin{cases}
        z_t(t,x) - z_{xx}(t,x) = u(t) \mu(x) & \text{in } (0,T) \times (0,\pi) \\
        z_x(t,0) = z_x(t,\pi) = 0 & \text{in } (0,T), \\
        z(0,x) = z_0(x) & \text{in } (0,\pi),
    \end{cases}
\end{equation}
where $\mu := \Gamma[0] \in H^{-1}_N$.
Let $(\varphi_k)_{k \in \N}$ be the eigenfunctions of the Neumann Laplacian on $(0,\pi)$.
Under well-understood classical assumptions on the coefficients $\langle \mu, \varphi_k \rangle$, one knows from techniques dating back to \cite{FattoriniRussell1971} that \eqref{eq:heat-bilin-lin} is small-time null controllable (see \cite[Proposition 2.5]{BeauchardMarbach2020}).

\begin{lemma}
    \label{lem:muk-control}
    Assume that, for all $k \in \N$, $\langle \mu, \varphi_k \rangle \neq 0$ and that
    \begin{equation}
        \label{eq:muk-bilinear}
        \limsup_{k \to +\infty} \frac{- \log |\langle \mu, \varphi_k \rangle|}{k} < + \infty.
    \end{equation}
    Then there exists $C > 0$ such that, for any $T > 0$ and $z_0 \in L^2(0,\pi)$, there exists $u \in L^\infty(0,T)$ such that the associated solution to \eqref{eq:heat-bilin-lin} satisfies $z(T) = 0$ and one has
    \begin{equation}
        \norm{u}_{L^\infty} \leq C e^{C/T} \norm{z_0}_{L^2}.
    \end{equation}
\end{lemma}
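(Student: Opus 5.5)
The plan is to prove this with the Lebeau--Robbiano time iteration of \cref{thm:LR}, in its relaxed form from \cref{rk:relax}. The low frequencies are controlled by a finite moment problem, as in \cref{s:heat-boundary}. Neumann eigenfunctions are $\varphi_0 = 1/\sqrt{\pi}$ and $\varphi_k(x) = \sqrt{2/\pi}\cos(kx)$, with eigenvalues $\lambda_k = k^2$. Write $\mu_k := \langle \mu, \varphi_k\rangle$. Testing \eqref{eq:heat-bilin-lin} against $\varphi_k$ gives
\[
\langle z(T),\varphi_k\rangle = e^{-k^2T}\langle z_0,\varphi_k\rangle + \mu_k\int_0^T e^{-k^2(T-t)}u(t)\,\dd t .
\]
Let $P_k$ be the orthogonal projection onto $\vect\{\varphi_0,\dots,\varphi_k\}$. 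Since $P_k$ commutes with the heat semigroup, the dissipation estimate \eqref{eq:ass.dissip} holds with $b=2$, $m=1$. The control is only in $L^\infty$, so $p=\infty$ here.

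\textbf{Low modes.} Setting $P_k z(T)=0$ amounts to the finite moment problem
\[
\int_0^T e^{j^2 t}u(t)\,\dd t = -\langle z_0,\varphi_j\rangle/\mu_j, \qquad 0\le j\le k .
\]
By \eqref{eq:muk-bilinear} and $\mu_j\neq 0$ for all $j$, there is $C$ with $|\mu_j|^{-1}\le Ce^{Cj}$. The right-hand side is therefore bounded by $Ce^{Ck}\norm{z_0}_{L^2}$.

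I would rerun the proof of \cref{thm:fin-cost} with nodes $j^2$ for $0\le j\le k$, plus the harmless node $0$. That gives an $L^2$ control $u=\sum_j a_j e^{j^2 t}$ with $\norm{u}_{L^2}\le \sqrt{(k+1)^2/T}\,e^{6\sqrt{(k+1)/T}}\,|b|$. Gautschi's bound and the integral estimate of \cref{lem:U-1-exp} extend to $y_0 = 0$, since the estimate $\int_0^\infty$ already covers that node.

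\textbf{Main obstacle: passing from $L^2$ to $L^\infty$.} This is the step I expect to be hardest. The control $u=\sum a_j e^{j^2 t}$ lies in a finite-dimensional space, but a crude bound of its sup norm by its $L^2$ norm loses a factor like $e^{k^2T}\sqrt{k/T}$. That loss is too large unless $k^2T\lesssim k$, which is not the regime used by the iteration.

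I would first try a different route: solve the moment problem with controls supported near $t=0$, say on $(0,T/k)$. There $e^{j^2 t}\le e^{jk\cdot T/k}$ is mild, and the Gram-matrix argument runs on a shorter interval. Alternatively, I would use biorthogonal families $q_j$ to $e^{j^2 t}$ in $L^2(0,T)$ with $\norm{q_j}_{L^\infty}\le Ce^{C\sqrt{j}/\ldots+C/T}$ (Fattorini--Russell). Either way the aim is a relaxed bound of the form
\[
\norm{u}_{L^\infty}\le C\exp\big(c_1 k + c_1' T^{-1}\big)\norm{z_0}_{L^2},
\]
where the factors $e^{Ck}$ from $1/\mu_j$, the polynomial prefactors, and $e^{6\sqrt{k/T}}\le e^{3(k+1/T)}$ (Young) are all absorbed. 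This is exactly \eqref{eq:ass-approx-u-relax} with $a=1$, $\theta=1$.

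\textbf{Conclusion.} \cref{rk:relax} gives $\sigma=\max\{am/(b-a),\theta\}=1$. \cref{thm:LR} then yields null controllability with cost $C_0e^{c_0/T}$ for $T\le 1$. For $T>1$, I would use the control built for time $1$ followed by the zero control.
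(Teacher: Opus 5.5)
There is a genuine gap. The one step that carries the content of the lemma is never established: an $L^\infty$ control killing the modes $0,\dots,k$ at cost $C\exp(c_1k+c_1'T^{-1})\norm{z_0}_{L^2}$. You state it as an aim, and neither of your two routes delivers it.

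Supporting the control on $(0,T/k)$ does tame $e^{j^2t}$. But the Gram--Gautschi bound of \cref{thm:fin-cost} on an interval of length $\delta=T/k$ only gives $e^{6\sqrt{(k+1)/\delta}}\approx e^{6k/\sqrt{T}}$, and $k/\sqrt{T}$ is not $\lesssim k+T^{-1}$ (take $k\approx T^{-1}$). Along the schedule of \cref{thm:LR} ($k_j\approx\beta 2^j$, $\tau_j=\alpha 2^{-j}$), this exponent grows like $2^{3j/2}$, faster than the dissipation exponent $\tau_jk_j^2\approx 2^j$. Rebalancing with $k\tau^{-1/2}\le k^{4/3}+\tau^{-2}$ and \cref{rk:relax} yields only a cost $e^{C/T^2}$.

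The biorthogonal route is the Fattorini--Russell moment method, which is what the paper relies on: it gives no proof and refers to \cite[Proposition 2.5]{BeauchardMarbach2020}. But suppose you have $(q_j)$ biorthogonal to $(e^{-j^2(T-t)})_{j\geq 0}$ in $L^2(0,T)$ with $\norm{q_j}_{L^\infty}\le Ce^{Cj+C/T}$. Then $u:=\sum_j c_jq_j$ with $c_j:=-e^{-j^2T}\langle z_0,\varphi_j\rangle/\langle\mu,\varphi_j\rangle$ proves the lemma at once, since $\sum_{j\geq0}e^{2Cj-j^2T}\le(1+\sqrt{\pi/T})e^{C^2/T}$. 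The Lebeau--Robbiano layer is then superfluous, and the hard part has simply been assumed.

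Your reason for discarding the crude bound is also mistaken, and that bound actually closes the gap. With $a=1$, $b=2$, $m=1$, the schedule of \cref{thm:LR} has $\tau_jk_j^2\approx\alpha\beta\,k_j$, with $\alpha\beta$ independent of $j$ and $T$ (it equals $12c_1/c_3$ there). So you are exactly in the regime $k^2\tau\lesssim k$. Only constants matter, because the loss $e^{k^2\delta}$ of a control supported on $(0,\delta)$ scales like the dissipation.

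For $u=\sum_{j\le k}a_je^{j^2t}$ on $(0,\delta)$ with $Ma=b$, you get $\norm{u}_{L^\infty}\le\sqrt{k+1}\,e^{k^2\delta}\norm{M^{-1}}\abs{b}\le C\frac{(k+1)^{5/2}}{\delta}e^{k^2\delta+12\sqrt{(k+1)/\delta}+Ck}\norm{z_0}_{L^2}$. Redo \cref{lem:LR-contract} with $\delta=\tau/4$, i.e.\ control on $[0,\tau/4]$ and free decay at rate $(k+1)^2$ on $[\tau/4,\tau]$. Up to polynomial factors, each step then contracts by $\exp(-\tfrac12\tau k^2+Ck+C\tau^{-1})$ and costs $\exp(\tfrac14\tau k^2+Ck+C\tau^{-1})$.

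In the summation step of \cref{thm:LR}, take $\beta=\rho/\alpha$ with $\rho$ large. The extra factor $e^{\tau_jk_j^2/4}\approx e^{\rho\beta 2^j/4}$ is beaten by the accumulated decay $\approx e^{-\rho\beta(2^j-1)/2}$, which gives $\sup_j\norm{u_j}_{L^\infty}\le Ce^{C/T}\norm{z_0}_{L^2}$. This is not of the black-box form \eqref{eq:ass-approx-u-relax}, since $a<b$ there excludes a $\tau k^2$ term, so it must be written out. It uses nothing beyond the paper's tools.
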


It is easy to estimate the discrepancy between the nonlinear system and its linearization.

\begin{lemma}
    \label{lem:bilin-gap}
    There exists $C > 0$ such that, for all $T > 0$, $u \in L^\infty((0,T);\R)$ and $y_0 \in L^2(0,\pi)$ small enough, the unique solution $y$ to \eqref{eq:heat-bilin} satisfies
    \begin{equation}
        \norm{y(T)-z(T)}_{L^2} \leq C \left( \norm{y_0}_{L^2} + \norm{u}_{L^\infty} \right)^2,
    \end{equation}
    where $z$ denotes the unique solution to \eqref{eq:heat-bilin-lin}.
\end{lemma}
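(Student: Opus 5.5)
The plan is to study the difference $w := y - z$, where $y$ solves \eqref{eq:heat-bilin} and $z$ solves \eqref{eq:heat-bilin-lin}, both with the same initial data $y_0$ and the same control $u$. Subtracting the two equations, $w$ solves a heat equation with homogeneous Neumann boundary conditions, null initial data, and source term
\begin{equation}
    f(t) := u(t) \left( \Gamma[y(t)] - \Gamma[0] \right) \in H^{-1}_N,
\end{equation}
since $\mu = \Gamma[0]$. I will then apply the Neumann analogue of the energy estimate of \cref{prop:heat-f-WP}. Multiplying by $w$ and integrating by parts, the duality pairing $\langle f, w\rangle_{H^{-1}_N, H^1_N}$ is bounded by $\norm{f}_{H^{-1}_N} \norm{w}_{H^1_N}$. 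The full $H^1$ norm, not only $\norm{w_x}_{L^2}$, appears here because the Neumann Laplacian has the constant mode in its kernel.

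This gives, for $t \in [0,T]$,
\begin{equation}
    \frac{\dd}{\dd t} \norm{w}_{L^2}^2 + \norm{w_x}_{L^2}^2 \leq \norm{f}_{H^{-1}_N}^2 + \norm{w}_{L^2}^2.
\end{equation}
By Grönwall, $\norm{w(T)}_{L^2} \leq C e^{CT} \norm{f}_{L^2(H^{-1}_N)}$. This should be harmless: we only need small or bounded $T$ for the time iteration, and \cref{rk:LTT} permits a locally uniform version. If uniformity in all $T$ is required, I would instead use the decay of the nonzero Neumann modes plus a separate treatment of the mean. That treatment is to integrate the equation against $\varphi_0$, which gives $\langle w(T),\varphi_0\rangle = \int_0^T \langle f,\varphi_0\rangle$.

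To bound the source term, I use that $\Gamma$ is globally Lipschitz from $H^1_N$ to $H^{-1}_N$:
\begin{equation}
    \norm{f}_{L^2(H^{-1}_N)} \leq \norm{u}_{L^\infty} \mathrm{Lip}(\Gamma) \norm{y}_{L^2(H^1_N)}.
\end{equation}
Estimate \eqref{eq:heat-bilin-est} then bounds the last factor by $C(\norm{y_0}_{L^2} + \norm{u}_{L^\infty})$. Combining this with the previous step yields
\begin{equation}
    \norm{y(T)-z(T)}_{L^2} \leq C \norm{u}_{L^\infty}\left(\norm{y_0}_{L^2}+\norm{u}_{L^\infty}\right) \leq C\left(\norm{y_0}_{L^2}+\norm{u}_{L^\infty}\right)^2,
\end{equation}
which is the claimed estimate.

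The main obstacle is the uniformity of the constants in $T$, both in the energy estimate for $w$ and in the well-posedness bound \eqref{eq:heat-bilin-est}. The latter is inherited from the cited lemma and presumably holds for $T$ in a bounded range. For the $w$ estimate, the only delicate point is the zero mode described above. Since \cref{thm:LTT} is applied with $\tau_j \leq T$, a constant uniform for $T \leq T_{\max}$ suffices, and I would state the lemma in that form if necessary.
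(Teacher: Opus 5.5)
Your proposal is correct and follows essentially the same route as the paper: you set $w = y - z$ with source $u(\Gamma[y]-\Gamma[0])$, apply a parabolic energy estimate in $L^2(H^{-1}_N)$, use the Lipschitz bound on $\Gamma$, and conclude with \eqref{eq:heat-bilin-est}. Your caveat about the $T$-dependence of the constants is legitimate, since both \eqref{eq:heat-bilin-est} and the Neumann zero mode force the constant to depend on an upper bound for $T$, and such a local version is all that \cref{thm:LTT} needs; note, however, that your separate mean-mode treatment would still leave a $\sqrt{T}$ growth, so it does not by itself give a constant uniform in all $T>0$.
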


\begin{proof}
    Let $y, z \in C^0([0,T];L^2(0,\pi))$ be the solutions to \eqref{eq:heat-bilin} and \eqref{eq:heat-bilin-lin} with initial data $y_0$ and control $u$.
    Then $w := y - z$ satisfies
    \begin{equation}
        \begin{cases}
            w_t(t,x) - w_{xx}(t,x) = u(t) (\Gamma[y]-\Gamma[0]) & \text{in } (0,T) \times (0,\pi) \\
            w_x(t,0) = w_x(t,\pi) = 0 & \text{in } (0,T), \\
            w(0,x) = 0 & \text{in } (0,\pi).
        \end{cases}
    \end{equation}
    By classical estimates for the heat equation (see \cite[Lemma 2.1]{BeauchardMarbach2020}) analogue to \cref{prop:heat-f-WP} for Dirichlet boundary conditions, and the Lipschitz constant for $\Gamma$,
    \begin{equation}
        \| w \|_{C^0(L^2)}
        \leq C \| u (\Gamma[y]-\Gamma[0]) \|_{L^2(H^{-1}_N)}
        \leq C C_\Gamma \|u\|_{L^\infty} \|y\|_{L^2(H^1_N)}.
    \end{equation}
    Thus, recalling \eqref{eq:heat-bilin-est}, we obtain
    \begin{equation}
        \| y(T) - z(T) \|_{L^2} \leq \|w\|_{C^0(L^2)} \leq C' \| u \|_{L^\infty} \left( \|y_0\|_{L^2}+\|u\|_{L^\infty} \right),
    \end{equation}
    which is the claimed estimate.
\end{proof}

We thus recover the following theorem, already contained in \cite[Theorem 2]{BeauchardMarbach2020}.
In particular, we avoid the 4 pages of computations of \cite[Sections 2.4 and 2.5]{BeauchardMarbach2020} which correspond to the rather heavy setup of the typical implementations of the Liu--Takahashi--Tucsnak method (definition of weighted functional spaces, controllability despite a source term, Banach fixed-point argument).

\begin{theorem}
    Assume that $\mu = \Gamma[0]$ is such that, for all $k \in \N$, $\langle \mu, \varphi_k \rangle \neq 0$ and \eqref{eq:muk-bilinear}.
    Then system \eqref{eq:heat-bilin} is small-time locally null controllable: for all $T, \eta > 0$, there exists $\delta > 0$ such that, for all $y_0 \in L^2(0,\pi)$ with $\norm{y_0}_{L^2} \leq \delta$, there exists $u \in L^\infty(0,T)$ with $\norm{u}_{L^\infty} \leq \eta$ such that the unique associated solution to \eqref{eq:heat-bilin} satisfies $y(T) = 0$.
\end{theorem}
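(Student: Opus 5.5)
The plan is to apply \cref{thm:LTT} directly, with $Y = L^2(0,\pi)$, $U = \R$ and $p = \infty$. We take $\cS_t(y_0,u) := y(t)$, the solution of \eqref{eq:heat-bilin}, and $\Sigma_t(z_0,u) := z(t)$, the solution of the linearized system \eqref{eq:heat-bilin-lin}. Here $\Sigma$ is an abstract linear control system by the classical well-posedness of the Neumann heat equation with source $u(t)\mu \in L^2(H^{-1}_N)$ (the Neumann analogue of \cref{prop:heat-f-WP}). $\cS$ is an abstract nonlinear control system by the local well-posedness lemma. By \cref{rk:LTT}, local well-posedness (small data and controls) suffices, so we only need to check \cref{ass:LTT-cost,ass:LTT-power}.

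\textbf{Check of \cref{ass:LTT-cost}.} This is exactly \cref{lem:muk-control}: under the stated hypotheses on $\langle \mu,\varphi_k\rangle$, for every $T>0$ and $z_0$ there is $u \in L^\infty(0,T)$ with $z(T)=0$ and $\norm{u}_{L^\infty} \leq C e^{C/T}\norm{z_0}_{L^2}$. This is the cost estimate \eqref{eq:cost.linear} with $\sigma = 1$, $c = C_L = C$.

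\textbf{Check of \cref{ass:LTT-power}.} \cref{lem:bilin-gap} gives $\norm{\cS_T(y_0,u)-\Sigma_T(y_0,u)}_{L^2} \leq C(\norm{y_0}_{L^2}+\norm{u}_{L^\infty})^2$, i.e.\ $\gamma = 2$.

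The one point needing care is uniformity in $T$. \cref{ass:LTT-power} requires $C_N$ independent of $T$, whereas \cref{lem:bilin-gap} and \eqref{eq:heat-bilin-est} are stated only for small data and, a priori, with constants depending on $T$. This is harmless. In the proof of \cref{thm:LTT}, the estimate is only used on slices of length $\tau_j \leq T$ with the final $T$ fixed, so it suffices that the constants are uniform for $\tau \in (0,T]$. The energy estimates behind \eqref{eq:heat-bilin-est} and \cref{lem:bilin-gap} have constants that are nondecreasing in the time horizon, so they hold with the constant for horizon $T$.

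It also suffices that the estimates hold for small $\norm{y_j}+\norm{u_j}$. The proof of \cref{thm:LTT} shows that, under \eqref{hyp:nl.local}, $\norm{y_j}$ and $\norm{u_j}$ stay below $\exp(-M_T-C_T)$ times constants. Shrinking $\delta$ (i.e.\ increasing $M_T$) therefore keeps every slice inside the local well-posedness regime, which is the content of the first item of \cref{rk:LTT}. With both assumptions verified, \cref{thm:LTT} yields, for all $T,\eta>0$, some $\delta>0$ such that every $\norm{y_0}_{L^2}\le\delta$ admits $u\in L^\infty(0,T)$ with $\norm{u}_{L^\infty}\le\eta$ and $y(T)=0$. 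Note that the cost bound \eqref{eq:cost-LTT} in $L^\infty$ holds because concatenating the $u_j$ gives an $L^\infty$ norm bounded by $\sup_j \norm{u_j}$, which is at most the sum.
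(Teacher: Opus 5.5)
Your proposal is correct and matches the paper's proof: apply \cref{thm:LTT} with $Y = L^2(0,\pi)$, $U = \R$, $p = \infty$, using \cref{lem:muk-control} for \cref{ass:LTT-cost} with $\sigma = 1$ and \cref{lem:bilin-gap} for \cref{ass:LTT-power} with $\gamma = 2$. Your extra checks on uniformity of the constants for slices $\tau \le T$ and on staying in the local well-posedness regime are sound, and are what \cref{rk:LTT} asserts.
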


\begin{proof}
    Let $Y := L^2(0,\pi)$ be the state space, $U := \R$ the control space and $p := \infty$.
    By \cref{lem:muk-control}, the linearized system is small-time null controllable with exponential cost, so \cref{ass:LTT-cost} is satisfied with $\sigma = 1$.
    By \cref{lem:bilin-gap}, the nonlinear state is close to the linear one, so \cref{ass:LTT-power} is satisfied with $\gamma = 2$.
    Hence the result follows from \cref{thm:LTT}.
\end{proof}

\subsubsection{Example: the viscous Burgers equation}

In this paragraph, we investigate the small-time local null controllability of the viscous Burgers equation.
More precisely, for $T > 0$, we consider the system:
\begin{equation}
    \label{eq:Burgers}
    \begin{cases}
        y_t - y_{xx} + y y_x = u \mathbf{1}_\omega & \text{in } (0,T) \times (0,1), \\
        y(t,0) = y(t,1) = 0 & \text{in } (0,T), \\
        y(0,x) = y_0(x) & \text{in } (0,1),
    \end{cases}
\end{equation}
where $y_0 \in L^2(0,1)$ is the initial data and $u \in L^2((0,T) \times \omega)$ is a control, supported in a small fixed open set $\omega \subset (0,1)$.
In this paragraph, we will consider solutions to this system in the space
\begin{equation}
    \mathcal{Y} := L^2((0,T);H^1_0(0,1)) \cap C^0([0,T];L^2(0,1)) \cap H^1((0,T);H^{-1}(0,1)).
\end{equation}
By a Banach fixed-point argument, one can prove that this system is locally well-posed.
In fact, classical compactness arguments (see \cref{sec:WP-Burgers}) entail that it is globally well-posed, so it can be interpreted as an abstract nonlinear control system with $Y = L^2(0,1)$, $U = L^2(\omega)$ and $p = 2$.

\begin{proposition}
    \label{prop:WP-Burgers}
    For any $T > 0$, $y_0 \in L^2(0,1)$ and $u \in L^2((0,T) \times \omega)$, there exists a unique solution $y \in \mathcal{Y}$ to \eqref{eq:Burgers}.
\end{proposition}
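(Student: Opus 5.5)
The plan is to obtain local existence by a Banach fixed-point argument in $\mathcal{Y}$ built on \cref{prop:heat-f-WP}, and then globalize using an a priori $L^2$ energy bound together with uniqueness; the passage from local to global is where the real work lies.

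\emph{Local existence.} Fix $y_0$ and $u$. For $z \in \mathcal{Y}$ (on a short interval $[0,T_*]$), define $\Theta(z)$ as the solution of \eqref{eq:heat-f} with source $f := u\mathbf{1}_\omega - z z_x = u\mathbf{1}_\omega - \tfrac12 (z^2)_x$. The nonlinear term is estimated in $L^2 H^{-1}$ by $\norm{z^2}_{L^2 L^2}$. Using the 1D Gagliardo--Nirenberg inequality $\norm{z}_{L^4}^2 \leq C\norm{z}_{L^2}^{3/2}\norm{z}_{H^1}^{1/2}$, I get
\begin{equation*}
    \norm{z^2}_{L^2L^2} \leq C T_*^{1/4}\norm{z}_{C^0L^2}^{3/2}\norm{z}_{L^2H^1}^{1/2} \leq C T_*^{1/4} \norm{z}_{\mathcal{Y}}^2 .
\end{equation*}
The difference $z_1^2 - z_2^2 = (z_1+z_2)(z_1-z_2)$ is handled in the same way. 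By \eqref{eq:energy-heat-f-WP}, $\Theta$ is then a contraction on a ball of radius $2C(\norm{y_0}_{L^2} + \norm{u}_{L^2})$ as soon as $T_*$ is small in terms of this radius. This gives a unique local solution; uniqueness holds in all of $\mathcal{Y}$ because two solutions can be compared on short intervals by the same difference estimate.

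\emph{A priori bound and globalization.} I multiply the equation by $y$ and integrate over $(0,1)$. The key cancellation is $\int_0^1 y^2 y_x \, \dd x = \tfrac13[y^3]_0^1 = 0$, which holds for $y \in H^1_0$; this step is justified since $y \in L^2H^1_0 \cap H^1H^{-1}$ gives $\tfrac{\dd}{\dd t}\norm{y}^2 = 2\langle y_t, y\rangle$ by the Lions--Magenes lemma, and $y^2 y_x \in L^1$. The result is $\tfrac12\tfrac{\dd}{\dd t}\norm{y}_{L^2}^2 + \norm{y_x}_{L^2}^2 \leq \norm{u}_{L^2(\omega)}\norm{y}_{L^2}$, so $\norm{y}_{C^0L^2} + \norm{y}_{L^2H^1}$ is bounded by a constant $K$ depending only on $\norm{y_0}$, $\norm{u}$ and $T$. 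Since the local existence time depends only on the size of the data, and that size stays bounded by $K$ at every restart time, I iterate the local result with a fixed step $T_*(K)$ and reach $T$ in finitely many steps. Finally, $y_t = y_{xx} - \tfrac12(y^2)_x + u\mathbf{1}_\omega \in L^2H^{-1}$ shows $y \in \mathcal{Y}$.

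\emph{Main obstacle.} The delicate point is making the globalization rigorous. A purely fixed-point approach must use the cancellation above, which is only formal for nonsmooth solutions. The paper alludes instead to compactness arguments, and if the direct approach ran into trouble I would use that route: Galerkin approximations on $\varphi_1,\dots,\varphi_n$, which satisfy the same energy bound uniformly in $n$; an Aubin--Lions compactness argument in $L^2L^2$ to pass to the limit in $(y^2)_x$; and uniqueness via the Gagliardo--Nirenberg difference estimate combined with Gronwall's lemma.
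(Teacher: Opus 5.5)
Your proof is correct, but it takes a different route from the paper.

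\textbf{The paper's route.} It gets global existence on $[0,T]$ in one stroke from Schaefer's fixed-point theorem. The map $w \mapsto$ (heat solution with source $u\mathbf{1}_\omega - w w_x$) is continuous from $E = L^4((0,T);L^4(0,1))$ into $\mathcal{Y}$. It is therefore compact on $E$, by the Aubin--Lions--Simon compact embedding $\mathcal{Y} \hookrightarrow E$. The set $\{ y = \lambda f(y) \}$ is bounded by the $\lambda$-dependent energy estimate of \cref{lem:Burgers-a-priori}, which relies on the same cancellation $\int_0^1 y^2 y_x = 0$ that you use. Uniqueness is then proved separately, by an energy estimate on $w = y - \tilde y$ giving $\frac{\dd}{\dd t}\norm{w}_{L^2}^2 \leq \norm{(y+\tilde y)_x}_{L^2}^2 \norm{w}_{L^2}^2$, followed by Gr\"onwall.

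\textbf{Your route.} You use a Banach fixed point in $\mathcal{Y}$ on short intervals, where Gagliardo--Nirenberg supplies the small factor $T_*^{1/4}$. You then globalize by continuation with a uniform step length, using the energy bound. This works because the constant in \eqref{eq:energy-heat-f-WP} does not depend on the interval, and concatenations of $\mathcal{Y}$-pieces that match at the junctions stay in $\mathcal{Y}$. Uniqueness follows from the same difference estimate, applied on intervals whose length depends on $\norm{y}_{\mathcal{Y}} + \norm{\tilde y}_{\mathcal{Y}}$.

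\textbf{What each buys.} Your approach is more elementary and constructive: it needs no compactness and no Schauder-type theorem, and it gives Lipschitz dependence on the data on each short interval for free. The price is the interpolation inequality and the bookkeeping of restarts. The paper's approach avoids continuation altogether. Moreover, its $\lambda$-dependent a priori estimate is reused directly to obtain the quadratic discrepancy bound of \cref{lem:Burgers-quad}, which is what the Liu--Takahashi--Tucsnak argument needs.

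\textbf{Your ``main obstacle'' paragraph.} That paragraph is unnecessary, and it contradicts your own justification. For $y \in \mathcal{Y}$ and a.e.\ $t$, one has $y(t) \in H^1_0(0,1) \subset L^\infty$. Hence $y^2 y_x = \frac13 (y^3)_x$ with $y^3 \in H^1_0$, so the cancellation is rigorous, exactly as the paper uses it, and no Galerkin fallback is needed.
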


The controllability of this system, which is a nice toy model for fluid mechanics, has been much studied.
In particular, despite it being globally well-posed, it is known that it is not small-time globally null controllable \cite{Diaz1996,FernandezCaraGuerrero2007,GuerreroImanuvilov2007}.
It is however known that it is small-time locally controllable to trajectories (including to the null equilibrium) since \cite[Theorem 5.1]{FursikovImanuvilov1995}, which relied on a different method (see \cref{sec:all-linear}).
We give a short proof as a direct application of our abstract result.

\begin{theorem}
    \label{thm:Burgers}
    System \eqref{eq:Burgers} is small-time locally null controllable: for all $T, \eta > 0$, there exists $\delta > 0$ such that, for all $y_0 \in L^2(0,1)$ with $\norm{y_0}_{L^2} \leq \delta$, there exists $u \in L^2((0,T)\times\omega)$ with $\norm{u}_{L^2} \leq \eta$ such that the unique associated solution to \eqref{eq:Burgers} satisfies $y(T) = 0$.
\end{theorem}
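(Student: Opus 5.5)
We apply \cref{thm:LTT} with $Y = L^2(0,1)$, $U = L^2(\omega)$, $p = 2$, the nonlinear system $\cS_t(y_0,u) := y(t)$ given by \cref{prop:WP-Burgers}, and the linear system $\Sigma$ associated with the interior-controlled heat equation \eqref{eq:heat-1D-internal}. We must therefore check \cref{ass:LTT-cost} and \cref{ass:LTT-power}.

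\cref{ass:LTT-cost} follows directly from \cref{prop:heat-control}, since an open $\omega$ has positive measure. That result gives, for every $T>0$, a null control with $\norm{u}_{L^2} \leq C e^{C/T}\norm{y_0}_{L^2}$. This is the required exponential cost with $\sigma = 1$.

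For \cref{ass:LTT-power}, with $\gamma = 2$, let $y = \cS(y_0,u)$ and $z = \Sigma(y_0,u)$. The difference $w := y - z$ solves \eqref{eq:heat-f} with zero initial data and source $f = -y y_x = -\frac12 (y^2)_x$. By \eqref{eq:energy-heat-f-WP},
\begin{equation}
\norm{w(T)}_{L^2} \leq C \norm{(y^2)_x}_{L^2 H^{-1}} \leq C \norm{y^2}_{L^2 L^2} = C \norm{y}_{L^4 L^4}^2 .
\end{equation}
In 1D we have $\norm{y}_{L^4}^2 \leq \norm{y}_{L^2}\norm{y}_{L^\infty} \leq C\norm{y}_{L^2}\norm{y_x}_{L^2}$. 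Hence
\begin{equation}
\norm{y}_{L^4L^4}^2 \leq C \norm{y}_{C^0L^2}\norm{y}_{L^2H^1_0}
\end{equation}
up to a factor $T^{1/2}$ that is harmless for $T\le 1$. It then remains to prove the nonlinear energy bound
\begin{equation}
\norm{y}_{C^0L^2} + \norm{y}_{L^2H^1} \leq C(\norm{y_0}_{L^2} + \norm{u}_{L^2}).
\end{equation}
This bound holds globally, with no smallness needed: multiply \eqref{eq:Burgers} by $y$ and note that $\int_0^1 y^2 y_x = \frac13[y^3]_0^1 = 0$ because of the Dirichlet conditions. The nonlinear term therefore drops out, and the same computation as in \cref{prop:heat-f-WP} gives the estimate with a constant independent of $T\le1$. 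Combining these bounds yields $\norm{\cS_T - \Sigma_T}\leq C_N(\norm{y_0}+\norm{u})^2$.

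The main obstacle is uniformity of the constants in $T$. \cref{ass:LTT-power} is stated for all $T>0$, but the time iteration only uses $\tau_j \le T$. Following \cref{rk:LTT}, it therefore suffices to have the estimates for $T\in(0,1]$, where the Gronwall-free energy identity and the constants of \cref{prop:heat-f-WP} are uniform. We then apply \cref{thm:LTT} on $(0,\min(T,1))$ and extend the control by $0$ afterwards, since $0$ is an equilibrium. A second point to check is that the solution produced in \cref{prop:WP-Burgers} actually satisfies the energy identity, i.e.\ that multiplying by $y$ is justified. This follows because $y \in \cY$, so the pairing $\langle y_t, y\rangle_{H^{-1},H^1_0}$ is justified (Lions--Magenes) and $y^2y_x \in L^1$.
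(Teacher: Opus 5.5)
Your proof is correct and follows the paper's route. You apply \cref{thm:LTT} with $\sigma=1$ from \cref{prop:heat-control}, and with $\gamma=2$ from the energy estimate in which $\int_0^1 y^2y_x$ vanishes, combined with $\norm{(y^2)_x}_{L^2H^{-1}}\le\norm{y}_{L^4L^4}^2\le\norm{y}_{C^0L^2}\norm{y}_{L^2H^1}$; this is exactly \cref{lem:Burgers-quad} via \cref{lem:Burgers-a-priori}. Your reduction to $T\le 1$ is harmless but unnecessary: these energy constants are uniform in all $T>0$, and the $L^4L^4$ bound carries no $T^{1/2}$ factor.
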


\begin{proof}
    Let $Y := L^2(0,1)$ be the state space and $U := L^2(\omega)$, the control space and $p := 2$.
    The linearized system at the null equilibrium is the controlled heat equation \eqref{eq:heat-1D-internal}.
    By \cref{prop:heat-control}, the heat equation is small-time null controllable with exponential cost, so the linearized system satisfies \cref{ass:LTT-cost} (with $\sigma = 1$).
    By \cref{lem:Burgers-quad} below, the discrepancy between the nonlinear system and its linearized version is quantified quadratically, so \cref{ass:LTT-power} is satisfied (with $\gamma = 2$).
    Hence, the conclusion follows directly from our black box statement of \cref{thm:LTT}.
\end{proof}

\begin{lemma}
    \label{lem:Burgers-quad}
    There exists $C > 0$ such that, for any $T > 0$, $u \in L^2((0,T) \times \omega)$ and $y_0 \in L^2(0,1)$, the unique solution $y$ to \eqref{eq:Burgers} satisfies
    \begin{equation}
        \label{eq:Burgers-quad-estimate}
        \norm{y(T)-z(T)}_{L^2} \leq C \left( \norm{y_0}_{L^2} + \norm{u}_{L^2} \right)^2,
    \end{equation}
    where $z$ denotes the unique solution to the linear heat equation \eqref{eq:heat-1D-internal}.
\end{lemma}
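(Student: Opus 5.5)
The plan is to compare $y$ with $z$ through their difference $w := y - z$, and to control the Burgers nonlinearity using only a nonlinear energy estimate with constants independent of $T$. Since both $y$ and $z$ have initial data $y_0$, control $u$ and homogeneous Dirichlet conditions, $w$ solves the heat equation \eqref{eq:heat-f} with
\begin{equation}
    w(0) = 0
    \quad \text{and} \quad
    f := - y y_x = - \tfrac12 \partial_x (y^2).
\end{equation}
By the energy estimate \eqref{eq:energy-heat-f-WP} of \cref{prop:heat-f-WP}, whose constant $C$ is independent of $T$, we get
\begin{equation}
    \norm{w(T)}_{L^2} \leq \norm{w}_{C^0 L^2} \leq C \norm{\partial_x(y^2)}_{L^2 H^{-1}} \leq C \norm{y^2}_{L^2 L^2} = C \norm{y}_{L^4 L^4}^2 .
\end{equation}
Here we use that $\partial_x$ is bounded from $L^2(0,1)$ to $H^{-1}(0,1)$ with norm at most $1$. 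It therefore suffices to show that
\begin{equation}
    \norm{y}_{L^4((0,T)\times(0,1))} \leq C \left(\norm{y_0}_{L^2} + \norm{u}_{L^2}\right)
\end{equation}
with $C$ independent of $T$.

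For this, I first derive the energy estimate for the nonlinear solution $y$ itself. I multiply \eqref{eq:Burgers} by $y$ and integrate in $x$. The key point is that the nonlinear term vanishes:
\begin{equation}
    \int_0^1 y^2 y_x \dd x = \tfrac13 \left[ y^3 \right]_0^1 = 0
\end{equation}
thanks to the Dirichlet conditions. The source term is bounded by $\norm{u \mathbf{1}_\omega}_{H^{-1}} \norm{y_x}_{L^2} \leq C \norm{u}_{L^2(\omega)} \norm{y_x}_{L^2}$, using the Poincaré inequality to embed $L^2$ into $H^{-1}$. Exactly as in the proof of \cref{prop:heat-f-WP}, this yields
\begin{equation}
    \norm{y}_{C^0 L^2} + \norm{y}_{L^2 H^1_0} \leq C \left(\norm{y_0}_{L^2} + \norm{u}_{L^2}\right),
\end{equation}
uniformly in $T$. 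Next, I apply the one-dimensional Gagliardo--Nirenberg (Agmon-type) inequality $\norm{v}_{L^4}^4 \leq \norm{v}_{L^\infty}^2 \norm{v}_{L^2}^2 \leq C \norm{v}_{L^2}^3 \norm{v_x}_{L^2}$ for $v \in H^1_0(0,1)$. Integrating in time gives
\begin{equation}
    \norm{y}_{L^4 L^4}^4 \leq C \norm{y}_{C^0 L^2}^3 \int_0^T \norm{y_x}_{L^2} \dd t .
\end{equation}
This is not quite what is needed, since $\int_0^T \norm{y_x}_{L^2} \dd t$ is only bounded by $T^{1/2} \norm{y}_{L^2 H^1}$, which would bring a factor of $T$. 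To avoid this, I will use the sharper inequality $\norm{v}_{L^4}^4 \leq C \norm{v}_{L^2}^2 \norm{v_x}_{L^2}^2$, which holds on $H^1_0(0,1)$ via Poincaré. With it,
\begin{equation}
    \norm{y}_{L^4 L^4}^4 \leq C \norm{y}_{C^0 L^2}^2 \norm{y}_{L^2 H^1}^2 \leq C \left(\norm{y_0}_{L^2} + \norm{u}_{L^2}\right)^4,
\end{equation}
and combining with the first display gives \eqref{eq:Burgers-quad-estimate}.

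The main obstacle is the rigorous justification of the nonlinear energy identity for solutions that are only in $\cY$. Two things need care: that $\frac{\dd}{\dd t}\norm{y}_{L^2}^2 = 2\langle y_t, y\rangle_{H^{-1},H^1_0}$, which follows from the Lions--Magenes lemma since $y \in L^2 H^1_0 \cap H^1 H^{-1}$; and that $\int y^2 y_x = 0$ for $y(t) \in H^1_0$. The latter follows by density of smooth compactly supported functions in $H^1_0$, since $v \mapsto \int v^2 v_x$ is continuous on $H^1_0$ (because $H^1 \hookrightarrow L^\infty$). I also need $y y_x \in L^2 H^{-1}$ so that $w$ fits \cref{prop:heat-f-WP}. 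This is given by the $L^4$ bound above, since $y y_x = \frac12 \partial_x (y^2)$ with $y^2 \in L^2 L^2$.
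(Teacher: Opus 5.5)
Your proof is correct and follows essentially the paper's argument, which is given via \cref{lem:Burgers-a-priori} with $\lambda = 1$. The ingredients coincide: the energy identity for $y$ in which $\int_0^1 y^2 y_x = 0$, the interpolation bound $\norm{y}_{L^4L^4}^2 \lesssim \norm{y}_{L^\infty L^2}\norm{y}_{L^2H^1}$, and the heat estimate of \cref{prop:heat-f-WP} for $w = y - z$ with source $-yy_x$. If anything, bounding $\norm{yy_x}_{L^2H^{-1}}$ directly by the $C^0L^2 \cap L^2H^1$ energy, as you do, is slightly cleaner than the paper's passage through $\norm{y}_{\mathcal{Y}}^2$, because it yields a purely quadratic bound that is uniform in $T$.
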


\begin{proof}
    The proof consists in classical PDE energy estimates: computing the equation satisfied by the difference $w(t) := y(t) - z(t)$, integrations by parts lead to the desired estimates.
    In fact, the claimed estimate is a direct consequence of the estimates used to prove the well-posedness of the system.
    More precisely, by \cref{lem:Burgers-a-priori} with $\lambda = 1$, we have
    \begin{equation}
        \norm{y(T)-z(T)}_{L^2}
        \leq \norm{y-z}_{\mathcal{Y}}
        \leq C \left(\norm{y_0}_{L^2} + \norm{u}_{L^2} \right)^2,
    \end{equation}
    which is the desired estimate.
\end{proof}

\subsection{Other approaches}

We give a short overview of two other methods which allow to prove the small-time local null controllability of a nonlinear system: an historic one, and a recent perspective.

\subsubsection{Uniform controllability of all linearizations}
\label{sec:all-linear}

Historically, the first results on the small-time local null controllability of nonlinear parabolic equations relied on the uniform controllability of a family of linearizations.

As an example, consider again the Burgers system \eqref{eq:Burgers}.
Now, instead of considering only its linearization at $0$ (which corresponds to the controlled heat equation \eqref{eq:heat-1D-internal}), fix a given trajectory $\bar{y}$ and consider its linearization at $\bar{y}$:
\begin{equation}
    \label{eq:Burgers-bary}
    \begin{cases}
        y_t - y_{xx} + \bar{y} y_x + \bar{y}_x y = u \mathbf{1}_\omega & \text{in } (0,T) \times (0,1), \\
        y(t,0) = y(t,1) = 0 & \text{in } (0,T), \\
        y(0,x) = y_0(x) & \text{in } (0,1).
    \end{cases}
\end{equation}
This is a family of linear time-dependent\footnote{A key advantage of the method exposed in \cref{sec:LTT} is that it avoids introducing such time-dependent systems, for which some techniques coming from spectral theory may be harder to apply.} control systems (depending on $\bar{y}$).

Given $T > 0$, using techniques first introduced in \cite[Section 4]{FursikovImanuvilov1995} based on Carleman estimates, one can then try to find a functional space $E_T$ and constants $C_T, \varepsilon_T > 0$ such that, for any $\bar{y} \in E_T$ with $\norm{\bar{y}}_{E_T} \leq \varepsilon_T$, for any initial state $y_0 \in L^2(0,1)$, there exists $u \in L^2((0,T) \times \omega)$ such that the associated solution to \eqref{eq:Burgers-bary} satisfies $y(T) = 0$ and $\norm{u}_{L^2} \leq C_T \norm{y_0}_{L^2}$.
A key point is that the control cost $C_T$ should be independent of the trajectory $\bar{y}$ around which the linearization is taken.

Provided that the solution space for the nonlinear system \eqref{eq:Burgers} is compactly embedded in $E_T$, one can then try to set up a fixed-point argument, associating with any given linearization $\bar{y}$ a trajectory of \eqref{eq:Burgers-bary} driving $y_0$ to $0$.

This technique was introduced in \cite{Imanuvilov1995} for semilinear heat equations and in \cite[Section 5]{FursikovImanuvilov1995} to prove \cref{thm:Burgers}, and extended in \cite{FernandezCara1997} to obtain a global result for semilinear heat equations.

\subsubsection{Exact controllability on the reachable space}
\label{sec:reachable}

In \cite{ErvedozaLeBalchTucsnak2022}, Ervedoza, Le Balc'h and Tucsnak give a modern perspective on the controllability of dissipative systems, by viewing them as being exactly controllable on their reachable space.

To present their ideas, we use the notations of \cref{sec:weiss} for abstract linear control systems.
For $T>0$, recall that the \emph{semigroup} $\T_T : Y \to Y$ and the \emph{control map} $\Phi_T : L^p((0,T);U) \to Y$ are defined by $\T_T y_0 := \Sigma_T(y_0, 0)$ and $\Phi_T u := \Sigma_T(0, u)$.
By linearity, $y(T) = \T_T y_0 + \Phi_T u$.

Fix $p = 2$, introduce the \emph{reachable space} $\Ran \Phi_T$ and endow it with the following norm:
\begin{equation}
    \label{eq:norm-EBT}
    \norm{z}_{\Ran \Phi_T} := \inf \left\{ \norm{u}_{L^2} \mid u \in L^2((0,T);U) \text{ s.t. } \Phi_T u = z \right\}.
\end{equation}
With this choice, $\Ran \Phi_T$ becomes a Hilbert space, and, when $\Sigma$ is small-time null controllable, $\Ran \Phi_T$ does not depend on $T$, and the associated norms are equivalent (see \cite[Section 2.1]{ErvedozaLeBalchTucsnak2022}).

Fix any $\tau > 0$ and set $Y_R := \Ran \Phi_\tau$ with the norm \eqref{eq:norm-EBT}.
The key perspective change of \cite{ErvedozaLeBalchTucsnak2022} is that, if one studies $\Sigma$ as an abstract linear control system with state space $Y_R$ instead of $Y$, it becomes small-time exactly controllable.
The key point is that the restriction of $\T$ to $Y_R$ forms a strongly continuous semigroup on $Y_R$ (see \cite[Theorem 2.1]{ErvedozaLeBalchTucsnak2022} or \cite[Lemma 2.2]{VanNeerven2005}).

This viewpoint is powerful for perturbation theory (see \cite[Section~4]{ErvedozaLeBalchTucsnak2022}).
In particular, it provides a natural route to treat nonlinear perturbations.
Indeed, if the nonlinearity is sufficiently mild, one can hope that the nonlinear system $\cS_T$ actually defines a $C^1$ map from $Y_R \times L^p((0,T);U)$ to $Y_R$, with derivative $\Sigma_T$ at $(0,0)$.
Then the small-time local exact controllability of the nonlinear system follows from the usual linear test argument of \cref{sec:lin-2-nonlin-exact}.

The main difficulty in implementing this program is that one needs a sufficiently explicit characterization of $Y_R = \Ran\Phi_\tau$ and of its norm in order to estimate the nonlinear terms in this topology.
This can already be delicate for toy linear parabolic models (see \cref{sec:why-null}).
Nevertheless, \cite[Section~10.2]{ErvedozaLeBalchTucsnak2022} successfully carries out this strategy for a semilinear heat equation, proving small-time exact controllability on the reachable space (see also \cite{ErvedozaTendaniSoler2025,LaurentRosier2020} for nonlinearities of Burgers type $y y_x$).

\newpage
\section{The tangent vectors method for truly nonlinear motions}

In this section, we investigate the local controllability of nonlinear systems at an equilibrium when the linearized system is not controllable.
The nonlinear dynamics play a crucial role, and one encounters phenomena sketched in \cref{sec:ode-nonlinear}.
The core idea of the \emph{tangent vectors method} is to show that one can generate infinitesimal approximate displacements in enough directions to span the whole space.
We focus on ODEs, though some ideas can be extended to PDEs (see \cref{sec:tangent-PDEs}).

Let $n \in \N^*$.
As in \cref{sec:affine}, we consider a control-affine system:
\begin{equation}
    \label{eq:affine-f1}
    \dot{y}(t) = f_0(y(t)) + u(t) f_1(y(t)),
\end{equation}
where $y(t) \in \R^n$ is the state, $u(t) \in \R$ the control, and $f_0, f_1 \in C^1(\R^n;\R^n)$ are globally Lipschitz, globally bounded vector fields with $f_0(0) = 0$.
We consider a single scalar control $u \in L^1((0,T);\R)$ for notational simplicity ($m = 1$ and $p = 1$ with the notations of \cref{sec:intro}).

We lighten the exposition, we will use the following notion of controllability\footnote{Here, we ask to bring any initial state back to the equilibrium.
One could ask the converse: go from the equilibrium towards any desired target.
Or both as in \cref{def:STLC}.
Typically, these properties are equivalent for ODEs (see \cite[Theorem 5.3]{Grasse1992} or \cite[Section 1.3]{AgrachevGamkrelidze1993_Semigroups}), but not for PDEs (recall \cref{sec:why-null}).}.

\begin{definition}[$L^1$-STLNC]
    \label{def:STLNC-0}
    We say that \eqref{eq:affine-f1} is \emph{small-time locally null controllable} when, for every $T > 0$, there exists $\delta > 0$ such that, for every initial data $y_0 \in B(0,\delta)$, there exists $u \in L^1((0,T);\R)$ with $\norm{u}_{L^1} \leq T$ such that $y(T;u,y_0) = 0$.
\end{definition}

\subsection{Motivating example}
\label{sec:jakub}

We consider the following classical system, due to Jakubczyk (see \cite[eq.\ (6.12)]{Sussmann1983}):
\begin{equation}
    \label{eq:jakubczyk}
    \begin{cases}
        \dot{y}_1 = u \\
        \dot{y}_2 = y_1 \\
        \dot{y}_3 = y_2^2 + y_1^3.
    \end{cases}
\end{equation}
Here $f_0(y) = (0, y_1, y_2^2 + y_1^3)$ so $f_0(0) = 0$ and $f_1(y) = (1,0,0)$.
One checks that the linearized system at $0$ is not controllable, as the third component is stationary.
So one cannot apply \cref{thm:ODE-linear}.

Let us consider various controls on $[0,T]$ and compute the associated final states starting from $y(0) = 0$, keeping only the largest term as $T \to 0$.
Fix a $\sigma \in \{ \pm 1 \}$.
\begin{itemize}
    \item With $u(t) = \sigma$, one gets $y(T) = \sigma T e_1 + \cO(T^2)$.
    \item With $u(t) = \sigma (-4 \frac{t}{T}+2)$, one gets $y_1(t) = \sigma T (-2(\frac t T)^2 + 2\frac t T)$ and $y_2(t) = \sigma T^2 ( - \frac23 (\frac tT)^3 + (\frac tT)^2)$.
    In particular, $y_1(T) = 0$, $y_2(T) = \frac{\sigma}{3} T^2$ and $y_3(T) = \cO(T^4)$.
    Thus $y(T) = \frac{\sigma}{3} T^2 e_2 + \cO(T^3)$.
    \item With $u(t) = \sigma (-10 (\frac{t}{T})^3 + 6 (\frac{t}{T})^2 + 3 \frac{t}{T} - 1)$, one gets $y_1(t) = \frac{\sigma}{2} T (- 5 (\frac{t}{T})^4 + 4 (\frac{t}{T})^3 + 3 (\frac{t}{T})^2 - 2 \frac{t}{T})$ and $y_2(t) = \frac{\sigma}{2} T^2(- (\frac{t}{T})^5 + (\frac{t}{T})^4 + (\frac{t}{T})^3 - (\frac{t}{T})^2)$.
    In particular, $y_1(T) = 0$, $y_2(T) = 0$, $\int_0^T y_2^2 = \cO(T^5)$ and $\int_0^T y_1^3 = \frac{29}{120120} \sigma^3 T^4$.
    Thus $y(T) = \frac{29}{120120} \sigma^3 T^4 e_3 + \cO(T^5)$.
\end{itemize}
Hence, we can move ``approximately'' as $T \to 0$, in the directions $\pm e_1$, $\pm e_2$ and $\pm e_3$.

\bigskip

\emph{Does this imply that system \eqref{eq:jakubczyk} is small-time locally null controllable?}
The following theory gives a formal general framework to answer this question.
The main difficulty is that, since the infinitesimal movements are truly nonlinear, one cannot simply use a linear combination of the associated controls to obtain an exact movement in an arbitrary direction.

\subsection{Concatenation estimates}

In the (nonlinear) context of tangent vectors, linear combinations of controls will be replaced by time concatenation of controls.
To analyze the effects of such operations, we will need the following classical Grönwall-type estimates (similar to \cite[Appendix]{Kawski1987_Survey}).

\begin{lemma}
    \label{lem:gronwall-single}
    There exists $C > 0$ such that, for any $T > 0$, $u \in L^1((0,T);\R)$ and $y_0 \in \R^n$,
    \begin{align}
        \label{eq:gronwall-1}
        | y(T ; u, y_0) - y(T ; u, 0) | & \leq \exp\left(C \left(T + \norm{u}_{L^1}\right)\right) |y_0|, \\
        \label{eq:gronwall-2}
        | y(T ; u, y_0) - y(T ; u, 0) - y_0 | & \leq C \left(T + \norm{u}_{L^1}\right) \exp\left(C \left(T + \norm{u}_{L^1}\right)\right) |y_0|.
    \end{align}
\end{lemma}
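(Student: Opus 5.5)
We compare the two mild solutions $y(t) := y(t;u,y_0)$ and $\tilde y(t) := y(t;u,0)$, which exist by \cref{lem:WP}. We then apply Grönwall's lemma to their difference $w := y - \tilde y$. Let $L$ be a common global Lipschitz constant for $f_0$ and $f_1$, and set $C := L$ (enlarged if needed at the end).

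\emph{First estimate.} Subtracting the integral formulations \eqref{eq:mild} gives
\begin{equation}
    w(t) = y_0 + \int_0^t \big(f_0(y(s)) - f_0(\tilde y(s))\big) + u(s) \big(f_1(y(s)) - f_1(\tilde y(s))\big) \dd s .
\end{equation}
Hence
\begin{equation}
    |w(t)| \leq |y_0| + \int_0^t L (1 + |u(s)|) |w(s)| \dd s .
\end{equation}
The weight $L(1+|u|)$ is in $L^1$, so the integral form of Grönwall's lemma applies; it only requires continuity of $|w|$ and integrability of the weight. It yields
\begin{equation}
    |w(t)| \leq |y_0| \exp\Big(L \int_0^t (1+|u|)\Big) \leq \exp\big(C(T + \norm{u}_{L^1})\big) |y_0|
\end{equation}
for all $t \in [0,T]$. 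At $t = T$ this is \eqref{eq:gronwall-1}.

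\emph{Second estimate.} We reuse the same identity, now written as
\begin{equation}
    w(T) - y_0 = \int_0^T \big(f_0(y) - f_0(\tilde y)\big) + u \big(f_1(y) - f_1(\tilde y)\big) \dd s .
\end{equation}
Bounding the integrand by $L(1+|u(s)|) \sup_{[0,T]} |w|$ and inserting the uniform-in-$t$ bound from the first step gives
\begin{equation}
    |w(T) - y_0| \leq L \big(T + \norm{u}_{L^1}\big) \exp\big(C(T + \norm{u}_{L^1})\big) |y_0| ,
\end{equation}
which is \eqref{eq:gronwall-2}.

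The only point needing care is the use of Grönwall's lemma with a merely $L^1$ time-dependent coefficient, since $u$ need not be continuous. This is handled by the standard integral version: set $\Phi(t) := |y_0| + \int_0^t a|w|$ with $a := L(1+|u|)$. Then $\Phi$ is absolutely continuous and $\Phi' = a|w| \leq a\Phi$ almost everywhere, so $(\Phi e^{-\int_0^t a})' \leq 0$ almost everywhere. Integrating gives $\Phi(t) \leq |y_0| e^{\int_0^t a}$, and $|w| \leq \Phi$ concludes.

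The constant $C = L$ is independent of $T$, $u$ and $y_0$, as required.
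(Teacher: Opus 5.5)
Your proof is correct and follows essentially the paper's route. The paper gets \eqref{eq:gronwall-1} by citing the Grönwall-based Lipschitz estimate of \cref{lem:EDO-Lip}, which is your first step inlined, with $C = \max\{L_0, L_1\}$. It then proves \eqref{eq:gronwall-2} exactly as you do, by bounding the integrand by $(L_0 + L_1|u|)$ times that estimate. Your explicit justification of Grönwall's lemma with a merely $L^1$ coefficient is a useful detail that the paper leaves implicit.
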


\begin{proof}
    Let $L_0$ and $L_1$ be the Lipschitz constants of $f_0$ and $f_1$ and define $C := \max \{ L_0, L_1 \}$.
    Estimate \eqref{eq:gronwall-1} follows from the Lipschitz dependence of the solution with respect to the initial data (see \cref{lem:EDO-Lip}).
    Moreover,
    \begin{equation}
        \begin{split}
            \abs{y(T;u,y_0) - y(T;u,0) - y_0}
            & = \Big|\int_0^T (f_0 + u(t) f_1)(y(t ; u, y_0)) - (f_0 + u(t) f_1)(y(t; u, 0)) \dd t \Big| \\
            & \leq \int_0^T (L_0 + L_1|u(t)|) \abs{y(t;u,y_0)-y(t;u,0)} \dd t \\
            & \leq \int_0^T (L_0 + L_1|u(t)|) \exp \left( L_0 T + L_1 \norm{u}_{L^1} \right) \abs{y_0} \dd t
        \end{split}
    \end{equation}
    using the bound of \cref{lem:EDO-Lip} for $\abs{y(t;u,y_0)-y(t;u,0)}$.
    We obtain \eqref{eq:gronwall-2}.
\end{proof}

\begin{lemma}
    \label{lem:gronwall-iterated}
    For any $N \geq 1$, any times $T_1, \dotsc, T_N > 0$, any controls $u_i \in L^1((0,T_i);\R)$ for $i=1,\dotsc,N$, and any initial condition $y_0 \in \R^n$, let $T = \sum_{i=1}^N T_i$ and let $u \in L^1((0,T);\R)$ be the concatenated control defined by $u := u_1 \diamond \dotsb \diamond u_N$.
    Then, the following estimate holds:
    \begin{equation}
        \label{eq:gronwall-iterated}
        \Big| y(T; u, y_0) - \Big(y_0 + \sum_{i=1}^N y(T_i; u_i, 0)\Big) \Big|
        \leq C R e^{C R} \Big(|y_0| + \sum_{i=1}^N |y(T_i; u_i, 0)| \Big),
    \end{equation}
    where $R := T + \norm{u}_{L^1(0,T)} = \sum_{i=1}^N (T_i + \norm{u_i}_{L^1(0,T_i)})$ and $C$ is the constant of \cref{lem:gronwall-single}.
\end{lemma}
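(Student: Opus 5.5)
The estimate \eqref{eq:gronwall-iterated} follows by induction on $N$, using the composition property (time-invariance) to write the flow of the concatenated control as successive flows, and \cref{lem:gronwall-single} at each step. For $k \in \{0,\dotsc,N\}$, let $S_k := T_1+\dotsb+T_k$, $x_k := y(S_k;u,y_0)$ (so $x_0 = y_0$ and $x_N = y(T;u,y_0)$), $a_i := y(T_i;u_i,0)$ and $R_k := \sum_{i \leq k}(T_i + \norm{u_i}_{L^1})$. By uniqueness of mild solutions (\cref{lem:WP}) and time-invariance, $x_k = y(T_k; u_k, x_{k-1})$.

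The per-step estimate comes from \eqref{eq:gronwall-2} applied with initial state $x_{k-1}$ and control $u_k$, setting $r_k := T_k + \norm{u_k}_{L^1}$:
\begin{equation*}
    |x_k - x_{k-1} - a_k| \leq C r_k e^{C r_k} |x_{k-1}|.
\end{equation*}
Summing over $k$ and telescoping gives
\begin{equation*}
    \Big| x_N - y_0 - \sum_{k=1}^N a_k \Big| \leq \sum_{k=1}^N C r_k e^{C r_k} |x_{k-1}|.
\end{equation*}

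It remains to bound the intermediate states, and I would prove $|x_k| \leq e^{C R_k}\big(|y_0| + \sum_{i\leq k}|a_i|\big)$ by induction. Using \eqref{eq:gronwall-1} together with the triangle inequality, $|x_k| \leq |a_k| + e^{C r_k}|x_{k-1}|$, and since $e^{C R_k} \geq 1$ the induction closes immediately. Plugging this bound into the telescoped sum, with $e^{C r_k} e^{C R_{k-1}} \leq e^{C R}$ and $\sum_k r_k = R$, yields
\begin{equation*}
    \Big| x_N - y_0 - \sum_{k=1}^N a_k \Big| \leq C R e^{CR}\Big(|y_0| + \sum_{i=1}^N |a_i|\Big),
\end{equation*}
which is exactly \eqref{eq:gronwall-iterated} with the same constant $C$.

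The only delicate point is the bookkeeping of the exponential factors, namely checking that $e^{C r_k} e^{C R_{k-1}} = e^{C R_k} \leq e^{CR}$, so that no extra factor of $N$ appears. Justifying the concatenation identity $x_k = y(T_k;u_k,x_{k-1})$ is routine, via uniqueness of the mild solution.
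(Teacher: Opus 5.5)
Your proof is correct and follows the paper's argument essentially verbatim. You telescope the error into per-segment errors, each bounded by \eqref{eq:gronwall-2}, then control the intermediate states via the recursion $|x_k| \leq |a_k| + e^{C r_k}|x_{k-1}|$ from \eqref{eq:gronwall-1}, with the exponential factors accumulating to at most $e^{CR}$.
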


\begin{proof}
    Let us define the flow for the $k$-th segment as $\Theta_k(z) := y(T_k; u_k, z)$ for $z \in \R^n$.
    Let us also define the sequence of states starting from $y_0$ by $y_k := \Theta_k(y_{k-1})$ for $k = 1, \dotsc, N$.
    The final state is thus $y_N = y(T; u, y_0)$.
    The quantity we want to estimate is the norm of the error vector
    \begin{equation}
        E_N := y_N - y_0 - \sum_{i=1}^N \Theta_i(0)
        = \sum_{k=1}^N \left( y_k - y_{k-1} - \Theta_k(0) \right).
    \end{equation}
    By \eqref{eq:gronwall-2}, each term of the sum is bounded as
    \begin{equation}
        \abs{E_N} \leq \sum_{k=1}^N C R_k e^{C R_k} | y_{k-1} |
    \end{equation}
    where $R_k := T_k + \norm{u_k}_{L^1}$.
    The next step is to control the growth of $|y_{k-1}|$.
    Using \eqref{eq:gronwall-1},
    \begin{equation}
        |y_k| = |\Theta_k(y_{k-1})| \leq |\Theta_k(y_{k-1}) - \Theta_k(0)| + |\Theta_k(0)|
        \leq |y_{k-1}| \exp(C R_k) + |\Theta_k(0)|.
    \end{equation}
    By unrolling this recurrence relation from $k-1$ down to $0$:
    \begin{equation}
        |y_{k-1}| \leq \Big( |y_0| + \sum_{i=1}^N |\Theta_i(0)| \Big) \exp (C R_1 + \dotsb + C R_{k-1}).
    \end{equation}
    Using that $R_i \geq 0$ and $R = \sum R_i$ proves \eqref{eq:gronwall-iterated} with the same constant $C$ as in \cref{lem:gronwall-single}.
\end{proof}

\subsection{Tangent vectors}
\label{sec:tangent-def}

We now give a formal definition for the approximate movements highlighted in \cref{sec:jakub}, in the context of general affine systems as in \cref{sec:affine}.
Many slightly different definitions exist in the literature (see \cite[Definition 1]{Frankowska1986}, \cite[Section 2]{BianchiniStefani1986}, \cite[Section 2]{Kawski1987_Survey} or \cite{HermesKawski1987}).

\begin{definition}
    \label{def:tangent}
    Let $\xi \in \R^n$ and $k \in \N^*$.
    We say that $\xi$ is a \emph{tangent vector of order $k$} when there exists a family of \emph{control variations} $\{u_T\}$ such that, for $0 < T \ll 1$, $u_T \in L^1((0,T);\R)$ with $\norm{u_T}_{L^1(0,T)} \leq T$ and the solution starting from the equilibrium satisfies
    \begin{equation}
        y(T;u_T, 0) = T^k \xi + \cO(T^{k+1}).
    \end{equation}
    Let $K_k \subset \R^n$ denote the set of all tangent vectors of order $k$ and $K_\infty := \cup_{k\geq1} K_k$.
\end{definition}

\begin{example}
    For system \eqref{eq:jakubczyk}, we proved in \cref{sec:jakub} that:
    \begin{itemize}
        \item $\pm e_1$ are tangent vectors of order $1$ with associated control variations $u_T(t) = \pm 1$,
        \item $\pm \frac 13 e_2$ are tangent vectors of order $2$ with $u_T(t) = \pm (- 4 \frac t T + 2)$,
        \item $\pm \frac{29}{120120} e_3$ are tangent vectors of order $4$ with $u_T(t) = \pm (- 10 (\frac t T)^3 + 6 (\frac t T)^2 + 3 \frac t T - 1)$.
    \end{itemize}
\end{example}

Since tangent vectors are motions which can be achieved in a truly nonlinear way, there is no reason for $K_k$ (or $K_\infty$) to be a vector space.
However, it enjoys the following elementary properties of monotonicity with respect to $k$ and nonlinear convexity, inspired by \cite[Section 2]{Kawski1987_Survey}.

\begin{lemma}
    \label{lem:K}
    The following facts hold:
    \begin{itemize}
        \item If $k < l$, then $K_k \subset K_l$.
        \item If $\xi_1, \dotsc, \xi_N \in K_k$ and $\lambda_1, \dotsc, \lambda_N \geq 0$ with $\lambda_1 + \dotsb + \lambda_N \leq 1$, then $\lambda_1^k \xi_1 + \dotsb + \lambda_N^k \xi_N \in K_k$.
    \end{itemize}
\end{lemma}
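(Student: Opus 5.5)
For the first item, I would take $\xi \in K_k$ with control variations $\{u_T\}$ and build variations of order $l$ by slowing time.
The natural choice is to run the original variation on a shorter time and then stay at rest.
Precisely, set $S := T^{l/k}$, which satisfies $S \leq T$ for $T \leq 1$ since $l > k$.
Define the new variation on $(0,T)$ as $u_S$ on $(0,S)$ followed by $0$ on $(S,T)$.
Its $L^1$ norm is at most $S \leq T$.
Since $f_0(0)=0$, it is tempting to say the null control keeps the state near $y(S;u_S,0) = S^k\xi + \cO(S^{k+1})$.
However, the free flow $\dot y = f_0(y)$ on $(S,T)$ moves this point, and by \eqref{eq:gronwall-2} (with $u=0$) the displacement is of size $\cO(T \cdot S^k)$.
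This gives $y(T) = T^l \xi + \cO(T^{l+1}) + \cO(T^{l+(l+k)/k})$-type terms, which would be fine.
The catch is that the rescaled error $\cO(S^{k+1}) = \cO(T^{l + l/k})$ is only $\cO(T^{l+1})$ when $l/k \geq 1$, which holds.
So the step to check is that all error terms are $\cO(T^{l+1})$: the free-flow error is $C T S^k = \cO(T^{l+1})$, and the intrinsic error is $\cO(T^{l+l/k}) \subset \cO(T^{l+1})$.
An alternative is to place the rest period first; by \eqref{eq:gronwall-1} with $y_0=0$ and $f_0(0)=0$ the state then stays exactly $0$, leaving only the intrinsic error.
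I would use this ordering: zero control on $(0,T-S)$, then the translated $u_S$.

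For the second item, I would concatenate controls.
Given $\xi_i \in K_k$ with variations $u^i_T$, set $T_i := \lambda_i T$ and concatenate $u^1_{T_1} \diamond \dotsb \diamond u^N_{T_N}$, padding with zero control on the remaining time $(1-\sum\lambda_i)T$ at the start (again harmless since $0$ is an equilibrium).
Indices with $\lambda_i = 0$ are dropped.
The total $L^1$ norm is at most $\sum T_i \leq T$, as required.
By \cref{lem:gronwall-iterated} with $y_0 = 0$, the final state equals $\sum_i y(T_i;u^i_{T_i},0)$ up to an error bounded by $C R e^{CR} \sum_i |y(T_i;u^i_{T_i},0)|$.
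Here $R \leq 2T$, and each $|y(T_i;\cdot)| = \cO(T^k)$, so the error is $\cO(T^{k+1})$.
The main sum is $\sum_i \lambda_i^k T^k \xi_i + \cO(T^{k+1})$, which yields the tangent vector $\sum_i \lambda_i^k\xi_i$.

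The only subtle point, and the main obstacle, is uniformity.
Each $u^i_T$ is only defined, with its $\cO$ estimate, for $T$ below some threshold $T^i_0$, and the $\cO$ constants depend on $i$.
Since $N$ is finite and $T_i \leq T$, choosing $T < \min_i T^i_0$ makes all estimates valid with a common constant.
Note that the Grönwall remainder is linear in $|y(T_i;\cdot)|$, so it is crucial that the intrinsic displacements are already $\cO(T^k)$ and not merely small.
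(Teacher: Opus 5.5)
Your proposal is correct and follows essentially the paper's proof. For the first item the paper also uses $0_{T-T^{l/k}} \diamond u_{T^{l/k}}$, with the rest period first so the state stays exactly at the equilibrium. For the second item it uses the same zero-padded concatenation $0_{T-T^\star}\diamond u_{1,T_1}\diamond\dotsb\diamond u_{N,T_N}$ with $T_i=\lambda_i T$, together with \cref{lem:gronwall-iterated} at $y_0=0$; your remarks on dropping indices with $\lambda_i=0$ and taking $T$ below all the thresholds are details the paper leaves implicit.
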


\begin{proof}
    We prove both statements independently.
    \begin{itemize}
        \item Let $k < l$ and $\xi \in K_k$.
        Let $\{u_T\}$ be a family of control variations for $\xi$.
        For $0 < T \ll 1$, let $v_T := 0_{T-T^\alpha} \diamond u_{T^\alpha}$ where $\alpha := \frac l k > 1$.
        Hence $\norm{v_T}_{L^1} \leq \norm{u_{T^\alpha}}_{L^1} \leq T^\alpha \leq T$.
        Since $f_0(0) = 0$,
        \begin{equation}
            y(T;v_T,0) = y(T^\alpha; u_{T^\alpha},0) = T^l \xi + \cO(T^{l+1}).
        \end{equation}

        \item Let $\xi_1, \dotsc, \xi_N \in K_k$ and $\lambda_1, \dotsc, \lambda_N \geq 0$ with $\lambda_1 + \dotsb + \lambda_N \leq 1$.
        Let $\{ u_{i,T} \}$ be families of control variations for the $\xi_i$.
        For $0 < T \ll 1$, let $T_i := \lambda_i T$ and $T^\star := T_1 + \dotsb + T_N \leq T$.
        Define $v_T := 0_{T-T^\star} \diamond u_{1,T_1} \diamond \dotsb \diamond u_{N,T_N}$.
        Then $\norm{v_T}_{L^1} \leq T^\star \leq T$.
        By \cref{lem:gronwall-iterated} with $y_0=0$,
        \begin{equation}
            \begin{split}
                y(T; v_T, 0) & = \left( \sum_{i=1}^N y(T_i; u_{i,T_i}, 0) \right) (1 + \cO(T)) \\
                & = \left( \sum_{i=1}^N \lambda_i^k T^k \xi_i + \cO(T^{k+1}) \right) (1 + \cO(T)) \\
                & = T^k \sum_{i=1}^N \lambda_i^k \xi_i + \cO(T^{k+1}),
            \end{split}
        \end{equation}
        which proves that $\lambda_1^k \xi_1 + \dotsb + \lambda_N^k \xi_N \in K_k$. \qedhere
    \end{itemize}
\end{proof}

These properties give an intuition of why time rescaling and time concatenation can replace linearity. 
In particular, as in \cite[Proposition 1.6]{Frankowska1987}, these properties entail the equivalence of the following formulations of the sufficient condition for controllability of \cref{thm:tangent}.

\begin{corollary}
    \label{cor:equiv-K}
    The following statements are equivalent:
    \begin{enumerate}
        \item \label{it:K-1}
        The origin $0 \in \R^n$ belongs to the interior of $K_\infty$, i.e.\ $0 \in \operatorname{int} K_\infty$.
        \item \label{it:K-2}
        There exist $\xi_1,\dotsc,\xi_N \in K_\infty$ such that $0 \in \operatorname{int} \operatorname{conv} \{ \xi_1,\dotsc,\xi_N \}$.
        \item \label{it:K-3}
        There exists $k \geq 1$ and $r > 0$ such that $\pm r e_1, \dotsc, \pm r e_n \in K_k$.
    \end{enumerate}
\end{corollary}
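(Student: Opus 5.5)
I would prove the cycle of implications \ref{it:K-1} $\Rightarrow$ \ref{it:K-2} $\Rightarrow$ \ref{it:K-3} $\Rightarrow$ \ref{it:K-1}, using only the two properties of \cref{lem:K}.

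\emph{Step \ref{it:K-1} $\Rightarrow$ \ref{it:K-2}.} If $0 \in \operatorname{int} K_\infty$, then there is $r > 0$ such that the $2n$ points $\pm r e_j$ all lie in $K_\infty$. Their convex hull is the cross-polytope of radius $r$, which contains $0$ in its interior. So \ref{it:K-2} holds with $N = 2n$.

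\emph{Step \ref{it:K-2} $\Rightarrow$ \ref{it:K-3}.} Let $\xi_1, \dotsc, \xi_N \in K_\infty$ with $0 \in \operatorname{int} \operatorname{conv}\{\xi_i\}$. Each $\xi_i$ lies in some $K_{k_i}$.
\begin{itemize}
    \item By the first item of \cref{lem:K}, all $\xi_i$ lie in $K_k$ with $k := \max k_i$.
    \item Since $0$ is interior, there is $\rho > 0$ with $\pm \rho e_j \in \operatorname{conv}\{\xi_i\}$ for every $j$. So we can write $\rho \sigma e_j = \sum_i \mu_i \xi_i$ with $\mu_i \geq 0$ and $\sum \mu_i = 1$.
    \item The second item of \cref{lem:K} produces $\sum \lambda_i^k \xi_i$, not $\sum \mu_i \xi_i$. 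I therefore set $\lambda_i := \mu_i^{1/k}$, so that $\lambda_i^k = \mu_i$.
    \item Now $\sum \lambda_i$ may exceed $1$, but it is at most $N$. Setting $\lambda_i' := \lambda_i / N$ gives $\sum \lambda_i' \leq 1$ and $\sum \lambda_i'^k \xi_i = N^{-k} \rho \sigma e_j$.
\end{itemize}
Hence $\pm r e_j \in K_k$ for all $j$, with the common radius $r := N^{-k}\rho$.

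\emph{Step \ref{it:K-3} $\Rightarrow$ \ref{it:K-1}.} This is the main step: I must show that a whole neighborhood of $0$ lies in $K_k$, using only nonnegative combinations with the constraint on $\sum \lambda_i$. Take $z$ with $\abs{z}_1 \leq r n^{-k}$ and write $z = \sum_j \abs{z_j}\, \operatorname{sgn}(z_j) e_j$, using the vector $\operatorname{sgn}(z_j) r e_j \in K_k$ for each $j$. I need $\lambda_j^k r = \abs{z_j}$, that is $\lambda_j = (\abs{z_j}/r)^{1/k}$. The constraint $\sum_j \lambda_j \leq 1$ then holds as soon as each $\abs{z_j} \leq r n^{-k}$, which is guaranteed by the choice of $z$. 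The second item of \cref{lem:K} gives $z \in K_k \subset K_\infty$.

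The subtle points are the nonlinear exponent $k$ and the normalization of $\sum \lambda_i$. Both are handled by taking $k$-th roots and shrinking the radius by a factor $N^{-k}$ (respectively $n^{-k}$). One could also include the degenerate case $z = 0$ by taking the zero control, or simply note that it is already covered since all $\lambda_j = 0$ is allowed.
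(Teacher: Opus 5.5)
Your proof is correct, and it uses the same ingredients as the paper: monotonicity of $K_k$ to reach a common order, the cross-polytope, and the substitution $\lambda_i = c_i^{1/k}$ with the radius shrunk by a factor $N^{-k}$. The difference is that you go around the cycle in the opposite direction.

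The paper proves $(1)\Rightarrow(3)\Rightarrow(2)\Rightarrow(1)$. With that order, $(1)\Rightarrow(3)$ and $(3)\Rightarrow(2)$ are immediate, and the nonlinear convexity of \cref{lem:K} is used only once, in $(2)\Rightarrow(1)$, for every point of a ball $B(0,r)$ at once (with H\"older's inequality to bound $\sum c_i^{1/k}$).

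Your order $(1)\Rightarrow(2)\Rightarrow(3)\Rightarrow(1)$ makes you run that argument twice. You use it first in $(2)\Rightarrow(3)$, for the $2n$ points $\pm\rho e_j$ only. You use it again in $(3)\Rightarrow(1)$, for a full neighbourhood; your coordinatewise decomposition there is the same argument applied to the cross-polytope.

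Your crude bound $\mu_i^{1/k}\le 1$ in place of H\"older is perfectly adequate. With the paper's own scaling $\sum c_i \le N^{-k}$ it already gives $\sum c_i^{1/k} \le 1$, so H\"older is not actually needed there. Your version is slightly more elementary and gives explicit radii, while the paper's is more economical. The difference is one of economy, not substance.
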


\begin{proof}
    We will prove the equivalence by showing that \cref{it:K-1} $\Rightarrow$ \cref{it:K-3} $\Rightarrow$ \cref{it:K-2} $\Rightarrow$ \cref{it:K-1}.

    \medskip
    \emph{Proof of \cref{it:K-1} $\Rightarrow$ \cref{it:K-3}.}
    Assume $0 \in \operatorname{int} K_\infty$.
    By definition, there exists $r > 0$ such that $B(0,2r) \subset K_\infty$.
    In particular $\pm r e_1, \dotsc, \pm r e_n \subset K_\infty$.
    Since $K_\infty = \cup_{k \geq 1} K_k$, by the first item in \cref{lem:K}, there exists $k \geq 1$ such that $\pm r e_1, \dotsc, \pm r e_n \in K_k$.

    \medskip
    \emph{Proof of \cref{it:K-3} $\Rightarrow$ \cref{it:K-2}.}
    Let $k \geq 1$ and $r > 0$ such that $\pm r e_1, \dotsc, \pm r e_n \in K_k$.
    For $1 \leq i \leq n$, let $\xi_i := + r e_i$ and $\xi_{n+i} := - r e_i$.
    Then $\xi_1, \dotsc, \xi_{2n} \in K_\infty$ and $\operatorname{conv} \{ \xi_1, \dotsc, \xi_{2n} \}$ is the cross-polytope $\{ |y_1| + \dotsb + |y_n| \leq r \}$.
    By Cauchy--Schwarz, it contains $B(0,r/\sqrt{n})$, so $0 \in \operatorname{int} \operatorname{conv} \{ \xi_1, \dotsc, \xi_{2n} \}$.

    \medskip
    \emph{Proof of \cref{it:K-2} $\Rightarrow$ \cref{it:K-1}.}
    Let $\xi_1, \dotsc, \xi_N \in K_\infty$ such that $0 \in \operatorname{int} \operatorname{conv} \{ \xi_1,\dotsc,\xi_N \}$.
    Since $K_\infty = \cup_{k \geq 1} K_k$, by the first item in \cref{lem:K}, there exists $k \geq 1$ such that $\xi_1, \dotsc, \xi_N \in K_k$.
    By definition, there exists $r > 0$ such that $B(0,r N^k) \subset \operatorname{conv} \{ \xi_1, \dotsc, \xi_N \}$.
    Hence, for any $\xi \in B(0,r)$, there exist $c_1, \dotsc, c_N$ with $0 \leq c_i \leq 1$ such that $\xi = c_1 \xi_1 + \dotsb + c_N \xi_N$ and $c_1 + \dotsb + c_N \leq N^{-k}$.
    Let $\lambda_i := c_i^{\frac 1k}$.
    By Hölder's inequality, $\sum \lambda_i \leq N^{\frac{k-1}{k}} (\sum c_i)^{\frac 1k} \leq 1$.
    By the second item of \cref{lem:K}, $\sum \lambda_i^k \xi_i = \xi \in K_k$.
    Hence $B(0,r) \subset K_k \subset K_\infty$.
\end{proof}

\subsection{Main theorem and time-iteration argument}
\label{sec:tangent-iteration}

Before proving the main result \cref{thm:tangent}, we give a result corresponding to a single step of the time iteration, where we construct a control bringing an initial state closer to the equilibrium.
Our proofs are inspired by \cite[Section 2]{Hermes1980}, \cite[Appendix]{Kawski1987_Survey} and \cite[Lemma 3.1]{Coron1992}.

\begin{proposition}
    \label{prop:single-move}
    Assume that there exists $k \geq 1$ and $r > 0$ such that $\pm r e_1, \dotsc, \pm r e_n \in K_k$.
    Then there exists $C > 0$ such that, for any $z \in B(0,r)$, there exists a time $0 \leq T_z \leq C |z|^{\frac 1k}$ and a control $u_z \in L^1((0,T_z);\R)$ with $\norm{u_z}_{L^1} \leq T_z$ such that $|y(T_z;u_z,z)| \leq C |z|^{1+\frac 1k}$.
\end{proposition}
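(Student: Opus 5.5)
The plan is to move from $z$ approximately to $0$ by concatenating, in each coordinate direction, the control variation that realizes the tangent vector pointing towards $-z$, with durations tuned so that the leading-order displacements sum exactly to $-z$. The error is then controlled by \cref{lem:gronwall-iterated}.

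\emph{Construction.} For each $1 \leq i \leq n$, pick the sign $s_i := -\operatorname{sign}(z_i)$ (say $+$ if $z_i = 0$). Let $\{u_{i,T}\}$ be the family of control variations associated with $s_i r e_i \in K_k$. There are only $2n$ such families, so there exist uniform $T_0, C_0 > 0$ such that, for all $T \in (0,T_0]$ and every sign choice,
\[
\abs{y(T;u_{i,T},0) - T^k s_i r e_i} \leq C_0 T^{k+1}
\quad\text{and}\quad
\norm{u_{i,T}}_{L^1} \leq T.
\]
Set $T_i := (\abs{z_i}/r)^{1/k}$, so that $T_i^k s_i r e_i = -z_i e_i$ and $T_i \leq (\abs{z}/r)^{1/k}$. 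Let $T_z := \sum_i T_i \leq n r^{-1/k} \abs{z}^{1/k}$ and define $u_z := u_{1,T_1} \diamond \dotsb \diamond u_{n,T_n}$, omitting the indices with $z_i = 0$. Then $\norm{u_z}_{L^1} \leq T_z$.

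\emph{Smallness of the times.} One needs every $T_i \leq T_0$. This holds when $\abs{z}$ is small, say $\abs{z} \leq r T_0^k$. For the remaining $z \in B(0,r)$ with $\abs{z} > r T_0^k$, the required estimate is trivial: take $T_z = 0$ and $u_z$ empty, so $\abs{y} = \abs{z} \leq C \abs{z}^{1+1/k}$ once $C \geq (r T_0^k)^{-1/k}$. This case split is how I intend to handle large $z$.

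\emph{Error estimate.} Apply \cref{lem:gronwall-iterated} with $y_0 = z$. Here $R \leq 2T_z \leq C_1 \abs{z}^{1/k}$, which is bounded on $B(0,r)$, and each $\abs{y(T_i;u_{i,T_i},0)} \leq r T_i^k + C_0 T_i^{k+1} \leq C_2 \abs{z}$. Hence
\[
\Big| y(T_z;u_z,z) - z - \sum_i y(T_i;u_{i,T_i},0) \Big| \leq C R e^{CR} (n+1) C_2 \abs{z} \leq C_3 \abs{z}^{1+1/k}.
\]
The leading terms telescope: $z + \sum_i T_i^k s_i r e_i = 0$. The remainders satisfy $\sum_i C_0 T_i^{k+1} \leq n C_0 (\abs{z}/r)^{1+1/k}$. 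Adding the two bounds gives $\abs{y(T_z;u_z,z)} \leq C \abs{z}^{1+1/k}$.

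The main subtlety is the uniformity of the $\cO(T^{k+1})$ constants across the $2n$ families and the ensuing restriction to small $T_i$. Both are handled by the finiteness of the families and by the trivial case for large $|z|$.
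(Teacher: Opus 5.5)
Your proof is correct and follows the paper's argument: you concatenate the coordinate control variations with durations $T_i = (\abs{z_i}/r)^{1/k}$ so that the leading displacements exactly cancel $z$, then bound the error with \cref{lem:gronwall-iterated}. You also make explicit two points the paper leaves implicit in its $\cO$-notation, namely the uniformity of the $\cO(T^{k+1})$ constants across the finitely many families, and the trivial choice $T_z = 0$ when $\abs{z} > r T_0^k$.
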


\begin{proof}
    For $1 \leq i \leq n$, we use the notation $e_{n+i} := - e_i$.
    For $1 \leq i \leq 2n$, let $\{ u_{i,T} \}$ be a family of control variations associated with $r e_i$ for $0 < T \ll 1$.

    For $z \in B(0,r)$, we construct a control $u_z$ that generates a motion approximately equal to $-z$.
    We decompose $- z = z_1 r e_1 + \dotsb + z_{2 n} r e_{2 n}$ with $0 \leq z_i \leq |z|/r$ and, for $1 \leq i \leq n$, $z_i z_{n+i} = 0$.

    Let $T_i := z_i^{\frac 1k}$ and $T_z := \sum_{i=1}^{2 n} T_i \leq n (\frac{|z|}{r})^{\frac 1k}$.
    Define $u_z \in L^1((0,T_z);\R)$ with $\norm{u_z}_{L^1} \leq T_z$ by
    \begin{equation}
        u_z := u_{1,T_1} \diamond \dotsb \diamond u_{2 n,T_{2 n}}.
    \end{equation}
    Using \cref{lem:gronwall-iterated}, the final state $y(T_z; u_z, z)$ satisfies
    \begin{equation}
        \Big| y(T_z; u_z, z) - \Big(z + \sum_{i=1}^{2 n} y(T_i; u_{i,T_i}, 0)\Big) \Big| \leq C R_z e^{C R_z} \Big(|z| + \sum_{i=1}^{2 n} |y(T_i; u_{i,T_i}, 0)| \Big),
    \end{equation}
    where $R_z := T_z + \norm{u_z}_{L^1} \leq 2T_z = \cO(|z|^{\frac 1k})$.
    The displacement from the origin is
    \begin{equation}
        \sum_{i=1}^{2 n} y(T_i; u_{i,T_i}, 0)
        = \sum_{i=1}^{2 n} T_i^k (re_i) + \cO(T_i^{k+1})
        = -z + \cO(|z|^{1+\frac1k}).
    \end{equation}
    Combining these estimates, we obtain $y(T_z; u_z, z) = \cO(|z|^{1+\frac 1k})$, which concludes the proof.
\end{proof}

\bigskip 

\begin{theorem}
    \label{thm:tangent}
    Assume that $0 \in \operatorname{int} K_\infty$.
    Then system \eqref{eq:affine-f1} is $L^1$-STLNC.
\end{theorem}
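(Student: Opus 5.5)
By \cref{cor:equiv-K}, the assumption $0 \in \operatorname{int} K_\infty$ provides $k \geq 1$ and $r > 0$ with $\pm r e_1, \dotsc, \pm r e_n \in K_k$. We may therefore invoke \cref{prop:single-move}, which gives a constant $C$ and, for each $z \in B(0,r)$, a time $T_z \leq C|z|^{1/k}$ and a control $u_z$ with $\norm{u_z}_{L^1} \leq T_z$ such that $|y(T_z;u_z,z)| \leq C|z|^{1+1/k}$. The plan is a time iteration in the spirit of \cref{thm:LTT}: apply \cref{prop:single-move} repeatedly. The resulting superlinear contraction $|z| \mapsto C|z|^{1+1/k}$ makes the states converge to $0$ doubly exponentially, so the step durations $C|z_j|^{1/k}$ are summable with a small total.

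Fix $T > 0$ and a small $y_0$. We define $z_0 := y_0$ and $z_{j+1} := y(T_{z_j}; u_{z_j}, z_j)$, with step times $\tau_j := T_{z_j}$. Choose $\delta$ small enough that $\delta < r$ and $C\delta^{1/k} \leq 1/2$. Then, by induction, $|z_{j+1}| \leq C|z_j|^{1/k}|z_j| \leq \tfrac12 |z_j|$, so every $z_j$ stays in $B(0,r)$ and $|z_j| \leq 2^{-j}|y_0|$. This geometric rate is already enough for what we need. It gives $\sum_j \tau_j \leq C\sum_j (2^{-j}|y_0|)^{1/k} = C|y_0|^{1/k}/(1-2^{-1/k})$, and this total can be made at most $T$ by shrinking $\delta$ further. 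We then let $u$ be the concatenation $u_{z_0} \diamond u_{z_1} \diamond \dotsb$ on $[0, T^\star]$, where $T^\star := \sum_j \tau_j$. It satisfies $\norm{u}_{L^1} \leq \sum \norm{u_{z_j}}_{L^1} \leq \sum \tau_j = T^\star$.

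Two small technical points remain. First, we must finish exactly at time $T$ and not at $T^\star \leq T$. Since $f_0(0) = 0$, we can append the null control on $[T^\star, T]$, and the state stays at $0$ there; the $L^1$ norm is then at most $T^\star \leq T$. Second, we must check that $y(T^\star) = 0$. By the composition property, $y(\sum_{i<j}\tau_i) = z_j \to 0$, and $t \mapsto y(t;u,y_0)$ is continuous by \cref{lem:WP}. The control $u$ is a countable concatenation, but it lies in $L^1$, so the solution on $[0,T^\star]$ is well defined and its value at the times $T_j$ agrees with the finite concatenations. Passing to the limit $T_j \to T^\star$ gives $y(T^\star) = 0$.

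The main obstacle is this limiting argument with infinitely many steps, that is, justifying that the solution driven by the infinite concatenation coincides with the iterates. I would handle it with \cref{lem:WP} on $[0,T^\star]$ together with uniqueness of mild solutions on each subinterval $[T_j,T_{j+1}]$. One degenerate case should also be mentioned: if some $z_j = 0$, we simply use the zero control from then on.
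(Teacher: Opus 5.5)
Your proposal is correct and follows essentially the same route as the paper. You use \cref{cor:equiv-K} and \cref{prop:single-move}, iterate the superlinear one-step contraction at a fixed geometric rate, sum the step durations, pass to the limit by continuity, and pad with the null control. The only cosmetic difference is the contraction factor: you take $\tfrac12$, giving total time $C|y_0|^{1/k}/(1-2^{-1/k})$, whereas the paper takes $2^{-k}$ to get the cleaner bound $2C|y_0|^{1/k}$.
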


\begin{proof}
    By \cref{cor:equiv-K}, there exists $k \geq 1$ and $r > 0$ such that $\pm r e_1, \dotsc, \pm r e_n \in K_k$.
    Thus \cref{prop:single-move} provides a map $\mathcal{S}(z) := y(T_z;u_z,z)$ which acts as a one-step contraction towards the origin.
    By iterating this map, we can steer any sufficiently small initial state to zero in finite time.
    The result $|\mathcal{S}(z)| = \cO(|z|^{1+\frac 1k})$ implies that for any contraction factor $q \in (0,1)$, there exists a radius $r_q > 0$ such that for all $z \in B(0,r_q)$, we have $|\mathcal{S}(z)| \leq q |z|$.
    Fix $q=2^{-k}$.

    Let $y_0$ be an initial state in $B(0,r_q)$.
    We define a sequence of states $(y_j)_{j \in \N}$ by the recurrence relation $y_{j+1} := \mathcal{S}(y_j)$ for $j \geq 0$.
    Then $|y_j| \leq 2^{-kj} |y_0|$ and $y_j \to 0$.
    The control driving $y_j$ to $y_{j+1}$ is $u_j := u_{y_j}$, applied over a time interval of duration $T_j := T_{y_j}$.
    \cref{prop:single-move} guarantees that $T_j \leq C|y_j|^{\frac 1k}$ for some constant $C>0$.
    The complete concatenated control is given formally by $u(y_0) := u_0 \diamond u_1 \diamond u_2 \diamond \dotsb$, and its total duration $T(y_0)$ is
    \begin{equation}
        \label{eq:tgt-T-conv}
        T(y_0) := \sum_{j=0}^\infty T_j \leq \sum_{j=0}^\infty C|y_j|^{\frac 1 k} \leq \sum_{j=0}^\infty C \left(2^{-kj} |y_0|\right)^{\frac 1 k} = 2 C|y_0|^{\frac 1 k}.
    \end{equation}
    Moreover, one has $\norm{u(y_0)}_{L^1(0,T(y_0))} \leq T(y_0)$.
    In particular, by \cref{lem:WP}, the associated solution $y \in C^0([0,T(y_0)];\R^n)$ is continuous.
    Since we had $y_j \to 0$, we get that $y(T(y_0);u(y_0),y_0) = 0$.

    As in \cref{def:STLNC-0}, let $T > 0$.
    Let $\delta := \min \{ r_q, (T/2C)^k \}$.
    For $y_0 \in B(0,\delta)$, we can perform the previous construction, and obtain that $\norm{u(y_0)}_{L^1} \leq T(y_0) \leq T$.
    We can therefore use the control $u := u(y_0) \diamond 0_{T-T(y_0)}$ to obtain the small-time local null controllability.
\end{proof}

\begin{example}
    Recall system \eqref{eq:jakubczyk} in $\R^3$.
    In \cref{sec:jakub}, we proved that $\pm e_1$, $\pm \frac 1 3 e_2$ and $\pm c e_3$ with $c \neq 0$ are tangent vectors (of respective orders 1, 2 and 4).
    Since $0 \in \operatorname{int} \operatorname{conv} \{ \pm e_1, \pm e_2, \pm e_3 \}$, \cref{cor:equiv-K} and \cref{thm:tangent} imply that this system is $L^1$-STLNC.
\end{example}

\subsection{How to construct tangent vectors?}

\cref{thm:tangent} deduces controllability from the existence of tangent vectors.
To apply it, one can either perform \emph{ad hoc} system-specific constructions (as in \cref{sec:jakub}), or use general recipes to construct tangent vectors systematically.
We give here a few examples of such arguments.
We start with the easiest systematic motion.

\begin{lemma}
    \label{lem:tangent-f1}
    One has: $\pm f_1(0)$ are tangent vectors of order $1$ with control variations $u_T^\pm(t) = \pm 1$.
\end{lemma}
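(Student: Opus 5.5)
We check \cref{def:tangent} directly for the constant control variations $u_T^\pm(t) := \pm 1$ on $(0,T)$. The norm constraint is immediate: $\norm{u_T^\pm}_{L^1(0,T)} = T$. It remains to show that the solution starting from the equilibrium satisfies $y(T;u_T^\pm,0) = \pm T f_1(0) + \cO(T^2)$ as $T \to 0$.

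\textbf{Step 1: a priori bound on the trajectory.} Write $y(t) := y(t;u_T^\pm,0)$ for $t \in [0,T]$. By the mild formulation \eqref{eq:mild},
\begin{equation}
    y(t) = \int_0^t \big(f_0(y(s)) \pm f_1(y(s))\big) \dd s .
\end{equation}
Since $f_0$ and $f_1$ are globally bounded, say by $M$, this gives $|y(t)| \leq 2 M t \leq 2MT$ on $[0,T]$. Alternatively, one may use \cref{lem:gronwall-single} or the Lipschitz bounds together with $f_0(0)=0$ to obtain $|y(t)| = \cO(T)$.

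\textbf{Step 2: expansion at first order.} Insert $f_0(y(s)) = f_0(y(s)) - f_0(0)$ and $f_1(y(s)) = f_1(0) + (f_1(y(s)) - f_1(0))$ into the integral. This yields
\begin{equation}
    y(T) \mp T f_1(0) = \int_0^T \big(f_0(y(s)) - f_0(0)\big) \dd s \pm \int_0^T \big(f_1(y(s)) - f_1(0)\big) \dd s .
\end{equation}
By the Lipschitz constants $L_0, L_1$, the right-hand side is bounded by $(L_0 + L_1)\int_0^T |y(s)| \dd s \leq (L_0+L_1) \cdot 2M T^2 = \cO(T^2)$.

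\textbf{Conclusion.} Hence $y(T;u_T^\pm,0) = T(\pm f_1(0)) + \cO(T^2)$, so $\pm f_1(0) \in K_1$. There is no real obstacle here. The only point requiring care is that the first-order term comes entirely from the control, because $f_0(0) = 0$; this is what allows the drift contribution to be absorbed into the $\cO(T^2)$ remainder.
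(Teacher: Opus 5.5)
Your proof is correct and follows essentially the paper's route. The paper cites \cref{lem:taylor-uf1}, the first-order expansion $y(T;u,0) = (\int_0^T u) f_1(0) + \cO((T+\norm{u}_{L^1})\norm{u}_{L^1})$, and specializes it to $u = \pm 1$; that lemma is proved by exactly your decomposition (using $f_0(0)=0$, the Lipschitz constants and an a priori bound on $|y|$), and you have inlined it for the constant control, with global boundedness of $f_0, f_1$ replacing the Grönwall bound in Step 1.
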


\begin{proof}
    Using \cref{lem:taylor-uf1}, $y(T;u,0) = (\int_0^T u) f_1(0) + \mathcal{O}((T+\norm{u}_{L^1}) \norm{u}_{L^1})$.
    For $u_T^\pm = \pm 1$, we obtain $y(T;u_T^{\pm},0) = \pm T f_1(0) + \mathcal{O}(T^2)$.
    Thus $\pm f_1(0)$ are tangent vectors of order $1$.
\end{proof}

The following well-known result (see \cite[Theorem 4]{Frankowska1986}, \cite[Theorem 6]{HermesKawski1987} or \cite[Property (P2)]{BianchiniStefani1986}) is of great importance, and can be transferred to PDEs (see \cref{sec:tangent-PDEs}).

\begin{proposition}
    \label{prop:tangent-ad}
    Assume that $f_0 \in C^2(\R^n;\R^n)$.
    Set $A := D f_0(0)$.
    Let $k \in \N^*$.

    If $\pm \xi \in K_k$, then $\pm A \xi \in K_{2k+1}$.
\end{proposition}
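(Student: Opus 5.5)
The idea is to let the drift $f_0$ act for a time between two opposite infinitesimal motions. This produces, at leading order, $-\,s A\xi$ times the size of the motion, just as a Lie bracket with $f_0$ would. Fix control variations $\{u^+_T\}$ for $\xi$ and $\{u^-_T\}$ for $-\xi$ of order $k$. For small $T$ we set $\tau := T^{k+1}$ (to be tuned) and define
\begin{equation}
v_T := u^+_{\tau} \diamond 0_{T - 2\tau} \diamond u^-_{\tau}.
\end{equation}
The total time is $T$ and $\norm{v_T}_{L^1} \leq 2\tau \leq T$. The first block produces the state $a := y(\tau;u^+_\tau,0) = \tau^k \xi + \cO(\tau^{k+1})$. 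The middle drift block maps $a$ to $b := e^{sA}a + \cO(|a|^2)$ with $s = T-2\tau$, using $f_0\in C^2$, $f_0(0)=0$ and a Taylor/Grönwall expansion of the free flow. Thus
\begin{equation}
b = a + sAa + \cO(s^2|a|) + \cO(|a|^2).
\end{equation}
The last block is handled by \cref{lem:gronwall-single} (estimate \eqref{eq:gronwall-2}) with $R = \cO(\tau)$:
\begin{equation}
y(T;v_T,0) = b + y(\tau;u^-_\tau,0) + \cO(\tau|b|) = b - \tau^k\xi + \cO(\tau^{k+1}).
\end{equation}

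Combining these, the $\pm \tau^k \xi$ terms cancel and
\begin{equation}
y(T;v_T,0) = sA(\tau^k\xi) + \cO(s\tau^{k+1}) + \cO(s^2\tau^k) + \cO(\tau^{2k}) + \cO(\tau^{k+1}).
\end{equation}
With $s\approx T$, the main term is $T\tau^k A\xi$. The error terms that must be beaten are $T^2\tau^k$ and, more dangerously, $\tau^{k+1}$, which comes from the $\cO(T^{k+1})$ remainders in the definition of tangent vectors. That remainder does not cancel between the two blocks.

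\textbf{This is the main obstacle:} the dominant error $\tau^{k+1}$ must be smaller than the main term $T\tau^k$, i.e.\ $\tau \ll T$. Choosing $\tau = T^2$ gives main term $T^{2k+1}A\xi$, with errors $\cO(T^{2k+2})$ from $\tau^{k+1}=T^{2k+2}$, $\cO(T^{2k+2})$ from $s^2\tau^k$, and $\cO(T^{4k})$ from $\tau^{2k}$, which is fine since $4k\geq 2k+2$. So the natural choice is $\tau = T^2$, which yields order $2k+1$ exactly as in the statement. This confirms that the reason for order $2k+1$ is precisely that the non-cancelling $\cO(\tau^{k+1})$ remainders force $\tau \sim T^2$.

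Finally, $y(T;v_T,0) = -T^{2k+1}A\xi\,(1+\cO(T))$, up to the sign coming from $s = T - 2T^2$. So $-A\xi \in K_{2k+1}$, and swapping the roles of $u^+$ and $u^-$ gives $+A\xi \in K_{2k+1}$. The steps to check carefully are the drift-flow expansion $b = a + sAa + \cO(s^2|a| + |a|^2)$, uniform for small $s$ and $a$, which needs $C^2$ regularity of $f_0$, and the constraint $\norm{v_T}_{L^1}\leq T$, which holds trivially here.
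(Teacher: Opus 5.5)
Your argument is correct and is essentially the paper's proof: it uses the same control $v_T = u^+_{T^2} \diamond 0_{T-2T^2} \diamond u^-_{T^2}$, the same first-order expansion of the drift flow (the paper's \cref{lem:autonomous}), and \eqref{eq:gronwall-2} for the last pulse. There is one sign slip, and it is harmless because you use both orderings: since $s = T-2T^2>0$, your own expansion gives $y(T;v_T,0) = +T^{2k+1}A\xi + \cO(T^{2k+2})$, not $-T^{2k+1}A\xi(1+\cO(T))$ (and the remainder is a general vector, not a multiple of $A\xi$), so the $u^+$-first ordering yields $+A\xi$ and the swapped ordering yields $-A\xi$.
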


\begin{proof}
    We prove that $+ A \xi \in K_{2k+1}$; the proof for $- A \xi$ is analogous.
    By assumption, there exist control variations $\{ u_T^\pm \}$ such that $y(T; u_T^\pm, 0) = \pm T^k \xi + \cO(T^{k+1})$.
    For $0 < T \ll 1$, define
    \begin{equation}
        v_T := u_{T^2}^+ \diamond 0_{T-2T^2} \diamond u_{T^2}^-.
    \end{equation}
    Denote by $y_1$ the state at time $T^2$ after the first pulse, by $y_2$ the state at time $T-T^2$ after the drift segment, and by $y_3$ the final state.
    Using the definition of $K_k$,
    \begin{equation}
        y_1 := y(T^2; u_{T^2}^+, 0) = T^{2k} \xi + \cO(T^{2k+2}).
    \end{equation}
    On $[T^2,T-T^2]$, the control $v_T$ is null and the system evolves according to $\dot{y} = f_0(y)$ from the initial data $y_1$ for a duration $\tau = T-2T^2$.
    By a Grönwall-type estimate (see \cref{lem:autonomous}),
    \begin{equation}
        y_2 = y(\tau;0,y_1) = y_1 + \tau A y_1 + \cO(\tau^2 |y_1| + \tau |y_1|^2) = T^{2k} \xi + T^{2k+1} A \xi + \cO(T^{2k+2}).
    \end{equation}
    Eventually, on the final time interval $[T-T^2,T]$, by \eqref{eq:gronwall-2},
    \begin{equation}
        y_3 = y(T^2;u_{T^2}^-,y_2)
        = y_2 + y(T^2;u^-_{T^2},0) + \mathcal{O}(T^2|y_2|).
    \end{equation}
    Hence, since $y(T^2;u^-_{T^2},0) = - T^{2k} \xi + \cO(T^{2k+1})$,
    \begin{equation}
        y(T;v_T,0) = y_3 = T^{2k+1} A \xi + \cO(T^{2k+2}),
    \end{equation}
    which proves that $+ A \xi \in K_{2k+1}$.
\end{proof}

A comforting application of \cref{prop:tangent-ad} is to recover the linear test of \cref{thm:ODE-linear}.

\begin{corollary}
    Assume that $A := Df_0(0)$ and $b := f_1(0)$ satisfy $\operatorname{rank} \{ b, Ab, \dotsc, A^{n-1}b \} = n$.
    Then the system \eqref{eq:affine-f1} is $L^1$-STLNC.
\end{corollary}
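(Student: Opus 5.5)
The plan is to show that $\pm A^j b$ are tangent vectors for every $0 \leq j \leq n-1$, all of a common order, and then to conclude with \cref{cor:equiv-K} and \cref{thm:tangent}.

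\textbf{Step 1: tangent vectors along $\pm A^j b$.} By \cref{lem:tangent-f1}, $\pm b \in K_1$. Applying \cref{prop:tangent-ad} repeatedly then gives $\pm A^j b \in K_{k_j}$ with $k_0 = 1$ and $k_{j+1} = 2k_j + 1$, so that $k_j = 2^{j+1}-1$. One caveat: \cref{prop:tangent-ad} assumes $f_0 \in C^2$, while here $f_0$ is only $C^1$. I would either add this regularity hypothesis to the corollary, or check that the estimate of \cref{lem:autonomous} used in that proof still holds when $f_0$ is merely differentiable at $0$ with a $o(|y|)$ remainder. The cleanest option is to keep $C^2$ implicitly, in line with how the corollary is stated after the proposition.

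\textbf{Step 2: a common order.} Let $k := 2^n - 1$. By the first item of \cref{lem:K}, $\pm A^j b \in K_k$ for all $0 \leq j \leq n-1$. These $2n$ vectors $\pm v_j$, with $v_j := A^j b$, form a basis together with its negatives. Hence $\operatorname{conv}\{\pm v_0, \dotsc, \pm v_{n-1}\}$ is the image of the cross-polytope under the invertible linear map $e_j \mapsto v_j$. It therefore contains a ball around $0$, so $0$ lies in its interior.

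\textbf{Step 3: conclusion.} \cref{it:K-2} of \cref{cor:equiv-K} now holds, which gives $0 \in \operatorname{int} K_\infty$. Then \cref{thm:tangent} yields $L^1$-STLNC.

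\textbf{Main obstacle.} The only delicate point is the regularity requirement of \cref{prop:tangent-ad}. Everything else is a direct chaining of earlier results. The linear-independence step is elementary, since a nonsingular linear map sends a neighborhood of $0$ to a neighborhood of $0$.
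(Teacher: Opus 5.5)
Your proposal is correct and follows essentially the paper's own route: get $\pm b \in K_1$ from \cref{lem:tangent-f1}, iterate \cref{prop:tangent-ad} to obtain $\pm A^j b \in K_\infty$, use the rank condition to place $0$ in $\operatorname{int} K_\infty$ (a step you make explicit via the image of the cross-polytope and \cref{cor:equiv-K}), and conclude with \cref{thm:tangent}. Your regularity caveat is well taken, since the paper's proof also tacitly relies on the $C^2$ hypothesis of \cref{prop:tangent-ad}; if $f_0$ is merely $C^1$, the conclusion still follows from the linear test \cref{thm:ODE-linear} with $p=1$, $y_T=0$ and $\eta=T$.
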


\begin{proof}
    By \cref{lem:tangent-f1}, $\pm f_1(0) = \pm b$ are tangent vectors.
    Applying \cref{prop:tangent-ad} repeatedly proves that $\pm A^k b$ are tangent vectors for all $k \geq 0$.
    The rank assumption thus implies that $0 \in \operatorname{int} K_\infty$, so \eqref{eq:affine-f1} is $L^1$-STLNC by \cref{thm:tangent}.
\end{proof}

Let us give an example of the interest of \cref{prop:tangent-ad} for a concrete system.

\begin{example}
    Consider the following variant of system \eqref{eq:jakubczyk}:
    \begin{equation}
        \label{eq:jakubczyk-int}
        \begin{cases}
            \dot{y}_1 = u \\
            \dot{y}_2 = y_1 \\
            \dot{y}_3 = y_2^2 + y_1^3 \\
            \dot{y}_4 = y_3.
        \end{cases}
    \end{equation}
    The computations of \cref{sec:jakub} remain valid and $\pm e_1$, $\pm e_2$ and $\pm e_3$ are tangent vectors of respective orders 1, 2 and 4 (the motion along $\pm e_3$ being truly nonlinear).
    Since $e_4 = Df_0(0) e_3$, \cref{prop:tangent-ad} implies that $\pm e_4$ are tangent vectors (of order 9).
    Hence \eqref{eq:jakubczyk-int} is $L^1$-STLNC by \cref{thm:tangent}.
    And we did not need to perform any additional explicit computation as in \cref{sec:jakub}.
\end{example}

\begin{example}[On the optimality of the order]
    \cref{prop:tangent-ad} proves that, given $\pm \xi \in K_k$, $\pm A \xi \in K_{2k+1}$.
    One can ask if $2k+1$ is the least possible order.
    Heuristically, one could think that $k+1$ could be enough.
    We give here a construction showing that the result fails with $2k-1$.

    Let $p \in \N$.
    Consider the system in $\R^4$:
    \begin{equation}
        \begin{cases}
            \dot{y}_1 = u \\
            \dot{y}_2 = y_1^{2p+1} \\
            \dot{y}_3 = y_2 \\
            \dot{y}_4 = y_1^{2p+2}.
        \end{cases}
    \end{equation}
    Using the control variations $u_T(t) = \pm (2 - 4 \frac t T)$, one can prove that $\pm r e_2$ are tangent vectors (for a given $r = r(p) > 0$) of order $k := 2p+2$ (with remainder $\cO(T^{2 p + 3})$).

    Assume that $\pm r' e_3 \in K_q$ for some $q \geq 0$.
    By Hölder's inequality,
    \begin{equation}
        \abs{y_3(T)}^{2p+2} = \left(\int_0^T (T-t) y_1^{2p+1}(t) \dd t\right)^{2p+2}
        \leq \left(\int_0^T (T-t)^{2p+2} \dd t\right) \left(\int_0^T y_1^{2p+2}(t) \dd t\right)^{2p+1}.
    \end{equation}
    Thus $\abs{y_3(T)}^{2p+2} \leq T^{2p+3} \abs{y_4(T)}^{2p+1}$.
    If $y(T) = r' T^q e_3 + \cO(T^{q+1})$, we obtain $T^{q (2p+2)} = \cO(T^{2p+3} T^{(q+1)(2p+1)})$ so $q \geq 4p+4 = 2k$.
    So one cannot have $\pm r' e_3 \in K_{2k-1}$.

    There is still a small gap between the positive result in $K_{2k+1}$ and the negative one in $K_{2k}$, which might be settled by more precise investigations.
\end{example}

The constructions presented above are very basic.
One knows many more involved and powerful ways to construct tangent vectors.
Their formulation often involves the language of Lie brackets and properties of the free Lie algebra.
We refer to Hermes \cite[Theorem~3.2]{Hermes1982}, Sussmann \cite[Theorem~7.3]{Sussmann1987}, Agrachev and Gamkrelidze \cite[Theorem~4]{AgrachevGamkrelidze1993_Semigroups} or Krastanov \cite[Theorem 2.7]{Krastanov2009}.

\subsection{About continuous controllability}

For applications, one may need the controls to depend continuously on the initial data.
This is key in order to go from controllability to stabilization (e.g.\ with continuous time-varying feedback laws), as identified by Coron in \cite{Coron1992}.
More precisely, let us introduce the following definition.

\begin{definition}[Continuous $L^1$-STLNC]
    \label{def:cont-STLC}
    We say that \eqref{eq:affine-f1} is \emph{continuously small-time locally null controllable} when, for every $T > 0$, there exists $\delta > 0$ and a continuous map $\mathcal{U} : B(0,\delta) \to L^1((0,T);\R)$ with $\mathcal{U}(0) = 0$, such that, for every initial data $y_0 \in B(0,\delta)$, $y(T;\mathcal{U}(y_0),y_0) = 0$.
\end{definition}

All known examples of $L^1$-STLNC systems are in fact continuously $L^1$-STLNC.
In fact, all known sufficient conditions for $L^1$-STLNC imply the continuous $L^1$-STLNC.
However, whether this holds for all systems is still a deep open question in control theory for ODEs.

\begin{open}
    Let $f_0, f_1 \in C^\omega(\R^n;\R^n)$ with $f_0(0) = 0$.
    Assume that~\eqref{eq:affine-f1} is $L^1$-STLNC.
    Is it also continuously $L^1$-STLNC in the sense of \cref{def:cont-STLC}.
\end{open}

Going back to our example \eqref{eq:jakubczyk}, we remark that our explicit control variations depended continuously on $T$.
This motivates the following definition.

\begin{definition}
    We say that $\xi \in \R^n$ is a \emph{continuous tangent vector of order $k \geq 1$} when it admits a family of control variations $\{ u_T \}$ as in \cref{def:tangent} such that the map $T \mapsto u_T$ is continuous from $(0,\infty)$ to $L^1((0,\infty);\R)$, where we extend each $u_T$ by $0$.
\end{definition}

Defining $K_k^{\mathrm{cont}}$ as the set of all continuous tangent vectors of order $k$ and $K_\infty^{\mathrm{cont}} := \cup_{k\geq1} K_k^{\mathrm{cont}}$, we obtain the same properties as in \cref{sec:tangent-def} (monotonicity with respect to $k$, nonlinear convexity, ...).
All our constructions are continuous.
In particular, one has the following result.

\begin{lemma}
    \label{prop:single-move-C0}
    Assume that there exists $k \geq 1$ and $r > 0$ such that $\pm r e_1, \dotsc, \pm r e_n \in K_k^{\mathrm{cont}}$.
    The maps $z \to T_z$ and $z \to u_z$ of \cref{prop:single-move} are continuous.
\end{lemma}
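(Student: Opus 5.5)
The plan is to make the construction in the proof of \cref{prop:single-move} explicit and check that each ingredient depends continuously on $z$. We view all controls as elements of $L^1((0,\infty);\R)$, extended by $0$, as in the definition of continuous tangent vectors. We pick, for each $1 \leq i \leq 2n$, a family $\{u_{i,T}\}$ of continuous control variations for $r e_i$, defined for $0 < T \leq T^\ast$. We set $u_{i,0} := 0$. Up to shrinking $r$, every time $T_i$ used below lies in $[0,T^\ast]$.

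\emph{Step 1: the decomposition and the times.} The decomposition $-z = \sum z_i r e_i$ with $z_i \geq 0$ and $z_i z_{n+i} = 0$ is unique. Explicitly,
\begin{equation}
z_i = \frac{\max\{-z\cdot e_i,0\}}{r}
\quad \text{and} \quad
z_{n+i} = \frac{\max\{z\cdot e_i,0\}}{r}
\quad \text{for } 1 \leq i \leq n.
\end{equation}
Hence each $z \mapsto z_i$ is continuous. Therefore $z \mapsto T_i = z_i^{1/k}$ and $z \mapsto T_z = \sum_i T_i$ are continuous on $B(0,r)$.

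\emph{Step 2: continuity of $T \mapsto u_{i,T}$ on $[0,T^\ast]$.} On $(0,T^\ast]$ this is the continuous tangent vector hypothesis. At $T = 0$ it follows from $\norm{u_{i,T}}_{L^1} \leq T \to 0 = \norm{u_{i,0}}_{L^1}$. This is the one place where the case $T_i = 0$ needs care, because then the $i$-th piece of the concatenation simply disappears.

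\emph{Step 3: continuity of the concatenation.} Let $S_i := T_1 + \dotsb + T_{i-1}$, which depends continuously on $z$. Let $\tau_a$ denote translation by $a$ on $L^1(\R)$, with functions extended by $0$. Since $u_{i,T_i}$ is supported in $[0,T_i]$, we can write
\begin{equation}
u_z = \sum_{i=1}^{2n} \tau_{S_i} u_{i,T_i}.
\end{equation}
It therefore suffices to show that $(a,f) \mapsto \tau_a f$ is jointly continuous from $\R_+ \times L^1$ to $L^1$. This follows from the bound
\begin{equation}
\norm{\tau_a f - \tau_{a'} f'}_{L^1} \leq \norm{f - f'}_{L^1} + \norm{\tau_a f - \tau_{a'} f}_{L^1},
\end{equation}
using that translations act isometrically and are strongly continuous on $L^1$. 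Composing with Steps 1 and 2 yields the continuity of $z \mapsto u_z$. The estimates of \cref{prop:single-move} are unaffected, since the construction itself is unchanged.

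I expect the only real obstacle to be the degenerate points where some $T_i$ vanishes, that is, where a coordinate of $z$ changes sign. This is handled by the convention $u_{i,0} = 0$ together with the $L^1$ bound $\norm{u_{i,T}}_{L^1} \leq T$ from \cref{def:tangent}.
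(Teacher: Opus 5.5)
Your proposal is correct and takes the same route as the paper, whose proof is the single sentence that every step of the construction in \cref{prop:single-move} is continuous. You make those steps explicit: the closed-form decomposition, the convention $u_{i,0}:=0$ justified by $\norm{u_{i,T}}_{L^1}\le T$, and joint continuity of translation in $L^1$. Your restriction to a smaller ball so that every $T_i$ stays where the variations are defined is a point the paper already leaves implicit in \cref{prop:single-move}, so handling it this way is appropriate.
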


\begin{proof}
    All the steps in our construction are continuous.
\end{proof}

In the case of continuous tangent vectors, the continuity in \cref{prop:single-move-C0} gives rise to a new proof of \cref{thm:tangent}, which does not require a time iteration.

\begin{proposition}
    Assume that there exists $k \geq 1$ and $r > 0$ such that $\pm r e_1, \dotsc, \pm r e_n \in K_k^{\mathrm{cont}}$.
    The system \eqref{eq:affine-f1} is $L^1$-STLNC.
\end{proposition}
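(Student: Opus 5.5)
The plan is to avoid the time iteration and use a topological degree argument (Brouwer fixed point) based on the continuity of the one-step construction of \cref{prop:single-move-C0}. Fix $T>0$. By \cref{prop:single-move} and \cref{prop:single-move-C0}, for every $z \in B(0,r)$ we have a time $T_z \leq C|z|^{\frac1k}$ and a control $u_z$, both depending continuously on $z$, with $\norm{u_z}_{L^1}\leq T_z$ and $|y(T_z;u_z,z)| \leq C|z|^{1+\frac1k}$.

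First, reformulate the single move as producing approximately the displacement $-z$ from the origin. Running $u_z$ from the origin and using \cref{lem:gronwall-iterated} with the construction in the proof of \cref{prop:single-move}, define the continuous map $\Phi(z) := y(T_z; u_z, 0)$, which satisfies $\Phi(z) = -z + \cO(|z|^{1+\frac1k})$. I would instead work directly with the null-control problem. Given $y_0$ small, look for a parameter $z$ in a small closed ball $\bar B(0,\rho)$ with $\rho$ comparable to $|y_0|$ such that applying $u_z$ from $y_0$ reaches $0$. Define the continuous map
\begin{equation}
    F_{y_0}(z) := y(T_z; u_z, y_0) \quad\text{for } z \in \bar B(0,2|y_0|).
\end{equation}
Continuity in $z$ follows from \cref{prop:single-move-C0} and continuous dependence of the ODE solution on the control in $L^1$ (Grönwall, \cref{lem:WP}), extending controls by $0$ to a common interval and using $f_0(0)=0$ only where needed. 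Actually, to compare $T_z$ across different $z$, I will append zeros so that all controls live on $(0,\bar T)$ with $\bar T := \sup T_z \leq C(2|y_0|)^{\frac1k}$.

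The key estimate is obtained from \cref{lem:gronwall-iterated}: $F_{y_0}(z) = y_0 - z + \cO(|z|^{1+\frac1k} + |y_0|\,|z|^{\frac1k})$. Hence the map $G(z) := z + F_{y_0}(z) = y_0 + \varepsilon(z)$ with $|\varepsilon(z)| \leq C'|y_0|^{1+\frac1k}$ on $\bar B(0,2|y_0|)$. For $|y_0|$ small enough, $G$ maps $\bar B(0,2|y_0|)$ into itself, so by Brouwer it has a fixed point $z^\star$, i.e.\ $F_{y_0}(z^\star)=0$. The trailing zero control keeps the state at $0$ since $f_0(0)=0$. The resulting control has $L^1$ norm at most $\bar T \leq T$ once $|y_0| \leq \delta := \tfrac12 (T/C)^k$, which gives $L^1$-STLNC.

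The main obstacle is the drift segment: zero-padding after $u_z$ is harmless only when the state is already $0$. So the padding must be placed after the move, and the comparison between $F_{y_0}(z)$ and $y_0 - z$ must be made uniform in $z$ on the whole ball, including small $z$ where $T_z\to0$. The error term $|y_0||z|^{\frac1k}\lesssim |y_0|^{1+\frac1k}$ handles this uniformly, so the self-map property should hold. Continuity of $z\mapsto u_z$ in $L^1$ together with continuity of $z\mapsto T_z$ then gives continuity of $F_{y_0}$.
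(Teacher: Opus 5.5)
Your proof is correct and is essentially the paper's argument. The paper also applies Brouwer's theorem to the continuous map $z \mapsto y(T_z;u_z,y_0) + z$, and bounds it using \eqref{eq:gronwall-1}, \eqref{eq:gronwall-2} and the one-step estimate of \cref{prop:single-move}. The only difference is the ball: the paper works on a fixed ball $B(0,r)$ with $|y_0| < r_0 \ll r$, whereas your radius $2|y_0|$ makes the bound $T_{z^\star} \leq C(2|y_0|)^{\frac1k} \leq T$, and hence the small-time requirement, explicit.
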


\begin{proof}
    Let $r,r_0 \in (0,1]$ to be chosen later.
    For a fixed $y_0 \in B(0,r_0)$, define a map $\psi_{y_0} : B(0,r) \to B(0,r)$ by $\psi_{y_0}(z) := y(T_z;u_z,y_0) + z$ where $T_z$ and $u_z$ are constructed in \cref{prop:single-move-C0}.
    Since $z \mapsto T_z$ and $z \mapsto u_z$ are continuous, by \cref{lem:EDO-bounds,lem:EDO-Lip}, $z \mapsto y(T_z;u_z,y_0)$ is continuous, and so is $\psi_{y_0}$.
    We claim that $\psi_{y_0}$ maps $B(0,r)$ to itself.
    By Brouwer's theorem, it thus admits a fixed-point $z^\star$.
    For this $z^\star$, one has $y(T_{z^\star};u_{z^\star},y_0) = 0$, so we have proved the $L^1$-STLNC.

    We now prove our claim.
    By \eqref{eq:gronwall-1} and \eqref{eq:gronwall-2},
    \begin{align}
        \abs{y(T_z;u_z,y_0) - y(T_z;u_z,0)} & \leq C \abs{y_0} \leq C r_0, \\
        \abs{y(T_z;u_z,z) - y(T_z;u_z,0) - z} & \leq C (T_z + \norm{u_z}_{L^1}) \abs{z} \leq C \abs{z}^{1+\frac 1k} \leq C r^{1+\frac 1k}.
    \end{align}
    Since, by \cref{prop:single-move}, $\abs{y(T_z;u_z,z)} \leq C \abs{z}^{1+\frac1k}$, we obtain that
    \begin{equation}
        \abs{\psi_{y_0}(z)} \leq C r_0 + C r^{\frac 1k} r.
    \end{equation}
    Choosing $r < 1/(2C)^k$ then $r_0 < r / (2C)$, we obtain that for any $y_0 \in B(0,r_0)$, $\psi_{y_0}$ maps $B(0,r)$ to itself, which concludes the proof.
\end{proof}

The previous proof avoids the time iteration, but it consumes continuous tangent vectors and only delivers (not necessarily continuous) $L^1$-STLNC.
This is inherent to the proof method.

\begin{remark}
    In the previous proof, we found $r, r_0 > 0$ and constructed a parametrized map $\psi_{y_0}(z) := y(T_z;u_z,y_0) + z$ such that, for each $y_0 \in B(0,r_0)$, $\psi_{y_0}$ is continuous and sends $B(0,r)$ to itself.
    For each fixed $y_0$, it thus admits a fixed-point.
    The question here is whether this parameter-dependent Brouwer fixed-point can be chosen to depend continuously on the parameter $y_0$.

    Generally, this is false.
    For $(y,z) \in [-1,1]^2$, define $f_y(z) := y + (1+\abs{y}) z$.
    For each $y \in [-1,1]$, $f_y$ is continuous on $[-1,1]$ and sends it to itself.
    But, for $y < 0$, the unique fixed-point is $-1$, while for $y > 0$, the unique fixed-point is $+1$.
    This prevents a continuous selection of the fixed-point.
\end{remark}

However, as noted in \cite[Proposition 11.22]{Coron2007}, repeating the main time iteration of \cref{sec:tangent-iteration}, one can transfer the continuity of tangent vectors to the controllability.

\begin{theorem}
    Assume that there exists $k \geq 1$ and $r > 0$ such that $\pm r e_1, \dotsc, \pm r e_n \in K_k^{\mathrm{cont}}$.
    Then system \eqref{eq:affine-f1} is continuously $L^1$-STLNC.
\end{theorem}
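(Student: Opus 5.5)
The plan is to rerun the time iteration from the proof of \cref{thm:tangent}, but to make each step depend continuously on the current state, so that the infinite concatenation is continuous in $y_0$. By \cref{prop:single-move-C0}, the one-step map $z \mapsto (T_z,u_z)$ is continuous on $B(0,r)$. We have $T_z \leq C|z|^{\frac1k}$, $\norm{u_z}_{L^1}\leq T_z$, and $|\mathcal{S}(z)| \leq C|z|^{1+\frac1k}$, where $\mathcal{S}(z) := y(T_z;u_z,z)$. By \cref{lem:EDO-Lip} and the continuity of $(z,u)\mapsto y(T;u,z)$, the map $\mathcal{S}$ is continuous. We may also take $T_0=0$ and $u_0=0$, since the construction in \cref{prop:single-move} gives $T_i=0$ when $z=0$.

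Fix $T>0$ and $q=2^{-k}$, and choose $\delta\leq r_q$ with $2C\delta^{\frac1k}\leq T$. For $y_0\in B(0,\delta)$, set $y_{j+1}:=\mathcal{S}(y_j)$. Each $y_j$ then depends continuously on $y_0$, and $|y_j|\leq 2^{-kj}|y_0|$. We define
\[
\mathcal{U}(y_0) := u_{y_0}\diamond u_{y_1}\diamond u_{y_2}\diamond\dotsb \diamond 0_{T-T(y_0)},
\]
seen as an element of $L^1((0,T);\R)$ extended by zero. The proof of \cref{thm:tangent} already gives $y(T;\mathcal{U}(y_0),y_0)=0$, and clearly $\mathcal{U}(0)=0$. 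So only continuity of $\mathcal{U}$ remains.

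\textbf{Main obstacle: continuity of the infinite concatenation.} I would write $\mathcal{U}(y_0)=\sum_j \tau_{S_j(y_0)} u_{y_j}$, where $\tau_s$ is the translation by $s$ and $S_j(y_0):=\sum_{i<j}T_{y_i}$.

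Each finite partial sum is continuous in $y_0$. The $T_{y_i}$ and $u_{y_i}$ are continuous (the latter as elements of $L^1((0,\infty))$, extended by zero), and translation $(s,v)\mapsto \tau_s v$ is jointly continuous on $\R_+\times L^1$ by continuity of translations in $L^1$.

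The tail is uniformly small on $B(0,\delta)$:
\[
\Big\|\sum_{j\geq J}\tau_{S_j}u_{y_j}\Big\|_{L^1} \leq \sum_{j\geq J} T_{y_j} \leq C\sum_{j\geq J}2^{-j}\delta^{\frac1k}\to0.
\]

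A uniform limit of continuous maps is continuous, which gives continuity of $\mathcal{U}$. Appending $0_{T-T(y_0)}$ does not affect the $L^1$ norm or continuity, since it only adds zeros on $(T(y_0),T)$.

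The same argument also gives continuity at $y_0=0$. There, $\norm{\mathcal{U}(y_0)}_{L^1}\leq T(y_0)\leq 2C|y_0|^{\frac1k}\to0$, which confirms $\mathcal{U}(y_0)\to\mathcal{U}(0)=0$.
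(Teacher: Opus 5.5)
Your proposal is correct and follows the paper's proof: you iterate the continuous one-step map $\mathcal{S}$, note that each finite stage depends continuously on $y_0$, and use the uniform geometric tail bound from the proof of \cref{thm:tangent} to pass continuity to the infinite concatenation. Your version also spells out a step the paper only sketches, namely writing the concatenation as a sum of translates and using the joint continuity of translation in $L^1$.
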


\begin{proof}
    As in the proof of \cref{thm:tangent}, let $r > 0$ be such that the map $z \mapsto \mathcal{S}(z) := y(T_z;u_z,z)$, which is continuous by \cref{prop:single-move-C0}, satisfies $\abs{\mathcal{S}(z)} \leq 2^{-k} \abs{z}$ for all $z \in B(0,r)$.
    Given $y_0 \in B(0,r)$, define the sequences $y_j = \mathcal{S}^j(y_0)$ as in the proof of \cref{thm:tangent}.
    For each finite $j$, $y_0 \mapsto y_j$ is continuous.
    Since the series of associated times and controls converges uniformly for $y_0 \in B(0,r)$ (see \eqref{eq:tgt-T-conv}), we conclude that $y_0 \mapsto T(y_0)$ and $y_0 \mapsto u(y_0)$ are continuous.
\end{proof}

\subsection{Quantifying controllability}

The relation between the time, the size of the control and the size of the achieved movement is more difficult in this nonlinear context than in \cref{sec:seidman} for linear systems.
However, it gives rise to interesting questions.
For $T > 0$, let us define
\begin{equation}
    \mathcal{N}_T := \left\{ y_0 \in \R^n \mid \exists u \in L^1((0,T);\R), \norm{u}_{L^1} \leq T, \enskip y(T;u,y_0) = 0 \right\}.
\end{equation}
Hence $\mathcal{N}_T$ represents the set of initial states which can be brought back to $0$ in time $T$.

With this notation, $L^1$-STLNC in the sense of \cref{def:STLNC-0} can be equivalently stated as: ``for all $T > 0$, there exists $\delta_T > 0$ such that $B(0,\delta_T) \subset \mathcal{N}_T$''.

Inspecting the end of the proof of \cref{thm:tangent} (and the definition of $\delta$ given there), we actually proved the following stronger statement.

\begin{theorem}
    If $0 \in \operatorname{int} K_k$, there exists $c > 0$ such that, for all $0 < T \leq 1$, $B(0,c T^k) \subset \mathcal{N}_T$.
\end{theorem}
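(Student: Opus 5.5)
Since $0 \in \operatorname{int} K_k$, there is $r > 0$ with $B(0,2r) \subset K_k$, so $\pm r e_1, \dotsc, \pm r e_n \in K_k$. This is exactly the hypothesis of \cref{prop:single-move} with the given $k$. The idea is then to rerun the time iteration in the proof of \cref{thm:tangent} while keeping track of the constants, in particular of the relation between $\delta$ and $T$.

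\emph{Step 1.} By \cref{prop:single-move} there is $C > 0$ such that, for every $z \in B(0,r)$, there exist $T_z \leq C|z|^{1/k}$ and $u_z$ with $\norm{u_z}_{L^1} \leq T_z$ and $|\mathcal{S}(z)| \leq C|z|^{1+1/k}$, where $\mathcal{S}(z) := y(T_z;u_z,z)$. Take $r_q := \min\{ r, (2^{-k}/C)^k \}$. For $|z| \leq r_q$ we then have $|\mathcal{S}(z)| \leq 2^{-k}|z|$, and this $r_q$ does not depend on $T$.

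\emph{Step 2.} Fix $y_0 \in B(0,r_q)$ and iterate $y_{j+1} := \mathcal{S}(y_j)$. As in \eqref{eq:tgt-T-conv}, the total duration satisfies $T(y_0) \leq 2C|y_0|^{1/k}$, the concatenated control satisfies $\norm{u(y_0)}_{L^1} \leq T(y_0)$, and $y(T(y_0);u(y_0),y_0) = 0$.

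\emph{Step 3.} Let $T \in (0,1]$ and set $c := \min\{ r_q, (2C)^{-k} \}$. Suppose $|y_0| < cT^k$. Because $T \leq 1$, we get $|y_0| < r_q$, so Step 2 applies. It also gives $T(y_0) \leq 2C (cT^k)^{1/k} \leq T$. Now use the control $u := u(y_0) \diamond 0_{T-T(y_0)}$. Since $f_0(0)=0$, the state stays at $0$ during the final null segment, so $y(T;u,y_0) = 0$. Moreover $\norm{u}_{L^1} = \norm{u(y_0)}_{L^1} \leq T(y_0) \leq T$. Hence $y_0 \in \mathcal{N}_T$, which gives $B(0,cT^k) \subset \mathcal{N}_T$.

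The only point that needs care is making sure all constants, namely $r$, $C$ and $r_q$, are independent of $T$. This holds because \cref{prop:single-move} yields them uniformly for $z \in B(0,r)$. The restriction $T \leq 1$ is used only to guarantee $cT^k \leq r_q$. Nothing else needs checking, since \cref{thm:tangent} already carries out the rest of the argument.
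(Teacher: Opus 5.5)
Your proposal is correct and is the paper's own argument: the paper obtains this statement by inspecting the end of the proof of \cref{thm:tangent}, where $\delta = \min\{ r_q, (T/2C)^k \} \geq \min\{ r_q, (2C)^{-k} \} T^k$ for $0 < T \leq 1$, and this lower bound uses exactly your constant $c$. You also rightly take $\pm r e_i \in K_k$ directly from $B(0,2r) \subset K_k$ instead of going through \cref{cor:equiv-K}, which keeps the same order $k$ in the one-step estimate and hence the $T^k$ scaling.
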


Hence, controllability obtained from tangent vectors is automatically ``quantified'', in the sense that we have a polynomial estimate on $\delta_T$ as $T \to 0$.
In fact, all known $L^1$-STLNC systems exhibit this property.
This leads to the following deep open question.

\begin{open}
    Assume that system \eqref{eq:affine-f1} is $L^1$-STLNC.
    Does there exist $c > 0$ and $k \in \N^*$ such that, for all $0 < T \leq 1$, $B(0,c T^k) \subset \mathcal{N}_T$?
\end{open}

There are many equivalent ways to present this issue.
From an optimal control perspective, a reformulation is to ask whether small-time local controllability implies that the value function of the minimum-time problem is Hölder continuous at the origin (see \cite[Open Problem 1]{Kawski2006}).

\subsection{Generalization to PDEs}
\label{sec:tangent-PDEs}

For PDEs, there is a wide range of examples of situations in which the linearized system at an equilibrium ``misses'' a finite number of directions (and is therefore not controllable).

\paragraph{Obstructions.}
In some cases, one can prove that the nonlinear system is not controllable either, due to the presence of some (typically quadratic) ``drift'', reminiscent of \cref{ex:obs-W1} for ODEs, where some given projection of the state cannot be made negative.
This has for example been observed for Schrödinger with Dirichlet \cite{Bournissou2023_Quad,Coron2006} or Neumann \cite{BeauchardMarbachPerrin2025} boundary conditions, for a water-tank system \cite{CoronKoenigNguyen2024}, for Korteweg--de Vries \cite{CoronKoenigNguyen2022,NiuXiang2025}, for Burgers \cite{Marbach2018}.

\paragraph{Tangent vectors.}
In other cases, one can use the nonlinear terms to recover the finite number of directions missing at the linear order, as in \cref{ex:cubic} with a purely cubic term, or as in \cref{sec:jakub} despite a competition with a quadratic one.
One can use manipulations reminiscent of tangent vectors for ODEs presented in this section.

For PDEs, the current works use slightly different notions of tangent vectors, in order to avoid the exploding cost $\exp(1/T)$ of the control of the linear part (see for example \cite[Section 2]{Bournissou2024}).

Nevertheless, some ideas are very similar.
For example, the idea of \cref{prop:tangent-ad} to generate new accessible directions has been adapted to several PDEs (see \cite[Section 6.4]{Bournissou2024} or \cite[Proposition 4.8]{Gherdaoui2025} for Schrödinger with Dirichlet boundary conditions, \cite[Section 4.3]{BeauchardMarbachPerrin2025} for Schrödinger with Neumann boundary conditions, or \cite[Section 4.3]{ChowdhuryErvedoza2019} for Navier--Stokes).

\newpage
\appendix
\section{Some inequalities}

\subsection{Remez inequality in a segment}

Our proofs of both spectral inequalities of \cref{p:turan,thm:LS} rely fundamentally on \emph{Remez inequalities} (see the historical proof~\cite{Remes1936} with optimal constant attained by Chebyshev polynomials).
The interest of this inequality for control theory is well-identified (see e.g.\ \cite{BeauchardJamingPravdaStarov2021,GreenLeBalchMartinOrsoni2025,HuangWangWang2024}).

We give here a direct proof similar to \cite[Proposition 3.4]{GreenLeBalchMartinOrsoni2025} with a suboptimal constant, but valid for all regular functions.
The usual polynomial case is of course the particular case with vanishing derivative of order $n+1$.

\begin{lemma}
    \label{lem:remez-f}
    Let $\omega \subset [0,1]$ be measurable with $|\omega| > 0$.
    For any $n \in \N$ and $f \in C^{n+1}([0,1];\C)$,
    \begin{equation}
        \label{eq:remez-f}
        \sup_{[0,1]} |f| \leq \left( \frac{8e}{|\omega|} \right)^n \sup_{\omega} |f| + \frac{1}{(n+1)!} \sup_{[0,1]} | f^{(n+1)} |.
    \end{equation}
\end{lemma}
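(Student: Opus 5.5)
We split $f = P + R$, where $P$ is the Taylor polynomial of degree $n$ of $f$ at a well-chosen point, and $R$ is the remainder. We then apply a polynomial Remez inequality to $P$. The two constants in \eqref{eq:remez-f} should come out separately from these two pieces.

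\textbf{Taylor expansion.} The first attempt is to expand at a point $x_0$ and use $|R(x)| \leq \frac{|x-x_0|^{n+1}}{(n+1)!}\sup|f^{(n+1)}|$. With $|x-x_0|\leq 1$, this gives $\sup_{[0,1]}|R| \leq \frac{1}{(n+1)!}\sup|f^{(n+1)}| =: \varepsilon$.

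The problem is that the estimate $\sup|f| \leq \sup|P| + \varepsilon$ combined with Remez for $P$ gives
\begin{equation}
    \sup_{[0,1]}|f| \leq \Big(\frac{8e}{|\omega|}\Big)^n \big(\sup_\omega |f| + \varepsilon\big) + \varepsilon.
\end{equation}
This puts a large constant in front of $\varepsilon$, so this naive route does not give \eqref{eq:remez-f}.

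\textbf{Interpolation instead of Taylor.} To fix this, we choose $P$ as the Lagrange interpolation polynomial of $f$ at $n+1$ nodes $x_0,\dots,x_n \in \omega$, picked to be well spread out. Since $|\omega|>0$, we can choose the nodes with $|x_i - x_j| \geq |\omega|\,|i-j|/(n+1)$ or similar, by taking quantiles of the measure restricted to $\omega$.

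The interpolation error formula gives
\begin{equation}
    f(x) - P(x) = \frac{f^{(n+1)}(\xi)}{(n+1)!}\prod_i (x-x_i).
\end{equation}
Since $\prod_i|x-x_i| \leq 1$ on $[0,1]$, the remainder is bounded by $\varepsilon$ with constant exactly one. For complex-valued $f$, we apply the error formula to the real and imaginary parts. This costs a factor at most $\sqrt2$, or more carefully we use the Hermite–Genocchi integral form, which holds for complex $f$ with the same bound.

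\textbf{Bounding $P$ by $f$ on $\omega$.} We then bound $P$ through the Lagrange formula $P=\sum_i f(x_i)\ell_i$, so that
\begin{equation}
    \sup_{[0,1]} |P| \leq \sup_\omega|f| \cdot \sum_i \sup_{[0,1]}|\ell_i|.
\end{equation}
This uses only the values of $f$ at the nodes, which lie in $\omega$. There is therefore no loss in front of $\varepsilon$.

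The remaining task is the Lebesgue-constant bound $\sum_i \sup_{[0,1]}|\ell_i| \leq (8e/|\omega|)^n$. We have $|\ell_i(x)| \leq \prod_{j\neq i}|x-x_j|/\prod_{j\neq i}|x_i-x_j|$. The numerator is at most $1$. With spacing $|x_i-x_j|\geq |\omega||i-j|/(n+1)$, the denominator is at least $(|\omega|/(n+1))^n\, i!\,(n-i)!$. Summing over $i$ gives
\begin{equation}
    \sum_i \sup_{[0,1]}|\ell_i| \leq \frac{(n+1)^n}{|\omega|^n}\sum_i \frac{1}{i!(n-i)!} = \frac{(n+1)^n 2^n}{|\omega|^n\, n!}.
\end{equation}
Using $n!\geq (n/e)^n$ and $(1+1/n)^n\leq e$, this is at most $(2e^2/|\omega|)^n \leq (8e/|\omega|)^n$, since $2e^2 < 8e$.

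\textbf{Main obstacle.} The delicate step is the node selection inside a merely measurable $\omega$.

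We define $x_i := \inf\{x : |\omega\cap[0,x]| \geq i|\omega|/n\}$ for $i=0,\dots,n$. These points lie in the closure $\overline{\omega}$, and they satisfy $|x_i-x_j| \geq |\omega\cap[x_j,x_i]| = |i-j|\,|\omega|/n$.

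The catch is that $x_i$ might not belong to $\omega$ itself. We handle this in one of two ways:
\begin{itemize}
    \item Perturb each $x_i$ to a nearby Lebesgue density point of $\omega$. This loses an arbitrarily small factor in the spacing, which we absorb in the slack between $2e^2$ and $8e$.
    \item Alternatively, replace $\sup_\omega$ by the essential supremum and use continuity of $f$.
\end{itemize}

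The resulting spacing $|\omega|/n$ is even better than the $|\omega|/(n+1)$ assumed above.
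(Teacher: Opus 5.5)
Your argument is correct up to one endpoint slip, described below. It follows the paper's skeleton: Lagrange interpolation at $n+1$ well-separated nodes, the remainder bounded via $\prod_j |x-x_j|\leq 1$ so that $\frac{1}{(n+1)!}\sup|f^{(n+1)}|$ carries constant one, and the Lebesgue sum $\sum_i \frac{1}{i!(n-i)!}=\frac{2^n}{n!}$.

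Where you differ is the choice of nodes. The paper takes a maximal $\delta$-separated subset $X\subset\omega$ with $\delta=\frac{|\omega|}{2(n+1)}$. Maximality gives $\omega\subset\bigcup_{x\in X}[x-\delta,x+\delta]$, hence $\#X\geq n+1$. The nodes lie in $\omega$ itself, so no closure issue arises, at the price of a spacing about half of yours. Your quantiles of $x\mapsto|\omega\cap[0,x]|$ give spacing $|\omega|/n$ and hence the sharper constant $(2e/|\omega|)^n$, but they only produce nodes in $\overline{\omega}$. Your second fix is the cleanest and needs no essential supremum: for continuous $f$ one has $\sup_{\overline{\omega}}|f|=\sup_\omega|f|$.

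The slip: as written, $x_0=\inf\{x : |\omega\cap[0,x]|\geq 0\}=0$, which need not lie in $\overline{\omega}$ (e.g.\ $\omega=[\frac12,1]$). Then neither of your fixes applies to that node. Take instead $x_0:=\inf\{x : |\omega\cap[0,x]|>0\}$. Every right neighbourhood of this point meets $\omega$ in positive measure, so $x_0\in\overline{\omega}$. Moreover $|\omega\cap[0,x_0]|=0$, which preserves $|x_i-x_0|\geq i|\omega|/n$. For $n=0$, where your formula divides by $n$, any single node in $\omega$ works.

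Your treatment of complex-valued $f$ is more careful than the paper's. The paper quotes the mean-value form $f(x)-P_f(x)=\frac{f^{(n+1)}(\xi_x)}{(n+1)!}\prod_j(x-x_j)$, which can fail for $\mathbb{C}$-valued $f$ (already for $n=0$, $f(x)=e^{2\pi i x}$, $x_0=0$, $x=1$). The Hermite--Genocchi integral form is the right way to keep the constant exactly $\frac{1}{(n+1)!}$. Splitting into real and imaginary parts would lose the factor $\sqrt{2}$, as you note.
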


\begin{proof}
    Let $\delta := \frac{|\omega|}{2} \frac{1}{n+1}$.
    We call a finite set $X \subset \omega$ ``$\delta$-separated'' if $|x-x'| \geq \delta$ for all $x \neq x' \in X$.
    Take a maximal (for inclusion) $\delta$-separated set $X$ (for example constructed greedily starting from any $\bar{x} \in \omega$).
    By maximality, $\omega \subset \cup_{x \in X} [x - \delta, x+\delta]$.
    Comparing measures, we obtain that $X$ contains at least $n+1$ elements.
    Thus we found $x_0 < x_1 < \dotsb < x_n \in \omega$ such that $|x_i-x_j| \geq \delta |i-j|$.

    Write the Lagrange interpolation polynomial for $f$ at these nodes:
    \begin{equation}
        P_f(x) := \sum_{j=0}^n f(x_j) \prod_{k \neq j} \frac{x-x_k}{x_j-x_k}.
    \end{equation}
    For all $x \in [0,1]$, $|x-x_k| \leq 1$ and thus, using $n! \geq (n/e)^n$,
    \begin{equation}
        \frac{|P_f(x)|}{\sup_\omega |f|}
        \leq \sum_{j=0}^n \prod_{k \neq j} \frac{1}{\delta |j-k|}
        = \frac{1}{\delta^n} \sum_{j=0}^n \frac{1}{j! (n-j)!}
        = \frac{1}{\delta^n} \frac{2^n}{n!}
        \leq \left( \frac{8 e}{|\omega|} \right)^n.
    \end{equation}
    Finally, by the Lagrange remainder formula, for each $x \in [0,1]$, there exists $\xi_x \in [0,1]$ such that
    \begin{equation}
        f(x) - P_f(x) = \frac{f^{(n+1)}(\xi_x)}{(n+1)!} \prod_{j = 0}^n (x-x_j).
    \end{equation}
    Using again that $|x-x_j| \leq 1$, this concludes the proof of \eqref{eq:remez-f}.
\end{proof}

\subsection{Tur\'an inequality}
\label{s:turan}

\begin{proof}[Proof of \cref{p:turan}]
    Using the substitution $y = \cos(\pi x)$, write $f(x) = \sin(\pi x) P(\cos (\pi x))$ for a polynomial $P \in \R_{n-1}[y]$.
    Note that $\norm{f}_{L^2(0,1)} \leq \sup_{[-1,1]} |P|$.

    Given $\omega \subset [0,1]$ with $|\omega| = 3 \delta > 0$, set $\omega' := \omega \cap [\delta,1-\delta]$.
    Hence $|\omega'| \geq \delta$.
    The map $\phi(x) = \cos(\pi x)$ is a diffeomorphism from $[\delta, 1-\delta]$ onto its image.
    On $[\delta,1-\delta]$, we have $|\phi'| \geq 2 \pi \delta$.
    Let $E := \phi(\omega')$.
    It follows that $|E| \geq 2 \pi \delta^2$.
    Let $Z := \{ y \in E ; |P(y)|^2 \leq 2 \norm{P}_{L^2(E)}^2 / |E| \}$.
    One has $|Z| \geq |E| / 2 \geq \pi \delta^2$.
    By the Remez inequality of \cref{lem:remez-f} applied to $P$ and $Z$,
    \begin{equation}
        \sup_{[-1,1]} |P|
        \leq \left(\frac{8e}{|Z|}\right)^{n-1} \sup_Z |P|
        \leq \left(\frac{8e}{\pi \delta^2}\right)^{n-1} \frac{1}{\pi^{\frac12} \delta} \norm{P}_{L^2(E)}.
    \end{equation}
    Finally, the change of variable $y = \cos(\pi x)$ on $\omega'$ yields
    \begin{equation}
        \int_{E} P^2(y) \dd y
        = \pi \int_{\omega'} \frac{f^2(x)}{\sin(\pi x)} \dd x
        \leq \frac{\pi}{\sin \pi \delta} \norm{f}_{L^2(\omega)}^2,
    \end{equation}
    which concludes the proof.
\end{proof}

\subsection{Remez inequality in multiple variables}

The one-dimensional Remez inequality of \cref{lem:remez-f} easily generalizes to higher dimensions.

\begin{lemma}
    \label{lem:remez-f-Rd}
    Let $\omega \subset [0,1]^d$ be measurable with $|\omega| > 0$.
    For any $n \in \N$ and $f \in C^{n+1}([0,1]^d;\C)$,
    \begin{equation}
        \label{eq:remez-f-Rd}
        \sup_{[0,1]^d} |f|
        \leq \left(\frac{8 e d}{|\omega|}\right)^n \sup_{\omega} |f| + d^{\frac{n+1}{2}} \sum_{|\alpha| = n+1} \sup_{[0,1]^d} \frac{| D^\alpha f |}{\alpha!}.
    \end{equation}
\end{lemma}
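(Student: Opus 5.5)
The approach is to mimic the one-dimensional proof of \cref{lem:remez-f}: first find a well-spread set of interpolation nodes inside $\omega$, then apply tensor-type or line-by-line interpolation. The most economical route is to reduce to \cref{lem:remez-f} along lines.

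Fix a point $x^\star \in [0,1]^d$ where $|f|$ attains its supremum. Consider lines through $x^\star$ in the direction $\theta \in S^{d-1}$, parametrised by the segment $\ell_\theta = \{x^\star + s\theta\} \cap [0,1]^d$. The length of such a segment is at most $\sqrt d$.

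By integrating in polar coordinates centred at $x^\star$, we can find one direction $\theta$ such that the one-dimensional measure of $\omega \cap \ell_\theta$ is a definite fraction of the length of $\ell_\theta$. This fraction should be of order $|\omega|/d$ after normalising the segment to unit length. Heuristically, $|\omega| = \int_{S^{d-1}} \int_0^{L_\theta} \mathbf 1_\omega(x^\star + s\theta)\, s^{d-1} \dd s \dd\theta$. Here the weight $s^{d-1}$ favours far points, which is unfavourable.

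So I would instead use a cleaner coordinate approach, iterating \cref{lem:remez-f} one variable at a time. By Fubini, pick a coordinate slicing. Let $\omega_{x'} := \{x_1 : (x_1,x') \in \omega\}$ for $x' \in [0,1]^{d-1}$. The set of $x'$ with $|\omega_{x'}| \geq |\omega|/2$ has measure at least $|\omega|/2$.

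On each such good slice, apply \cref{lem:remez-f} in $x_1$. This bounds $\sup_{x_1}|f(x_1,x')|$ by $(16e/|\omega|)^n \sup_\omega |f|$ plus a derivative term. Next apply \cref{lem:remez-f} in the remaining variables to the function $g(x') := \sup_{x_1}|f(x_1,x')|$ on the good set of $x'$. The difficulty is that $g$ is not smooth, so one should apply the argument to $x' \mapsto f(x_1,x')$ for each fixed $x_1$ instead.

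This induction would give constants like $(C/|\omega|)^{dn}$, which is worse than the claimed $(8ed/|\omega|)^n$. The claimed constant, with only a factor $d$ and exponent $n$, strongly suggests the line argument. So I would return to the line approach, done carefully, as follows.

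Choose $x^\star$ as the maximum point. Rather than lines through $x^\star$, take a random segment: pick $y$ uniformly in $\omega$ and consider the segment $[x^\star,y]$, parametrised by $t\in[0,1]$ as $\gamma(t) = x^\star + t(y-x^\star)$. Set $h(t) := f(\gamma(t))$. Then $|h^{(n+1)}(t)| \leq \sum_{|\alpha|=n+1} \frac{(n+1)!}{\alpha!} |D^\alpha f|\, |y-x^\star|^{n+1}$. Dividing by $(n+1)!$ and using $|y-x^\star|\leq \sqrt d$ gives exactly the remainder $d^{\frac{n+1}{2}}\sum_{|\alpha|=n+1}\sup|D^\alpha f|/\alpha!$. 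This matches the statement, which confirms the segment approach.

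It remains to choose $y$ so that $E_y := \{t\in[0,1] : \gamma(t) \in \omega\}$ has measure at least $|\omega|/d$. Then \cref{lem:remez-f} applied to $h$ on $E_y$ yields $|f(x^\star)| = |h(0)| \leq (8ed/|\omega|)^n \sup_\omega|f| + \text{remainder}$.

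This choice of $y$ is the main obstacle. Average $|E_y|$ over $y$ uniform in $[0,1]^d$ rather than over $\omega$, and change variables $z = x^\star + t(y-x^\star)$ for each fixed $t$. This gives $\int_{[0,1]^d} |E_y| \dd y = \int_0^1 |\{y : x^\star+t(y-x^\star)\in\omega\}|\dd t = \int_0^1 t^{-d} |\omega \cap (x^\star + t([0,1]^d - x^\star))| \dd t$. Since $|\omega\cap \cdot| \geq |\omega| - (1-t^d)$ and trivially $\geq 0$, the integral is bounded below. The bound $\int_0^1 \max(0,|\omega|-1+t^d)t^{-d}\dd t$ looks problematic, so the cleaner bound is to use $t^{-d}|\omega\cap A_t| \geq |\omega \cap A_t|$ and show $\int_0^1 |\omega\cap A_t|\dd t \geq |\omega|/d$. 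I would verify this via the mapping $z\mapsto t$: for $z\in\omega$, the set of $t$ with $z \in A_t$ is an interval containing $[\tau(z),1]$, where $1-\tau(z)$ is comparable to a distance. I expect a bound like $\geq 1/d$ after a direct computation, possibly after restricting to the cube's geometry. Averaging then gives some $y$ with $|E_y| \geq |\omega|/d$, which concludes the proof.
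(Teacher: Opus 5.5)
Your final architecture matches the paper's: take the maximum point $x^\star$, restrict $f$ to a segment issued from $x^\star$, apply the one-dimensional \cref{lem:remez-f}, and bound the remainder using $\abs{y-x^\star}\leq\sqrt d$. Your computation of $h^{(n+1)}$ is also correct. The gap is the step you flag yourself: the existence of $y$ with $|E_y|\geq|\omega|/d$. The averaging you propose cannot give it. The inequality $\int_0^1|\omega\cap A_t|\dd t\geq|\omega|/d$ is false. Even the exact average $\int_{[0,1]^d}|E_y|\dd y$, without dropping $t^{-d}$, can be too small.

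To see why, write $\tau(z):=\inf\{t : z\in A_t\}$. Then $\int_0^1|\omega\cap A_t|\dd t=\int_\omega(1-\tau(z))\dd z$, and $1-\tau(z)$ vanishes on the faces of the cube not containing $x^\star$, so this quantity is small whenever $\omega$ concentrates there. Concretely, take $x^\star=0$ and $\omega=[0,1]^d\setminus[0,1-\varepsilon]^d$ with $\varepsilon<1/d$. Then $|\{y\in[0,1]^d : ty\in\omega\}|$ vanishes for $t\leq 1-\varepsilon$ and equals $1-((1-\varepsilon)/t)^d\leq|\omega|$ otherwise. Hence $\int_{[0,1]^d}|E_y|\dd y\leq\varepsilon|\omega|<|\omega|/d$. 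A good segment does exist here: $y=(1,\dots,1)$ gives $|E_y|=\varepsilon\geq|\omega|/d$. But it runs all the way to the boundary, and segments stopped at interior points $y$ dilute the average.

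The missing idea is to measure along full rays from $x^\star$ to $\partial[0,1]^d$, which is exactly the polar-coordinate route you discarded. Your objection that the weight $r^{d-1}$ favours far points is misplaced. What you need is an \emph{upper} bound on $|\omega|$ by one-dimensional measures, and $r^{d-1}\leq R_\theta^{d-1}$ gives
$|\omega|=\int_{S^{d-1}}\int_{L_\theta\cap\omega}r^{d-1}\dd r\dd\sigma\leq\int_{S^{d-1}}R_\theta^{d-1}|L_\theta\cap\omega|_1\dd\sigma$,
where $L_\theta$ is the ray of length $R_\theta\leq\sqrt d$. Since $\int_{S^{d-1}}R_\theta^d\dd\sigma=d\,|[0,1]^d|=d$, the normalized quantities $|L_\theta\cap\omega|_1/R_\theta$ have mean at least $|\omega|/d$ under the probability measure $R_\theta^d\dd\sigma/d$. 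These are precisely your $|E_y|$ when $y$ is the endpoint of the ray. Equivalently: draw $y$ uniformly in the cube, but extend $[x^\star,y]$ to the boundary before measuring. This is where the factor $d$ in $8ed/|\omega|$ comes from.

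The paper packages the same computation slightly differently. It applies the rescaled 1D inequality on every ray to get $|L_\theta\cap\omega|_1\leq 8eR_\theta\,(m/(M-M'))^{1/n}$, with $m:=\sup_\omega|f|$, $M:=|f(x^\star)|$ and $M'$ the remainder term. It then integrates over $\theta$ instead of selecting one good ray.
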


\begin{proof}
    Let $x_0$ be a point where $|f|$ attains its maximum $M$ on $[0,1]^d$, $m := \sup_\omega |f|$ and $M'$ the second term in the right-hand side of \eqref{eq:remez-f-Rd}.
    Assume without loss of generality that $M > M'$.

    Using spherical coordinates centered at $x_0$, let $L_\theta$ be the ray from $x_0$ to the boundary of $[0,1]^d$ along a direction $v_\theta \in S^{d-1}$ and $0 \leq R_\theta \leq \sqrt{d}$ its length.
    For $r \in [0,R_\theta]$, let $g_\theta(r) := f(x_0 + r \theta)$.
    By the (rescaled) 1D Remez inequality of \cref{lem:remez-f} applied to $g_\theta$,
    \begin{equation}
        M = |g_\theta(0)| 
        \leq \left(\frac{8 e d R_\theta}{|L_\theta \cap \omega|_1}\right)^n \sup_{L_\theta \cap \omega} |f| + \frac{R_\theta^{n+1}}{(n+1)!} \sup_{[0,R_\theta]} | g_\theta^{(n+1)} | 
        \leq \left(\frac{8 e d R_\theta}{|L_\theta \cap \omega|_1}\right)^n m + M'
    \end{equation}
    Rearranging this inequality, we obtain $|L_\theta \cap \omega|_1 \leq 8 e R_\theta (m/(M-M'))^{\frac 1 n}$.
    
    Since $r \leq R_\theta$ along $L_\theta$,
    \begin{equation}
        |\omega|
        = \int_{S^{d-1}} \int_{L_\theta \cap \omega} r^{d-1} \dd r \dd \sigma
        \leq \int_{S^{d-1}} R_\theta^{d-1} |L_\theta \cap \omega|_1 \dd \sigma.
    \end{equation}
    Substituting the 1D bound and noting that $\int_{S^{d-1}} R_\theta^d \dd \sigma = d \cdot \operatorname{Vol}([0,1]^d) = d$, we obtain:
    \begin{equation}
        |\omega|
        \leq 8e \left(\frac{m}{M-M'}\right)^{\frac{1}{n}} \int_{S^{d-1}} R_\theta^d \dd \sigma
        = 8e d \left(\frac{m}{M-M'}\right)^{\frac{1}{n}}.
    \end{equation}
    Rearranging for $M$ yields the result.
\end{proof}

\begin{lemma}
    \label{cor:remez-f-Rd-L2}
    Let $\omega \subset [0,1]^d$ be measurable with $|\omega| > 0$.
    For any $n \in \N^*$ and $f \in C^n([0,1]^d;\C)$,
    \begin{equation}
        \sup_{[0,1]^d} |f|
        \leq \left(\frac{16 e d}{|\omega|}\right)^n \norm{f}_{L^2(\omega)} + d^{\frac{n}{2}} \sum_{|\alpha| = n} \sup_{[0,1]^d} \frac{| D^\alpha f |}{\alpha!}.
    \end{equation}
\end{lemma}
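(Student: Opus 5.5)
The plan is to deduce this $L^2$ version from the sup-norm Remez inequality of \cref{lem:remez-f-Rd}, applied with $n-1$ in place of $n$. The point is to pass from $\sup_\omega |f|$ to $\norm{f}_{L^2(\omega)}$ by discarding the part of $\omega$ where $|f|$ is large, exactly as in the proof of \cref{p:turan}.

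First, assume $\norm{f}_{L^2(\omega)} < \infty$ (it is, since $f$ is continuous on the compact cube) and introduce the sublevel set
\begin{equation}
    Z := \left\{ x \in \omega \;;\; |f(x)|^2 \leq \frac{2 \norm{f}_{L^2(\omega)}^2}{|\omega|} \right\}.
\end{equation}
By Chebyshev's inequality, $|\omega \setminus Z| \leq |\omega|/2$, so $|Z| \geq |\omega|/2 > 0$. Moreover, by construction,
\begin{equation}
    \sup_Z |f| \leq \sqrt{2/|\omega|}\, \norm{f}_{L^2(\omega)}.
\end{equation}

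Second, apply \cref{lem:remez-f-Rd} with the set $Z$ and the integer $n-1 \geq 0$; this is legitimate since $f \in C^{n}$. It gives
\begin{equation}
    \sup_{[0,1]^d} |f| \leq \left(\frac{16 e d}{|\omega|}\right)^{n-1} \sqrt{\frac{2}{|\omega|}}\, \norm{f}_{L^2(\omega)} + d^{\frac n2} \sum_{|\alpha| = n} \sup_{[0,1]^d} \frac{|D^\alpha f|}{\alpha!}.
\end{equation}
The remainder term already matches the statement.

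It remains to check that
\begin{equation}
    \left(\frac{16ed}{|\omega|}\right)^{n-1} \sqrt{\frac{2}{|\omega|}} \leq \left(\frac{16ed}{|\omega|}\right)^{n}.
\end{equation}
This is equivalent to $\sqrt{2/|\omega|} \leq 16ed/|\omega|$, i.e.\ $\sqrt{2|\omega|} \leq 16 e d$. That holds because $|\omega| \leq 1$ and $d \geq 1$.

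The only delicate point is this bookkeeping: making sure the loss of a factor $2$ in $|Z|$ and the $|\omega|^{-1/2}$ prefactor are absorbed by the one extra power. The hypothesis $n \geq 1$ is exactly what provides that extra power. Measurability of $Z$ is immediate from the continuity of $f$.
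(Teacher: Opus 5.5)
Your proof is correct and follows the paper's argument essentially step for step. You restrict to the sublevel set where $|f| \leq \sqrt{2/|\omega|}\,\norm{f}_{L^2(\omega)}$, use Chebyshev to keep at least half of $\omega$, apply \cref{lem:remez-f-Rd} with $n-1$, and absorb the factor $\sqrt{2/|\omega|}$ into one extra power of $16ed/|\omega|$. Your explicit check that $\sqrt{2|\omega|} \leq 16ed$, and your use of $|f|^2$ rather than $f^2$ for complex-valued $f$, spell out details the paper leaves implicit in ``combining these estimates''.
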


\begin{proof}
    Let $M := \sqrt{2} |\omega|^{-\frac12} \norm{f}_{L^2(\omega)}$.
    We apply \cref{lem:remez-f-Rd} with $n \gets n-1$ and a modified subset $\omega' := \left\{ x \in \omega ; |f(x)| \leq M \right\}$.
    First $\sup_{\omega'} |f| \leq M$.
    Second, since $\frac 12 |\omega| M^2 = \int_\omega f^2 \geq \int_{\omega \setminus \omega'} f^2 \geq (|\omega|-|\omega'|) M^2$.
    Hence $|\omega'| \geq \frac 12 |\omega|$.
    Combining these estimates concludes the proof.
\end{proof}

\subsection{Bernstein inequality}

For $f \in L^2(\R^d;\C)$ and $\xi \in \R^d$, let $\widehat{f}(\xi) := \int_{\R^d} f(x) e^{- i x \cdot \xi} \dd x$.
We use the standard multi--index notation: for $\alpha = (\alpha_1,\dots,\alpha_d) \in \N^d$ we set $|\alpha| := \alpha_1 + \cdots + \alpha_d$ and $D^\alpha := \partial_{x_1}^{\alpha_1} \cdots \partial_{x_d}^{\alpha_d}$.
For $n \in \N$,
\begin{equation}
    \label{eq:cards}
    \# \left\{ \alpha \in \N^d ; |\alpha| \leq n \right\} \leq (n+1)^d.
\end{equation}

\begin{lemma}
    \label{lem:bernstein-Rd}
    Let $f \in L^2(\R^d)$ with $\supp \widehat{f} \subset B_{\R^d}(0,N)$.
    For any multi--index $\alpha \in \N^d$,
    \begin{equation}
        \norm{D^\alpha f}_{L^2(\R^d)}
        \leq N^{|\alpha|} \norm{f}_{L^2(\R^d)}.
    \end{equation}
\end{lemma}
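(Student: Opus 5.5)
The plan is to work entirely on the Fourier side and combine Plancherel's identity with the pointwise bound on the symbol of $D^\alpha$ over the support of $\widehat{f}$.

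With the convention $\widehat{f}(\xi) = \int f(x) e^{-ix\cdot\xi}\dd x$, Plancherel reads $\norm{g}_{L^2}^2 = (2\pi)^{-d}\norm{\widehat{g}}_{L^2}^2$. The normalizing constant is the same on both sides of the desired inequality, so it cancels and we never need its value.

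The steps are as follows.

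First, we justify that $D^\alpha f \in L^2(\R^d)$ and that its Fourier transform is $\widehat{D^\alpha f}(\xi) = (i\xi)^\alpha \widehat{f}(\xi)$, where $(i\xi)^\alpha := \prod_j (i\xi_j)^{\alpha_j}$. Since $\widehat{f}$ is supported in the compact ball $B(0,N)$ and lies in $L^2$, it is in $L^1$. Hence $f$ is (a.e.\ equal to) the inverse Fourier transform of a compactly supported $L^1$ function, which is smooth. We may then differentiate under the integral sign, because $\xi^\alpha \widehat{f}(\xi)$ remains in $L^1 \cap L^2$. This is the only point requiring any care: it shows that the distributional and the classical derivatives agree and that the formula above holds.

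Second, we bound the multiplier on the support. For $|\xi|\le N$ we have
\begin{equation}
|\xi^\alpha| = \prod_j |\xi_j|^{\alpha_j} \le \prod_j |\xi|^{\alpha_j} = |\xi|^{|\alpha|} \le N^{|\alpha|},
\end{equation}
using $|\xi_j|\le|\xi|$ for each coordinate.

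Third, we conclude with Plancherel:
\begin{equation}
\norm{D^\alpha f}_{L^2}^2 = (2\pi)^{-d}\int_{B(0,N)} |\xi^\alpha|^2 |\widehat{f}(\xi)|^2 \dd\xi \le N^{2|\alpha|}(2\pi)^{-d}\norm{\widehat{f}}_{L^2}^2 = N^{2|\alpha|}\norm{f}_{L^2}^2.
\end{equation}
Taking square roots gives the claim.

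No step is a genuine obstacle. The mild point is the regularity justification in the first step, which the compact support of $\widehat{f}$ settles immediately.
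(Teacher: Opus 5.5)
Your proposal is correct and follows the same route as the paper: the multiplier identity $\widehat{D^\alpha f}(\xi) = (i\xi)^\alpha \widehat{f}(\xi)$, the bound $|\xi^\alpha| \leq |\xi|^{|\alpha|} \leq N^{|\alpha|}$ on the support of $\widehat{f}$, and Plancherel's theorem. The only difference is that you spell out two points the paper's one-line proof leaves implicit: the regularity justification via $\widehat{f} \in L^1$, and the elementary bound on the multiplier.
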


\begin{proof}
    Since $\widehat{D^\alpha f}(\xi) = (i\xi)^\alpha \widehat{f}(\xi)$, the estimate follows from Plancherel's theorem.
\end{proof}

\subsection{Logvinenko--Sereda inequality}
\label{sec:LS-proof}

We prove \cref{thm:LS}.
The thickness condition on the observation subset $\omega$ hints at partitioning $\R^d$ into cells of equal sizes.
Within each cell, we apply our Remez inequality.
We conclude using the fact that the derivatives of band-limited functions grow only geometrically.
Unlike Kovrijkine's proof of \cite{Kovrijkine2001} or \cite{WangWangZhangZhang2019}, we neither distinguish good and bad cells, nor use Harnack's inequality.

\begin{proof}[Proof of \cref{thm:LS}]
    Let $\gamma \in (0,1]$ and $a > 0$ such that \eqref{eq:thick} holds.
    Up to working with $g(x) := f(a x)$, which satisfies $\supp \widehat{g} \subset B_{\R^d}(0,N a)$, we can assume that $a = 1$.

    For each $j \in \Z^d$, set $E_j := j + [0,1]^d$ and $\omega_j := \omega \cap E_j$.
    By \eqref{eq:thick}, $|\omega_j| \geq \gamma$.

    Let $n \in \N^*$ to be chosen later.
    By the Remez inequality of \cref{cor:remez-f-Rd-L2} and $|\omega_j|\geq\gamma$,
    \begin{equation}
        \norm{f}_{L^2(E_j)} \leq C_\gamma^n \norm{f}_{L^2(\omega_j)} + d^{\frac n 2} \sum_{|\alpha| = n} \frac{1}{\alpha!} \norm{D^\alpha f}_{L^\infty(E_j)},
    \end{equation}
    where $C_\gamma := 16 e d / \gamma$.
    Squaring, using \eqref{eq:cards} and the Sobolev embedding of \cref{lem:sobolev},
    \begin{equation}
        \norm{f}_{L^2(E_j)}^2 \leq 2 C_\gamma^{2n} \norm{f}_{L^2(\omega_j)}^2 + \frac{2 d^{3n}}{n!^2} (n+1)^d 2^d \sum_{|\alpha| = n} \sum_{|\beta| \leq d} \norm{D^{\alpha+\beta} f}_{L^2(E_j)}^2.
    \end{equation}
    Summing over $j \in \Z^d$, we obtain
    \begin{equation}
        \label{eq:global-splitting}
        \norm{f}_{L^2(\R^d)}^2
        \leq 2 C_\gamma^{2n} \norm{f}_{L^2(\omega)}^2
        + \frac{2 d^{3n}}{n!^2} (n+1)^d 2^d \sum_{|\alpha| = n} \sum_{|\beta| \leq d} \norm{D^{\alpha+\beta} f}_{L^2(\R^d)}^2.
    \end{equation}
    Since $\supp \widehat{f} \subset B(0,N)$, by Bernstein’s inequality of \cref{lem:bernstein-Rd} and \eqref{eq:cards}, we obtain
    \begin{equation}
        \label{eq:global-tail}
        \norm{f}_{L^2(\R^d)}^2
        \leq 2 C_\gamma^{2n} \norm{f}_{L^2(\omega)}^2
        + 2 \varepsilon_N^2(n) \norm{f}_{L^2(\R^d)}^2
    \end{equation}
    where
    \begin{equation}
        \varepsilon_N(n) := 2^d \frac{(ed^2)^n}{n^n} (n+1)^d (d+1)^d N^{d + n}.
    \end{equation}
    Choosing $n_N := \lceil 2 e d^2 N \rceil$, we obtain
    \begin{equation}
        \varepsilon_N(n_N)
        \leq 2^d (d+1)^d \left( \frac{e d^2 N}{n_N} \right)^{n_N} (n_N + 1)^{2d}
        \leq C_d 2^{- 2 e d^2 N} (2 e d^2 N + 2)^{2d}.
    \end{equation}
    Hence, for $N \geq N_0(d)$, $\varepsilon_N(n_N) \leq \frac 12$.
    Returning to \eqref{eq:global-tail}, we conclude that
    \begin{equation}
        \norm{f}_{L^2(\R^d)} \leq 2 C_\gamma^{n_N} \norm{f}_{L^2(\omega)}
        \leq 2 C_\gamma^{2 e d^2 N +1} \norm{f}_{L^2(\omega)},
    \end{equation}
    which is the claimed estimate \eqref{eq:logvinenko-sereda}.
    We recover the optimal dependency on $\gamma$ discussed in \cite{Kovrijkine2001}.
\end{proof}

\subsection{Sobolev inequality}

In the proof of \cref{thm:LS}, we used the following highly-subcritical Sobolev embedding.

\begin{lemma}
    \label{lem:sobolev}
    Let $d \geq 1$.
    For all $f \in C^\infty([0,1]^d;\R)$,
    \begin{equation}
        \label{eq:sobolev}
        \norm{f}_{L^\infty([0,1]^d)}^2 \leq 2^d \sum_{|\alpha| \leq d} \norm{D^\alpha f}_{L^2([0,1]^d)}^2.
    \end{equation}
\end{lemma}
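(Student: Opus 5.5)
We plan to prove \eqref{eq:sobolev} by induction on the dimension $d$, starting from a one-dimensional estimate and tensorizing it one variable at a time.

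\emph{One-dimensional estimate.} For $g \in C^1([0,1];\R)$ we first show
\begin{equation}
    \norm{g}_{L^\infty(0,1)}^2 \leq 2 \left( \norm{g}_{L^2(0,1)}^2 + \norm{g'}_{L^2(0,1)}^2 \right).
\end{equation}
Fix $x \in [0,1]$. For every $y \in [0,1]$ we write $g(x) = g(y) + \int_y^x g'$. Hence $|g(x)| \leq |g(y)| + \norm{g'}_{L^1(0,1)} \leq |g(y)| + \norm{g'}_{L^2(0,1)}$. Squaring with $(A+B)^2 \leq 2A^2 + 2B^2$ and integrating in $y \in [0,1]$ gives the claim, with constant exactly $2$.

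\emph{Tensorization.} We then prove, by induction on $d$, the slightly stronger statement
\begin{equation}
    \norm{f}_{L^\infty([0,1]^d)}^2 \leq 2^d \sum_{\alpha \in \{0,1\}^d} \norm{D^\alpha f}_{L^2([0,1]^d)}^2.
\end{equation}
This suffices for \eqref{eq:sobolev}, because every $\alpha \in \{0,1\}^d$ satisfies $|\alpha| \leq d$, and the right-hand side of \eqref{eq:sobolev} only adds further nonnegative terms.

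For the inductive step, write $x = (x', x_d)$ and fix $x'$. Applying the 1D estimate to $t \mapsto f(x',t)$ gives
\begin{equation}
    |f(x', x_d)|^2 \leq 2 \int_0^1 \left( |f(x',t)|^2 + |\partial_d f(x',t)|^2 \right) \dd t.
\end{equation}
Next fix $t$ and apply the induction hypothesis in the first $d-1$ variables to $f(\cdot,t)$ and to $\partial_d f(\cdot,t)$. Each of these functions is smooth, and the sup over $x'$ of each integrand is bounded by $2^{d-1}\sum_{\alpha'\in\{0,1\}^{d-1}}\norm{D^{\alpha'} h(\cdot,t)}_{L^2}^2$ with $h = f$ or $h = \partial_d f$. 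We then integrate in $t$ by Fubini, taking the supremum over $x'$ inside the $t$-integral. This is legitimate because $\sup_{x'} \int_0^1 \leq \int_0^1 \sup_{x'}$.

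The result is
\begin{equation}
    |f(x)|^2 \leq 2 \cdot 2^{d-1} \sum_{\alpha \in \{0,1\}^d} \norm{D^\alpha f}_{L^2([0,1]^d)}^2,
\end{equation}
since the pairs $(\alpha', \alpha_d)$ with $\alpha_d \in \{0,1\}$ enumerate $\{0,1\}^d$.

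The main obstacle is purely bookkeeping. One must order the sup and the integrals correctly, as above, so that the constants multiply to exactly $2^d$ without any loss. One must also check that no mixed derivative of order larger than one in any single variable appears; this holds because the exponents used are exactly $\{0,1\}^d$.
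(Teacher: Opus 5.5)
Your proof is correct and follows essentially the same route as the paper: you prove a one-dimensional bound with constant $2$, then induct on $d$ by applying it in one variable, moving the supremum over the remaining variables inside the integral, and invoking the induction hypothesis for both $f$ and its partial derivative. The differences are only cosmetic: the paper obtains the 1D bound from $f^2(x)-f^2(x')=\int_{x'}^{x} 2ff'$ instead of from $g(x)=g(y)+\int_y^x g'$, and it carries the full sum over $|\alpha|\le d$ through the induction, whereas you carry the sharper sum over $\alpha\in\{0,1\}^d$.
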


\begin{proof}
    We proceed by induction on $d \geq 1$.
    For $d = 1$ and $x, x' \in [0,1]$, by Cauchy--Schwarz
    \begin{equation}
        |f^2(x) - f^2(x')| = \left| \int_{x'}^{x} 2 f f' \right| \leq \norm{f}_{L^2(0,1)}^2 + \norm{f'}_{L^2(0,1)}^2.
    \end{equation}
    Integrating over $x' \in [0,1]$ leads to \eqref{eq:sobolev} for $d=1$.

    Now let $d \geq 2$ and assume that the inequality holds for dimension $d-1$.
    Let $x \in [0,1]^d$ be written as $x = (x_1, x')$ where $x_1 \in [0,1]$ and $x' \in [0,1]^{d-1}$.

    Fix $x' \in [0,1]^{d-1}$.
    Applying the base case $d=1$ to the function $g(x_1) := f(x_1, x')$ yields:
    \begin{equation}
        \sup_{x_1 \in [0,1]} |f(x_1, x')|^2 \leq 2 \int_0^1 \left( |f(x_1, x')|^2 + |\partial_1 f(x_1, x')|^2 \right) \dd x_1.
    \end{equation}
    To bound the $L^\infty$ norm over the full domain $[0,1]^d$, we take the supremum over $x'$:
    \begin{equation}
        \begin{split}
            \norm{f}_{L^\infty([0,1]^d)}^2
            & \leq 2 \sup_{x'} \int_0^1 \left( |f(x_1, x')|^2 + |\partial_1 f(x_1, x')|^2 \right) \dd x_1 \\
            & \leq 2 \int_0^1 \left(\sup_{x'} |f(x_1, x')|^2 + \sup_{x'} |\partial_1 f(x_1, x')|^2\right) \dd x_1 \\
            & \leq 2 \int_0^1 2^{d-1} \sum_{|\beta| \leq d-1} \left(\norm{D^\beta_{x'} f(x_1,\cdot)}_{L^2(x')}^2 + \norm{D^\beta \partial_1 f(x_1, \cdot)}_{L^2(x')}^2\right) \dd x_1 \\
            & \leq 2^d \sum_{|\beta| \leq d} \norm{D^\beta f}_{L^2}^2
        \end{split}
    \end{equation}
    where we applied the inductive hypothesis to $x' \mapsto f(x_1, x')$ and $x' \mapsto \partial_1 f(x_1, x')$ for fixed~$x_1$.
\end{proof}

\subsection{Gautschi bound for inverse Vandermonde matrix}

Given distinct $z_1, \dotsc, z_n \in \C$, define the square Vandermonde matrix $V := (z_j^{k-1})_{1 \leq j,k \leq n}$.
The following estimate of the norm of the inverse of $V$ is proved in \cite[Theorem 1]{Gautschi1962}.

\begin{lemma}
    \label{lem:gautschi}
    For any $n \geq 1$ and distinct $z_1, \dotsc, z_n \in \C$, one has
    \begin{equation}
        \label{eq:gautschi}
        \norm{V^{-1}} \leq \sqrt{n} \max_{1 \leq j \leq n} \prod_{k \neq j} \frac{1+|z_k|}{|z_j-z_k|}.
    \end{equation}
\end{lemma}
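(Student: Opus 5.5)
The bound will come from Cramer's rule together with Lagrange interpolation, which is the classical route to Gautschi's estimate. The rows of $V^{-1}$ are coefficient vectors of the Lagrange basis polynomials: if $V^{-1} = (w_{kj})$, then $L_j(z) := \sum_{k=1}^n w_{kj} z^{k-1}$ satisfies $L_j(z_i) = \delta_{ij}$. This is because $V W = \Id$ reads $\sum_k z_i^{k-1} w_{kj} = \delta_{ij}$. By uniqueness of the interpolating polynomial of degree at most $n-1$,
\begin{equation}
    L_j(z) = \prod_{k \neq j} \frac{z - z_k}{z_j - z_k}.
\end{equation}
So column $j$ of $V^{-1}$ is the coefficient vector of $L_j$.

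The next step is to bound the coefficients of $L_j$. Expanding $\prod_{k\neq j}(z - z_k)$, its coefficients are, up to sign, the elementary symmetric functions of $\{z_k\}_{k\neq j}$. Hence the sum of their absolute values is at most $\prod_{k\neq j}(1+|z_k|)$, since expanding that product produces every monomial with nonnegative weights. Therefore the $\ell^1$ norm of column $j$ of $V^{-1}$ satisfies
\begin{equation}
    \sum_k |w_{kj}| \leq \prod_{k\neq j} \frac{1+|z_k|}{|z_j - z_k|} =: \Pi_j.
\end{equation}

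The remaining step, which is the only delicate point, is converting this column information into a spectral-norm bound with a factor $\sqrt n$. The maximum column $\ell^1$ sum is the operator norm $\norm{V^{-1}}_{1\to 1}$. For the spectral norm I plan to use $\norm{W}_2 \leq \sqrt{n}\,\norm{W}_{1\to1}$, which follows from $\norm{W}_2 \le \norm{W}_F$ and $\norm{W}_F^2 = \sum_j \norm{W e_j}_2^2 \le \sum_j \norm{W e_j}_1^2 \le n \max_j \Pi_j^2$. This gives
\begin{equation}
    \norm{V^{-1}} \leq \sqrt{n}\, \max_j \Pi_j,
\end{equation}
which is \eqref{eq:gautschi}. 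The Frobenius detour should suffice and avoids any interpolation-of-norms argument.
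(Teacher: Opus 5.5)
Your proof is correct and follows the paper's route: the columns of $V^{-1}$ are the coefficient vectors of the Lagrange polynomials $L_j$, their $\ell^1$ norms are bounded by $\prod_{k\neq j}(1+|z_k|)/|z_j-z_k|$, and passing to the spectral norm costs a factor $\sqrt{n}$. The paper does the last step via $\abs{Ax}_2 \le \abs{Ax}_1 \le \norm{A}_1 \abs{x}_1 \le \sqrt{n}\,\norm{A}_1 \abs{x}_2$ rather than through the Frobenius norm, which is an inessential difference. The one slip is in your first sentence: as your computation $VW=\Id$ shows, it is the columns of $V^{-1}$, not the rows, that carry the coefficients of $L_j$.
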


\begin{proof}
    For a matrix $A = (a_{jk})_{1 \leq j,k \leq n}$, we introduce its $1$-norm or \emph{column-sum} norm
    \begin{equation}
        \norm{A}_1 := \max_{\abs{x}_1=1} \abs{Ax}_1 = \max_{1 \leq k \leq n} \sum_{j=1}^n |a_{jk}|.
    \end{equation}
    In particular, by Cauchy--Schwarz, $\norm{A} = \max_{\abs{x}_2=1} \abs{Ax}_2 \leq \sqrt{n} \norm{A}_1$.

    Let $L_j$ for $1 \leq j \leq n$ be the Lagrange basis polynomials for the nodes $z_1, \dotsc, z_n$:
    \begin{equation}
        L_j(z) := \prod_{k \neq j} \frac{z-z_k}{z_j-z_k}
        \quad \text{so that} \quad
        L_j(z_k) = \mathbf{1}_{j=k}.
    \end{equation}
    Writing $L_j(z) = \ell^{(j)}_{0} + \ell^{(j)}_{1} z + \dotsb + \ell^{(j)}_{n-1} z^{n-1}$, the vector $\ell^{(j)}$ is the $j$-th column of $V^{-1}$ since $V \ell^{(j)} = e_j$.
    For a polynomial $Q(z) = \sum_{m=0}^{n-1} q_m z^m$, define $\norm{Q}_1 := |q_0| + \dotsb + |q_{n-1}|$.
    Then $\norm{z - z_0}_1 = 1 + |z_0|$ and $\norm{P Q}_1 \leq \norm{P}_1 \norm{Q}_1$.
    Thus
    \begin{equation}
        \norm{L_j}_1 \leq \prod_{k \neq j} \frac{1+|z_k|}{|z_j-z_k|}.
    \end{equation}
    By definition $\norm{V^{-1}}_1 = \max_{1 \leq j \leq n} \abs{\ell^{(j)}}_1$, so \eqref{eq:gautschi} follows by Cauchy--Schwarz.
\end{proof}

\newpage

\section{Well-posedness results}

\subsection{Control-affine ODEs}

Let $n \in \N^*$ and $f_0, f_1 \in C^1(\R^n;\R^n)$ vector fields with $f_0(0) = 0$.
We assume that $f_0$ and $f_1$ are globally Lipschitz, with constants $L_0$ and $L_1$, and globally bounded, with constants $M_0$ and $M_1$.

For $T > 0$, $u \in L^1((0,T);\R)$ and $y_0 \in \R^n$, let $t \mapsto y(t;u,y_0)$ be the solution to \eqref{eq:affine-f1} with initial data $y_0$ constructed in \cref{lem:WP}.
We collect standard estimates on this solution.

\subsubsection{A priori bounds}

\begin{lemma}
    \label{lem:EDO-bounds}
    For all $T > 0$, $u \in L^1((0,T);\R)$, $y_0 \in \R^n$ and $t \in [0,T]$,
    \begin{align}
        \label{eq:EDO-bound-1}
        \abs{y(t;u,y_0)} & \leq e^{L_0 t} \left(\abs{y_0} + M_1 \norm{u}_{L^1(0,t)}\right), \\
        \label{eq:EDO-bound-2}
        \abs{y(t;u,y_0)-y_0} & \leq e^{L_0 t} \left(L_0 t \abs{y_0} + M_1 \norm{u}_{L^1(0,t)}\right).
    \end{align}
\end{lemma}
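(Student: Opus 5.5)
We work directly from the mild formulation \eqref{eq:mild} of \cref{lem:WP}, specialized to the single-control system \eqref{eq:affine-f1}: for $t \in [0,T]$,
\begin{equation*}
    y(t) = y_0 + \int_0^t f_0(y(s)) \dd s + \int_0^t u(s) f_1(y(s)) \dd s,
\end{equation*}
where $y(t) := y(t;u,y_0)$. Two facts are used. Since $f_0(0) = 0$ and $f_0$ is $L_0$-Lipschitz, $\abs{f_0(y)} \leq L_0 \abs{y}$. Since $f_1$ is bounded by $M_1$, $\abs{u(s) f_1(y(s))} \leq M_1 \abs{u(s)}$. These give the integral inequality
\begin{equation*}
    \abs{y(t)} \leq \abs{y_0} + M_1 \norm{u}_{L^1(0,t)} + L_0 \int_0^t \abs{y(s)} \dd s .
\end{equation*}

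For \eqref{eq:EDO-bound-1}, set $a(t) := \abs{y_0} + M_1 \norm{u}_{L^1(0,t)}$. It is nondecreasing in $t$ and continuous. Since $\abs{y}$ is continuous by \cref{lem:WP}, the integral form of Grönwall's lemma with a nondecreasing inhomogeneity gives $\abs{y(t)} \leq a(t) e^{L_0 t}$. To stay self-contained, I would fix $t$ and apply Grönwall on $[0,t]$ with the constant $a(t)$ in place of $a(s)$, which is allowed because $a(s) \leq a(t)$ for $s \leq t$. This avoids any differentiability issue with $u \in L^1$.

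For \eqref{eq:EDO-bound-2}, set $w(t) := y(t) - y_0$. From the mild formulation,
\begin{equation*}
    \abs{w(t)} \leq \int_0^t L_0 (\abs{w(s)} + \abs{y_0}) \dd s + M_1 \norm{u}_{L^1(0,t)} \leq L_0 t \abs{y_0} + M_1 \norm{u}_{L^1(0,t)} + L_0 \int_0^t \abs{w(s)} \dd s .
\end{equation*}
The inhomogeneous term $b(t) := L_0 t \abs{y_0} + M_1 \norm{u}_{L^1(0,t)}$ is again nondecreasing, so the same Grönwall argument yields $\abs{w(t)} \leq b(t) e^{L_0 t}$, which is \eqref{eq:EDO-bound-2}.

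There is no real obstacle. The only point needing care is to use the version of Grönwall valid for continuous functions with a monotone forcing term, rather than a differential version, since $u$ is only $L^1$ and $y$ is only a mild solution.
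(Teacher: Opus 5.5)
Your proof is correct and follows the paper's argument: both bound the mild formulation using $\abs{f_0(y)} \leq L_0 \abs{y}$ and $\abs{u f_1(y)} \leq M_1 \abs{u}$, then apply Grönwall's inequality, with $\abs{f_0(y)} \leq L_0 (\abs{y - y_0} + \abs{y_0})$ for the second bound. Your remark about freezing the nondecreasing forcing term at time $t$ simply makes explicit the version of Grönwall that the paper uses implicitly.
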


\begin{proof}
    Since $f_0(0) = 0$, by the integral formulation \eqref{eq:mild},
    \begin{equation}
        \abs{y(t)} \leq \abs{y_0} + \int_0^t \abs{f_0(y) + u f_1(y)}
        \leq \abs{y_0} + \int_0^t L_0 \abs{y} + M_1 \int_0^t \abs{u}
    \end{equation}
    Thus \eqref{eq:EDO-bound-1} follows by Grönwall's inequality.
    Similarly,
    \begin{equation}
        \abs{y(t) - y_0} \leq \int_0^t \abs{f_0(y) + u f_1(y)}
        \leq \int_0^t L_0 \abs{y-y_0} + L_0 \abs{y_0} + M_1 \abs{u},
    \end{equation}
    which entails \eqref{eq:EDO-bound-2} by Grönwall's inequality.
\end{proof}

\subsubsection{Lipschitz dependence on the parameters}

\begin{lemma}
    \label{lem:EDO-Lip}
    For all $T > 0$, $u, v \in L^1((0,T);\R)$, $y_0, z_0 \in \R^n$ and $t \in [0,T]$,
    \begin{equation}
        \label{eq:EDO-Lip}
        \abs{y(t;u,y_0) - y(t;v,z_0)} \leq e^{L_0 T + L_1 \norm{u}_{L^1}} \left(\abs{y_0-z_0} + M_1 \norm{u-v}_{L^1} \right).
    \end{equation}
\end{lemma}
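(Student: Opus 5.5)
The plan is to compare the two mild formulations \eqref{eq:mild} directly and close with Grönwall's inequality. Write $y(t) := y(t;u,y_0)$ and $z(t) := y(t;v,z_0)$. Subtracting the integral equations gives
\begin{equation}
    y(t) - z(t) = y_0 - z_0 + \int_0^t \big(f_0(y) - f_0(z)\big) \dd s + \int_0^t \big(u f_1(y) - v f_1(z)\big) \dd s.
\end{equation}
The first integral is bounded by $L_0 \int_0^t \abs{y-z}$ using the Lipschitz constant of $f_0$.

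For the second integral, I will split it as $u\big(f_1(y) - f_1(z)\big) + (u - v) f_1(z)$. This particular splitting matters: it puts the Lipschitz constant $L_1$ next to $\abs{u}$ and the bound $M_1$ next to $\abs{u-v}$, which is exactly the asymmetric form of the claimed estimate, where only $\norm{u}_{L^1}$ appears in the exponent. This yields
\begin{equation}
    \abs{y(t)-z(t)} \leq \abs{y_0-z_0} + M_1 \norm{u-v}_{L^1(0,t)} + \int_0^t \big(L_0 + L_1 \abs{u(s)}\big) \abs{y(s)-z(s)} \dd s.
\end{equation}

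Next I apply the integral form of Grönwall's inequality with the nonnegative $L^1$ weight $\beta(s) := L_0 + L_1 \abs{u(s)}$. The forcing term is nondecreasing in $t$, so I can bound it by its value at $T$. This gives $\abs{y(t)-z(t)} \leq \big(\abs{y_0-z_0} + M_1 \norm{u-v}_{L^1}\big) \exp\big(\int_0^t \beta\big)$. Finally, $\int_0^t \beta \leq L_0 T + L_1 \norm{u}_{L^1}$, which gives \eqref{eq:EDO-Lip}.

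The only delicate point is that the weight $\beta$ is merely $L^1$, not continuous. Grönwall's lemma still holds for continuous $\phi$ and integrable $\beta \geq 0$: set $\Psi(t) := \int_0^t \beta\phi$, observe that $\big(\Psi e^{-\int_0^t \beta}\big)'$ is bounded above by the forcing term times $\beta e^{-\int_0^t\beta}$ almost everywhere, and use absolute continuity. Since $y$ and $z$ are continuous, this applies, and everything else is routine.
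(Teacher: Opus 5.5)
Your proposal is correct and matches the paper's proof essentially verbatim: you subtract the mild formulations, split $u f_1(y) - v f_1(z) = u\,(f_1(y)-f_1(z)) + (u-v) f_1(z)$, and close with Grönwall's inequality using the weight $L_0 + L_1\abs{u}$. Your justification of Grönwall for a weight that is merely integrable, via absolute continuity, is a detail the paper leaves implicit.
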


\begin{proof}
    Let $y$ denote the trajectory for the control $u$ and initial data $y_0$, $z$ the trajectory for the control $v$ and initial data $z_0$.
    Then $w := y - z$ satisfies the mild formulation:
    \begin{equation}
        w(t) = (y_0 - z_0) + \int_0^t (f_0 + u(s) f_1)(y(s)) - (f_0 + v(s) f_1)(z(s)) \dd s.
    \end{equation}
    By the triangle inequality,
    \begin{equation}
        \begin{split}
            \abs{w(t)} & \leq \abs{y_0-z_0} + \int_0^t \abs{f_0(y) - f_0(z)} + \abs{u (f_1(y) - f_1(z))} + \abs{(u-v) f_1(z)} \\
            & \leq \abs{y_0-z_0} + \int_0^t (L_0 + \abs{u} L_1) \abs{w} + M_1 \int_0^t \abs{u-v}.
        \end{split}
    \end{equation}
    Grönwall's inequality entails \eqref{eq:EDO-Lip}.
\end{proof}

\subsubsection{Taylor expansions}

\begin{lemma}
    \label{lem:taylor-uf1}
    For $T \in [0,1]$ and $u \in L^1((0,T);\R)$,
    \begin{equation}
        \label{eq:taylor-uf1}
        y(T;u,0) = \Big(\int_0^T u\Big) f_1(0) + \cO((T+\norm{u}_{L^1}) \norm{u}_{L^1}).
    \end{equation}
\end{lemma}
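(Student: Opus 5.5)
The plan is to write the mild formulation \eqref{eq:mild} with $y_0 = 0$ and $y(t) := y(t;u,0)$:
\begin{equation}
    y(T) = \int_0^T f_0(y(t)) \dd t + \int_0^T u(t) f_1(y(t)) \dd t .
\end{equation}
I will compare this expression term by term with $(\int_0^T u) f_1(0)$, using only the a priori bound \eqref{eq:EDO-bound-1} of \cref{lem:EDO-bounds} and the Lipschitz constants $L_0, L_1$. With $y_0 = 0$ and $t \leq T \leq 1$, \eqref{eq:EDO-bound-1} gives, uniformly in $t \in [0,T]$,
\begin{equation}
    \abs{y(t)} \leq e^{L_0} M_1 \norm{u}_{L^1(0,T)} .
\end{equation}
This is the only quantitative input needed.

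For the drift term, since $f_0(0) = 0$, we have $\abs{f_0(y(t))} \leq L_0 \abs{y(t)}$. Hence
\begin{equation}
    \Big| \int_0^T f_0(y) \Big| \leq L_0 T \sup_{[0,T]} \abs{y} \leq L_0 e^{L_0} M_1 \, T \norm{u}_{L^1}.
\end{equation}
For the control term, I split $f_1(y(t)) = f_1(0) + (f_1(y(t)) - f_1(0))$. The first piece gives exactly $(\int_0^T u) f_1(0)$. The second piece is bounded by
\begin{equation}
    L_1 \int_0^T \abs{u} \abs{y} \leq L_1 e^{L_0} M_1 \norm{u}_{L^1}^2 .
\end{equation}

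Summing the two remainder bounds gives a total error at most $C (T + \norm{u}_{L^1}) \norm{u}_{L^1}$, with $C := \max\{L_0, L_1\} e^{L_0} M_1$. This constant is independent of $T \in [0,1]$ and of $u$, which is the claimed $\cO$ in \eqref{eq:taylor-uf1}. No step is a real obstacle. The only point requiring care is that the $\cO$ must be uniform over all $u \in L^1$, not just small ones. This uniformity holds because the bound on $\abs{y}$ is linear in $\norm{u}_{L^1}$ (using global boundedness of $f_1$, i.e. $M_1$). The restriction $T \leq 1$ keeps the factor $e^{L_0 T}$ bounded.
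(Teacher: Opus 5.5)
Your proof is correct and is essentially the paper's argument. Both use the mild formulation with $y_0=0$, the pointwise bound $\abs{f_0(y)+u f_1(y)-u f_1(0)}\leq (L_0+L_1\abs{u})\abs{y}$, and the a priori estimate \eqref{eq:EDO-bound-1}, which yields the error bound $(L_0T+L_1\norm{u}_{L^1})e^{L_0T}M_1\norm{u}_{L^1}$ uniformly for $T\leq 1$.
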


\begin{proof}
    Let $w(T) := y(T;u,0) - (\int_0^T u) f_1(0)$.
    By the mild formulation \eqref{eq:mild}, since $f_0(0) = 0$,
    \begin{equation}
        \begin{split}
            \abs{w(T)}
            & \leq \int_0^T \abs{f_0(y(t;u,0)) + u(t) f_1(y(t;u,0)) - u(t) f_1(0)} \dd t \\
            & \leq \int_0^T (L_0 + L_1 \abs{u(t)}) \abs{y(t;u,0)} \dd t.
        \end{split}
    \end{equation}
    Using \eqref{eq:EDO-bound-1}, we obtain
    \begin{equation}
        \abs{w(T)} \leq (L_0 T + L_1 \norm{u}_{L^1(0,T)}) e^{L_0 T} M_1 \norm{u}_{L^1(0,T)}.
    \end{equation}
    This entails \eqref{eq:taylor-uf1} for $T \leq 1$.
\end{proof}

\begin{lemma}
    \label{lem:autonomous}
    Assume that $f_0 \in C^2(\R^n;\R^n)$.
    Let $A := Df_0(0)$.
    For $t \in [0,1]$ and $y_0 \in \R^n$,
    \begin{equation}
        y(t;0,y_0) = y_0 + t A y_0 + \cO(t^2 |y_0| + t |y_0|^2).
    \end{equation}
\end{lemma}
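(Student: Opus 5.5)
The natural route is to write the flow in integral form and expand $f_0$ to second order around $0$. Write $y(t) := y(t;0,y_0)$, so that, by the mild formulation \eqref{eq:mild} with $u = 0$,
\begin{equation}
    y(t) = y_0 + \int_0^t f_0(y(s)) \dd s.
\end{equation}
Since $f_0 \in C^2$ and $f_0(0) = 0$, Taylor's formula gives $f_0(z) = A z + R(z)$ with $\abs{R(z)} \leq C \abs{z}^2$ for $z$ in a fixed compact set. A global bound for all $z$ also holds: $\abs{R(z)} \leq \abs{f_0(z)} + \abs{Az} \leq (L_0 + \norm{A}) \abs{z}$. Hence $\abs{R(z)} \leq C \min \{ \abs{z}, \abs{z}^2 \} \leq C \abs{z}^2$ whenever $\abs{z} \leq 1$. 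For $\abs{y_0}$ large, say $\abs{y_0} \geq 1$, the claim is trivial. Indeed, by \eqref{eq:EDO-bound-2} with $u = 0$ and $t \leq 1$, one has $\abs{y(t) - y_0 - tAy_0} \leq C t \abs{y_0} \leq C t \abs{y_0}^2$. So we may assume $\abs{y_0} \leq 1$. There the a priori bound \eqref{eq:EDO-bound-1} gives $\abs{y(s)} \leq e^{L_0} \abs{y_0}$, so $y(s)$ stays in a fixed compact set where the quadratic Taylor bound applies. (Alternatively, one can use the uniform bound $\abs{R(z)} \leq C\abs{z}^2$ on the ball of radius $e^{L_0}$.)

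Then I decompose
\begin{equation}
    y(t) - y_0 - tAy_0 = \int_0^t A\big(y(s) - y_0\big) \dd s + \int_0^t R(y(s)) \dd s.
\end{equation}
The first term is controlled by \eqref{eq:EDO-bound-2} with $u=0$: $\abs{y(s) - y_0} \leq e^{L_0} L_0 s \abs{y_0}$. Integrating gives $\cO(t^2 \abs{y_0})$. The second term is bounded by $\int_0^t C \abs{y(s)}^2 \dd s \leq C e^{2L_0} t \abs{y_0}^2$, using \eqref{eq:EDO-bound-1}. Summing the two yields $\cO(t^2\abs{y_0} + t \abs{y_0}^2)$, uniformly for $t \in [0,1]$.

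There is no genuine obstacle here. The only point needing care is that the remainder constant of the $C^2$ Taylor expansion is only local. This is handled by the case split $\abs{y_0} \leq 1$ versus $\abs{y_0} \geq 1$, together with the a priori bound keeping the trajectory in a compact set.
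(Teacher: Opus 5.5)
Your proof is correct, but it splits the integrand differently from the paper. You linearize along the trajectory, writing $f_0(y(s)) - Ay_0 = A(y(s)-y_0) + R(y(s))$. The second-order remainder must then be controlled at every point of the trajectory, which is why you need \eqref{eq:EDO-bound-1} to confine $y(s)$ to a ball, and a case split on $\abs{y_0}$.

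The paper instead freezes the nonlinearity at the initial point: $f_0(y(s)) - Ay_0 = \big(f_0(y_0) - Ay_0\big) + \big(f_0(y(s)) - f_0(y_0)\big)$. The first piece contributes $t\abs{f_0(y_0)-Ay_0} = \cO(t\abs{y_0}^2)$. The second is bounded by $L_0\int_0^t \abs{y(s)-y_0}\dd s = \cO(t^2\abs{y_0})$ using only \eqref{eq:EDO-bound-2}. So the $C^2$ information is used at the single point $y_0$, and the trajectory never needs to be confined.

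Your version attributes each error term to its natural source (linear drift versus nonlinearity), while the paper's is slightly leaner. Also, your own bound $\abs{R(z)} \leq C\min\{\abs{z},\abs{z}^2\}$ actually holds for all $z\in\R^n$: use the local Taylor estimate for $\abs{z}\leq 1$ and the Lipschitz bound for $\abs{z}\geq 1$. Hence $\abs{R(z)}\leq C\abs{z}^2$ globally, which is exactly how the paper bounds $\abs{f_0(y_0)-Ay_0}$. With that global bound, your case split on $\abs{y_0}$ is unnecessary.
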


\begin{proof}
    Let $w(t) := y(t;0,y_0) - y_0 - t A y_0$.
    By the mild formulation \eqref{eq:mild},
    \begin{equation}
        \begin{split}
            \abs{w(t)} & \leq \int_0^t \abs{f_0(y(s;0,y_0)) - A y_0} \dd s \\
            & \leq \int_0^t \abs{f_0(y_0) - A y_0} + \abs{f_0(y(s;0,y_0)) - f_0(y_0)} \dd s \\
            & \leq t \abs{f_0(y_0) - A y_0} + L_0 \int_0^t \abs{y(s;0,y_0)-y_0} \dd s \\
            & \leq t \abs{f_0(y_0) - A y_0} + (L_0 t)^2 e^{L_0 t} \abs{y_0}
        \end{split}
    \end{equation}
    since, by \eqref{eq:EDO-bound-2} with $u = 0$, $\abs{y(s;0,y_0)-y_0} \leq e^{L_0 s} L_0 s \abs{y_0}$.

    Since $f_0 \in C^2(\R^n;\R^n)$, there exists $C_0 > 0$ such that $|f_0(y_0) - Ay_0| \leq C_0 |y_0|^2$ when $|y_0| \leq 1$.
    For $|y_0| > 1$, write $|f_0(y_0) - Ay_0| \leq |f_0(y_0)-f_0(0)| + |Ay_0| \leq L_0 |y_0| + \norm{A} |y_0| \leq (L_0 + \norm{A}) |y_0|^2$.
\end{proof}

\subsection{Burgers equation}
\label{sec:WP-Burgers}

We prove \cref{prop:WP-Burgers} using a classical compactness argument.
To prove the existence of solutions, we will rely on Schaefer's fixed-point theorem \cite{Schaefer1955}, which is an easy consequence of Schauder's fixed-point theorem (see e.g.\ \cite[Chapter 3, Theorem 4.4]{Cronin1964} for a short proof), sometimes easier to apply to prove the existence of solutions to PDEs.

\begin{lemma}[Schaefer's fixed-point theorem]
    \label{lem:schaefer}
    Let $E$ be a Banach space and $f$ a continuous compact map from $E$ to $E$.
    Assume that the set
    \begin{equation}
        \label{eq:schaefer-set}
        \left\{ y \in E \mid \exists \lambda \in [0,1], y = \lambda f(y) \right\}
    \end{equation}
    is bounded.
    Then $f$ has a fixed point.
\end{lemma}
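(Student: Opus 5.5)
This is Schaefer's theorem, to be deduced from Schauder's fixed-point theorem (a continuous map from a nonempty closed convex subset of a Banach space into a compact subset of it has a fixed point). The plan is to retract $f$ onto a large ball, apply Schauder there, and use the a priori bound to rule out a fixed point on the boundary.

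\textbf{Step 1: choose the ball.} Let $M$ bound the norms of elements of the set \eqref{eq:schaefer-set}. Fix $R > M$ and let $B := \overline{B}_E(0,R)$, which is closed, convex and nonempty.

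\textbf{Step 2: build the truncated map.} Define the radial retraction $\rho : E \to B$ by
\begin{equation}
    \rho(x) := x \text{ if } \norm{x} \leq R,
    \qquad
    \rho(x) := R\, x/\norm{x} \text{ otherwise.}
\end{equation}
It is continuous. Set $g := \rho \circ f$ restricted to $B$. Then $g$ is continuous and maps $B$ into $B$.

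\textbf{Step 3: check compactness.} I read ``compact map'' as: $f$ sends bounded sets to relatively compact sets. So $K := \overline{f(B)}$ is compact, and $\rho(K)$ is compact as a continuous image. Hence $g(B) \subset \rho(K) \cap B$, which is compact. If the paper's convention requires a convex compact target, I would replace it by the closed convex hull of $\rho(K)$. This is still compact by Mazur's theorem and still inside $B$ by convexity of $B$. Schauder then gives $x \in B$ with $g(x) = x$.

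\textbf{Step 4: exclude the truncated case.} This is the only real step. Suppose $\norm{f(x)} > R$. Then $x = R f(x)/\norm{f(x)} = \lambda f(x)$ with $\lambda := R/\norm{f(x)} \in (0,1)$. So $x$ lies in the set \eqref{eq:schaefer-set}, which forces $\norm{x} \leq M < R$. But $\norm{x} = R$ by construction, a contradiction. Therefore $\norm{f(x)} \leq R$, so $g(x) = f(x)$, and $x = f(x)$ is the desired fixed point.

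I expect no serious obstacle. The points to handle carefully are the compactness convention in Step 3 and choosing $R$ strictly larger than $M$ so that Step 4 closes.
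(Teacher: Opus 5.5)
Your proof is correct and takes the same route as the paper: the paper gives no argument of its own, only noting that the lemma is an easy consequence of Schauder's fixed-point theorem (with a reference to Cronin). Your radial retraction onto a ball of radius $R > M$, followed by using the a priori bound to rule out the truncated case $\norm{f(x)} > R$, is exactly that standard deduction.
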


We will therefore need the following \emph{a priori} estimates.

\begin{lemma}
    \label{lem:Burgers-a-priori}
    There exists a constant $C > 0$ such that, for any $\lambda \in [0,1]$, $T > 0$, $y_0 \in L^2(0,1)$ and $u \in L^2((0,T) \times \omega)$, if $y \in \mathcal{Y}$ is a solution to
    \begin{equation}
        \label{eq:Burgers-lambda}
        \begin{cases}
            y_t - y_{xx} = \lambda (u \mathbf{1}_\omega - y y_x) & \text{in } (0,T) \times (0,1), \\
            y(t,0) = y(t,1) = 0 & \text{in } (0,T), \\
            y(0,x) = \lambda y_0(x) & \text{in } (0,1),
        \end{cases}
    \end{equation}
    then
    \begin{equation}
        \norm{y - \lambda z}_{\mathcal{Y}} \leq C \left(\norm{y_0}_{L^2} + \norm{u}_{L^2} \right)^2,
    \end{equation}
    where $z \in \mathcal{Y}$ is the solution to the linear heat equation \eqref{eq:heat-1D-internal}.
\end{lemma}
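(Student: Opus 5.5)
The plan is to reduce everything to the linear energy estimate of \cref{prop:heat-f-WP}. Set $w := y - \lambda z$. Subtracting $\lambda$ times \eqref{eq:heat-1D-internal} from \eqref{eq:Burgers-lambda}, the control terms and the initial data cancel, so $w$ solves the heat equation \eqref{eq:heat-f} with homogeneous Dirichlet conditions, $w(0) = 0$ and source $f := -\lambda y y_x = -\frac{\lambda}{2} (y^2)_x$. Then \eqref{eq:energy-heat-f-WP}, whose constant is independent of $T$, gives
\begin{equation}
    \norm{y - \lambda z}_{\mathcal{Y}} \leq C \norm{(y^2)_x}_{L^2 H^{-1}}.
\end{equation}
So it suffices to bound this quadratic source by $C(\norm{y_0}_{L^2} + \norm{u}_{L^2})^2$.

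\emph{Bounding the source.} Since $\partial_x$ is bounded from $L^2(0,1)$ to $H^{-1}(0,1)$, we have $\norm{(y^2)_x}_{H^{-1}} \leq \norm{y^2}_{L^2} = \norm{y}_{L^4}^2$. For $y(t) \in H^1_0(0,1)$ we use the elementary 1D inequality $\norm{y}_{L^\infty}^2 \leq \norm{y}_{L^2} \norm{y_x}_{L^2} \leq \norm{y_x}_{L^2}^2$, which follows from $y^2(x) = \int_0^x 2 y y_x$, Cauchy--Schwarz and Poincaré. This gives $\norm{y}_{L^4}^4 \leq \norm{y}_{L^\infty}^2 \norm{y}_{L^2}^2 \leq \norm{y_x}_{L^2}^2 \norm{y}_{L^2}^2$ pointwise in time. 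Integrating in $t$ yields
\begin{equation}
    \norm{(y^2)_x}_{L^2 H^{-1}} \leq \norm{y}_{C^0 L^2} \norm{y}_{L^2 H^1},
\end{equation}
which also justifies that $f \in L^2 H^{-1}$, so \cref{prop:heat-f-WP} applies.

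\emph{A priori energy bound on $y$.} We test \eqref{eq:Burgers-lambda} against $y$. This is legitimate for $y \in \mathcal{Y}$ via the Lions--Magenes identity $\frac{\dd}{\dd t}\norm{y}^2 = 2\langle y_t, y\rangle_{H^{-1},H^1_0}$. The convective term vanishes, since $\int_0^1 y^2 y_x = \frac13 [y^3]_0^1 = 0$. The control term is bounded using Poincaré with constant $\leq 1$:
\begin{equation}
    \lambda\int_\omega u y \leq \norm{u}_{L^2}\norm{y_x}_{L^2} \leq \tfrac12\norm{u}_{L^2}^2 + \tfrac12\norm{y_x}_{L^2}^2 .
\end{equation}
Absorbing $\tfrac12\norm{y_x}^2$ into the dissipation and integrating in time gives
\begin{equation}
    \norm{y}_{C^0 L^2}^2 + \norm{y}_{L^2 H^1}^2 \leq C \left( \norm{y_0}_{L^2}^2 + \norm{u}_{L^2}^2 \right),
\end{equation}
using $\lambda \leq 1$, with $C$ independent of $T$ and $\lambda$. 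Combining the three displays gives the claimed quadratic bound.

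The only delicate point is keeping every constant independent of $T$. Neither Grönwall nor the crude bound $\norm{\cdot}_{L^1 H^{-1}} \leq T \norm{\cdot}_{L^\infty H^{-1}}$ may be used. This is why I rely on Poincaré to absorb the control term into the dissipation and on the $L^2_t$-based estimate \eqref{eq:energy-heat-f-WP} for $w$. Checking the rigour of the energy identity for $y \in \mathcal{Y}$ with the cubic term is routine once the $L^2 H^{-1}$ bound on $(y^2)_x$ is known.
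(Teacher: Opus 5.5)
Your proof is correct and follows the same route as the paper. Like the paper, you first prove the energy estimate for $y$ (the convective term vanishes and the control term is absorbed via Poincaré), then bound the source by $\norm{(y^2)_x}_{L^2 H^{-1}} \leq \norm{y}_{L^\infty L^2} \norm{y}_{L^2 H^1}$, and finally apply the linear estimate of \cref{prop:heat-f-WP} to $w = y - \lambda z$. Bounding the source directly by these energy quantities, rather than by $\norm{y}_{\mathcal{Y}}^2$ as the paper writes, is in fact the cleaner way to get a bound that is genuinely quadratic, uniformly in the size of the data.
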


\begin{proof}
    First, let us obtain \emph{a priori} estimates on $y$.
    Multiplying \eqref{eq:Burgers-lambda} by $y$, integrating over $(0,1)$ and using integration by parts, the boundary conditions and Poincaré, we obtain
    \begin{equation}
        \frac 12 \frac{\dd}{\dd t} \int_0^1 y^2 + \int_0^1 y_x^2 = \lambda \int_0^1 u y
        \leq \frac 12 \int_0^1 u^2 + \frac 12 \int_0^1 y_x^2.
    \end{equation}
    Hence,
    \begin{equation}
        \norm{y}_{L^\infty L^2}^2 + \norm{y}_{L^2 H^1}^2
        \leq \norm{y_0}_{L^2}^2 + \norm{u}_{L^2}^2.
    \end{equation}
    Moreover, $\norm{(y^2)_x}_{L^2 H^{-1}} \leq \norm{y}_{L^4 L^4}^2 \leq \norm{y}_{L^\infty L^2} \norm{y}_{L^2 H^1}$.
    Hence, using the PDE, we can estimate $y_t$ in $L^2 H^{-1}$.
    We conclude that $\norm{y}_{\mathcal{Y}} \leq C (\norm{y_0}_{L^2}^2 + \norm{u}_{L^2}^2)$.

    Let $w := y - \lambda z \in \mathcal{Y}$.
    Then $w$ is a solution to
    \begin{equation}
        \begin{cases}
            w_t - w_{xx} = - \lambda yy_x & \text{in } (0,T) \times (0,1), \\
            w(t,0) = w(t,1) = 0 & \text{in } (0,T), \\
            w(0,x) = 0 & \text{in } (0,1).
        \end{cases}
    \end{equation}
    By \cref{prop:heat-f-WP}, $\norm{w}_{\mathcal{Y}} \leq C \norm{\lambda y y_x}_{L^2 H^{-1}} \leq C \norm{y}_{\mathcal{Y}}^2$, which concludes the proof.
\end{proof}

We can now turn to the proof of the well-posedness.

\begin{proof}[Proof of \cref{prop:WP-Burgers}]
    Fix $T > 0$, $y_0 \in L^2(0,1)$ and $u \in L^2((0,T) \times \omega)$.

    \medskip
    \step{Existence}
    Let $E := L^4((0,T);L^4(0,1))$.
    The map $w \mapsto w w_x = \frac 12 \partial_x (w^2)$ is continuous from $E$ to $L^2((0,T);H^{-1}(0,1))$.
    Let $f : E \to E$ be the map which associates to $w \in E$ the solution $y \in \mathcal{Y}$ given by \cref{prop:heat-f-WP} to
    \begin{equation}
        \begin{cases}
            y_t - y_{xx} = u \mathbf{1}_\omega - w w_x & \text{in } (0,T) \times (0,1), \\
            y(t,0) = y(t,1) = 0 & \text{in } (0,T), \\
            y(0,x) = y_0(x) & \text{in } (0,1).
        \end{cases}
    \end{equation}
    By continuity of $w \mapsto w w_x$ from $E$ to $L^2((0,T);H^{-1}(0,1))$ and \cref{prop:heat-f-WP}, $f$ is continuous from $E$ to $\mathcal{Y}$.
    Moreover, $\mathcal{Y} \hookrightarrow E$ and this embedding is compact (this follows for example from the Aubin--Lions--Simon lemma, see e.g.\ \cite[Section 9, Corollary 6]{Simon1987}).
    Hence $f$ is compact as a map from $E$ to $E$.
    To apply \cref{lem:schaefer}, it remains to check that the set \eqref{eq:schaefer-set} is bounded.
    Let $y \in E$ and $\lambda \in [0,1]$ such that $y = \lambda f(y)$.
    Since $f \in C^0(E;\mathcal{Y})$, $y \in \mathcal{Y}$ and satisfies \eqref{eq:Burgers-lambda}.
    By \cref{lem:Burgers-a-priori} and \cref{prop:heat-f-WP}, $\norm{y}_{\mathcal{Y}} \leq C (M + M^2)$, where $M := \norm{y_0}_{L^2} + \norm{u}_{L^2}$.
    Hence, by Schaefer's fixed-point theorem, there exists $y \in E$ such that $y = f(y)$.
    In particular, $y \in \mathcal{Y}$.

    \step{Uniqueness}
    Consider two solutions $y, \tilde{y} \in \mathcal{Y}$ to \eqref{eq:Burgers}.
    Let $w := y - \tilde{y}$ and $z := y + \tilde{y}$.
    Then $w \in \mathcal{Y}$ is a solution to
    \begin{equation}
        \begin{cases}
            w_t - w_{xx} = - \frac 12 (w z)_x & \text{in } (0,T) \times (0,1), \\
            w(t,0) = w(t,1) = 0 & \text{in } (0,T), \\
            w(0,x) = 0 & \text{in } (0,1).
        \end{cases}
    \end{equation}
    Multiplying by $w$ and integrating by parts yields:
    \begin{equation}
        \frac 12 \frac{\dd}{\dd t} \int_0^1 w^2 + \int_0^1 w_x^2 = - \frac 14\int_0^1 z_x w^2.
    \end{equation}
    By Cauchy--Schwarz, Poincaré and Young, for each $t \in [0,T]$,
    \begin{equation}
        \left| \int_0^1 z_x w^2 \right|
        \leq \norm{z_x}_{L^2} \norm{w}_{L^4}^2
        \leq \norm{z_x}_{L^2} \norm{w}_{L^2} \norm{w_x}_{L^2}
        \leq \norm{z_x}_{L^2}^2 \norm{w}_{L^2}^2 + \norm{w_x}_{L^2}^2.
    \end{equation}
    Hence, setting $M(t) := \norm{z_x(t)}_{L^2}^2$,
    \begin{equation}
        \frac{\dd}{\dd t} \norm{w(t)}_{L^2}^2 \leq M(t) \norm{w(t)}_{L^2}^2.
    \end{equation}
    Since $z \in \cY$, $M \in L^1(0,T)$.
    Since $w(0) = 0$, Grönwall's lemma implies that $w \equiv 0$ on $[0,T]$.
\end{proof}

\section*{Acknowledgments}

I am indebted to many colleagues for the numerous discussions that helped shape this course.

I am particularly grateful to Karine Beauchard for our ongoing collaboration on the controllability of nonlinear systems, and for extensive exchanges on almost all subsections of this course.
I also thank Franck Sueur for stimulating conversations on the abstraction of \cref{thm:LTT} as part of our research on the Navier--Stokes equations, and Thomas Perrin for \cref{sec:WP-Burgers}.

I would like to thank Jérémi Dardé and Julien Royer for organizing the EUR-MINT 2025 summer school, the other lecturers, Lucie Beaudouin, Lucas Chesnel, and David Krejčiřík, as well as all the students, for the pleasant atmosphere and the fruitful exchanges we enjoyed together.

\bibliographystyle{plain}
\bibliography{control}

\end{document}